\documentclass[12pt]{article}
\usepackage{filecontents}
\usepackage[a4paper, margin = 1in]{geometry}

\hyphenation{endo-functor endo-functors}

\usepackage[verbose]{backref}

\RequirePackage[hyphens]{url}
\usepackage{mathtools}

\usepackage{amsmath}
\usepackage{amssymb}
\usepackage{latexsym}

\newcommand{\isopil}{\stackrel{\raisebox{0.1ex}[0ex][0ex]{\(\sim\)}}%
			{\raisebox{-0.15ex}[0.28ex]{\(\rightarrow\)}}}
\newcommand{\isleftadjointto}{\dashv}

\newcommand{\TSB}[2]{\operatorname{Bar}_{#1}({#2})}

\usepackage{eucal}

\DeclareMathAlphabet{\mathbbe}{U}{bbold}{m}{n}
\newcommand{\simplexcategory}{\mathbbe{\Delta}}
\newcommand{\OMEGA}{\mathbbe{\Omega}}
\newcommand{\fatnerve}{\mathbf{N}}

\renewcommand{\simplexcategory}{\boldsymbol{\Delta}}
\renewcommand{\OMEGA}{\boldsymbol{\Omega}}


\newcommand{\op}{^{\text{{\rm{op}}}}}

\newcommand{\dstrees}{H}

\RequirePackage[raiselinks=false,colorlinks=true,citecolor=blue,urlcolor=blue,linkcolor=blue,bookmarksopen=true,pagebackref]{hyperref}
\newcommand{\arxiv}[1]{\href{http://arxiv.org/pdf/#1}{arXiv:#1}}

\usepackage{tikz}
\usetikzlibrary{matrix,arrows}
\usetikzlibrary{decorations.pathmorphing}


\tikzset{
  dots/.style args={#1per #2}{
    line cap=round,
    dash pattern=on 0 off #2/#1
  }
}

\tikzset{
  rot/.style={shift={(-4.5pt,0pt)}, rotate=-45}
}

\tikzset{
  act /.tip = >|
}


\tikzset{
  wavy/.style={
	/tikz/decorate, /tikz/decoration={
	  snake, segment length=3.5,
	  amplitude=.7,
	  post=lineto,
	  post length=2pt
	}
  }
}


\tikzset{
    onto/.style={/tikz/commutative diagrams/twoheadrightarrow}
}
\tikzset{
    into/.style={/tikz/commutative diagrams/rightarrowtail}
}

\tikzset{
  onedot/.pic={
	\fill (0,0) circle[radius=0.065];
  }
}

\tikzset{
  /tikz/commutative diagrams/on top/.style={inner sep=1pt, description}
}

\usepackage{tikz-cd}
\newcommand{\drpullback}{\arrow[phantom]{dr}[very near start,description]{\lrcorner}}
\newcommand{\dlpullback}{\arrow[phantom]{dl}[very near start,description]{\llcorner}}
\newcommand{\urpullback}{\arrow[phantom]{ur}[very near start,description]{\urcorner}}
\newcommand{\ulpullback}{\arrow[phantom]{ul}[very near start,description]{\ulcorner}}
\newcommand{\drpullbackS}{\arrow[phantom]{ddr}[very near start,description]{\lrcorner}}

\newcommand{\dlactinert}{\arrow[phantom]{dl}[very near start,description]{\urcorner}}
\newcommand{\ulactinert}{\arrow[phantom]{ul}[very near start,description]{\lrcorner}}

\newcommand{\dpullback}{\arrow[phantom]{dd}[near start,description, "$\mathclap{\lrcorner}$" rot]{}}

\newcommand{\PPP}{\mathsf{P}}
\newcommand{\QQQ}{\mathsf{Q}}
\newcommand{\RRR}{\mathsf{R}}
\newcommand{\MMM}{\mathsf{M}}
\newcommand{\SSS}{\mathsf{S}}
\newcommand{\TTT}{\mathsf{T}}

\newcommand{\into}{\rightarrowtail}

\newcommand{\bigleftbrace}[1]{\left\{\raisebox{0pt}[#1pt]{}\right.}
\newcommand{\bigrightbrace}[1]{\left.\raisebox{0pt}[#1pt]{}\right\}}

\newcommand{\inlineDotlessTree}{%
\raisebox{-4pt}{ \begin{tikzpicture}
  \draw (0.0, 0.0) -- (0.0, 0.5);
\end{tikzpicture} }}

\newcommand{\inlineonetree}{%
\raisebox{-4pt}{ \begin{tikzpicture}
  \draw (0.0, 0.0) -- (0.0, 0.5);
  \fill (0.0, 0.25) circle[radius=0.06];
\end{tikzpicture} }}

\newcommand{\inlinecorolla}{%
\raisebox{-4pt}{ \begin{tikzpicture}
\coordinate (dot) at (0.0, 0.0);
\draw (0.0, -0.24) -- (dot);
\fill (dot) circle[radius=0.05];
\draw (dot) -- +(-0.18, 0.19);
\draw (dot) -- +(-0.07, 0.26);
\draw (dot) -- +(0.07, 0.26);
\draw (dot) -- +(0.18, 0.19);
\end{tikzpicture} }}
	
\newcommand{\inlineblobtriv}{%
\raisebox{-4pt}{ \begin{tikzpicture}
  \draw (0.0, 0.0) -- (0.0, 0.5);
  \draw (0.0, 0.25) circle[radius=0.1];
\end{tikzpicture} }}

\newcommand{\ctreeone}{
\begin{tikzpicture}
  \fill (0.0, 0.0) circle[radius=0.05];
\end{tikzpicture}
}

\newcommand{\ctreetwo}{
\begin{tikzpicture}
  \fill (0.0, 0.0) circle[radius=0.05];
  \draw (0.0, 0.0) -- (0.0, 0.245);
  \fill (0.0, 0.245) circle[radius=0.05];
\end{tikzpicture}
}

\newcommand{\ctreethreeV}{
\begin{tikzpicture}
  \fill (0.0, 0.0) circle[radius=0.05];
  \draw (0.0, 0.0) -- (-0.098, 0.245);
  \fill (-0.098, 0.245) circle[radius=0.05];
  \draw (0.0, 0.0) -- (0.098, 0.245);
  \fill (0.098, 0.245) circle[radius=0.05];
\end{tikzpicture}
}

\newcommand{\ctreethreeL}{
\begin{tikzpicture}
  \fill (0.0, 0.0) circle[radius=0.05];
  \draw (0.0, 0.0) -- (0.0, 0.245);
  \fill (0.0, 0.245) circle[radius=0.05];
  \draw (0.0, 0.245) -- (0.0, 0.49);
  \fill (0.0, 0.49) circle[radius=0.05];
\end{tikzpicture}
}

\usepackage{ stmaryrd }
\newcommand{\actto}{\rightarrow\Mapsfromchar}

\DeclareRobustCommand\comma{%
  \mathchoice%
    {\kern1.2pt\raise0pt\hbox{$\displaystyle\downarrow$}\kern1.2pt}
    {\kern1pt\raise0pt\hbox{$\textstyle\downarrow$}\kern1pt}
    {\kern0.4pt\raise0pt\hbox{$\scriptstyle\downarrow$}\kern0.4pt}
    {\kern0.2pt\raise0pt\hbox{$\scriptscriptstyle\downarrow$}\kern0.2pt}
}%

\DeclareRobustCommand\uppercirc{%
  \mathchoice%
    {\kern0pt\raise0.55ex\hbox{$\displaystyle\circ$}\kern0.8pt}
    {\kern0pt\raise0.58ex\hbox{$\textstyle\circ$}\kern0.8pt}
    {\kern0pt\raise0.45ex\hbox{$\scriptstyle\circ$}\kern0.4pt}
    {\kern0pt\raise0.4ex\hbox{$\scriptscriptstyle\circ$}\kern0.2pt}
}%

\newcommand{\bd}{\uppercirc}

\DeclareRobustCommand\upperodot{%
  \mathchoice%
    {\kern0pt\raise0.72ex\hbox{\scriptsize $\displaystyle\odot$}\kern0.8pt}
    {\kern0pt\raise0.72ex\hbox{\scriptsize $\textstyle\odot$}\kern0.8pt}
    {\kern0pt\raise0.45ex\hbox{\tiny $\scriptscriptstyle\odot$}\kern0.4pt}
    {\kern0pt\raise0.4ex\hbox{\normalsize $\scriptscriptstyle\odot$}\kern0.2pt}
}%

\newcommand{\bdr}{\upperodot}

\DeclareRobustCommand\upperstar{%
  \mathchoice%
    {\kern0pt\raise0.55ex\hbox{$\displaystyle *$}\kern0.8pt}
    {\kern0pt\raise0.58ex\hbox{$\textstyle *$}\kern0.8pt}
    {\kern0pt\raise0.45ex\hbox{$\scriptstyle *$}\kern0.4pt}
    {\kern0pt\raise0.4ex\hbox{$\scriptscriptstyle *$}\kern0.2pt}
}%
\DeclareRobustCommand\lowerstar{%
  \mathchoice%
    {\kern0pt\raise-0.65ex\hbox{$\displaystyle *$}\kern0.8pt}
    {\kern0pt\raise-0.68ex\hbox{$\textstyle *$}\kern0.8pt}
    {\kern0pt\raise-0.55ex\hbox{$\scriptstyle *$}\kern0.4pt}
    {\kern0pt\raise-0.5ex\hbox{$\scriptscriptstyle *$}\kern0.2pt}
}%

\newcommand{\lowershriek}{_!}

\newcommand{\name}[1]{\ulcorner #1\urcorner}
\newcommand{\coname}[1]{\raisebox{-1.5pt}{$\llcorner$}\kern0.8pt#1\kern0.8pt\raisebox{-1.5pt}{$\lrcorner$}}

\newcommand{\tr}{\operatorname{tr}}

\newcommand{\Aut}{\operatorname{Aut}}

\newcommand{\id}{\operatorname{id}}
\newcommand{\Id}{\mathsf{Id}}

\newcommand{\Decbot}[1]{\operatorname{Dec}_\bot{}\kern-2pt{#1}}
\newcommand{\Dectop}[1]{\operatorname{Dec}_\top{}\kern-2pt{#1}}

\providecommand{\kat}[1]{\text{\textbf{\textsl{#1}}}}

\newcommand{\Set}{\kat{Set}}

\newcommand{\Grpd}{\kat{Grpd}}

\newcommand{\Q}{\mathbb{Q}}

\renewcommand{\epsilon}{\varepsilon}

\newcommand{\CC}{\mathcal{C}}

\providecommand{\norm}[1]{\left| {#1}\right|}

\newcommand{\tensor}{\otimes}

\newcommand{\N}{\mathbb{N}}
\newcommand{\B}{\mathbb{B}}

\newcommand{\FF}{\mathcal{F}}
\newcommand{\MM}{\mathcal{M}}

\setcounter{secnumdepth}{2}

\usepackage{amsthm}

\newtheorem{lemma}{Lemma}[subsection]
\newtheorem{prop}[lemma]{Proposition}

\newtheorem{theorem}[lemma]{Theorem}

\newtheorem{taller}[lemma]{$\!\!$}

\newenvironment{blanko}[1]%
{\begin{taller}{\normalfont\bfseries  #1}\normalfont}%
{\end{taller}}
\usepackage{graphicx}

%
%
%


\begin{document}

\title{The incidence comodule bialgebra of the \\[4pt]
Baez--Dolan construction}
\author{Joachim Kock\\
\footnotesize Universitat Aut\`onoma de Barcelona\\[-3pt]
\footnotesize and Centre de Recerca Matem\`atica, Barcelona\\
\footnotesize kock@mat.uab.cat}
\date{}

\maketitle

\begin{abstract}
  Starting from any operad $\PPP$, one can consider on one hand the free
  operad on $\PPP$, and on the other hand the Baez--Dolan construction on
  $\PPP$. These two new operads have the same space of operations, but with
  very different notions of arity and substitution. The main result of this
  paper is that the incidence bialgebras of the two-sided bar constructions
  of the two operads constitute together a comodule bialgebra.
  The result is objective: it concerns comodule-bialgebra structures 
  on groupoid slices, and the proof is given in terms of 
  equivalences of groupoids and homotopy pullbacks.
  Comodule bialgebras in the usual sense are obtained
  by taking homotopy cardinality.
  The simplest instances of the construction cover several comodule
  bialgebras of current interest in analysis. If $\PPP$ is the identity
  monad, then the result is the Fa\`a di Bruno comodule bialgebra (dual to
  multiplication and substitution of power series). If $\PPP$ is any monoid
  $\Omega$ (considered as a one-coloured operad with only unary
  operations), the resulting comodule bialgebra is the dual of the
  near-semiring of $\Omega$-moulds under product and composition, as
  employed in \'Ecalle's theory of resurgent functions in local dynamical systems.
  If $\PPP$ is the terminal operad, then the result is essentially the
  Calaque--Ebrahimi-Fard--Manchon comodule bialgebra of rooted trees,
  dual to composition and substitution of B-series in numerical analysis
  (Chartier--Hairer--Vilmart). 
  The full generality is of interest in category theory. As it holds for 
  {\em any} operad, the result is actually about the Baez--Dolan 
  construction itself, providing it with a new algebraic perspective.
\end{abstract}


 
\newpage

  \tableofcontents

  \newpage
  
\setcounter{section}{-1}

\section{Introduction}
  
The main result of this paper is the construction of a comodule bialgebra
from the Baez--Dolan construction on any operad. Before explaining this
result, we outline the motivation behind it,
recall the Baez--Dolan construction itself, and say a word about objective
combinatorics and homotopy linear algebra, the setting in which the results
are staged.

\subsection{Background and motivation}

\begin{blanko}{Contexts of comodule bialgebras.}
  For $B$ a commutative bialgebra, a comodule bialgebra over $B$ is a
  bialgebra object in the braided monoidal category of (left) $B$-comodules
  \cite{Abe}, \cite{Manchon:Abelsymposium} (see \ref{sub:comodulebialgebras} for some 
  recollections). Although comodule bialgebras go
  back to the genesis of Hopf algebra theory in algebraic topology --- 
  the homology of any $H$-space is a comodule bialgebra over Milnor's dual 
  Steenrod bialgebra~\cite{Milnor:1958}, 
  \cite{Switzer} 
  --- and
  also have a certain history in quantum algebra~\cite{Molnar}, the recent
  burst in interest in them essentially comes from analysis.

  --- {\em B-series} \cite{Hairer-Lubich-Wanner}. First it was discovered
  in numerical analysis by Chartier, E.~Hairer, and
  Vilmart~\cite{Chartier-Hairer-Vilmart:FCM2010} that Butcher series admit
  a second important operation after composition, namely substitution, and
  that the two operations interact in an interesting way. The interaction
  was recast in the language of combinatorial Hopf algebras by Calaque,
  Ebrahimi-Fard, and Manchon~\cite{Calaque-EbrahimiFard-Manchon:0806.2238},
  who made explicit the comodule-bialgebra structure.

  --- {\em Mould calculus} \cite{Cresson}. Second, it was realised that the
  same structure plays an important role in mould calculus, \'Ecalle's
  extensive theory for dealing with `functions in a variable number of
  variables' in local dynamical systems~\cite{Ecalle:I} (see also
  \'Ecalle--Vallet~\cite{Ecalle-Vallet} and
  Ebrahimi-Fard--Fauvet--Manchon~\cite{EbrahimiFard-Fauvet-Manchon:1609.03549}).

  The examples from these two application areas will be recovered as special
  cases of the {\em incidence comodule bialgebra of the Baez--Dolan construction}.

  --- {\em Regularity structures} \cite{Hairer:intro}. More recently,
  comodule bialgebras have surfaced in the algebraic renormalisation of
  regularity structures~\cite{Hairer:regularity}, a powerful framework for
  analysing singular stochastic PDEs. While so-called structure Hopf
  algebras (shuffle Hopf algebras and Butcher--Connes--Kreimer-like Hopf
  algebras) have played an important role for some time (see for example
  \cite{Chen:54}, \cite{Lyons:roughsignals}, \cite{Gubinelli:0610300},
  \cite{Hairer:regularity}), a second bialgebra
  structure, the {\em renormalisation bialgebra}, was introduced recently
  in a landmark paper by Bruned, M.~Hairer, and
  Zambotti~\cite{Bruned-Hairer-Zambotti:1610.08468}. A key point in their
  theory is that the structure Hopf algebra is a comodule bialgebra over
  the renormalisation bialgebra, in close analogy with the
  Calaque--Ebrahimi-Fard--Manchon situation. (See
  Manchon~\cite{Manchon:cours2016} and Foissy~\cite{Foissy:1811.07572} for
  further algebraic treatment.)

  --- {\em Moment-cumulant relations} \cite{Nica-Speicher:Lectures}.
  Finally, in Voiculescu's theory of free
  probability~\cite{Voiculescu:lectures},
  Speicher~\cite{Speicher:multiplicative}, \cite{Nica-Speicher:Lectures}
  had discovered a beautiful moment-cumulant formula in terms of M\"obius
  inversion in the incidence algebra of the noncrossing partitions lattice
  (analogous to Rota's classical moment-cumulant relations in terms of the
  ordinary partition lattice). Ebrahimi-Fard and Patras
  \cite{EbrahimiFard-Patras:1409.5664},~\cite{EbrahimiFard-Patras:1502.02748},
  inspired by methods of quantum field theory, gave a very different
  approach to the same formula, in terms of a time-ordered exponential
  coming from a half-shuffle in the double tensor algebra. Recently,
  Ebrahimi-Fard, Foissy, Kock, and
  Patras~\cite{EbrahimiFard-Foissy-Kock-Patras:1907.01190} uncovered the
  relationship between the two constructions in terms of a
  comodule-bialgebra structure on noncrossing partitions.

  \newpage
  
  The topic is now under scrutiny in combinatorics; for a survey, see
  Manchon~\cite{Manchon:Abelsymposium}. Fauvet, Foissy, and
  Manchon~\cite{Fauvet-Foissy-Manchon:1503.03820}, motivated by mould
  calculus, gave a beautiful construction of comodule bialgebras from
  finite topological spaces, building on
  \cite{Foissy-Malvenuto-Patras:1403.7488}, with important connections to
  quasi-symmetric functions. Comodule bialgebras play a prominent role in
  Foissy's treatise~\cite{Foissy:1702.05344} where it is shown how comodule
  bialgebras are induced by certain actions of the brace operad. In a
  different line of development, Carlier~\cite{Carlier:1903.07964} showed
  that every hereditary species (in the sense of
  Schmitt~\cite{Schmitt:hacs}) induces a comodule bialgebra, and found a
  link with the Batanin--Markl operadic
  categories~\cite{Batanin-Markl:1404.3886}. 
  
  The present contribution provides a rather general construction of
  comodule bialgebras, from the Baez--Dolan construction on any operad.
\end{blanko}

\begin{blanko}{The Baez--Dolan construction.}
  In their seminal 1998 paper {\em Higher-dimensional algebra III:
  $n$-categories and the algebra of opetopes}~\cite{Baez-Dolan:9702}, Baez
  and Dolan introduced the {\em opetopes}, a new family of shapes for
  defining weak higher categories in a uniform way, overcoming coherence
  problems by exploiting universal properties. The opetopes are defined
  inductively by a remarkable construction on operads $\PPP\mapsto\PPP\bd$,
  now called the Baez--Dolan construction (reviewed below in
  Subsection~\ref{sub:BD}). The colours of $\PPP\bd$ are the operations of
  $\PPP$, and the algebras for $\PPP\bd$ are the operads over $\PPP$.
  The construction leads to a notable
  combinatorial richness: starting from nothing but the identity monad
  $\Id$ on $\Set$, it produces first the natural numbers as the operations
  of $\Id{}\bd$, then the planar rooted trees as operations of
  $(\Id{}\bd)\bd$, and after that certain trees of trees, and trees of
  trees of trees\ldots The $(n+1)$-dimensional opetopes are by definition
  the operations of the $n$th iterate of this construction.
  
  The operations-to-colours shift can be described combinatorially in terms
  of trees as a shift from {\em grafting onto leaves} to {\em substituting
  into nodes}. This is the aspect of the Baez--Dolan construction explored
  in the present paper.
  
  The Baez--Dolan construction has interesting connections with logic. It
  was recast in terms of a {\em function replacement} by Hermida, Makkai,
  and Power~\cite{Hermida-Makkai-Power:I},~\cite{Hermida-Makkai-Power:II},
  motivated by first-order logic with dependent sorts.
  Cheng~\cite{Cheng:0304277}, \cite{Cheng:0304279} made important
  contributions establishing relationships between the different
  variants of the construction; see also Leinster~\cite{Leinster:0305049}.
  Kock, Joyal, Batanin, and Mascari~\cite{Kock-Joyal-Batanin-Mascari:0706}
  exploited the formalism of polynomial functors~\cite{Gambino-Kock:0906.4931}
  to give a purely
  combinatorial description of the Baez--Dolan construction, which turned
  out to have a homological interpretation~\cite{Steiner:1204.6723}, and
  more recently found further applications to logic and computer science
  \cite{Finster:opetopic!}, \cite{Finster:HoTTest},
  \cite{Curien-HoThanh-Mimram:1903.05848},
  \cite{HoThanh-Subramaniam:1911.00907}. The polynomial approach to operads
  has proven useful in homotopy theory. Batanin and
  Berger~\cite{Batanin-Berger:1305.0086} used it to construct left-proper
  Quillen model structures for algebras for a large class of operads called
  {\em tame} polynomial monads. They show that for any polynomial monad,
  the Baez--Dolan construction is tame. Recently, Batanin and 
  Markl~\cite{Batanin-Markl:1812.02935} have ported the Baez--Dolan 
  construction to the theory of 
  operadic categories~\cite{Batanin-Markl:1404.3886}, for the purpose of
  developing Koszul duality theory.

  The present contribution exploits the polynomial formalism to add a
  new algebraic perspective to the landscape of the Baez--Dolan
  construction, by studying it in the context of objective algebraic
  combinatorics.
\end{blanko}
  
\begin{blanko}{Objective combinatorics and homotopy linear algebra.}
  The idea of {\em objective combinatorics} --- the term is due to Lawvere
  --- is to work with the algebra of sets instead of numbers, in order to
  obtain native bijective proofs, and reveal and exploit functorialities
  and universal properties that cannot exist at the numerical level.
  Joyal's theory of species~\cite{Joyal:1981} is the starting point for the
  theory. For the sake of dealing seamlessly with symmetries of objects, it
  is often fruitful to upgrade further from sets to
  groupoids~\cite{Baez-Dolan:finset-feynman},
  \cite{Galvez-Kock-Tonks:1207.6404}, dealing directly with groupoids of
  combinatorial objects rather than with their sets of iso-classes. The
  numerical level is then recovered by taking homotopy cardinality.
  
  {\em Homotopy linear algebra} \cite{Baez-Hoffnung-Walker:0908.4305},
  \cite{Galvez-Kock-Tonks:1602.05082} serves as a general tool in this
  context by systematically replacing vector spaces by slice categories,
  and linear maps by linear functors; see 
  Subsections~\ref{sub:Set/I}--\ref{sub:groupoids} for a brief 
  review. If $I$ is a groupoid of combinatorial
  objects, classical algebraic combinatorics starts with the vector space
  $\Q_{\pi_0 I}$ spanned by iso-classes of objects; the objective approach
  considers instead the slice category $\Grpd_{/I}$. Linear maps are 
  replaced by spans of groupoids, and matrix multiplication by 
  certain homotopy pullbacks.
  
  At the objective level, everything is encoded with groupoids and maps of 
  groupoids. Of course these groupoids and maps should not come out of the 
  blue; they should organise themselves into nice configurations. In this
  respect simplicial structures are particularly nice, for their 
  importance in homotopy theory and category theory.
  In a recent series of papers (starting with
  \cite{Galvez-Kock-Tonks:1512.07573}), G\'alvez, Kock, and Tonks show how
  certain simplicial groupoids called {\em decomposition spaces} admit the
  construction of incidence (co)algebras at the groupoid-slice level (and a
  M\"obius inversion principle); a few key concepts are recalled in 
  Subsection~\ref{sub:simplicial}. This construction is a common
  generalisation of classical constructions of incidence (co)algebras of
  posets \cite{Joni-Rota}, \cite{Schmitt:incidence},
  monoids~\cite{Cartier-Foata}, and M\"obius categories~\cite{Leroux:1976},
  \cite{Content-Lemay-Leroux}, \cite{Lawvere-Menni}, but reveals also most other coalgebras in
  combinatorics to be of incidence type (see for
  example~\cite{Galvez-Kock-Tonks:1708.02570} and
  \cite{Galvez-Kock-Tonks:1612.09225}). Decomposition spaces are the same
  thing as the {\em $2$-Segal spaces} of Dyckerhoff and
  Kapranov~\cite{Dyckerhoff-Kapranov:1212.3563} (see
  \cite{Feller-Garner-Kock-Proulx-Weber:1905.09580} for the last piece of
  this equivalence), of importance in homological algebra and
  representation theory, notably in connection with Hall algebras and the
  Waldhausen $S$-construction~\cite{Dyckerhoff-Kapranov:1212.3563},
  \cite{Dyckerhoff:1505.06940}, \cite{Young:1611.09234},
  \cite{Penney:1710.02742}, \cite{Poguntke:1709.06510}. The theory is now
  being developed in many directions.

  The incidence comodule bialgebras of the present work will be constructed
  at the level of slice categories using the decomposition-space machinery.
\end{blanko}
  
\begin{blanko}{Incidence bialgebras of operads.}
  Central to this paper is the now-standard construction of
  a bialgebra from an operad (see
  \cite{vanderLaan:math-ph/0311013},
  \cite{vanderLaan-Moerdijk:hep-th/0210226}, and
  \cite{Chapoton-Livernet:0707.3725} for related constructions).
  Given an operad $\RRR$ (satisfying finiteness
  conditions, cf.~\ref{fin-cond} below), the associated bialgebra is
  free as an algebra on the set of iso-classes of operations. The
  comultiplication of an operation $r$ is given by summing over all
  ways $r$ can arise by operad substitution from a collection of
  operations fed into a single operation:
  $$
  \Delta(r) = \sum_{r=b \circ (a_1,\ldots,a_n)}  a_1
  \cdots a_n \tensor b  .
  $$
  
  It is fruitful to break the construction into three
  steps~\cite{Kock-Weber:1609.03276}, each of which is important in its own
  right, routing it through simplicial methods and homotopy linear algebra.
  The {\em first step} is to take the two-sided bar construction on
  the operad~\cite{Weber:1503.07585}, a classical construction in algebraic
  topology and homological algebra \cite{May:LNM271}. This produces a
  simplicial groupoid $X=\TSB{\SSS}{\RRR}$, which is a symmetric monoidal 
  Segal space~\cite{Kock-Weber:1609.03276} (cf.~\ref{sub:bar} below). The
  {\em second step} constructs from any monoidal Segal groupoid $X$ a
  bialgebra structure on the slice $\Grpd_{/X_1}$, using the machinery of
  decomposition spaces \cite{Galvez-Kock-Tonks:1512.07573},
  \cite{Galvez-Kock-Tonks:1512.07577}. The {\em third step}
  is to take homotopy cardinality~\cite{Galvez-Kock-Tonks:1602.05082}, to
  recover usual bialgebras at the vector-space level.
  
  This three-step realisation of the
  incidence-bialgebra construction is a main ingredient in this work,
  where there will be {\em two} of them in interaction.
\end{blanko}

\begin{blanko}{Polynomial functors and trees.}
  The operads used in this work are more precisely {\em finitary 
  polynomial monads over groupoids}, as explained in Subsection~\ref{sub:monads}. 

  The notion of polynomial functor has origins in topology, representation
  theory, combinatorics, logic, and computer science, but the task of
  unifying these developments has only recently
  begun~\cite{Gambino-Kock:0906.4931}, \cite{Weber:1106.1983}. As a first
  approximation, polynomial functors objectify polynomial functions and
  formal power series. More generally, the theory turns out to be a
  powerful toolbox for studying substitution and recursion. Upgraded to
  groupoids, it can subsume the notions of species and
  operads~\cite{Kock:MFPS28}, \cite{Gepner-Haugseng-Kock:1712.06469}. The
  more specific reason polynomial functors are useful in the present context is that,
  since they are represented by diagrams of groupoids
  $$
  I \leftarrow E \to B \to I,
  $$
  many of the calculations performed at the objective level can be
  expressed naturally in terms of these representing groupoids. Secondly,
  trees --- the combinatorial substrate for the theory of operads --- can
  be represented by the same shape of diagram as polynomial
  endofunctors~\cite{Kock:0807}: $I$ is then the set of edges, $B$ the set of
  nodes, and $E$ the edge-node incidence --- see \ref{sub:trees}.
  Altogether, the polynomial formalism helps articulate the interplay
  between algebra and combinatorics.
\end{blanko}

\subsection{Present contributions}

\begin{blanko}{The incidence comodule bialgebra construction, general form.}
  The present paper connects the developments outlined above to
  provide a rather general categorical construction of comodule bialgebras,
  namely as the incidence bialgebras of the Baez--Dolan construction for
  operads (polynomial monads over groupoids).
  
  For any operad $\PPP$ one can construct two new operads: the free operad
  on $\PPP$, denoted $\PPP\upperstar$ (\ref{sub:free}), and the Baez--Dolan 
  construction $\PPP\bd$ (\ref{sub:BD}). These operads have the same space of operations,
  namely the groupoid of $\PPP$-trees, but behave very differently, with
  different notions of colours, arity, and substitution. The free operad
  $\PPP\upperstar$ is about grafting of trees, whereas the Baez--Dolan
  construction $\PPP\bd$ is about substituting trees into nodes of other
  trees.
  
  The two-sided bar construction now gives simplicial groupoids $Y =
  \TSB{\SSS}{\PPP\upperstar}$ (\ref{sub:bar(free)}) and $Z = \TSB{\SSS}{\PPP\bd}$ 
  (\ref{sub:bar(BD)}). Applying the incidence bialgebra construction, one obtains two
  different bialgebra structures on the same groupoid slice $\Grpd_{/Y_1}
  \simeq \Grpd_{/Z_1}$. The constructions are categorical and rather formal
  --- no ad hoc  
  choices are involved --- but at the same time all the groupoids and maps 
  involved have compelling combinatorial interpretations in terms of trees.
  These two bialgebras have the same underlying
  algebra, but different comultiplications. The first version of the main
  result of this paper states that the two
  bialgebras form a comodule bialgebra:

  \medskip

  \noindent {\bf Theorem~\ref{thm:main}.}
  {\em
  For any operad $\PPP$, the
  two-sided bar constructions $Y:=\TSB{\SSS}{\PPP\upperstar}$ and
  $Z:=\TSB{\SSS}{\PPP\bd}$ together endow the slice $ \Grpd_{/\SSS(\tr(\PPP))} $ with
  the structure of a comodule bialgebra. Precisely, the incidence bialgebra
  of $\TSB{\SSS}{\PPP\upperstar}$ is a left comodule bialgebra over the
  incidence bialgebra of $\TSB{\SSS}{\PPP\bd}$.
  }
  
  \medskip
  
\end{blanko}

\begin{blanko}{Proofs: homotopy pullbacks, trees, and active-inert factorisations.}
  At the objective level, the comodule-bialgebra axioms state that certain
  linear functors are isomorphic. The linear functors are given by
  composites of spans of groupoids, extracted naturally from the bar
  constructions.
  The spans are composed by pullback --- this always means homotopy 
  pullback. In the end the proof (taking up Subsection~\ref{sub:mainthm}) amounts to
  establishing an equivalence of groupoids between two different 
  pullbacks, or more precisely, exhibiting a groupoid which serves as 
  pullback for both.
  
  While all this is rather formal, exploiting basic properties of
  pullbacks, the actual checks must of course use features specific to the
  situation. In the present case, the proofs end up relying on properties
  of the category of trees, and in particular its active-inert
  factorisation system, recalled in \ref{active}. The groupoid central to
  the proof is formally described as having objects pairs of composable
  active maps of $\PPP$-trees
  $$
  W \actto K \actto T
  $$
  where $W$ is a $2$-level tree.
  At the same time, this groupoid has the clean combinatorial 
  interpretation
  as a groupoid of `blobbed' trees with a compatible cut, like this:
  
  \begin{center}
  \begin{tikzpicture}
	
	\begin{scope}[shift={(0.0,0.0)}]
	  \draw[densely dotted] (0.0, 0.0) circle (0.33);
	  \coordinate (b1) at (0.0, -0.17);
	  \coordinate (b21) at (-0.15, 0.11);
	  \coordinate (b22) at (0.15, 0.11);
	  \draw (b1)--(b21) pic {onedot};
	  \draw (b1)--(b22) pic {onedot};
	\end{scope}

	\begin{scope}[shift={(-0.65,0.45)}]
	  \draw[densely dotted] (0.0, 0.0) circle (0.3);
	  \coordinate (c1) at (0.1, -0.1);
	  \coordinate (c2) at (-0.1, 0.1);
	\end{scope}

	\begin{scope}[shift={(0.45,0.4)}]
	  \draw[densely dotted] (0.0, 0.0) circle (0.18);
	  \coordinate (d) at (0.0, 0.0);
	\end{scope}
	
	\begin{scope}[shift={(-1.0,1.25)}]
	  \draw[densely dotted] (0.0, 0.0) circle (0.32);
	  \coordinate (e1) at (0.0, -0.17);
	  \coordinate (e21) at (-0.14, 0.11);
	  \coordinate (e22) at (0.14, 0.11);
	  \draw (e1)--(e21) pic {onedot} -- +(-0.2, 0.6);
	  \draw (e21) -- +(0.2, 1.00);
	  \draw[densely dotted] (e21)+(0.12, 0.60) circle (0.14);
	  \draw (e1)--(e22) pic {onedot};
	\end{scope}

	\begin{scope}[shift={(0.0,1.0)}]
	  \draw[densely dotted] (0.0, 0.0) circle (0.32);
	  \coordinate (f1) at (0.0, -0.17);
	  \coordinate (f21) at (-0.14, 0.11);
	  \coordinate (f22) at (0.14, 0.11);
	  \draw (f1)--(f21) pic {onedot};
	  \draw (f1)--(f22) pic {onedot};
	  \draw (f22) -- +(-0.2, 0.8);
	  \draw (f22) -- +(0.17, 0.9);
	\end{scope}

	\begin{scope}[shift={(-0.4,1.6)}]
	  \draw[densely dotted] (0.0, 0.0) circle (0.28);
	  \coordinate (u3) at (0.0, -0.13);
	  \coordinate (u31) at (0.0, 0.1);
	  \draw (u3) -- (u31) pic {onedot} -- +(-0.2, 0.8);
	  \draw (u31) -- +(0.2, 0.8);
	\end{scope}

   \begin{scope}[shift={(1.1,1.05)}]
	  \draw[densely dotted] (0.03, 0.0) circle (0.46);
	  \coordinate (r1) at (0.0, -0.3);
	  \coordinate (r2) at (-0.25, 0.0);
	  \coordinate (r3) at (0.05, 0.0);
	  \coordinate (r4) at (0.3, -0.1);
	  \coordinate (r5) at (-0.1, 0.3);
	  \coordinate (r6) at (0.16, 0.3);
	  \draw (r1) -- (r2) pic {onedot} -- +(-0.4, 1.4);
	  \draw (r1) -- (r3) pic {onedot};
	  \draw (r1) -- (r4) pic {onedot} -- +(0.2, 0.8);
	  \draw (r3) -- (r5) pic {onedot};
	  \draw (r3) -- (r6) pic {onedot} -- +(0.2, 0.8);
	\end{scope}
	   
	\begin{scope}[shift={(0.48,1.4)}]
	  \draw[densely dotted] (0.0, 0.0) circle (0.16);
	  \coordinate (v) at (0.0, 0.0);
	\end{scope}

	\begin{scope}[shift={(1.0,2.0)}]
	  \draw[densely dotted] (0.0, 0.0) circle (0.32);
	  \coordinate (w1) at (0.0, -0.17);
	  \coordinate (w21) at (-0.14, 0.11);
	  \coordinate (w22) at (0.14, 0.11);
	  \draw (w1)--(w21) pic {onedot} -- +(-0.2, 0.6);
	  \draw (w1)--(w22) pic {onedot} -- +(0.2, 0.6);
	  \draw (w21) -- +(0.2, 0.8);
	\end{scope}

	\draw (0.0,-0.6) -- (b1) pic {onedot};
	\draw (b21)--(c1) pic {onedot} --(c2) pic {onedot};
	\draw (b22)--(d) pic {onedot};
	\draw (c2)--(e1) pic {onedot};
	\draw (b21)--(f1) pic {onedot};
	\draw (c1)--(u3) pic {onedot};
	\draw (d)--(r1) pic {onedot};
	\draw (d)--(v) pic {onedot};
	\draw (r5)--(w1) pic {onedot};

	\draw[ultra thin] (-1.5, 0.6) to[out=0, in=180] (-0.6, 0.9);
	\draw[ultra thin] (-0.6, 0.9) to[out=0, in=180] (0.0, 0.5);
	\draw[ultra thin] (0.0, 0.5) to[out=0, in=180] (0.45, 0.7);
	\draw[ultra thin] (0.45, 0.7) to[out=0, in=180] (1.2, 0.2);

  \end{tikzpicture}
  \end{center}

\end{blanko}

\begin{blanko}{Finiteness conditions.}
  At the objective level, no finiteness conditions are required, which is 
  pleasant in terms of elbow room.
  However, in the end it is interesting
  to be able to take homotopy
  cardinality, to make contact with all the interesting 
  mathematics taking place at the level of ordinary vector spaces. 
  To this end it is necessary to impose finiteness conditions, and in fact
  some adjustments to the constructions are required, in order to keep things
  finite.
  For the incidence coalgebra of a decomposition space $X$ to admit a
  homotopy cardinality, it must be assumed locally
  finite~\cite{Galvez-Kock-Tonks:1512.07577}, which means that the two maps
  $
  X_0 \stackrel{s_0}\to X_1$ and $X_1 \stackrel{d_1}{\leftarrow} X_2
  $
  have finite homotopy fibres. For two-sided bar constructions, this in
  turn happens for {\em locally finite}
  operads~\cite{Kock-Weber:1609.03276}. For example, the terminal operad
  (the operad for commutative (unital) monoids) is {\em not} locally
  finite, whereas the terminal reduced operad (the operad for commutative
  not-necessarily-unital monoids) {\em is} locally finite.
\end{blanko}
  
\begin{blanko}{Locally finite version of the main theorem.}
  While the free operad $\PPP\upperstar$ is {\em always} locally finite
  (cf.~\cite{Kock-Weber:1609.03276} and \ref{lem:freefinite} below), the
  Baez--Dolan construction $\PPP\bd$ is {\em never} locally finite
  (\ref{lem:BDnotfinite}), and as a result its two-sided bar construction
  is not locally finite. Therefore one cannot just take homotopy
  cardinality of the general main theorem \ref{thm:main} to get a
  traditional comodule bialgebra. In Section~\ref{sec:finiteness} it is
  explained how to fix this in a canonical way. The main point is to use
  the {\em reduced} Baez--Dolan construction $\PPP\bdr:=
  \overline{\PPP\bd}$, which is simply the Baez--Dolan construction
  followed by operad reduction, whereby all nullary operations are
  excluded. This tweak is completely analogous to subtleties regarding the
  most classical comodule bialgebras: for example, while one can multiply
  arbitrary formal power series, in order to substitute one power series
  into another it must be required to have no constant term.

  The two-sided bar construction on the reduced Baez--Dolan construction
  $\PPP\bdr$ {\em is} locally finite (Lemma~\ref{lem:BDRfin}), and
  therefore admits a homotopy cardinality giving a traditional bialgebra in
  vector spaces. This is the motivation for replacing $\PPP\bd$ with
  $\PPP\bdr$, but we continue to work at the objective level. The
  interaction between $\PPP\upperstar$ and $\PPP\bdr$ is a bit more
  complicated now, since the two bialgebras are no longer supported at the
  same groupoid, and we need to consider comodules more general than just
  those underlying a bialgebra itself. The objective account of comodules
  is given by certain {\em culf} simplicial maps~\cite{Walde:1611.08241},
  \cite{Young:1611.09234}, \cite{Carlier:1801.07504}. We need a third
  simplicial groupoid which is a comodule over $\TSB{\SSS}{\PPP\bdr}$ and
  having the same base groupoid as $\TSB{\SSS}{\PPP\upperstar}$. This is
  achieved through another relative two-sided bar construction (of
  $\PPP\bdr$ relative to $\PPP\bd$), which also has another nice
  description (Proposition~\ref{prop:=Actop}): it is the symmetrisation of
  the fat nerve of the opposite of the category of $\PPP$-trees and active
  injections:
  $$
  \TSB{\PPP\bd}{\PPP\bdr} \simeq 
  \SSS \fatnerve \,\OMEGA_{\operatorname{act.inj}}(\PPP)\op .
  $$

  This comodule structure is shown to meet the finiteness requirements, and
  is the main ingredient in the proof of the following
  locally finite version of the main theorem~\ref{thm:main}.
\medskip

  \noindent {\bf Theorem~\ref{thm:main-finite}.}
  {\em 
  For any operad $\PPP$, the two-sided bar constructions
  $Y=\TSB{\SSS}{\PPP\upperstar}$ and $Z=\TSB{\SSS}{\PPP\bdr}$ together
  endow the slice $\Grpd_{/\SSS(\tr(\PPP))}$ with the structure of a {\em
  locally finite} comodule bialgebra. Precisely, the incidence bialgebra of
  $Y$ is a locally finite left comodule bialgebra over the incidence bialgebra of $Z$.  
  }
  
  \medskip
   \end{blanko}

\begin{blanko}{Examples.}
  In the final section we work out some examples.
  
  Subsection~\ref{sub:unary} deals with the special case where the operad is just a category (regarded as an 
  operad with only unary operations). Then there is an interesting 
  simplification, namely that the comodule bialgebra is actually free
  on a comodule {\em co}algebra. This feature is shared by the examples of 
  comodule bialgebras constructed by Carlier~\cite{Carlier:1903.07964}
  from hereditary species.
  
  For the more specific examples, the first is simply the incidence 
  comodule bialgebra of the Baez--Dolan construction on the identity 
  functor. This is shown (in Subsection~\ref{sub:FdB}) to be the Fa\`a di Bruno comodule bialgebra,
  dual to composition and multiplication of power series.
  
  The next class of examples is that of monoids, considered as
  one-colour operads with only unary operations. In this case we recover 
  the comodule bialgebras of mould calculus~\cite{Ecalle:I}.
  
  In Subsection~\ref{sec:CEFM} we come to the example that prompted this
  work: the Calaque--Ebrahimi-Fard--Manchon comodule bialgebra of rooted
  trees~\cite{Calaque-EbrahimiFard-Manchon:0806.2238} from the theory of
  B-series~\cite{Chartier-Hairer-Vilmart:FCM2010}. In this case there is an extra step involved, namely taking
  core~\cite{Kock:1109.5785}. This is the passage from operadic trees to combinatorial trees,
  consisting simply in omitting decorations and shaving off leaves and
  root. 
  
  Subsection~\ref{sub:freeP} deals with variations on the class of examples
  where the base operad $\PPP$ is free on a linear order, on a quiver, or
  on a polynomial endofunctor. This leads to certain comodule bialgebras of
  words, paths in quivers, and trees.
  
  In the final subsection~\ref{sub:outlook}, some non-examples are treated.
  The first is rather close to being a Baez--Dolan construction, namely the
  comodule bialgebra of noncrossing partitions of
  Ebrahimi-Fard--Foissy--Kock--Patras~\cite{EbrahimiFard-Foissy-Kock-Patras:1907.01190}.
  It is outlined here how the Baez--Dolan construction should be modified
  in order to cover this example. To finish, it is briefly discussed why
  the comodule bialgebras of Bruned,
  Hairer, and Zambotti~\cite{Bruned-Hairer-Zambotti:1610.08468} do not come
  from a Baez--Dolan construction.
\end{blanko}

\section{Preliminaries}

\label{sec:prelims}

Objective combinatorics works with objects instead of numbers. A
systematic way of achieving this is to use slice categories instead of vector
spaces~\cite{Galvez-Kock-Tonks:1602.05082}; for efficiency one
uses groupoids instead of sets, in order to take symmetries
into account automatically. After a brief introduction to homotopy linear 
algebra, we recall the basic notions of 
decomposition spaces and polynomial functors, the two main toolboxes 
employed in this work.

\subsection{Objective combinatorics and linear algebra}
\label{sub:Set/I}

In this first subsection we work with sets instead of groupoids, only to 
emphasise the main ideas in their simplest form. In Subsection~\ref{sub:groupoids}
we upgrade to groupoids, as actually needed in this work.

\begin{blanko}{Vector spaces versus slice categories.}
  For $S$ a set, we denote by $\Q_S$ the vector space spanned by $S$,
  and by $\delta_s$ the basis vector indexed by $s\in S$. A general vector
  in $\Q_S$ is a (finite) linear combination $\sum_{s\in S} \lambda_s
  \delta_s$, essentially the same thing as a (finite)
  $S$-indexed family of scalars $\lambda_s$.
  
  We want to consider $S$-indexed families of sets (or groupoids) $(X_s
  \mid s\in S)$, encoded as a single map $p:X \to S$; the members of the
  family are then the fibres $ X_s := p^{-1}(s)$,
  defined formally as the pullback
  $$
  \begin{tikzcd}
	X_s \drpullback \ar[d] \ar[r] & X \ar[d] \\
	1 \ar[r, "\name{s}"'] & S  .
  \end{tikzcd}
  $$
  Here $1$ denotes a singleton set, and $\name{s} : 1 \to S$
  is the map that picks out $s$.  
  
  The families over $S$ form the objects of the {\em slice category} 
  $\Set_{/S}$; its morphisms 
  are commutative triangles.
  The slice category $\Set_{/S}$ plays the role of $\Q_S$. Just as $\Q_S$
  is the free vector space on $S$, the slice $\Set_{/S}$ can be
  characterised as the completion of $S$ under sums (categorical 
  coproducts).
\end{blanko}

\begin{blanko}{Linear maps and linear functors.}
  Instead of linear maps $\Q_S \to \Q_T$, we have {\em linear functors}
  $\Set_{/S} \to \Set_{/T}$, which means functors that preserve
  sums. One can prove (cf.~\cite{Galvez-Kock-Tonks:1602.05082}) that every
  such functor is given by a span
  $$
  \begin{tikzcd}[sep={3em,between origins}]
	& M\ar[ld, "p"'] \ar[rd, "q"] & \\
	S && T
  \end{tikzcd}
  $$
  by pullback along $p$ followed by composing along $q$. This is denoted 
  $q\lowershriek \circ p\upperstar$.
  Note that $M$ can be regarded as an $S\times T$-indexed family of sets
  $$({}_s M_t \mid s\in S, t\in T),$$
  that is, a matrix of sets.
  The basic fact in linear algebra that composition of linear maps is given
  in coordinates by matrix multiplication has its objective analogue in the
  following fundamental lemma:
\end{blanko}
\begin{lemma}[`Beck--Chevalley']\label{lem:BC}
  For any pullback square
  \[\begin{tikzcd}
  X' \drpullback \ar[d, "p'"'] \ar[r, "g"] & X \ar[d, "p"]  \\
  S' \ar[r, "f"'] & S  ,
  \end{tikzcd}
  \]
  the canonical natural transformation of functors
  $$
  p'\lowershriek \circ g\upperstar  \Rightarrow f\upperstar \circ p\lowershriek
  $$
  is invertible.
\end{lemma}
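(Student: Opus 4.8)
The plan is to prove invertibility by the pasting law for pullbacks, which exhibits the two composite functors as computing \emph{the same} pullback. First I would unwind the two sides pointwise. For an object $a\colon A\to X$ of $\Set_{/X}$, the functor $p\lowershriek$ is postcomposition, so $p\lowershriek(a)=p\circ a\colon A\to S$, and $f\upperstar$ is base change along $f$; hence $f\upperstar p\lowershriek(a)$ is the pullback $S'\times_S A$ (formed along $p\circ a$), equipped with its projection to $S'$. On the other side, $g\upperstar(a)$ is the pullback $A':=X'\times_X A$ with its projection to $X'$, and $p'\lowershriek$ composes that projection with $p'$ to land in $S'$. So the claim is that these two constructions of an object over $S'$ coincide.

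The key step is to stack the pullback square defining $g\upperstar(a)$ on top of the given pullback square and invoke the two-pullbacks lemma:
\[
\begin{tikzcd}
A' \drpullback \ar[d] \ar[r] & A \ar[d, "a"] \\
X' \drpullback \ar[d, "p'"'] \ar[r, "g"] & X \ar[d, "p"] \\
S' \ar[r, "f"'] & S .
\end{tikzcd}
\]
The upper square is a pullback by the definition of $A'=g\upperstar(a)$, and the lower square is the pullback assumed in the statement. By the pasting law, the outer rectangle is then a pullback as well. But the outer rectangle exhibits $A'$ as the pullback of $p\circ a=p\lowershriek(a)$ along $f$, that is, as $f\upperstar p\lowershriek(a)$; and its left-hand leg $A'\to S'$ is exactly $p'$ composed with the projection $A'\to X'$, i.e.\ $p'\lowershriek g\upperstar(a)$. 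Thus the two objects agree, and tracing the construction shows the agreement is natural in $a$.

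It remains to identify this agreement with the \emph{canonical} transformation of the statement. The canonical $p'\lowershriek\circ g\upperstar\Rightarrow f\upperstar\circ p\lowershriek$ is the mate of the coherence isomorphism $g\upperstar p\upperstar\cong p'\upperstar f\upperstar$ (valid because the square commutes) under the adjunctions $p\lowershriek\dashv p\upperstar$ and $p'\lowershriek\dashv p'\upperstar$; equivalently, at $a$ it is the comparison map into the pullback $f\upperstar p\lowershriek(a)$ induced by the universal property from the projections of $A'$. The isomorphism produced by pasting realises precisely this comparison, which I would confirm by a short diagram chase through the two universal properties. The main point requiring care is that although this lemma is stated here in the set-theoretic warm-up, it is used throughout in the homotopy setting, where all pullbacks are homotopy pullbacks of groupoids; I would note that the pasting law and the mate description both remain valid for homotopy pullbacks, so the argument carries over verbatim. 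No essential obstacle arises beyond this bookkeeping.
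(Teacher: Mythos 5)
Your proof is correct. Note that the paper itself offers no proof of this lemma: it is stated as a fundamental fact, with the surrounding discussion deferring to the homotopy linear algebra of G\'alvez--Kock--Tonks (arXiv:1602.05082), so there is no in-paper argument to compare against. Your route --- unwinding both composites pointwise, invoking the pasting law to exhibit $A'=X'\times_X A$ as a pullback of $p\circ a$ along $f$, and then identifying the resulting isomorphism with the mate transformation via the universal property --- is exactly the standard proof, and your closing remark is the right one for this paper: the pasting law you need is precisely the Prism Lemma (Lemma~\ref{lem:prism}), which the paper states for homotopy pullbacks of groupoids, so the argument does carry over to the setting in which the lemma is actually used.
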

This lemma ensures that composition of linear functors
is given by pullback composition of spans: in the situation of the
diagram
\[\begin{tikzcd}[sep = {2.8em,between origins}]
  && M\times_T N \dpullback \ar[ld] \ar[rd] && \\
  & M \ar[ld] \ar[rd] && N \ar[ld] \ar[rd] & 
  \\
  S && T && U 
\end{tikzcd}
\]
the linear functor defined by the outer span is isomorphic
to the composite of the linear functors defined by
the two small spans. Note that
 the `matrix' corresponding to the
  pullback can be written 
  $$
  \textstyle
  \big( \; \underset{t\in T}\sum \; {}_s M_t \times {}_t N_u \mid s\in S, u 
  \in U\;\big) ,
  $$
  which is the objective analogue of matrix multiplication.

\begin{blanko}{Cardinality.}
  The objective level works independently of finiteness conditions. 
  The passage from the slice-category level to the
  vector-space level consists in taking cardinality; this requires
  some finiteness conditions.
  
  The {\em cardinality} of a family
  $p:X \to S$ in $\Set_{/S}$ with $X$ finite is by definition the vector
  $$
  \norm{p} \; := \sum_{s\in S} \norm{X_s} \, \delta_s  \quad \in \Q_S ,
  $$
  where $\norm{X_s}$ is usual cardinality of the finite set $X_s$, and
  $\delta_s$ is the basis vector in $\Q_S$ corresponding to $s$. 
  The cardinality of pullback composition is matrix multiplication.
\end{blanko}

\begin{blanko}{Example: small categories are linear monads.}\label{cat=monadspan}
  Given a small category with object set $X_0$ 
  and set of arrows $X_1$, the span
  $$
  X_0 \stackrel{s}{\leftarrow} X_1 \stackrel{t}{\to} X_0
  $$
  (source and target) induces a linear endofunctor
  \begin{eqnarray*}
    \PPP: \Set_{/X_0} & \longrightarrow & \Set_{/X_0}  \\
    A{\to}X_0 & \longmapsto & t\lowershriek s\upperstar (A {\to} X_0) .
  \end{eqnarray*}
  The composite $\PPP\circ\PPP$
  is given by 
  $$
  \begin{tikzcd}[sep={3.5em,between origins}]
	&& X_2 \dpullback \ar[ld, "d_2"'] \ar[rd, "d_0"]&&
	\\
	& X_1 \ar[ld, "d_1"'] \ar[rd, "d_0"] && X_1 \ar[ld, "d_1"'] \ar[rd, "d_0"] & 
	\\
	X_0 && X_0 && X_0  
  \end{tikzcd}
  $$
  with $X_2 \simeq X_1 \times_{X_0} X_1$ the set of pairs of composable arrows.
  Composition of arrows (which is the face map $d_1 : X_2 \to X_1$)
  defines a monad multiplication $\mu:\PPP \circ \PPP \Rightarrow \PPP$.
  The assignment of identity arrows to each object $s_0 : X_0 
  \to X_1$ is precisely the unit for the 
  monad, $\eta: \Id \Rightarrow \PPP$.
  The monad axioms amount precisely to the category axioms. Conversely, a 
  linear monad defines a category.
\end{blanko}

\subsection{Groupoids and homotopy pullbacks}
\label{sub:groupoids}

To account for symmetries of combinatorial objects, it is convenient
to upgrade the theory from sets to groupoids. For this to work, it is 
necessary that all notions be taken in their 
homotopy sense: pullback will thus mean homotopy pullback, fibre will mean
homotopy fibre, and so on. The passage to numbers is now given by homotopy 
cardinality of (families of) groupoids; see~\ref{card} below.
Introductions to this machinery can be found in preliminary sections of 
\cite{Galvez-Kock-Tonks:1207.6404}, or in the appendix of 
\cite{Galvez-Kock-Tonks:1612.09225}. A fuller treatment (in the case of 
$\infty$-groupoids) can be found in \cite{Galvez-Kock-Tonks:1602.05082}.
In due time, the book manuscript 
\cite{Carlier-Kock} should become a suitable reference.

Recall that a groupoid $X$ is a small category in which all arrows are
invertible. A map of groupoids is just a functor. An equivalence of
groupoids is just an equivalence of categories. Topology provides valuable
intuition and terminology: an object in a groupoid is also called a point; 
an arrow is also called a path; a
natural transformation is also called a homotopy. We write $\pi_0(X)$ for
the set of connected components of a groupoid $X$.
  
We are interested in groupoids up to equivalence. For this reason, the
usual notions of pullback, fibres, slices, adjoints, and so on, are not
appropriate --- they are not homotopy invariant. We shall need the homotopy
notions. If just they are used consistently, they behave very much like the
ordinary notions do for sets.

\begin{blanko}{Homotopy commutative squares.}
  Playing the role of commutative squares we have the {\em homotopy} 
  commutative squares: they are squares of groupoids
   \[
  \begin{tikzcd}
  P \ar[d] \ar[r] \ar[rd, phantom, pos =0.3, "\theta" description]& Y \ar[d]  \\
  X \ar[r] \ar[ru, Rightarrow, shorten <= 10pt, 
	shorten >= 10pt]& S
  \end{tikzcd}
  \]
 that commute only up to a homotopy (natural transformation). It is 
 important that this homotopy $\theta$ is specified as part of the data.
 Nevertheless, in the present paper
 the homotopy will always be clear from the context (very often simply 
 because the square actually happens to commute strictly), and we will
 suppress it throughout.
 The same convention goes for homotopy commutative triangles, and for 
 other shapes of diagram.
\end{blanko}

\begin{blanko}{Homotopy pullbacks.}\label{hopbk}
  A homotopy pullback is a (homotopy commutative) square
  \[
  \begin{tikzcd}
  P \ar[d] \ar[r] & Y \ar[d, "q"]  \\
  X \ar[r, "p"'] 
	& S
  \end{tikzcd}
  \]
  satisfying a universal property among all such squares with common $p$
  and $q$. As such it is determined uniquely up to equivalence. There are
  different (but equivalent) models for homotopy pullback. The standard
  model is given by the so-called comma category, which is
  the groupoid whose objects are triples $(x,y,\sigma)$ with $x\in X$ and
  $y\in Y$, and where $\sigma: px \isopil qy$ is an arrow in $S$
  (constituting the components of the natural transformation which is part
  of the data of the square). This model plays an important role in the
  development of the homotopy theory of groupoids~\cite{Carlier-Kock}, and
  it is good to keep in mind as a blueprint for the idea of homotopy
  pullback. Nevertheless, in this work it does not appear explicitly. The
  only homotopy pullbacks needed will be computed using the following three
  practical lemmas.
\end{blanko}

\begin{blanko}{Fibrations.}
  A map of groupoids $p:X\to B$ is a {\em fibration} when it satisfies the 
  {\em path lifting property} from topology: for each point $x\in X$ and 
  arrow $\beta: 
  b \isopil p(x)$ in $B$, there exists an arrow $\alpha:x'\isopil x$ such 
  that $p(\alpha)=\beta$.
\end{blanko}

\begin{lemma}
  An ordinary (strict) pullback is also a homotopy pullback if one of the
  two maps $p$ and $q$ is a fibration.
\end{lemma}

\begin{lemma}[Prism Lemma]\label{lem:prism}
  Given a prism diagram of groupoids
  \begin{center}
    \begin{tikzcd}
        X''   \ar[r, ""] \ar[d, ""]
          & X' \ar[r, ""]\ar[d, ""] \drpullback
          & X\ar[d, ""]\\
        Y'' \ar[r, ""'] & Y' \ar[r, ""] & Y
    \end{tikzcd}
  \end{center}
	in which the right-hand square is a (homotopy) pullback, then the outer
	rectangle is a (homotopy) pullback if and only of the left-hand square
	is a (homotopy) pullback.
\end{lemma}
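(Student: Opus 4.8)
The plan is to reduce to the case of strict pullbacks, where the statement becomes the classical pasting law for pullbacks in an ordinary category, and then to transport back using the fibration lemma stated just above. The only homotopy-theoretic input needed is that homotopy pullbacks depend on a cospan only up to equivalence, together with the fact that when one leg of a cospan is a fibration the strict pullback already computes the homotopy pullback.

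First I would replace the right-hand vertical map $X \to Y$ by a fibration. Since every map of groupoids factors, up to equivalence, as an equivalence followed by a fibration, and since homotopy pullbacks are insensitive to such replacement, there is no loss of generality in assuming $X \to Y$ is a fibration. The right-hand square being a homotopy pullback then says exactly that the canonical comparison $X' \to Y' \times_Y X$ to the \emph{strict} pullback (which now computes the homotopy pullback) is an equivalence; I transport the rest of the prism along this equivalence, so that henceforth $X' = Y' \times_Y X$. Because fibrations are stable under strict pullback, the induced map $X' \to Y'$ is again a fibration.

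With $X \to Y$ and $X' \to Y'$ both fibrations, the preceding lemma lets me compute the two remaining homotopy pullbacks strictly: the left square is a homotopy pullback if and only if the comparison $X'' \to Y'' \times_{Y'} X'$ is an equivalence, and the outer rectangle is a homotopy pullback if and only if $X'' \to Y'' \times_Y X$ is an equivalence. The crux is then the observation that these two strict pullbacks coincide on the nose: substituting $X' = Y' \times_Y X$ and using ordinary associativity of strict pullbacks,
$$
Y'' \times_{Y'} X' \;=\; Y'' \times_{Y'} (Y' \times_Y X) \;=\; Y'' \times_Y X,
$$
and the two comparison maps out of $X''$ agree under this identification. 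Hence $X'' \to Y'' \times_{Y'} X'$ is an equivalence if and only if $X'' \to Y'' \times_Y X$ is, which is precisely the asserted equivalence between the left square and the outer rectangle being homotopy pullbacks.

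The main obstacle is the bookkeeping in the first step: one must check that replacing $X \to Y$ by a fibration, and correspondingly replacing $X'$ by its strict model $Y' \times_Y X$, can be carried out coherently with the whole prism — that is, that the maps $X'' \to X'$ and $X' \to X$ are transported along the chosen equivalences in such a way that all three squares still commute up to coherent homotopy. Once this is arranged, there is no further homotopy-theoretic content: everything reduces to the strict pasting law combined with the fibration lemma.
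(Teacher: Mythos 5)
Your argument is correct, but note that the paper offers no proof of this lemma to compare against: the Prism Lemma is quoted as part of the standard homotopy-pullback toolkit for groupoids, with details deferred to the cited references on homotopy linear algebra. Judged on its own merits, your strictification strategy works: factoring $X\to Y$ as an equivalence followed by a fibration, replacing $X'$ by the strict pullback $Y'\times_Y X$ (legitimate precisely because the right-hand square is assumed to be a homotopy pullback and strict pullbacks along fibrations model homotopy pullbacks), and then invoking associativity of strict pullbacks is a complete reduction; the coherence bookkeeping you flag is genuinely routine in the groupoid setting, where all homotopies are natural isomorphisms and homotopy pullbacks are invariant under equivalence of squares. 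It is worth pointing out that the paper's own toolkit suggests an even shorter route that avoids fibrant replacement entirely, namely the Fibre Lemma (Lemma~\ref{lem:fibre}) combined with two-out-of-three for equivalences: for $y''\in Y''$ with images $y'\in Y'$ and $y\in Y$, the right-hand square being a pullback gives that the fibre comparison $X'_{y'}\to X_{y}$ is an equivalence; since the fibre comparison of the outer rectangle factors up to homotopy as $X''_{y''}\to X'_{y'}\to X_{y}$, two-out-of-three shows that $X''_{y''}\to X'_{y'}$ is an equivalence for all $y''$ if and only if $X''_{y''}\to X_{y}$ is, which by the Fibre Lemma again is exactly the assertion. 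Your route buys a reduction to the strict pasting law familiar from ordinary category theory; the fibrewise route stays entirely within homotopy-invariant language and requires no replacement or transport of the diagram.
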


\begin{blanko}{Homotopy fibre.}
  Given a map of groupoids $p: X \to S$ and a point
  $s \in S$, the \emph{(homotopy) fibre} $X_s$ of $p$ over $s$ is the 
  (homotopy) pullback
\[
    \begin{tikzcd}
        X_s \ar[r, ""] 
            \ar[d, ""'] 
            \drpullback
             & X \ar[d, "p"]\\
        1 \ar[r, "\name{s}"'] & S.
    \end{tikzcd}
\]
\end{blanko}

\begin{lemma}
  [Fibre Lemma]\label{lem:fibre}
  A square of groupoids
  \begin{center}
    \begin{tikzcd}
        P \ar[r, "u"] \ar[d] & Y \ar[d] \\
        X \ar[r, "f"'] & S
    \end{tikzcd}
  \end{center}
  is a (homotopy) pullback if and only if for each $x\in X$ the induced comparison map
  $u_x\colon P_x \to Y_{fx}$ is an equivalence.
\end{lemma}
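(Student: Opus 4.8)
The plan is to prove the two implications separately, using the Prism Lemma~\ref{lem:prism} together with the observation that a functor of groupoids over a common base which is a fibrewise equivalence is itself an equivalence. Write $f\colon X\to S$ for the bottom map, $\rho\colon P\to X$ for the left map, and $g\colon Y\to S$ for the right map, so that $P_x$ is the homotopy fibre of $\rho$ over $x$, the target $Y_{fx}$ is the homotopy fibre of $g$ over $fx$, and $u_x$ is the restriction of $u$ to these fibres.

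For the forward implication, suppose the square is a homotopy pullback and paste the fibre square defining $P_x$ onto it:
\[
\begin{tikzcd}
P_x \ar[r] \ar[d] & P \ar[r, "u"] \ar[d] & Y \ar[d, "g"] \\
1 \ar[r, "\name{x}"'] & X \ar[r, "f"'] & S .
\end{tikzcd}
\]
The left-hand square is a homotopy pullback by the definition of the homotopy fibre, and the right-hand square is one by hypothesis; hence the Prism Lemma~\ref{lem:prism} makes the outer rectangle a homotopy pullback. Since the bottom composite $1\to S$ is $\name{fx}$, that outer rectangle exhibits $P_x$ as a model for $Y_{fx}$, with comparison functor precisely $u_x$. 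As any two models of a homotopy pullback are canonically equivalent, $u_x$ is an equivalence.

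For the converse, suppose every $u_x$ is an equivalence. Form the genuine homotopy pullback $\tilde P := X\times^{\mathrm h}_S Y$ and let $c\colon P\to\tilde P$ be the canonical comparison functor, which lies over $X$. Applying the forward implication to the honest pullback $\tilde P$ identifies its fibre over $x$ with $Y_{fx}$ via a comparison $\tilde u_x$ satisfying $\tilde u_x\circ c_x\simeq u_x$; since $\tilde u_x$ and $u_x$ are equivalences, so is each $c_x\colon P_x\to\tilde P_x$. It then remains to show that a functor $c$ over $X$ which is an equivalence on every fibre is an equivalence. Essential surjectivity is immediate: any object of $\tilde P$ sits, together with its identity path, in some fibre $\tilde P_x$, where fibrewise essential surjectivity produces a preimage. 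For full faithfulness one analyses the map $\Hom_P(p,p')\to\Hom_X(\rho p,\rho p')$: its fibre over a path $\phi$ is canonically the hom-set $\Hom_{P_{\rho p}}\!\big((p,\id),(p',\phi^{-1})\big)$ in the homotopy fibre $P_{\rho p}$, and likewise for $\tilde P$; fibrewise full faithfulness of $c_{\rho p}$ then gives a bijection on each such fibre, hence on the total hom-sets. Thus $c$ is an equivalence and the original square is a homotopy pullback.

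The main obstacle is this last sub-lemma, that a fibrewise equivalence over a base is an equivalence: it is the only step that does not reduce formally to the Prism Lemma and the definition of the homotopy fibre, and it forces one to work with the concrete groupoid model of the homotopy fibre in order to reorganise the hom-sets as above.
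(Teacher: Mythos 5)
Your proof is correct, but there is nothing in the paper to compare it against: Lemma~\ref{lem:fibre} is stated in the preliminaries as one of three practical lemmas for computing homotopy pullbacks and is left unproven, the machinery being deferred to the cited literature on homotopy linear algebra (the appendix of \cite{Galvez-Kock-Tonks:1612.09225}, \cite{Galvez-Kock-Tonks:1602.05082}, and the book manuscript \cite{Carlier-Kock}). What you have written is thus a self-contained verification, and it follows the standard route. The forward implication is pure pasting: the Prism Lemma~\ref{lem:prism} applied to the fibre square glued onto the given square exhibits $P_x$ as a model of the homotopy pullback of $Y\to S$ along $\name{fx}$, whence the comparison $u_x$ is an equivalence. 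The converse correctly isolates the one piece of real content, namely the sub-lemma that a functor $c\colon P \to \tilde P$ strictly over $X$ which is an equivalence on all homotopy fibres is an equivalence; your argument for it is sound: essential surjectivity because any $q\in\tilde P$ lies as $(q,\id)$ in the fibre over $\pi(q)$, and an isomorphism $c_x(p,\psi)\cong(q,\id)$ in $\tilde P_x$ in particular contains an isomorphism $c(p)\cong q$ in $\tilde P$; full faithfulness because the strict fibre of $\Hom_P(p,p')\to\Hom_X(\rho p,\rho p')$ over $\phi$ is exactly a hom-set in the homotopy fibre $P_{\rho p}$, on which $c_{\rho p}$ induces a bijection, and a map of sets over a common base that is bijective on every fibre is bijective. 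One pedantic remark: whether that second object of $P_{\rho p}$ is written $(p',\phi^{-1})$ or $(p',\phi)$ depends on the direction convention for the structure isomorphism in the homotopy fibre; with the convention that objects are pairs $(q,\psi)$ with $\psi\colon \rho q \isopil x$ your formula is the right one, and in any case this is a convention check, not a gap. The only points left implicit --- that the comparison $c$ can be taken strictly over $X$ for the standard model of $\tilde P$, and that $\tilde u_x \circ c_x \simeq u_x$ so that 2-out-of-3 applies --- are routine and acceptable at the level of rigour the paper itself adopts for these preliminaries.
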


%
%

\begin{blanko}{Linear functors.}
  For a groupoid $B$, the slice category $\Grpd_{/B}$ has objects families
  $X\to B$, but in contrast to the set case, the morphisms in $\Grpd_{/B}$
  are allowed to be triangles commuting only up to a specified isomorphism. This
  slack is required to obtain a notion invariant under equivalence, but in
  practice many triangles commute
  strictly.
  
  Every map $f: B \to A$ defines a functor $f\upperstar: \Grpd_{/A}\to 
  \Grpd_{/B}$ by homotopy pullback along $f$.  This functor has both 
  adjoints (which means adjoints up to homotopy): the left adjoint 
  $f\lowershriek :\Grpd_{/B} \to \Grpd_{/A}$ can be taken to be simply postcomposition
  with $f$. 
  
  A {\em linear functor} $\Grpd_{/I} \to \Grpd_{/J}$ is one 
  equivalent to $q\lowershriek\circ p\upperstar$ for a span
  $I \stackrel{p}\leftarrow M \stackrel{q}\to J$. These turn out to be
  precisely the cocontinuous functors.
  We get in this way a 
  category $\kat{LIN}$ whose objects are slices $\Grpd_{/I}$ and whose morphisms are 
  linear functors~\cite{Galvez-Kock-Tonks:1602.05082}.
  The category $\kat{LIN}$
  features a tensor product, defined as
  $$
  \Grpd_{/I} \tensor \Grpd_{/J} := \Grpd_{/(I \times J)},
  $$
  in analogy with the familiar fact for vector spaces
  $\Q_I \tensor \Q_J = \Q_{I\times J}$.
  The neutral object is $\Grpd\simeq \Grpd_{/1}$.
  In reality, $\kat{LIN}$ should be construed a monoidal $2$-category or as an 
  monoidal $\infty$-category (in both cases, the coherence of the tensor 
  product is a consequence of the coherence of the cartesian product), but this
  subtlety is not important in this paper.
\end{blanko}

\begin{blanko}{Remarks.}\label{weak?}
  As $f\upperstar$ is defined by homotopy pullback, in turn only defined up
  to equivalence, in principle $f\upperstar$ is only a pseudofunctor. This
  is the natural level of strictness, and the results for them look the
  same as for ordinary functors between set slices. To justify the
  manipulations rigorously, it can sometimes be necessary to invoke the
  theory of $\infty$-categories, as developed by Joyal~\cite{Joyal:CRM} and
  Lurie~\cite{Lurie:HTT} and many other people --- the paper
  \cite{Gepner-Haugseng-Kock:1712.06469} develops the theory of polynomial
  monads in this setting, and contains everything needed for the present
  purposes. 
  
  It can be felt a bit awkward to invoke this huge machinery. In
  a different approach, there are at least two ways to stay within the
  realm of strict functors. One is to take pullback to mean always the
  standard homotopy pullback, given by the comma category construction, as
  mentioned in \ref{hopbk}. This is in fact a strict functor, and restricts
  to strict slices (where the morphisms are strictly commutative
  triangles)~\cite{Carlier-Kock}. The second is to arrange always for
  pulling back only along fibrations, and then use the strict pullback
  (between strict slices). In concrete situations of combinatorial origin,
  it is usually not difficult to take one of these stricter routes, but it
  is rather cumbersome to develop the general theory in this way, as the
  strictnesses are not homotopy invariant, and can be tricky to maintain
  through various constructions.
  
  In order to focus on the main ideas,
  the present exposition generally favours the fully homotopy-invariant
  approach, which is much closer to combinatorial intuition (where one 
  freely replaces a groupoid of combinatorial structures by an equivalent 
  one), avoiding going
  into details with these issues. However, in many places it is practical to
  carry out a local argument using fibrations, as we shall see.
  The reader is invited to read the text also according to the stricter
  approaches. The follow-up remarks \ref{followup} and \ref{followup2} will
  elaborate on this issue in connection with monads, and in connection with
  the Baez--Dolan construction, respectively, and provide hints for such a
  reading.
\end{blanko}

\begin{blanko}{Homotopy cardinality of groupoids 
  \cite{Baez-Hoffnung-Walker:0908.4305}, \cite{Galvez-Kock-Tonks:1602.05082}.}
  \label{card}
  A groupoid $X$ is called finite if $\pi_0(X)$ is a finite set, and all
  $\Aut(x)$ are finite groups.
  The {\em homotopy cardinality} of a groupoid $X$ is defined to be
  $$
  \norm{X} = \sum_{x\in \pi_0 X} \frac{1}{\norm{\Aut(x)}} .
  $$
  
  More generally, the {\em homotopy cardinality} of a family $X \to B$ is
  $$
  \sum_{b\in \pi_0 B} \frac{\norm{X_b}}{\norm{\Aut(b)}} \cdot \delta_b \quad \in 
  \Q_B ,
  $$
  where $\delta_b$ is the basis vector corresponding to $b$. Homotopy 
  cardinality is a homotopy invariant notion.
\end{blanko}

\begin{blanko}{Homotopy quotients~\cite{Baez-Dolan:finset-feynman}.}\label{X/G}
  Given a group action $G \times X \to X$ (for $G$ a group and $X$ a set
  or groupoid), instead of the naive quotient (set of orbits) it is
  better to use the corresponding homotopy invariant notion of {\em 
  homotopy quotient}.
  It is an example of a 
  homotopy colimit, and as such is determined up to equivalence.
  One specific model (also called {\em weak
  quotient} or {\em action groupoid}) is obtained from $X$ by sewing in a path from
  $x$ to $g.x$ for each $x\in X$ and $g\in G$. It has much better
  properties than the naive quotient regarding interaction with the other
  basic constructions such
  as homotopy cardinality and homotopy
  pullbacks~\cite{Galvez-Kock-Tonks:1602.05082}, and it will be denoted
  simply $X/G$. (The naive quotient (which would be $\pi_0(X/G)$) is never
  employed in this work.) In particular, we have
  $$
  \norm{X/G} = \norm{X}/\norm{G}
  $$
  (where $\norm{G}$ is the order of the group $G$).
\end{blanko}

\begin{blanko}{Discrete and finite maps.}
  A map of groupoids is called {\em discrete} if all its homotopy fibres 
  are discrete. It is called {\em finite} if all its homotopy fibres are 
  finite.
\end{blanko}

From now on, the words {\em pullback}, {\em fibre}, {\em quotient} will 
always refer to the homotopy notions.

\subsection{Simplicial groupoids, decomposition spaces, and incidence coalgebras}
\label{sub:simplicial}

\begin{blanko}{The simplex category and the active-inert factorisation system.}
  \label{active-inert-Delta}
  The simplex category $\simplexcategory$ is the category of nonempty finite 
  linear orders 
  $$
  [n] := \{0<1<\cdots< n\}
  $$
  and monotone maps. It features an active-inert factorisation system: An
  arrow in $\simplexcategory$ is \emph{active}, written 
  $a:[m]\actto [n]$, when it
  preserves end-points, $a(0)=0$ and $a(m)=n$; and it is \emph{inert}, 
  written $a:[m] \into [n]$, if it
  preserves distance, meaning $a(i+1)=a(i)+1$ for $0\leq i\leq m-1$. The
  active maps are generated by the codegeneracy maps $s^i : [n{+}1] \to [n]$
  and by the {\em inner} coface maps $d^i : [n{-}1]\to [n]$, $0 < i < n$,
  while the inert maps are generated by the {\em outer} coface maps $d^0$
  and $d^n$. Every morphism in $\simplexcategory$ factors
  uniquely as an active map followed by an inert map. Furthermore, it is a
  basic fact \cite{Galvez-Kock-Tonks:1512.07573} that active and inert maps
  in $\simplexcategory$ admit pushouts along each other, and the resulting
  maps are again active and inert.
\end{blanko}

\begin{blanko}{Simplicial groupoids.}
  A simplicial groupoid is a functor $X: \simplexcategory\op\to\Grpd$.
  (In principle, in order to have a homotopy invariant notion, we allow 
  simplicial groupoids to be pseudofunctors, but in this work all 
  simplicial groupoids can actually be arranged to be strict functors.)
  Since $\simplexcategory$ is generated by coface and codegeneracy maps, a
  simplicial groupoid is conveniently described by indicating the face and
  degeneracy maps, picturing it as
  \[\begin{tikzcd}[column sep = {7em,between origins}]
  X_0
  \ar[r, pos=0.65, "s_0" on top] 
  &
  \ar[l, shift left=6pt, pos=0.65, "d_0" on top]
  \ar[l, shift right=6pt, pos=0.65, "d_1" on top]
  X_1
  \ar[r, shift right=6pt, pos=0.65, "s_0" on top]
  \ar[r, shift left=6pt, pos=0.65, "s_1" on top]  
  &
  \ar[l, shift left=12pt, pos=0.65, "d_0" on top]
  \ar[l, shift right=12pt, pos=0.65, "d_2" on top]
  \ar[l, pos=0.65, "d_1" on top]
  X_2
  \ar[r, shift left=12pt, pos=0.65, "s_2" on top]
  \ar[r, pos=0.65, "s_1" on top]
  \ar[r, shift right=12pt, pos=0.65, "s_0" on top]
  &
  \ar[l, shift right=18pt, pos=0.65, "d_3" on top]
  \ar[l, shift right=6pt, pos=0.65, "d_2" on top]
  \ar[l, shift left=6pt, pos=0.65, "d_1" on top]
  \ar[l, shift left=18pt, pos=0.65, "d_0" on top]
  X_3 
  \ar[r, phantom, "\dots" on top]
  & {}
  \end{tikzcd}\]
  The subscripts on the $d_i$ refer to which vertex of a simplex is omitted. The
  subscripts on the $s_i$ refer to which vertex is repeated. (The 
  simplicial identities, such as for example $d_0 \circ d_2 = d_1\circ d_0$,
  are not captured by the picture.)
\end{blanko}

\begin{blanko}{Segal spaces.}
  A simplicial groupoid $X: \simplexcategory\op\to\Grpd$ is called a {\em Segal 
  space} when for all $n\geq 0$ the simplicial-identity square 
  $$
  \begin{tikzcd}
  X_{n+2} \ar[d, "d_0"'] \ar[r, "d_{n+2}"] & X_{n+1} \ar[d, "d_0"]  \\
  X_{n+1} \ar[r, "d_{n+1}"'] & X_{n}
  \end{tikzcd}
  $$
  is a (homotopy) pullback.
\end{blanko}

\begin{blanko}{Fat nerve.}
  The base case $n=0$ of the Segal condition can be interpreted as saying 
  that the $2$-simplices are pairs of `composable $1$-simplices'.
  Indeed the Segal condition is motivated by categories.
  For $\CC$ a small category, the {\em fat nerve} of $\CC$ is the simplicial 
  groupoid
  \begin{eqnarray*}
    \fatnerve \CC : \simplexcategory\op & \longrightarrow & \Grpd  \\
    {}[n] & \longmapsto & \kat{Fun}([n],\CC)^{\operatorname{iso}}
  \end{eqnarray*}
  (strings of composable arrows). It is always a
  Segal space. In fact, up to homotopy, the Segal condition characterises
  the simplicial groupoids that are fat nerves of categories. In this work,
  Segal spaces arise mainly from the two-sided bar construction on an 
  operad, cf.~Section~\ref{sec:bar}.
\end{blanko}

\begin{blanko}{Decomposition spaces~\cite{Galvez-Kock-Tonks:1512.07573}/$2$-Segal spaces \cite{Dyckerhoff-Kapranov:1212.3563}.}
  \label{incidence}
  A {\em decomposition space} is a simplicial groupoid $X:
  \simplexcategory\op\to\Grpd$ that takes active-inert pushouts in
  $\simplexcategory$ to pullbacks in $\Grpd$.
  Decomposition spaces are the same thing as the
  $2$-Segal spaces of Dyckerhoff and 
  Kapranov~\cite{Dyckerhoff-Kapranov:1212.3563}.
  
  Every Segal groupoid is a decomposition
  space~\cite{Dyckerhoff-Kapranov:1212.3563},~\cite{Galvez-Kock-Tonks:1512.07573}.
\end{blanko}

\begin{blanko}{The incidence coalgebra of a decomposition space~\cite{Galvez-Kock-Tonks:1512.07573}.}
  \label{inc}
  The motivating property of decomposition spaces is that they admit the
  incidence-coalgebra construction, meaning that the functor
  $$
  \begin{tikzcd}
  \Delta: \Grpd_{/X_1} 
  \ar[r, "{(d_2,d_0)\lowershriek\circ d_1{}\upperstar}"] &
  \Grpd_{/X_1} \tensor \Grpd_{/X_1} 
  \end{tikzcd}
  $$
  given by the canonical span
  \begin{equation}\label{eq:Xspan}
  \begin{tikzcd}
	X_1 & \ar[l, "d_1"'] X_2 \ar[r, "{(d_2,d_0)}"] & X_1 \times X_1
  \end{tikzcd}
  \end{equation}
  defines a (homotopy-coherent) coassociative
  comultiplication~\cite{Dyckerhoff-Kapranov:1212.3563},
  \cite{Galvez-Kock-Tonks:1512.07573}. Note that only simplicial
  degree $1$ and $2$ are needed in order to define the
  comultiplication, but $X_3$ enters to express coassociativity, and
  (in the general homotopical setting) all the higher $X_k$ are
  needed to express coherence~\cite{Galvez-Kock-Tonks:1512.07573},
  \cite{Dyckerhoff-Kapranov:1212.3563}, \cite{Penney:1710.02742}.
\end{blanko}

\begin{blanko}{Examples: posets, monoids, categories.}
  If $\CC$ is a poset, a monoid, or more generally a category, then the
  nerve or fat nerve $X = \fatnerve \CC$ is a Segal space and hence a decomposition 
  space. The incidence coalgebra construction on $X$ now recovers the 
  classical notions in the case where $\CC$ is a locally finite poset 
  \cite{Joni-Rota}, a decomposition-finite monoid \cite{Cartier-Foata}, or 
  more generally
  a M\"obius category in the sense of Leroux~\cite{Leroux:1976}, after 
  taking homotopy cardinality. Indeed,
  Leroux's formula for comultiplication on the vector space spanned by the 
  arrows of $\CC$ is
  \begin{equation}\label{Delta(f)}
  \Delta(f) = \sum_{b \circ a = f} a \tensor b  .
  \end{equation}
  This is precisely what comes out of the general construction in 
  \ref{inc}: the sum in \eqref{Delta(f)} is taken over the
  fibre (that is, pullback along $d_1$)
  $$
  \begin{tikzcd}
  (X_2)_f \drpullback \ar[d] \ar[r] & X_2 \ar[d, "d_1"]  \\
  1 \ar[r, "\name{f}"'] & X_1
  \end{tikzcd}
  $$
  (the set of pairs of composable arrows whose composite is $f$), and 
  returning the two constituents of the decomposition is precisely to apply
  $(d_2,d_0)\lowershriek$.
\end{blanko}

Many coalgebras in combinatorics arise as the homotopy cardinality of 
the incidence coalgebra of a
decomposition space which is not a poset, monoid or
category~\cite{Galvez-Kock-Tonks:1708.02570},
\cite{Galvez-Kock-Tonks:1612.09225}.

\begin{blanko}{Example~\cite{Galvez-Kock-Tonks:1612.09225}.}\label{ex:CK}
  Pertinent to the present undertakings is the decomposition-space
  realisation of the Butcher--Connes--Kreimer Hopf algebra of rooted trees
  \cite{Butcher:1972}, \cite{Dur:1986}, \cite{Kreimer:9707029}. As an algebra it is free
  commutative on the set of iso-classes of rooted trees $T$. The
  comultiplication is defined by summing over certain admissible cuts $c$:
  \begin{equation}\label{eq:preintro}
  \Delta(T) \ = \sum_{c\in \operatorname{adm.cuts}(T)} P_c \tensor R_c .
  \end{equation}
  An {\em admissible cut} $c$ partitions the nodes of $T$ into two subsets or `layers'
  \[
\begin{tikzpicture}
  \begin{scope}[shift={(0.0, 0.0)}]
    \draw (0.0, 0.0) pic {onedot} -- (-0.172, 0.368);
    \draw (0.0, 0.0) -- (0.172, 0.368);
  \end{scope}
  
  \begin{scope}[shift={(-0.172, 0.368)}]
    \draw (0.0, 0.0) pic {onedot} -- (-0.319, 0.196) pic {onedot};
    \draw (0.0, 0.0) -- (0.0, 0.515) pic {onedot};
  \end{scope}
  
  \begin{scope}[shift={(0.172, 0.368)}]
    \draw (0.0, 0.0) pic {onedot} -- (0.0, 0.368) pic {onedot} -- (0.0, 0.735) pic {onedot};
  \end{scope}
  
  \begin{scope}[shift={(0.0, 0.368)}]
    \draw (-0.613, 0.441) .. controls (-0.172, 0.221) and (0.172, 0.221) .. (0.49, 0.025);
  \end{scope}
  \draw (-0.613, 0.0) node {\footnotesize $R_c$};
  \draw (0.735, 0.735) node {\footnotesize $P_c$};
  \end{tikzpicture}
  \]
  One layer must form a rooted subtree $R_c$ (or be empty), and its
  complement forms the `crown', a subforest $P_c$ regarded as a monomial of
  trees. To realise this from a decomposition space
  (cf.~\cite{Galvez-Kock-Tonks:1612.09225}), let $\dstrees_k$ denote the
  groupoid of forests with $k-1$ compatible admissible cuts, partitioning
  the forest into $k$ layers (which may be empty). The $\dstrees_k$ assemble into
  a simplicial groupoid $\dstrees$, where degeneracy maps repeat a cut
  (that is, insert an empty layer), and face maps forget a cut (joining
  adjacent layers) or discard the top or bottom layer.

  The comultiplication \eqref{eq:preintro} arises from this simplicial
  groupoid by the general pull-push formula~\eqref{eq:Xspan}:
  for a tree $T\in \dstrees_1$, take the homotopy sum over the
  homotopy fibre $d_1^{-1}(T) \subset \dstrees_2$, and for each element $c$
  in this fibre return the pair $(d_2 c, d_0 c)$ consisting of the two
  layers. Finally take homotopy cardinality to arrive at $P_c \tensor R_c$.
  This simplicial groupoid
  is not a Segal groupoid, since it is not possible to reconstruct a tree 
  from the layers of a cut. One readily checks that it is a decomposition 
  space~\cite{Galvez-Kock-Tonks:1708.02570}, 
  \cite{Galvez-Kock-Tonks:1612.09225}.
  We shall come back to this example in \ref{sec:CEFM}.
\end{blanko}

\begin{blanko}{Culf maps~\cite{Galvez-Kock-Tonks:1512.07573}.}
  A simplicial map is {\em culf} (short for conservative and
  with unique lifting of factorisations) if, considered as a natural
  transformation of functors $\simplexcategory\op\to\Grpd$, it is cartesian
  on active maps (that is, the naturality squares on active maps are 
  (homotopy) pullbacks). Culf maps induce coalgebra
  homomorphisms~\cite{Galvez-Kock-Tonks:1512.07573}. In the present paper,
  the main use of this concept is that monoidal structures on decomposition
  spaces are required to be culf: this is what ensures that the
  multiplication resulting from taking cardinality is comultiplicative so
  as to yield a bialgebra. The second use of culfness is that the 
  objective analogue of comodules is given by certain culf
  maps~\cite{Walde:1611.08241}, \cite{Young:1611.09234},
  \cite{Carlier:1801.07504}, cf.~\ref{comoduleconf} below.
\end{blanko}

\subsection{Polynomial functors}

A standard reference for polynomial functors is
\cite{Gambino-Kock:0906.4931}; the long
manuscript~\cite{Kock:NotesOnPolynomialFunctors} aims at eventually
becoming a unified reference. While polynomial functors have often been studied
in the context of sets, for the present purposes it is necessary to deal
with polynomial functors over groupoids~\cite{Kock:MFPS28}, which
constitute a convenient language for operads~\cite{Weber:1412.7599}.
The upgrade from sets to groupoids is possible, provided
all notions are taken to be the homotopy notions. The full
$\infty$-categorical theory is developed in
\cite{Gepner-Haugseng-Kock:1712.06469}. For the present needs,
the introductions in \cite{Kock:MFPS28} and \cite{Kock:1512.03027}
should suffice as supplement to the following brief review.

\bigskip

A small category can be seen as a linear monad (\ref{cat=monadspan}), or as
an operad with only unary operations. The multi aspect of general operads
can be accounted for by passing from linear functors to polynomial
functors~\cite{Gambino-Kock:0906.4931},
\cite{Kock:NotesOnPolynomialFunctors}. Where linear functors are given by
spans $I \leftarrow M \to J$, polynomial functors incorporate a nonlinear
aspect by means of an extra `middle map':

\begin{blanko}{Polynomial functors.}
  A {\em polynomial} is a diagram of groupoids
  $$
  I \stackrel s \longleftarrow E \stackrel p \longrightarrow B \stackrel t 
  \longrightarrow J .
  $$
  The associated {\em polynomial functor}
  is the composite
  $$
  \Grpd_{/I} \overset{s\upperstar}\longrightarrow
  \Grpd_{/E} \overset{p\lowerstar}\longrightarrow
  \Grpd_{/B} \overset{t\lowershriek}\longrightarrow
  \Grpd_{/J}  .
  $$
  (Here of course we are talking about homotopy slices, upperstar is
  homotopy pullback, and lowerstar and lowershriek are the homotopy
  adjoints to homotopy pullback \cite{Kock:MFPS28},
  \cite{Kock:1512.03027}, \cite{Gepner-Haugseng-Kock:1712.06469}.)
  Generally, functors isomorphic to one of this form are called polynomial 
  functors. They can be characterised intrinsically as those preserving
  connected limits~\cite{Gambino-Kock:0906.4931}, 
  \cite{Gepner-Haugseng-Kock:1712.06469}, such as most notably pullbacks.
    
  We shall be concerned only with {\em finitary} polynomial functors.
  Abstractly this means polynomial functors that furthermore 
  preserve sifted colimits;
  in terms of the representing diagrams, they are those for which the
  middle map has finite discrete fibres~\cite{Kock:MFPS28},
  \cite{Gepner-Haugseng-Kock:1712.06469}. In this case, there is the
  following explicit formula for the polynomial functor~\cite{Kock:MFPS28}:
  $$
  (X_i \mid i\in I) \longmapsto (  \sum_{b\in \pi_0(B_j)} \prod_{e\in E_b} 
  X_{se}/\Aut(b) \mid j\in J)  ,
  $$
  where the quotient is of course a homotopy quotient (of the canonical 
  action of the group $\Aut(b)$), as in \ref{X/G}.
\end{blanko}

\begin{blanko}{Examples.}\label{ex:IMS-endo}
  (The monad structures on these examples will be dealt with in 
  \ref{sub:monads}.)
  The identity monad $\Id: \Grpd\to\Grpd$ is represented by $1 
  \leftarrow 1 \to 1 \to 1$.
  
  The {\em free-monoid monad} (also called the {\em word monad})
  \begin{eqnarray*}
    \MMM: \Grpd & \longrightarrow & \Grpd  \\
    X & \longmapsto & X\upperstar = \sum_{n\in \N} X^n
  \end{eqnarray*}
  is represented by the polynomial diagram
  $$
  1 \leftarrow \N' \to \N \to 1,
  $$
  where $\N'$ denotes the 
  set $\{ (n,i) \mid i\leq n \}$ so that the fibre over $n$ is an
  $n$-element set. The elements in $\MMM(X)$ are finite words in $X$.
  
  The {\em free-symmetric-monoidal-category monad}
  \begin{eqnarray*}
    \SSS:\Grpd & \longrightarrow & \Grpd  \\
    X & \longmapsto & \sum_{\underline n\in \pi_0\B} X^{\underline n} / 
	\Aut(\underline n) 
  \end{eqnarray*}
  is polynomial, represented by
  $$
  1\leftarrow \B' \to \B \to 1 .
  $$
  Here $\B =\SSS 1$ is the groupoid of finite sets and bijections, and $\B'$ is the
  groupoid of finite pointed sets and basepoint-preserving bijections. To 
  be specific we take $\B$ to mean the skeleton consisting of the finite 
  sets $\underline n := \{1,2,\ldots,n\}$.
  Then $\SSS X$ is the groupoid whose objects are the words in $X$, and
  whose morphisms from $(x_i)_{i \in \underline n}$ to $(y_i)_{i \in
  \underline n}$ are given by {\em decorated permutations}, meaning pairs
  $(\rho,(f_i)_{i \in \underline n})$ where $\rho \in \mathfrak{S}_n$ is a
  permutation of $\underline{n}$ and $f_i : x_i \to y_{\rho i}$ is an
  arrow in $X$ for each $i \in \underline{n}$.
  
  The objects of $\SSS X$ are called {\em monomials} of objects in $X$.
\end{blanko}

\begin{blanko}{Morphisms of polynomial functors~\cite{Gambino-Kock:0906.4931}.}
  \label{morphisms}
  {\em Morphisms} of polynomial functors are essentially cartesian natural
  transformations, but involving also change of colours. Precisely, they
  are given by diagrams
  $$
  \begin{tikzcd}
  \PPP': &   I' \ar[d, "F"']& \ar[l]  E'\ar[d] \drpullback \ar[r] & B'\ar[d] \ar[r] & 
  I'\ar[d, "F"] \\
  \PPP: &  I  &\ar[l] E\ar[r] & B \ar[r]  &I ,
  \end{tikzcd}
  $$
  where the middle square is a pullback (expressing arity preservation) 
  \cite{Gambino-Kock:0906.4931}.
  These correspond to cartesian natural transformations
  $$
  \begin{tikzcd}
	\Grpd_{/I'} \ar[r, "\PPP'"] \ar[d, "F\lowershriek"'] 
	& \Grpd_{/I'} \ar[d, "F\lowershriek"] \ar[ld, Rightarrow, shorten <= 16pt, 
	shorten >= 12pt, "\phi"']
	\\
	\Grpd_{/I} \ar[r, "\PPP"'] & \Grpd_{/I} .
  \end{tikzcd}
  $$
  We should now explain the notions colour and arity used here:
\end{blanko}

\begin{blanko}{Graphical interpretation of polynomial endofunctors~\cite{Kock:NotesOnPolynomialFunctors}.}
  \label{graphical}
  Given a polynomial endofunctor $\PPP$ represented by 
  $$
  I \leftarrow E \stackrel p \to B \to I  ,
  $$
  $I$ is interpreted as the groupoid of {\em colours} (or {\em objects}),
  and $B$ as the groupoid of {\em operations}. The {\em arity} of an
  operation $b\in B$ is the fibre $E_b$ (at the numerical level, the arity
  is its homotopy cardinality; assuming $\PPP$ is finitary, this is a
  natural number), and each operation is typed: the output colour of $b$ is
  $t(b)$, and the input colours are the $s(e)$ for $e\in E_b$. The groupoid
  $E$ is thus the groupoid of operations with a marked input slot. The map
  $p$ just forgets the mark. We may picture an element in $B$ as a corolla
  with node labelled by $b\in B$, and with leaves and root decorated by $I$
  according to this scheme. This is called a $\PPP$-corolla.
  
  Evaluation of $\PPP$ on an object $X\to I$ has the following
  combinatorial interpretation~\cite{Kock:NotesOnPolynomialFunctors}. 
  $\PPP(X)$ is the
  groupoid of $\PPP$-corollas as before, but furthermore 
  with leaves decorated in $X$ (subject to a compatibility condition:
  an element $x\in X$ may decorate leaf $\ell$ only
  if the colour of $x$ (under $X \to I$) matches the colour of $\ell$
  (under $E\to I$)).
  
  In particular, the endofunctor $\PPP\circ \PPP$ has as operations
  $\PPP$-corollas whose input edges are decorated by other $\PPP$-corollas,
  with compatible colours. This data is naturally interpreted as a
  `$2$-level $\PPP$-tree':
    \[
  \begin{tikzpicture}[line width=0.25mm]
  \begin{scope} 
	\draw (0.0, 0.28) -- (0.0, 0.7);
	\fill (0.0, 0.7) circle[radius=0.065];
	\draw (0.0, 0.7) -- (-0.7, 1.4);
	\fill (-0.7, 1.4) circle[radius=0.065];
	\draw (-0.7, 1.4) -- (-0.91, 1.82);
	\draw (-0.7, 1.4) -- (-0.49, 1.82);
	\draw (0.0, 0.7) -- (0.0, 1.4);
	\fill (0.0, 1.4) circle[radius=0.065];
	\draw (0.0, 0.7) -- (0.7, 1.4);
	\fill (0.7, 1.4) circle[radius=0.065];
	\draw (0.7, 1.4) -- (0.28, 1.82);
	\draw (0.7, 1.4) -- (0.7, 1.82);
	\draw (0.7, 1.4) -- (1.12, 1.82);	
	\draw (-0.91, 1.05) -- (0.91, 1.05);
  \end{scope}
  \end{tikzpicture}
  \]

\end{blanko}

We now formalise this in terms of trees.

\subsection{Trees}

\label{sub:trees}

\begin{blanko}{Trees.}\label{polytree-def}
  The trees relevant to the present context are {\em operadic} trees,
  i.e.~admitting open-ended edges for leaves and root, such as the
  following:

  \begin{center}
  \begin{tikzpicture}[line width=0.25mm]
  \begin{scope}[shift={(-1.75, 0.0)}]
    \draw (0.0, 0.0) -- (0.0, 0.7);
  \end{scope}
  
  \begin{scope}[shift={(0.0, 0.0)}]
    \draw (0.0, 0.0) -- (0.0, 0.35);
	\fill (0.0, 0.35) circle[radius=0.065];
  \end{scope}
  
  \begin{scope}[shift={(1.75, 0.0)}]
    \draw (0.0, 0.0) -- (0.0, 0.91);
	\fill (0.0, 0.455) circle[radius=0.065];
  \end{scope}
  
  \begin{scope}[shift={(3.675, 0.0)}]
    \draw (0.0, 0.0) -- (0.0, 0.35);
	\fill (0.0, 0.35) circle[radius=0.065];
	\draw (0.0, 0.35) -- (-0.175, 0.875);
	\fill (-0.175, 0.875) circle[radius=0.065];
	\draw (-0.175, 0.875) -- (-0.35, 1.4);
    \draw (0.0, 0.35) -- (-0.49, 0.735);
	\fill (-0.49, 0.735) circle[radius=0.065];
    \draw (0.0, 0.35) -- (0.175, 0.875);
	\fill (0.175, 0.875) circle[radius=0.065];
	\draw (0.175, 0.875) -- (0.0, 1.4);
    \draw (0.175, 0.875) -- (0.35, 1.4);
    \draw (0.0, 0.35) -- (0.805, 1.26);
  \end{scope}
  \end{tikzpicture}
  \end{center}

  It was observed in \cite{Kock:0807} that operadic trees can be
  conveniently encoded by diagrams of the same shape as polynomial endofunctors. 
  By definition, a
  {\em (finite, rooted) tree} is a diagram of finite sets
\begin{equation}\label{tree}
  A \stackrel{s}\longleftarrow M \stackrel{p}\longrightarrow N 
  \stackrel{t}\longrightarrow A
\end{equation}
satisfying the following three conditions:
  
  (1) $t$ is injective
  
  (2) $s$ is injective with singleton complement (called the {\em 
  root} and denoted $1$).
  
  \noindent With $A=1+M$, 
  define the walk-to-the-root function
  $\sigma: A \to A$ by $1\mapsto 1$ and $e\mapsto t(p(e))$ for
  $e\in M$. 
  
  (3)  $\forall x\in A : \exists k\in \N : \sigma^{k}(x)=1$.
  
  The elements of $A$ are called {\em edges}.  The elements of $N$
  are called {\em nodes}.  For $b\in N$, the edge $t(b)$ is called
  the {\em output edge} of the node.  That $t$ is injective is just to
  say that each edge is the output edge of at most one node.  For
  $b\in N$, the elements of the fibre $M_b:= p^{-1}(b)$ are
  called {\em input edges} of $b$.  Hence the whole set
  $M=\sum_{b\in N} M_b$ can be thought of as the set of
  nodes-with-a-marked-input-edge, i.e.~pairs $(b,e)$ where $b$ is a
  node and $e$ is an input edge of $b$.  The map $s$ returns the
  marked edge.  Condition (2) says that every edge is the input edge
  of a unique node, except the root edge.
  Condition (3) says that if you walk towards the root, in a finite 
  number of steps you arrive there.
  The edges not in the image of $t$ are called {\em leaves}.
  
  The tree $1 \leftarrow 0 \to 0 \to 1$ is the {\em trivial tree}
  \inlineDotlessTree .
  A {\em corolla} is a tree \!\inlinecorolla of the form
  $n{+}1 \leftarrow n \to 1 \to n{+}1$ (one node and $n$ input edges).
\end{blanko}

\begin{blanko}{Inert maps of trees (tree embeddings).}\label{inert}
  An {\em inert map} of trees is a diagram
  $$    \begin{tikzcd}
    A' \ar[d, "\alpha"']& \ar[l]  M'\ar[d] \drpullback \ar[r] & N'\ar[d] \ar[r] & 
    A'\ar[d, "\alpha"] \\
    A  &\ar[l] M\ar[r] & N \ar[r]  &A ,
  \end{tikzcd}
  $$
  where the middle square is a pullback.  It is a consequence of
  the tree axioms that inert maps of trees are necessarily injective~\cite{Kock:0807}.
  The pullback condition means (in view of the Fibre Lemma~\ref{lem:fibre})
  that a node must be mapped to a node of the same 
  arity.  In conclusion, the inert maps are precisely the full subtree  
  inclusions (called tree embeddings in \cite{Kock:0807}).
  
  The category of trees and inert maps has nice geometric features,
  including a Grothen\-dieck topology~\cite{Kock:0807}, useful to formalise 
  notions of gluing. Presently, it is of
  interest that grafting of trees is expressed as colimits in this
  category: every tree is canonically the colimit of its one-node subtrees.
  We shall later need more general maps of trees, which will be generated
  by the free-monad monad, cf.~\ref{active} below.
\end{blanko}


\begin{blanko}{$n$-level trees.}\label{levelled}
  The {\em height} of an edge $x\in A$ is defined as
  $$
  h(x) := \min \{ k\in \N : \sigma^{k}(x)=1 \} 
  $$
  (with reference to the walk-to-the-root function $\sigma$ of Axiom~3).
  In particular, the root edge has height $0$.
  An {\em  $n$-level tree} is a tree where all edges have 
  height $\leq n$ and all leaves have height
  precisely $n$. The trivial tree is thus a $0$-level tree, and a
  corolla is a $1$-level tree.  Note that a tree without leaves is
  an $n$-level tree for all sufficiently big $n$.
  
  To give an $n$-level tree is equivalent to giving a sequence of $n$
  maps of finite sets $A_n \to A_{n-1} \to \dots \to A_1 \to A_0=1$. 
  The corresponding tree is given by
  $$
  \sum_{i=0}^n A_i \longleftarrow \sum_{i=1}^n A_i \longrightarrow 
  \sum_{i=0}^{n-1} A_i\longrightarrow \sum_{i=0}^n A_i ,
  $$
  readily checked to satisfy the tree axioms, and being 
  $n$-levelled by construction --- its set of leaves is $A_n$.
\end{blanko}
 
\begin{blanko}{$\PPP$-trees.}\label{Ptree}
  Having trees and polynomials on the same footing makes it easy to
  deal with decorations of trees \cite{Kock:0807} (see also
  \cite{Kock:1109.5785}, \cite{Kock:MFPS28}, \cite{Kock-Joyal-Batanin-Mascari:0706}). With
  a polynomial endofunctor $\PPP$ fixed, given by a diagram $I \leftarrow E
  \to B \to I$,
  a {\em $\PPP$-tree} is  by definition a diagram
  $$    \begin{tikzcd}
    A \ar[d, "\alpha"']& \ar[l]  M\ar[d] \drpullback \ar[r] & N\ar[d] \ar[r] & 
    A\ar[d, "\alpha"] \\
    I  &\ar[l] E\ar[r] & B \ar[r]  &I ,
  \end{tikzcd}
  $$
  where the top row is a tree.  Hence nodes are decorated by elements in $B$,
  and edges are decorated by elements in $I$, subject to obvious 
  compatibilities.  That the middle square is a
  pullback expresses that $n$-ary nodes of the tree have to be decorated by
  $n$-ary operations, and that a specific bijection is given.
\end{blanko}

\begin{blanko}{Examples of $\PPP$-trees.}
  With reference to Example~\ref{ex:IMS-endo}, 
  $\Id$-trees are linear trees, $\MMM$-trees
  are planar trees, and $\SSS$-trees are abstract trees (naked trees).
  More exotically, if $\PPP(X)= 1+X^2$ then $\PPP$-trees are planar binary 
  trees.
\end{blanko}

Denote by $\tr(\PPP)$ the groupoid of $\PPP$-trees, by 
$\operatorname{cor}(\PPP)$ the groupoid of $\PPP$-corollas, and by 
$\operatorname{triv}(\PPP)$ the groupoid of trivial $\PPP$-trees.
\begin{lemma}[\cite{Kock:1512.03027}]\label{lem:cor(P)}
  There are canonical equivalences
  $$
  I \simeq \operatorname{triv}(\PPP)
  \qquad \text{ and } \qquad B \simeq \operatorname{cor}(\PPP).
  $$
\end{lemma}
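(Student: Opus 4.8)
The plan is to unwind the definition of $\PPP$-tree from \ref{Ptree} in the two special cases, using the key observation that the pullback condition on the middle square pins down \emph{all} of the decoration data from the single map to $B$ (respectively $I$). The underlying trivial tree and the underlying corolla are so small that there is almost nothing to check, and the content of the lemma is really that the groupoid structure matches on both sides.

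First I would treat the trivial tree. Its underlying diagram is $1 \leftarrow 0 \to 0 \to 1$, so a trivial $\PPP$-tree has $M=N=0$, and the only surviving datum is the edge decoration $\alpha\colon A=1 \to I$, i.e.\ a point of $I$. The pullback condition is automatic, since $0$ is canonically the fibre product $E\times_B 0$. Because the underlying trivial tree is rigid (its only self-isomorphism is the identity), a morphism of trivial $\PPP$-trees is exactly a compatibility $2$-cell over $I$, that is, an arrow in the groupoid $I$. Reading off the colour of the single edge therefore defines a fully faithful, essentially surjective functor $\operatorname{triv}(\PPP)\to I$, giving the first equivalence.

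For the corolla, the underlying tree is $n{+}1 \leftarrow n \to 1 \to n{+}1$ with $N=1$. Reading off the node decoration $N\to B$ gives a functor $\Phi\colon \operatorname{cor}(\PPP)\to B$. Conversely I would send $b\in B$ to the \emph{standard} $\PPP$-corolla whose node carries $b$, whose input slots are indexed by the fibre $E_b$ with input colours $s(e)$, and whose root colour is $t(b)$ (this is exactly the picture of \ref{graphical}); call this $\Psi\colon B\to\operatorname{cor}(\PPP)$. The pullback condition forces $M\cong E_b$ canonically, so every $\PPP$-corolla is isomorphic to a standard one and $\Phi\Psi\cong\id_B$, $\Psi\Phi\cong\id_{\operatorname{cor}(\PPP)}$ follow by inspection.

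The step I expect to be the main obstacle is faithfulness of $\Phi$: one must verify that a $\PPP$-corolla decorated by $b$ has automorphism group precisely $\Aut_B(b)$, with no extra leaf-permutation symmetries surviving from the naked corolla's full symmetric group $\mathfrak{S}_n$. This is exactly where the pullback condition does its work. As emphasized in \ref{Ptree}, that condition records not merely arity preservation but a \emph{specified} bijection on input slots; by the Fibre Lemma \ref{lem:fibre}, an isomorphism of $\PPP$-corollas lying over an arrow $\phi\colon b\to b'$ in $B$ induces, and is determined by, the forced bijection $E_b\xrightarrow{\sim}E_{b'}$ together with the compatible edge $2$-cells over $I$. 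Hence the arrows of $\operatorname{cor}(\PPP)$ correspond bijectively to the arrows of $B$, cutting $\mathfrak{S}_n$ down to the image of $\Aut_B(b)$ acting through $E_b$, and $\Phi$ is an equivalence.
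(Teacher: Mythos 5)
The paper does not actually prove this lemma: it is imported by citation from \cite{Kock:1512.03027}, so there is no in-paper proof to compare against. Your argument is correct and is the natural one (and it is essentially how the cited source treats it): unwind the definition of $\PPP$-tree in the two degenerate cases, observe that for the trivial tree all decoration data collapses to the edge-colouring $1\to I$ with the pullback condition vacuous, and that for a corolla the pullback condition identifies the set of input edges with the fibre $E_b$, so that the node-reading functor $\operatorname{cor}(\PPP)\to B$ is essentially surjective (via the standard corolla on $b$) and fully faithful.

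Two fine points you should tighten. First, the engine behind faithfulness is that the middle map $p\colon E\to B$ of a \emph{finitary} polynomial functor has finite \emph{discrete} homotopy fibres, hence behaves like a covering of groupoids: a path $\phi\colon b\to b'$ in $B$ lifts uniquely (up to contractible choice) once a starting point in $E_b$ is chosen. This unique path lifting, not the Fibre Lemma \ref{lem:fibre} per se, is what forces the leaf permutation and all the edge $2$-cells from $\phi$ alone; it is also where the finitariness hypothesis of \ref{ouroperads} enters, so it deserves to be named explicitly. Second, your phrase ``cutting $\mathfrak{S}_n$ down to the image of $\Aut_B(b)$ acting through $E_b$'' is slightly misleading: when $\Aut_B(b)$ acts non-faithfully on $E_b$ (which certainly can happen, e.g.\ $B$ a one-object groupoid with group $G$ and $E=B\times\underline{n}$ with trivial action), the automorphism group of the $\PPP$-corolla is still all of $\Aut_B(b)$, not its image in $\mathfrak{S}_n$ --- distinct automorphisms of $b$ give distinct $\PPP$-corolla automorphisms with the \emph{same} underlying leaf permutation, differing only in their $2$-cells. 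Your headline claim that arrows of $\operatorname{cor}(\PPP)$ biject with arrows of $B$ is the correct statement; the parenthetical about the image describes only the underlying permutations.
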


\subsection{Polynomial monads and operads}

\label{sub:monads}

\begin{blanko}{Polynomial monads.}\label{monadcontract}\label{residue}
  A {\em polynomial monad} is a polynomial endofunctor $\PPP$ equipped with
  cartesian natural transformations $\Id\Rightarrow \PPP \Leftarrow \PPP
  \circ \PPP$, required to satisfy the associative and unital laws 
  \cite{MacLane:categories}.
  The multiplication law $\mu: \PPP\circ \PPP \Rightarrow \PPP$ on a
  polynomial endofunctor $\PPP$ can be seen as a rule prescribing how to
  contract each $2$-level $\PPP$-tree to a $\PPP$-corolla, preserving
  arities. The unit law $\eta: \Id \Rightarrow \PPP$ assigns to each colour
  a special unary corolla; we may think of this as contraction of trivial 
  $\PPP$-trees 
  \inlineDotlessTree\, to \inlineonetree. More generally, iteration of
  these laws gives the unbiased view on monads, which can be seen as a law
  prescribing how to obtain from a whole tree configuration of
  $\PPP$-operations (that is, a $\PPP$-tree) a single $\PPP$-operation (a 
  $\PPP$-corolla),  called the {\em residue} of the $\PPP$-tree.
\end{blanko}

\begin{blanko}{Example: the free-monoid monad.}\label{ex:M}
  The free-monoid endofunctor from Example~\ref{ex:IMS-endo},
  $\MMM(X) = \sum_{n\in \N} X^n$, represented by
  $$
  1 \leftarrow \N'\stackrel p \to \N \to 1,
  $$
  has a canonical monad structure. For $X$ a set (or more generally a
  groupoid), $\MMM(X)$ is the set of words in $X$, and $\MMM(\MMM(X))$ is
  the set of words of words in $X$. The monad multiplication is
  concatenation of words (that is, removal of parentheses). The unit 
  interprets an element in $X$ as a word of length $1$.
  
  The element $n\in \N$ can be represented as a planar $n$-corolla. Then
  the composite endofunctor $\MMM\circ \MMM$ has as operations $2$-level
  planar trees, and the monad multiplication simply contracts such a
  $2$-level tree to the corolla with the same number of leaves.
  Preservation of leaf number is precisely to say that the natural
  transformations $\Id \Rightarrow \MMM \Leftarrow\MMM\circ \MMM$ are
  cartesian.
\end{blanko}

The following example is fundamental to our undertakings.

\begin{blanko}{The free-symmetric-monoidal-category monad.}\label{S} 
  See \cite{Weber:1106.1983} for details.  The 
  free-symmetric-monoidal-category endofunctor $\SSS : \Grpd\to\Grpd$
  of Example~\ref{ex:IMS-endo}, represented by
  $$
  1\leftarrow \B' \to \B \to 1 ,
  $$
  is naturally a monad. For $X$ a groupoid, $\SSS X$ is the groupoid of
  {\em monomials} of objects in $X$. Explicitly these are finite sequences of
  objects $(x_i)_{i \in \underline n}$, with morphisms permutations
  decorated by arrows in $X$ (see \ref{ex:IMS-endo}). The multiplication
  $\mu_X : \SSS\SSS X \to \SSS X$ is given by concatenation 
  (disjoint union). The unit $\eta_X : X
  \to \SSS X$ interprets an object as a length-$1$ sequence. These natural 
  transformations are readily seen to be cartesian.
  
%
\end{blanko}

\begin{blanko}{Ordinary coloured symmetric operads as polynomial monads.}
  By \emph{operad} we mean coloured symmetric operad in $\Set$. We will not
  reproduce the standard definition here, because in this work we shall only  
  consider operads in the form of polynomial monads. The basic result in
  this direction is Weber's theorem:
\end{blanko}
  
\begin{theorem}[Weber~\cite{Weber:1412.7599}, Theorem~3.3]
  Operads with colour set $I$
  are essentially the same thing as polynomial monads $\PPP: \Grpd_{/I} 
  \to \Grpd_{/I}$ cartesian
  over the free-symmetric-monoidal-category monad, as in
  $$    
  \begin{tikzcd}
  I \ar[d, "F"']& \ar[l, "s"']  E\ar[d] \drpullback \ar[r, "p"] & B\ar[d] 
  \ar[r, "t"] & 
  I\ar[d, "F"] \\
  1  &\ar[l] \B'\ar[r] & \B \ar[r]  &1 ,
  \end{tikzcd}
  $$
  for which ($I$ is a set and) $B \to \B$ is a discrete fibration.
\end{theorem}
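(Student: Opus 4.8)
The plan is to establish the two directions of the correspondence between operads with colour set $I$ and polynomial monads over $\SSS$ satisfying the discreteness condition, constructing explicit functors both ways and checking they are mutually inverse up to the appropriate equivalence. The proof is essentially a matter of unwinding the polynomial formalism developed in \ref{graphical}--\ref{Ptree} and matching it against the classical operad axioms, so I would structure it as a dictionary.

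First I would set up the data. Given a polynomial monad $\PPP$ represented by $I \leftarrow E \to B \to I$ that is cartesian over $\SSS$ (represented by $1 \leftarrow \B' \to \B \to 1$) with $B \to \B$ a discrete fibration, I read off the operad structure via the graphical interpretation of \ref{graphical}: the colours are the objects of $I$, and an operation of input arity $n$ and specified input/output colours is an element $b \in B$ lying over $\underline n \in \B$, typed by $t(b)$ and the $s(e)$ for $e \in E_b$. The requirement that $B \to \B$ be a discrete fibration is exactly what guarantees that the fibre $E_b$ is a genuine finite set (of size $n$) equipped with a rigid total ordering-up-to-the-symmetric-group-action, i.e. a bare finite-set-worth of input slots with the symmetric group acting by relabelling --- this is what makes $B$ a (symmetric) collection rather than carrying extra automorphisms. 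The operadic composition and unit come from the monad structure $\Id \Rightarrow \PPP \Leftarrow \PPP \circ \PPP$: by \ref{monadcontract} the multiplication $\mu$ is the rule contracting a $2$-level $\PPP$-tree (an operation of $\PPP \circ \PPP$) to a $\PPP$-corolla, and cartesianness over $\SSS$ forces this contraction to preserve total input arity, which is precisely the operadic substitution $b \circ (a_1,\dots,a_n)$; the unit $\eta$ picks out the colour-indexed unary identities.

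Conversely, given an $I$-coloured operad I would build the polynomial endofunctor by taking $B$ to be the groupoid of all operations (objects = operations, morphisms = the symmetric-group relabellings of inputs), $E \to B$ the groupoid of operations-with-a-marked-input-slot, and $s,t$ the typing maps into $I$; the comparison square to $\SSS$ sends an operation to its underlying arity $\underline n \in \B$, and this map is a discrete fibration because the only automorphisms of $\underline n$ that lift are honest input-relabellings, with the lift uniquely determined. The associativity and unitality of operadic composition translate directly into the monad axioms, and the cartesianness of $\Id \Rightarrow \PPP \Leftarrow \PPP \circ \PPP$ is the arity-preservation recorded in \ref{morphisms}. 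I would then verify the two constructions are inverse up to equivalence of groupoids, invoking Lemma~\ref{lem:cor(P)} to identify $B \simeq \operatorname{cor}(\PPP)$ so that the ``operations'' recovered from $\PPP$ match the corollas one started with.

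The main obstacle I anticipate is the careful bookkeeping of symmetries: one must check that the groupoid-level automorphisms of $B$ correspond exactly to the symmetric-group actions permuting operad inputs, with no spurious automorphisms and none missing. This is exactly where the discrete-fibration hypothesis does the work, and it is the delicate point where the homotopy-coherent (groupoid) setting differs from the naive set-based statement --- so I would spend most of the care there, using the Fibre Lemma~\ref{lem:fibre} to control the fibres $E_b$ and the discreteness of $B \to \B$. Since the theorem is quoted from Weber~\cite{Weber:1412.7599} as an external result, I would expect the paper to cite it rather than reprove it in full, so in practice this ``proof'' is really an explanation of how to read the statement within the present formalism rather than a self-contained verification of every operad axiom.
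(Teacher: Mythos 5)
Your proposal is correct and takes essentially the same approach as the paper: the paper likewise only sketches the dictionary in both directions --- $B$ constructed as the disjoint union of the homotopy quotients (action groupoids) $\PPP_n/\mathfrak S_n$ with its canonical discrete fibration to $\B$, conversely the discrete fibration $B \to \B$ encoding the $\mathfrak S$-set of operations, and the monad multiplication corresponding to the substitution law --- deferring the actual verification to Weber~\cite{Weber:1412.7599}. Your closing expectation that the result is cited rather than reproved is exactly what the paper does.
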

(For the notion of monad morphism expressed by the diagram, some further 
details are provided in \ref{opf-ax} below.)  

  Briefly, the equivalence goes as follows~\cite{Kock-Weber:1609.03276}.
  For an operad $\PPP$ with colour set $I$, the symmetric groups $\mathfrak
  S_n$ act on the sets of $n$-ary operations. Let $B$ be the disjoint union of
  the homotopy quotients of these actions.
  There is a canonical projection map to $\B$, itself the
  disjoint union of the $1$-object groupoids $\mathfrak S_n$. This is a
  discrete fibration, whose fibre over $n$ is the set $\PPP_n$ of $n$-ary 
  operations. The groupoid $E$ is given
  by (strict) pullback.
  The fibre of $E \to B$ over an operation $r$ is the set of its input
  slots. The monad structure on the polynomial endofunctor comes precisely
  from the substitution law of the operad $\PPP$.
  
  Conversely, given a polynomial monad cartesian over $\SSS$ as above, the
  discrete fibration $B \to \B$ induces a $\mathfrak S$-set which is the set of
  operations of an operad.  The set of $n$-ary operations is the (homotopy) fibre of $B \to
  \B$ over $n$. 
  The operad substitution law comes from the monad multiplication.
  
\begin{blanko}{Polynomial monads and operads as needed in this paper.}
  \label{ouroperads}
  It is often convenient to work with operads allowed to have a
  {\em groupoid} of colours rather than just a {\em set} of colours. 
  In this paper, the reason is simple: we shall see that  the
  Baez--Dolan construction naturally 
  produces operads with a groupoid of colours, even if given as input
  an operad with a set of colours.
  We
  also need to give up the requirement that the classifying map $B \to \B$
  be a discrete fibration. Accordingly we define an {\em operad} (with
  groupoids of colours $I$) to be a finitary polynomial monad $\PPP :
  \Grpd_{/I} \to \Grpd_{/I}$ represented by a diagram of groupoids
  $$
  I \stackrel s \leftarrow E \stackrel p \to B \stackrel t  \to I .
  $$
  (The monad map to $\SSS$ as in Weber's theorem is not necessary in the 
  homotopical setting, because $\SSS$ is homotopy terminal 
  among finitary polynomial monads~\cite{Gepner-Haugseng-Kock:1712.06469}.)
\end{blanko}

\begin{blanko}{Remark.}
  It is not unlikely that this notion of operad is actually
  (bi)equivalent to the classical notion of symmetric operad featured in
  Weber's theorem. Discreteness of colours does not seem significant: it
  transpires from \cite{Batanin-Kock-Weber:1510.08934} that every 
  operad with groupoid colours should be equivalent to an 
  operad with discrete colours, the idea being that the symmetries of
  colours can be incorporated into the groupoid of operations as invertible
  unary operations. The details have not been worked out, though. On the
  other hand, the discrete-fibration condition on the map $B \to \B$ might
  be a more delicate issue.
\end{blanko}

\begin{blanko}{Example.}\label{MoverS}
  The free-monoid monad $\MMM$ (see \ref{ex:IMS-endo} and \ref{ex:M}) is
  an operad in the sense of \ref{ouroperads}, by means of the diagram
    $$    
  \begin{tikzcd}
  \MMM : &
  1 \ar[d, "="']& \ar[l]  \N'\ar[d] \drpullback \ar[r] & \N\ar[d] \ar[r] & 
  1\ar[d, "="] \\
  \SSS : & 
  1  &\ar[l] \B'\ar[r] & \B \ar[r]  &1 .
  \end{tikzcd}
  $$
  Note that the map $\N\to\B$ is not a fibration, but it could easily be 
  replaced by a fibration, by letting $\N$ denote the equivalent groupoid 
  of all finite 
  linear orders and monotone maps.
  
  A {\em nonsymmetric operad} is a polynomial monad 
  over $\MMM$.  Thereby it is also over $\SSS$; this is its symmetrisation.
  Note that in the polynomial formalism, the polynomial monad itself 
  does not change under symmetrisation.
\end{blanko}

\begin{blanko}{Follow-up remark (continuation of discussion in \ref{weak?}).}\label{followup}
  Since in the fully homotopical setting the polynomial functors are
  actually pseudofunctors, the monads we deal with are allowed to be
  pseudomonads. The theory of polynomial monads
  works well in the fully homotopical setting --- in fact it works well in
  the setting of $\infty$-groupoids~\cite{Gepner-Haugseng-Kock:1712.06469},
  and leads to a theory of $\infty$-operads. (In a precise sense, as
  explained there, it works better than over $\Set$!)
  
  On the other hand, Weber's polynomial approach to operads showcases the
  strict approach: since the polynomial monads in question are cartesian 
  over $\SSS$, and since the middle map $\B' \to \B$ is a fibration, 
  the cartesianness over it can be taken to be strict and thereby also
  all middle maps $p$ of polynomial functors involved are fibrations, and 
  one then works with strict pullbacks $p\upperstar$
  (and its strict right adjoint $p\lowerstar$), and in fact one can then 
  work over strict slices $\Grpd/I$, and  the polynomial monads are then ordinary monads 
  or $2$-monads. As noted (and as will be clear in the next section), 
  for the present purposes 
  it is not practical to 
  maintain all Weber's strictness conditions. 
  Instead, it is possible to work with the level of strictness where the
  monad map to $\SSS$ is maintained as part of the structure and is
  required to be strictly commutative, and where $s$, $p$ and $t$ as well
  as the arity map $B \to \B$ are all required to be fibrations. The
  viability of this approach will be substantiated in \ref{followup2} 
  below, once we have seen how the free-monad and Baez--Dolan 
  constructions work.
\end{blanko}

\section{Free monads, Baez--Dolan construction, and two-sided bar 
construction}
\label{sec:bar}

\subsection{The free monad $\PPP\upperstar$}

\label{sub:free}

For any polynomial endofunctor $\PPP$, one can construct the free
monad on $\PPP$: it is the (least) solution to the fixpoint equation of 
endofunctors
$$
\QQQ \simeq \Id + \PPP\circ \QQQ ,
$$
and it exists for general categorical reasons. More importantly, there is a
neat explicit polynomial representation:

\begin{theorem}[\cite{Kock:0807}, \cite{Kock:1512.03027}, \cite{Gepner-Haugseng-Kock:1712.06469}]
  \label{thm:P*}
  For a finitary polynomial endofunctor $\PPP$ represented by
  $I\leftarrow E \to B \to I$, the free monad $\PPP\upperstar$ is
  represented by
  $$
  I \longleftarrow \tr'(\PPP) \longrightarrow \tr(\PPP) \longrightarrow I ,
  $$
  where $\tr(\PPP)$ is the groupoid of $\PPP$-trees, and 
  $\tr'(\PPP)$ is the groupoid of $\PPP$-trees with a marked 
  leaf.
\end{theorem}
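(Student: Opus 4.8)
The plan is to verify directly that the polynomial functor $\QQQ$ represented by the displayed diagram is the free monad on $\PPP$, that is, the least solution of the fixpoint equation $\QQQ \simeq \Id + \PPP\circ\QQQ$. First I fix the structure maps: the rightmost map $\tr(\PPP)\to I$ records the root colour of a $\PPP$-tree, the middle map $\tr'(\PPP)\to\tr(\PPP)$ forgets the marked leaf, and the leftmost map $\tr'(\PPP)\to I$ records the colour of the marked leaf (so that the fibre of the middle map over a tree $T$ is the set of leaves of $T$, its arity as an operation). Evaluated on a family $X\to I$, the associated polynomial functor then sends $X$ to the groupoid of $\PPP$-trees whose leaves are decorated by $X$ with matching colours, re-indexed by root colour; this is the combinatorial content of the evaluation formula in \ref{graphical}.

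The key step is the fixpoint equivalence $\QQQ \simeq \Id + \PPP\circ\QQQ$, which rests on the recursive structure of trees. A $\PPP$-tree is either the trivial tree, or it has a genuine root node; removing that node and retaining the subtrees grafted onto its input edges exhibits a non-trivial $\PPP$-tree as a $\PPP$-corolla together with a $\PPP$-tree grafted onto each input slot, glued over matching colours. Under the identification $\operatorname{cor}(\PPP)\simeq B$ of Lemma~\ref{lem:cor(P)}, this is exactly an operation of the composite $\PPP\circ\QQQ$ as computed by the polynomial-composition formula, while the trivial trees account for the $\Id$ summand via $\operatorname{triv}(\PPP)\simeq I$. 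To promote this bijection on operations to an equivalence of polynomial functors I must also match the marked-input data: a marked leaf of a non-trivial tree lies in exactly one grafted subtree, which is precisely the middle object of $\PPP\circ\QQQ$, and the trivial tree contributes its unique edge, matching the marked input of $\Id$. Checking that these identifications respect the $s$, $p$, $t$ maps yields the desired equivalence.

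It remains to see that $\QQQ$ is the \emph{least} fixpoint, hence the free monad. Here Axiom~(3) of \ref{polytree-def} is decisive: the walk-to-the-root condition forces every $\PPP$-tree to have finite height, so $\tr(\PPP)$ is the colimit over $n$ of the groupoids of $\PPP$-trees of height $\leq n$. These approximations are exactly the stages of the initial-algebra sequence obtained by iterating $\Id + \PPP\circ(-)$ from $\Id$, and the finitary hypothesis on $\PPP$ (preservation of filtered colimits) guarantees that this sequence converges at $\omega$ to the initial solution. Equivalently, for any $(X+\PPP(-))$-algebra the unique algebra map out of $\QQQ(X)$ is defined by well-founded recursion on tree height, which terminates precisely because each tree is finite. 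This identifies the underlying functor of $\QQQ$ with the free monad, with grafting of trees as the induced multiplication and the trivial-tree inclusion as the unit.

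The main obstacle is the second half of the key step: upgrading the evident combinatorial bijection on operations to a genuine equivalence of \emph{polynomial functors over groupoids}. The subtlety is that the grafting decomposition must be shown natural and coherent with the automorphisms of trees and of $\PPP$-corollas, so that the identifications are verified at the level of the representing spans rather than merely on $\pi_0$; concretely, one checks that the relevant comparison squares are genuine (homotopy) pullbacks using the Fibre Lemma~\ref{lem:fibre}. Once the decomposition is established functorially, the leastness argument is routine, the requisite finiteness being built into the tree axioms.
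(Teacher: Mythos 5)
Your proposal is correct and takes essentially the approach the paper itself gestures at and that its cited sources (\cite{Kock:0807}, \cite{Gepner-Haugseng-Kock:1712.06469}) carry out: exhibit the root-node decomposition of a $\PPP$-tree to show that $I \leftarrow \tr'(\PPP) \to \tr(\PPP) \to I$ solves the fixpoint equation $\QQQ \simeq \Id + \PPP\circ\QQQ$, and then use the height filtration of $\tr(\PPP)$ together with finitarity of $\PPP$ to identify it with the colimit of the initial-algebra chain, i.e.\ the least solution. The paper itself gives no proof beyond this citation, so there is nothing to diverge from; your sketch (including the care about verifying the decomposition at the level of representing spans over groupoids, not just on iso-classes) matches the standard argument.
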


Graphically:
  \[
  \begin{tikzpicture}[line width=0.25mm,scale=0.8]
	
    \begin{scope}[shift={(-2.73, 0.28)}] 
      \draw (0.0, 0.175) -- (0.0, 0.595);
	  \fill (0.0, 0.595) circle[radius=0.065];
	  \draw (0.0, 0.595) -- (-0.35, 1.05);
	  \fill (-0.35, 1.05) circle[radius=0.065];
	  \draw (-0.35, 1.05) -- (-1.05, 1.225);
      \draw (-0.35, 1.05) -- (-0.7, 1.75);
	  \fill (-0.7, 1.75) circle[radius=0.065];
      \draw (-0.35, 1.05) -- (-0.175, 1.575);
	  \fill (-0.175, 1.575) circle[radius=0.065];
	  \draw (-0.175, 1.575) -- (-0.35, 2.275);
      \draw (-0.175, 1.575) -- (0.0, 2.275);
      \draw (0.0, 0.595) -- (0.525, 2.1);
      \draw (0.0, 0.595) -- (0.77, 1.225);
	  \fill (0.77, 1.225) circle[radius=0.065];
      \draw (-0.33, 2.4) node {\footnotesize *};
    \end{scope}
	
    \draw (-4.2, 1.4) node {$\bigleftbrace{30}$};
    \draw (-1.4, 1.4) node {$\bigrightbrace{30}$};
	  
    \begin{scope}[shift={(2.87, 0.28)}] 
      \draw (0.0, 0.175) -- (0.0, 0.595);
	  \fill (0.0, 0.595) circle[radius=0.065];
	  \draw (0.0, 0.595) -- (-0.35, 1.05);
	  \fill (-0.35, 1.05) circle[radius=0.065];
	  \draw (-0.35, 1.05) -- (-1.05, 1.225);
      \draw (-0.35, 1.05) -- (-0.7, 1.75);
	  \fill (-0.7, 1.75) circle[radius=0.065];
      \draw (-0.35, 1.05) -- (-0.175, 1.575);
	  \fill (-0.175, 1.575) circle[radius=0.065];
	  \draw (-0.175, 1.575) -- (-0.35, 2.275);
      \draw (-0.175, 1.575) -- (0.0, 2.275);
      \draw (0.0, 0.595) -- (0.525, 2.1);
      \draw (0.0, 0.595) -- (0.77, 1.225);
	  \fill (0.77, 1.225) circle[radius=0.065];
    \end{scope}

	\draw (1.4, 1.4) node {$\bigleftbrace{30}$};
    \draw (4.2, 1.4) node {$\bigrightbrace{30}$};

	\begin{scope}[shift={(-4.9, -2.45)}] 
      \draw (0.0, 0.175) -- (0.0, 1.12);
    \end{scope}
	
    \draw (-5.6, -1.75) node {$\bigleftbrace{15}$};
    \draw (-4.2, -1.75) node {$\bigrightbrace{15}$};
    
	\begin{scope}[shift={(4.9, -2.45)}] 
      \draw (0.0, 0.175) -- (0.0, 1.12);
    \end{scope}
	
    \draw (4.2, -1.75) node {$\bigleftbrace{15}$};
    \draw (5.6, -1.75) node {$\bigrightbrace{15}$};   
    \draw (0.0, -1.925) node {$\PPP\upperstar$};
    \draw[->] (-0.525, 1.4) -- (0.525, 1.4);
    \draw[->] (3.85, 0.0) -- (4.375, -0.805);
    \draw[->] (-3.85, 0.0) -- (-4.375, -0.805);
	  \draw (0.0, 1.7) node {\scriptsize forget mark};
	  \draw (5.0, -0.3) node {\scriptsize root edge};
	  \draw (-5.2, -0.3) node {\scriptsize marked leaf};

  \end{tikzpicture}
  \]
  (The trees in the picture are $\PPP$-trees, but the $\PPP$-decorations 
  have been suppressed to avoid clutter.)
  
\begin{blanko}{Example: free monad on a tree.}
  Since a tree $A \leftarrow M \to N \to A$ is in particular a polynomial 
  endofunctor $\TTT$, one can consider the free monad on it. But according 
  to \ref{inert}, $\TTT$-trees are subtrees in $\TTT$, so that the free 
  monad on $\TTT$ is given by
  $$
  A \longleftarrow \operatorname{sub}'(\TTT) \longrightarrow \operatorname{sub}(\TTT)
  \longrightarrow A 
  $$  
  (with evident notation).
\end{blanko}

\begin{blanko}{Active maps and the Moerdijk--Weiss category of trees $\OMEGA$.}\label{active} 
  Moerdijk and Weiss~\cite{Moerdijk-Weiss:0701293} defined the category
  $\OMEGA$ of operadic trees to be the full subcategory of the category of
  operads spanned by the free operads on trees. Formally, a 
  map $T'\to T$ in $\OMEGA$ is a morphism of polynomial endofunctors $T' \to 
  T\upperstar$, that is, a diagram
  $$    \begin{tikzcd}[column sep={5em,between origins}]
    A' \ar[d, "\alpha"']& \ar[l]  M'\ar[d] \drpullback \ar[r] & N'\ar[d] \ar[r] & 
    A'\ar[d, "\alpha"] \\
    A  &\ar[l] \operatorname{sub}'(T) \ar[r] & \operatorname{sub}(T) 
	\ar[r]  &A .
  \end{tikzcd}
  $$
  
  In addition to the inert maps $\into$ already described in \ref{inert},
  $\OMEGA$ has {\em active} maps, denoted
  $\actto$, formally generated by the free-monad
  monad~\cite{Weber:TAC18}.
  In elementary terms they are {\em node refinements}
  \cite{Kock:0807}: for each corolla $C$ there is an active map to any tree
  with the same number of leaves:
  
\begin{center}
  \begin{tikzpicture}[line width=0.25mm]
  \footnotesize
  \begin{scope}[shift={(0.0, 0.28)}]
    \draw (0.0, 0.0) -- (0.0, 1.225);
    \draw (0.0, 0.63) pic {onedot} -- (-0.42, 1.05);
    \draw (0.0, 0.63) -- (0.42, 1.05);
    \draw (-0.7, 0.0) node {$C$};
    \draw[dotted] (0.0, 0.665) circle (0.35);
  \end{scope}
  
  \draw (1.4, 0.875) node {\normalsize $\actto$};
  
  \begin{scope}[shift={(3.2, 0.0)}]
    \draw (0.875, 0.0) node {$R$};
    \draw (0.0, -0.28) -- (0.0, 0.42) pic {onedot}
    -- (-0.42, 0.77) pic {onedot}
    -- (-0.7, 1.05) pic {onedot};
    \draw (-0.42, 0.77) -- (-0.7, 2.1);
    \draw (0.0, 0.42) -- (-0.14, 0.84) pic {onedot}
    -- (-0.21, 1.4) pic {onedot};
    \draw (-0.14, 0.84) -- (0.0, 1.295) pic {onedot} -- (0.0, 2.275);
    \draw (0.0, 0.42) -- (0.14, 0.84) pic {onedot};
    \draw (0.0, 0.42) -- (0.42, 0.77) pic {onedot} -- (0.245, 1.12) pic {onedot};
    \draw (0.42, 0.77) -- (0.49, 1.19) pic {onedot} -- (0.98, 1.925);
    \draw (0.42, 0.77) -- (0.7, 1.05) pic {onedot};
    \draw[dotted] (0.0, 0.98) circle (0.875);
  \end{scope}
\end{tikzpicture}
\qquad\qquad\qquad
\begin{tikzpicture}[baseline=-1.5ex, line width=0.25mm]
  \footnotesize
  \begin{scope}[shift={(0.0, 0.28)}]
    \draw (0.0, 0.0) -- (0.0, 1.225);
    \draw (0.0, 0.63) pic {onedot};
    \draw (-0.7, 0.0) node {$C$};
    \draw[dotted] (0.0, 0.63) circle (0.35);
  \end{scope}
  
  \draw (1.4, 0.875) node {\normalsize $\actto$};
  
  \begin{scope}[shift={(2.8, 0.28)}]
    \draw (0.0, 0.0) -- (0.0, 1.225);    
    \draw (0.7, 0.0) node {$R$};
    \draw[dotted] (0.0, 0.63) circle (0.35);
  \end{scope}
\end{tikzpicture}
\end{center}
  (The second picture here illustrates the important special case
  where a unary node is refined into a trivial tree.)  Conversely, for a 
  fixed tree $T$, there is a unique active map from a corolla, namely from 
  the corolla with the same number of leaves. (More precisely, all the 
  existing such maps are uniquely isomorphic.)

  General active
  maps $K \actto T$ are described by the following lemma. For $K$ a
  tree, denote by $\operatorname{Act}(K)$ the groupoid of all active
  maps out of $K$.
\end{blanko}  

\begin{lemma}[\cite{Gepner-Haugseng-Kock:1712.06469}, 
  Lemma~5.3.8]\label{lem:Act(K)}
  If $C_1,\ldots,C_n$ are the nodes of a tree $K$, then there is a 
  canonical equivalence
  $$
  \operatorname{Act}(K) \simeq \prod_{i=1}^n \operatorname{Act}(C_i) .
  $$
\end{lemma}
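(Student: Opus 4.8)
The plan is to exhibit the equivalence concretely, using the active--inert factorisation system on $\OMEGA$ together with the fact (recalled in \ref{inert}) that every tree is canonically the grafting of its one-node subtrees. The conceptual slogan is that $\operatorname{Act}(-)$ turns inert colimits of trees into limits of groupoids; since $K$ is the inert colimit of its corollas $C_i$ and its edges (trivial trees), and the edges contribute only terminal groupoids, the limit collapses to the bare product $\prod_i \operatorname{Act}(C_i)$.

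First I would describe the comparison functor $\Phi\colon \operatorname{Act}(K) \to \prod_{i=1}^n \operatorname{Act}(C_i)$ by \emph{restriction to nodes}. Each node $C_i$ sits inside $K$ via its inert inclusion $C_i \into K$. Given an active map $f\colon K \actto T$, take the active--inert factorisation of the composite $C_i \into K \actto T$,
$$
C_i \actto T_i \into T,
$$
and set $\Phi(f) := (C_i \actto T_i)_i$; the active part $C_i \actto T_i$ records exactly how $f$ refines the node $C_i$. Because active and inert maps in $\OMEGA$ admit pushouts along one another (the tree analogue of the fact in \ref{active-inert-Delta}), this factorisation is functorial in isomorphisms over $K$, so $\Phi$ is a well-defined functor of groupoids.

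Next I would construct a quasi-inverse $\Psi$ by \emph{grafting}. Recall that $K$ is the colimit, along inert maps, of the diagram whose pieces are the corollas $C_i$ and the trivial trees $e$, glued by the inclusions $e \into C_i$ for each edge $e$ incident to a node. The key point is that an active refinement $C_i \actto T_i$ preserves the boundary: $T_i$ has the same leaves and the same root edge as $C_i$, since activeness fixes the number of leaves and the root. Hence, given a family $(C_i \actto T_i)_i$, the trees $T_i$ can be grafted along the very edges along which the $C_i$ were grafted, yielding $T := \colim_i T_i$ with an evident active map $K \actto T$; this defines $\Psi$. Note in passing that $\operatorname{Act}(e) \simeq 1$, since a trivial tree has no node to refine and hence a unique active map out of it.

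Finally I would check $\Phi\Psi \simeq \id$ and $\Psi\Phi \simeq \id$, tracking isomorphisms through the same factorisation to get fully faithfulness of $\Phi$. The first is immediate: restricting the grafted tree $\colim_j T_j$ to node $i$ returns $T_i$, because the boundary edges of $T_i$ are precisely the grafting interfaces, so the factorisation of $C_i \into \colim_j T_j$ recovers $C_i \actto T_i$. The second requires showing that an arbitrary active $f\colon K \actto T$ is reconstructed by grafting its node-restrictions, i.e.\ that $T$ decomposes as the colimit of the $T_i$ along the images of the edges of $K$. This locality statement is the crux of the argument and the step I expect to be the main obstacle: it amounts to proving that an active map out of a grafting is determined node-by-node with no additional gluing data. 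The rigidity of the boundary under active maps is exactly what guarantees there is no constraint linking distinct $T_i$, so that the limit over the gluing diagram is the plain product, completing the equivalence.
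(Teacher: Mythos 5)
The first thing to say is that the paper contains no proof of this lemma to compare against: it is imported wholesale from \cite{Gepner-Haugseng-Kock:1712.06469} (Lemma~5.3.8), and the only internal material is the informal gloss following the statement (an active map out of $K$ is a refinement of each node, glued according to the recipe exhibiting $K$ as the colimit of its corollas). Your sketch is a direct combinatorial implementation of exactly that gloss, and its skeleton is sound: $\Phi$ is well defined on objects and isomorphisms by uniqueness of active--inert factorisation (\ref{active-inert-Omega}); the grafting functor $\Psi$ makes sense because active refinements preserve the boundary, so the interfaces match; $\operatorname{Act}$ of a trivial tree is contractible (consistent with the empty product when $n=0$); and $\Phi\Psi\simeq\id$ does follow from uniqueness of factorisation once $\Psi$ exists.

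The genuine gap is the step you flag yourself, and the sentence offered to close it (``the rigidity of the boundary under active maps is exactly what guarantees there is no constraint linking distinct $T_i$'') is a paraphrase of $\Psi\Phi\simeq\id$, not an argument for it. What must actually be proven is: for $f\colon K \actto T$ active, with node restrictions $C_i \actto T_i \into T$, the glued inert subtree $S = \bigcup_i T_i \into T$ is all of $T$; note that the fullness of $\Phi$ silently uses the same decomposition of the target, so nothing in your equivalence is complete without it. Two ingredients are needed. First, that active maps preserve leaves and root: if you take boundary-preservation as the definition of active this is free, but if you start from the paper's formal definition (maps generated by the free-monad monad, \ref{active}) this identification is itself a step, and your proposal slides between the two descriptions. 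Second, a combinatorial sub-lemma: an inert (full) subtree $S \into T$ which contains the root edge of $T$ and all of whose leaves are leaves of $T$ must equal $T$. This is where the tree axioms of \ref{polytree-def} enter: if a node $v$ of $T$ lay outside $S$, follow the walk-to-the-root function $\sigma$ from the output edge of $v$ until the first edge $e$ lying in $S$ (it exists, since the root edge is in $S$); the node directly above $e$ is not in $S$ (otherwise fullness of the inert inclusion, \ref{inert}, would force the previous edge into $S$), so $e$ is a leaf of $S$ that is not a leaf of $T$ --- a contradiction; hence all nodes of $T$ lie in $S$, and then so does every edge, each being the root or an input edge of some node. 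Verifying the hypothesis that every leaf of $S$ is a leaf of $T$ also needs a small induction when some $T_j$ are trivial (unary nodes of $K$ collapsed to edges). With these two points supplied, your argument closes and gives a correct, self-contained proof of the cited lemma.
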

\noindent  
In other words, an active map out of $K$ is given by
  refining each node of $K$ into a
  tree of matching arity as above, and then gluing together all these
  refinement maps according to the same gluing recipe that describes
  the tree $K$ as a colimit of its nodes, like this:  
\begin{center}
  \begin{tikzpicture}[line width=0.25mm]
  \scriptsize
  \begin{scope}[shift={(5.25, 0.0)}]
    \draw (-1.4, 2.45) node {$K$};
    \draw (0.0, 0.0) -- (0.0, 0.595) pic {onedot}
    -- (-0.35, 1.05) pic {onedot} -- (-0.945, 1.785);
    \draw (-0.35, 1.05) -- (-0.7, 1.995);
    \draw (-0.35, 1.05) -- (-0.175, 1.75) pic {onedot} -- (-0.35, 2.275);
    \draw (-0.175, 1.75) -- (0.0, 2.275);
    \draw (0.0, 0.595) -- (0.42, 1.855);
    \draw (0.21, 1.225) pic {onedot};
    \draw (-0.77, 0.84) node {$C_2$};
    \draw (0.42, 0.385) node {$C_1$};
    \draw (0.63, 1.015) node {$C_3$};
    \draw (0.21, 1.96) node {$C_4$};
    \draw[dotted] (-0.35, 1.05) circle (0.245);
    \draw[dotted] (0.0, 0.595) circle (0.245);
    \draw[dotted] (0.21, 1.225) circle (0.245);
    \draw[dotted] (-0.175, 1.75) circle (0.245);
  \end{scope}
  
  \draw (7.525, 1.05) node {\normalsize $\actto$};
  
  \begin{scope}[shift={(10.5, 0.0)}]
	\draw (-1.75, 2.8) node {$T$};
	\draw (0.0, 0.0) -- (0.0, 0.595) pic {onedot}
	-- (-0.525, 1.225) pic {onedot} -- (-1.225, 2.45);
	\draw (-0.525, 1.225) -- (-1.05, 1.4) pic {onedot};
	\draw (-0.28, 1.575) -- (-0.875, 2.8);
	\draw (-0.525, 1.225) -- (-0.28, 1.575) pic {onedot}
	-- (-0.175, 2.1) pic {onedot} -- (-0.175, 2.8) pic {onedot} -- (-0.35, 3.325);
	\draw (-0.175, 2.8) -- (0.0, 3.325);
	\draw (-1.295, 0.91) node {$R_2$};
	\draw (0.64, 0.37) node {$R_1$};
	\draw (0.94, 1.505) node {$R_3$};
	\draw (0.27, 2.905) node {$R_4$};
	\draw (0.0, 0.595) -- (0.875, 2.345);
	\draw (0.0, 0.595) -- (0.525, 0.945) pic {onedot};
	\draw[dotted,rotate around={45:(-0.525,1.645)}] (-0.525,1.645) ellipse (0.84 and 0.595);
	\draw[dotted,rotate around={32:(0.245,0.77)}] (0.245,0.77) ellipse (0.56 and 0.315);
	\draw[dotted] (-0.175, 2.8) circle (0.245);
	\draw[dotted] (0.525, 1.645) circle (0.21);
  \end{scope}
\end{tikzpicture}
\end{center}
  
General maps in $\OMEGA$ look essentially the same but can land in
  bigger trees. More precisely, one is allowed to postcompose with an
  inclusion (inert map) of trees of $T$ into a bigger tree. This leads to:

\begin{blanko}{The active-inert factorisation system in $\OMEGA$ \cite{Kock:0807}.}
  \label{active-inert-Omega}
  Every arrow in $\OMEGA$ factors as an active map followed by an inert
  map, constituting the {\em active-inert factorisation system}. Restricted to the subcategory of linear trees
  $\simplexcategory\subset \OMEGA$, this gives the active-inert
  factorisation system on $\simplexcategory$ already described in
  \ref{active-inert-Delta}.

  In particular, we shall need to refactor as follows.
  Given an inert
  map followed by an active map (drawn as solid arrows):

\newcommand{\trA}{
\begin{tikzpicture}[scale=0.8, baseline=10]
    \coordinate (b) at (0.0, 0.6);
    \coordinate (l1) at (-0.45, 1.0);
    \coordinate (l2) at (0.0, 1.15);
    \coordinate (l3) at (0.45, 1.0);
    \draw (0.0, 0.1) -- (b) pic {onedot};
	\draw (b) -- (l1);
	\draw (b) -- (l2);
	\draw (b) -- (l3);
	\draw[densely dotted] (b) circle (0.3);
  \end{tikzpicture}
}
\newcommand{\trB}{
  \begin{tikzpicture}[scale=0.8, baseline=10]
    \coordinate (b1) at (0.1, 0.4);
    \coordinate (b2) at (-0.1, 0.7);
    \coordinate (l1) at (-0.5, 1.05);
    \coordinate (l2) at (0.0, 1.2);
    \coordinate (l3) at (0.5, 1.05);
    \draw (0.1, 0.0) -- (b1) pic {onedot} -- (b2) pic {onedot};
	\draw (b2) -- (l1);
	\draw (b2) -- (l2);
	\draw (b1) -- (l3);
	\draw[densely dotted] (0.0, 0.6) circle (0.42);
  \end{tikzpicture}
}
\newcommand{\trC}{
  \begin{tikzpicture}[scale=0.8, baseline=16]
    \coordinate (b) at (0.0, 0.6);
    \coordinate (t1) at (-0.5, 1.2);
    \coordinate (t2) at (0.0, 1.2);
    \coordinate (t3) at (0.5, 1.2);
    \draw (0.0, 0.0) -- (b) pic {onedot};
	\draw (b) -- (t1) pic {onedot};
	\draw (b) -- (t2) pic {onedot};
	\draw (b) -- (t3) pic {onedot};
	\draw (t1) -- +(-0.18, 0.5);
	\draw (t1) -- +(0.18, 0.5);
	\draw (t2) -- +(-0.18, 0.5);
	\draw (t2) -- +(0.18, 0.5);
	\draw (t3) -- +(-0.18, 0.5);
	\draw (t3) -- +(0.18, 0.5);
	\draw[densely dotted] (b) circle (0.3);
  \end{tikzpicture}
}
\newcommand{\trD}{
  \begin{tikzpicture}[scale=0.8, baseline=16]
    \coordinate (b1) at (0.1, 0.4);
    \coordinate (b2) at (-0.1, 0.7);
    \coordinate (t1) at (-0.5, 1.2);
    \coordinate (t2) at (0.0, 1.2);
    \coordinate (t3) at (0.5, 1.2);
    \draw (0.1, -0.1) -- (b1) pic {onedot} -- (b2) pic {onedot};	
	\draw (b2) -- (t1) pic {onedot};
	\draw (b2) -- (t2) pic {onedot};
	\draw (b1) -- (t3) pic {onedot};
	\draw (t1) -- +(-0.18, 0.5);
	\draw (t1) -- +(0.18, 0.5);
	\draw (t2) -- +(-0.18, 0.5);
	\draw (t2) -- +(0.18, 0.5);
	\draw (t3) -- +(-0.18, 0.5);
	\draw (t3) -- +(0.18, 0.5);
	\draw[densely dotted] (0.0, 0.6) circle (0.42);
  \end{tikzpicture}
}

\[
\begin{tikzcd}[sep={8em,between origins}]
  \trA
  \ar[r, dotted, -act, shorten <= 10pt, shorten >= 10pt] 
  \ar[d, into, shorten <= 10pt, shorten >= 10pt] 
  & \trB \dlactinert
  \ar[d, dotted, into, shorten <= 8pt, shorten >= 9pt]
  \\
  \trC
  \ar[r, -act, shorten <= 10pt, shorten >= 10pt] & \trD
\end{tikzcd}
\]
\noindent
 there is a unique
  way of refactoring it into an active map followed by an inert map, as 
  indicated.
\end{blanko}

\begin{blanko}{Remark.}
  This active-inert factorisation system is a special case of the general
  notion of generic-free factorisation system introduced and studied
  deeply by Weber~\cite{Weber:TAC13}, \cite{Weber:TAC18}. Many of the
  important properties in the abstract theory of operads can be described
  in terms of such factorisation systems. In fact, recently Chu and
  Haugseng~\cite{Chu-Haugseng:1907.03977} have taken this viewpoint to the
  extreme, developing an axiomatic theory of operad-like structures based
  on this notion.
\end{blanko}

\begin{blanko}{Induction of $\PPP$-structure along maps in 
  $\OMEGA$.}\label{induce}
  Let $\PPP$ be any polynomial endofunctor (not required to have monad 
  structure). For an inert map of trees $S\into T$, if $T$ has 
  $\PPP$-tree structure, then there is induced a $\PPP$-tree structure on 
  $S$, simply by composition $S \into T \to \PPP$ (which is composition 
  of diagrams as in \ref{morphisms}).  
  This defines the category $\OMEGA_{\operatorname{inert}}(\PPP)$ of
  $\PPP$-trees and inert maps (for any polynomial endofunctor $\PPP$).
  
  If $\PPP$ is furthermore a monad, then the functoriality extends to
  active maps $K \actto T$. Indeed, by construction
  (\ref{active-inert-Omega}) an active map $K\actto T$ is a morphism of
  polynomials $K \to T\upperstar$, and since $\PPP$ is a monad, the
  $\PPP$-tree structure $T\to\PPP$ gives by adjunction a morphism of
  polynomials $T\upperstar \to \PPP$. The composite $K\to T\upperstar \to
  \PPP$ is the induced $\PPP$-structure on $K$. Altogether this defines,
  for any polynomial monad $\PPP$, a category of $\PPP$-trees
  $\OMEGA(\PPP)$ (which in fancier terms can be described as the category
  of elements of the dendroidal nerve of $\PPP$, cf.~\cite{Kock:0807}.)
\end{blanko}

\subsection{The Baez--Dolan construction $\PPP \bd$}

\label{sub:BD}

We now come to the central notion of this work, the Baez--Dolan
construction~\cite{Baez-Dolan:9702}. Curiously, Baez and Dolan defined the
construction for operads with {\em categories} of colours, but used it only
for operads with {\em sets} of colours. The set version is not optimal for
the purposes of opetope theory. It was adjusted by
Cheng~\cite{Cheng:0304277}, simply by taking the original definition
seriously and allowing a {\em groupoid} of colours. An alternative
adjustment was provided by Kock, Joyal, Batanin, and
Mascari~\cite{Kock-Joyal-Batanin-Mascari:0706}, using polynomial monads
over $\Set$. These cannot account for general operads, only for so-called
sigma-cofibrant operads~\cite{Kock:0807}, but that is enough for the
purpose of defining opetopes. The following version of the Baez--Dolan
construction is the polynomial version from
\cite{Kock-Joyal-Batanin-Mascari:0706}, but upgraded from sets to
groupoids, in the spirit of Cheng~\cite{Cheng:0304277}.

\begin{blanko}{Set-up.}\label{BD-setup}
  Let $\PPP$ be a polynomial monad, represented by a diagram of 
  groupoids
  $$
  I \leftarrow E \to B \to I  .
  $$
  Consider the category $\kat{PolyEnd}_{/\PPP}$ of polynomial
  endofunctors of $\Grpd_{/I}$ cartesian over $\PPP$. This category is
  monoidal under composition: given $\QQQ{\Rightarrow}\PPP$ and
  $\QQQ'{\Rightarrow}\PPP$, the composite is $\QQQ\circ \QQQ'
  \Rightarrow \PPP \circ \PPP \Rightarrow \PPP$, using the monad
  multiplication of $\PPP$. The neutral object for the composition is
  $\eta: \Id \Rightarrow \PPP$ (the monad unit). Consider also the
  category $\kat{PolyMnd}_{/\PPP}$ of polynomial monads cartesian over
  $\PPP$. The forgetful functor $U$ admits a left adjoint, the
  free-monad-over-$\PPP$ functor:
  $$\begin{tikzcd}
  \kat{PolyMnd}_{/\PPP} \ar[d, shift left=10pt, "U"] \ar[d, phantom, description, 
  "\isleftadjointto"]\\
  \kat{PolyEnd}_{/\PPP}  .  \ar[u, shift left=10pt]
  \end{tikzcd}$$
  Denote by $\TTT$ the monad generated by this adjunction. Now the key 
  point is that there is a natural equivalence
  \begin{equation}\label{eq:key-equiv}
  \kat{PolyEnd}_{/\PPP} \simeq \Grpd_{/B}  .
  \end{equation}
  This is because polynomial endofunctors cartesian over $\PPP$ are given by diagrams
  $$
  \begin{tikzcd}[row sep={4.5ex,between origins}]
	& U \ar[r] \ar[dd] \ar[ld] \drpullbackS  & V \ar[dd] \ar[rd] & \\
	I &&{}& I \,,\\
	& E \ar[r] \ar[ul] & B \ar[ur] &
  \end{tikzcd}
  $$
  but clearly this data is completely determined by the single map $V 
  \to B$.  
\end{blanko}

\begin{blanko}{The Baez--Dolan construction, polynomial version.}
  With notation as in \ref{BD-setup}, the {\em Baez--Dolan construction} on
  $\PPP$ is by definition the monad
  $$
  \begin{tikzcd}
  \Grpd_{/B} \ar[r, "\PPP\bd"] & \Grpd_{/B} 
  \end{tikzcd}
  $$
  obtained by transporting the monad
  $\TTT$ along the key equivalence $\kat{PolyEnd}_{/\PPP}
  \simeq \Grpd_{/B}$.
\end{blanko}

\begin{blanko}{Note.}
  In the literature the Baez--Dolan construction on $\PPP$ is usually
  denoted $\PPP^+$, and sometimes called the plus construction
  \cite{Baez-Dolan:9702}, \cite{Leinster:0305049},
  \cite{Kock-Joyal-Batanin-Mascari:0706}. The change in notation here is
  motivated by the fact that $\PPP\bd$ will relate to $\PPP\upperstar$ in
  the same way as substitution
  relates to multiplication (for power series), as will become clear.
\end{blanko}

One can now follow through the explicit description of the
free-monad-over-$\PPP$ monad and the key equivalence~\eqref{eq:key-equiv},
to obtain the following.

\begin{theorem}[\cite{Kock-Joyal-Batanin-Mascari:0706}]
  \label{thm:Pbd}
  The Baez--Dolan construction $\PPP\bd$ is polynomial, represented by
  $$
  B \longleftarrow \tr^\bullet(\PPP) \longrightarrow \tr(\PPP) 
  \longrightarrow B  .
  $$
\end{theorem}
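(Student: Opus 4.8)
The plan is to follow through the definition of $\PPP\bd$ as the transport of the free-monad-over-$\PPP$ monad $\TTT$ along the key equivalence \eqref{eq:key-equiv}, computing everything in terms of the explicit tree description of Theorem~\ref{thm:P*}. The first task is to identify the free-monad-over-$\PPP$ functor with the ordinary free-monad functor $\QQQ \mapsto \QQQ\upperstar$. Given a polynomial endofunctor $\QQQ \Rightarrow \PPP$ cartesian over $\PPP$, the universal property of the free monad produces a unique monad morphism $\QQQ\upperstar \Rightarrow \PPP$ extending $\QQQ \Rightarrow \PPP$ (using that $\PPP$ is itself a monad). The key check is that this morphism is cartesian: writing $\QQQ\upperstar$ via Theorem~\ref{thm:P*} as $I \leftarrow \tr'(\QQQ) \to \tr(\QQQ) \to I$, the comparison to $I \leftarrow E \to B \to I$ sends a $\QQQ$-tree to its $\PPP$-residue (\ref{residue}) and a marked leaf to the corresponding input slot of that residue; in the language of \ref{morphisms}, cartesianness then amounts to the statement that the leaves of a tree are in natural bijection with the input slots of its residue, i.e.\ arity preservation. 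Hence $\QQQ\upperstar$, with this cartesian structure map, is the free polynomial monad over $\PPP$ on $\QQQ$, so that $\TTT(\QQQ) = \QQQ\upperstar$ as endofunctors.

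Next I would transport along the equivalence $\kat{PolyEnd}_{/\PPP} \simeq \Grpd_{/B}$, which sends a cartesian-over-$\PPP$ endofunctor to its groupoid of operations together with the structure map to $B$. Thus $\QQQ$ corresponds to its operations $V \to B$, while $\TTT(\QQQ) = \QQQ\upperstar$ corresponds to $\tr(\QQQ) \to B$, the residue map (using Lemma~\ref{lem:cor(P)} to read residue corollas as elements of $B$). Consequently the transported monad $\PPP\bd\colon \Grpd_{/B} \to \Grpd_{/B}$ sends $(V \to B)$ to $(\tr(\QQQ_V) \to B)$, where $\QQQ_V$ denotes the endofunctor cartesian over $\PPP$ whose operations are $V$.

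Finally I would match this functor with the polynomial functor of the claimed diagram $B \leftarrow \tr^\bullet(\PPP) \to \tr(\PPP) \to B$. The combinatorial heart is the identification of $\QQQ_V$-trees: since $\QQQ_V \Rightarrow \PPP$ is cartesian, a $\QQQ_V$-tree is exactly a $\PPP$-tree equipped with a lift of each node-decoration from $B$ to $V$; equivalently, over a fixed $\PPP$-tree $\beta$ one chooses, for each node, an element of the fibre of $V \to B$ at that node's decoration. This is precisely what the polynomial functor computes: interpreting $\tr(\PPP)$ as the groupoid of operations, the map $\tr^\bullet(\PPP) \to \tr(\PPP)$ (forgetting a marked node) as the universal family of input slots, the map $\tr^\bullet(\PPP) \to B$ as the decoration of the marked node, and $\tr(\PPP) \to B$ as the residue. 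The explicit formula for polynomial functors then yields, for $X = (X_b \mid b\in B)$ corresponding to $V \to B$, the family $c \mapsto \sum_{\beta} \prod_{\text{nodes of }\beta} X_{\mathrm{dec}}/\Aut(\beta)$ over $\PPP$-trees $\beta$ of residue $c$, with output map the residue. This coincides termwise with $\tr(\QQQ_V) \to B$. It remains to observe that this agreement is natural in $V$ and transports the monad structure of $\TTT$, completing the identification of the representing diagram.

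I expect the main obstacle to be the bookkeeping in the first step, namely verifying that the induced monad morphism $\QQQ\upperstar \Rightarrow \PPP$ is genuinely cartesian (and hence that $\TTT(\QQQ)$ has operations $\tr(\QQQ)$ with the residue as its map to $B$), together with making the identification of $\QQQ_V$-trees with node-$V$-decorated $\PPP$-trees precise at the groupoid level, so that the homotopy quotients $/\Aut(\beta)$ appearing in the polynomial formula are correctly accounted for rather than merely matched on iso-classes.
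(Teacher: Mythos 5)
Your proposal is correct and follows exactly the route the paper indicates for this theorem (which it cites from Kock--Joyal--Batanin--Mascari rather than reproving): compute the free-monad-over-$\PPP$ monad via Theorem~\ref{thm:P*}, transport along the key equivalence $\kat{PolyEnd}_{/\PPP}\simeq\Grpd_{/B}$, and identify $\QQQ_V$-trees with $\PPP$-trees whose node decorations are lifted along $V\to B$. Your identification of cartesianness of $\QQQ\upperstar\Rightarrow\PPP$ with the leaves-to-input-slots bijection for the residue, and of the three structure maps with marked node, forget mark, and residue, matches the paper's description of the representing diagram.
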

\noindent
Here $\tr(\PPP)$ is the groupoid of $\PPP$-trees, and
  $\tr^\bullet(\PPP)$ is the groupoid of $\PPP$-trees with a
  marked node. The last map takes a $\PPP$-tree and contracts it to a
  $\PPP$-corolla (i.e.~an object of $B$, cf.~Lemma~\ref{lem:cor(P)})
  using the original monad structure on $\PPP$ 
  (cf.~\ref{monadcontract}).  The leftmost map returns the marked node
  of the $\PPP$-tree, considered as a $\PPP$-corolla.
  The middle map
  simply forgets the mark.
  
  Graphically:  
  
  \[
  \begin{tikzpicture}[line width=0.25mm,scale=0.8]
	
    \begin{scope}[shift={(-2.73, 0.28)}] 
      \draw (0.0, 0.175) -- (0.0, 0.595);
	  \fill (0.0, 0.595) circle[radius=0.065];
	  \draw (0.0, 0.595) -- (-0.35, 1.05);
	  \fill (-0.35, 1.05) circle[radius=0.065];
	  \draw (-0.35, 1.05) -- (-1.05, 1.225);
      \draw (-0.35, 1.05) -- (-0.7, 1.75);
	  \fill (-0.7, 1.75) circle[radius=0.065];
      \draw (-0.35, 1.05) -- (-0.175, 1.575);
	  \fill (-0.175, 1.575) circle[radius=0.065];
	  \draw (-0.175, 1.575) -- (-0.35, 2.275);
      \draw (-0.175, 1.575) -- (0.0, 2.275);
      \draw (0.0, 0.595) -- (0.525, 2.1);
      \draw (0.0, 0.595) -- (0.77, 1.225);
	  \fill (0.77, 1.225) circle[radius=0.065];
      \draw (-0.525, 0.84) node {\footnotesize *};
    \end{scope}
	
    \draw (-4.2, 1.4) node {$\bigleftbrace{30}$};
    \draw (-1.4, 1.4) node {$\bigrightbrace{30}$};
	  
    \begin{scope}[shift={(2.87, 0.28)}] 
      \draw (0.0, 0.175) -- (0.0, 0.595);
	  \fill (0.0, 0.595) circle[radius=0.065];
	  \draw (0.0, 0.595) -- (-0.35, 1.05);
	  \fill (-0.35, 1.05) circle[radius=0.065];
	  \draw (-0.35, 1.05) -- (-1.05, 1.225);
      \draw (-0.35, 1.05) -- (-0.7, 1.75);
	  \fill (-0.7, 1.75) circle[radius=0.065];
      \draw (-0.35, 1.05) -- (-0.175, 1.575);
	  \fill (-0.175, 1.575) circle[radius=0.065];
	  \draw (-0.175, 1.575) -- (-0.35, 2.275);
      \draw (-0.175, 1.575) -- (0.0, 2.275);
      \draw (0.0, 0.595) -- (0.525, 2.1);
      \draw (0.0, 0.595) -- (0.77, 1.225);
	  \fill (0.77, 1.225) circle[radius=0.065];
    \end{scope}

	\draw (1.4, 1.4) node {$\bigleftbrace{30}$};
    \draw (4.2, 1.4) node {$\bigrightbrace{30}$};

	\begin{scope}[shift={(-4.9, -2.45)}] 
      \draw (0.0, 0.175) -- (0.0, 0.595);
	  \fill (0.0, 0.595) circle[radius=0.065];
	  \draw (0.0, 0.595) -- (-0.42, 0.945);
      \draw (0.0, 0.595) -- (0.0, 1.12);
      \draw (0.0, 0.595) -- (0.42, 0.945);
    \end{scope}
	
    \draw (-5.6, -1.75) node {$\bigleftbrace{15}$};
    \draw (-4.2, -1.75) node {$\bigrightbrace{15}$};
    
	\begin{scope}[shift={(4.9, -2.45)}] 
      \draw (0.0, 0.175) -- (0.0, 0.595);
	  \fill (0.0, 0.595) circle[radius=0.065];
	  \draw (0.0, 0.595) -- (-0.42, 0.945);
      \draw (0.0, 0.595) -- (-0.175, 1.05);
      \draw (0.0, 0.595) -- (0.175, 1.05);
      \draw (0.0, 0.595) -- (0.42, 0.945);
    \end{scope}
	
    \draw (4.2, -1.75) node {$\bigleftbrace{15}$};
    \draw (5.6, -1.75) node {$\bigrightbrace{15}$};   
    \draw (0.0, -1.925) node {$\PPP\bd$};
    \draw[->] (-0.525, 1.4) -- (0.525, 1.4);
    \draw[->] (3.85, 0.0) -- (4.375, -0.805);
    \draw[->] (-3.85, 0.0) -- (-4.375, -0.805);
	  \draw (0.0, 1.7) node {\scriptsize forget mark};
	  \draw (5.3, -0.3) node {\scriptsize monad mult.};
	  \draw (-5.3, -0.3) node {\scriptsize marked node};

  \end{tikzpicture}
  \]
(The trees in the picture are $\PPP$-trees, but the $\PPP$-decorations 
  have been suppressed to avoid clutter.)  

\begin{blanko}{Blobbed trees.}\label{blobbing}
  An operation of the endofunctor $\PPP\bd \circ \PPP\bd$ is a $\PPP$-tree
  $K$ whose nodes are decorated by $\PPP$-trees $R_i$ in a compatible way:
  the tree that decorates a node with local structure $b\in B$ must have
  $b$ as its residue~\ref{residue} (note that the notion of residue for
  $\PPP$-trees involves the monad multiplication of $\PPP$, to compose
  the whole tree configuration of operations to a single operation).
  An operation of $\PPP\bd\circ\PPP\bd$ is thus an active
  map of $\PPP$-trees
  $$
  K \actto T ,
  $$
  mapping each node in $K$ to a subtree $R_i \subset T$, as described 
  in \ref{active}.
  A pictorial way of encoding this is to draw each node of $K$ as a big blob and
  draw its decorating tree inside the blob, so that its leaves
  and root match the boundary of the blob. Altogether the configuration can
  be seen as a {\em blobbed $\PPP$-tree} (called {\em constellation} in 
  \cite{Kock-Joyal-Batanin-Mascari:0706}) ---
  a $\PPP$-tree $T$ 
  with some blobs partitioning the tree into smaller trees:
  \begin{equation}
	\begin{tikzpicture}
	  \begin{scope} 
		\draw (0.0, 0.175) -- (0.0, 0.595);
		\fill (0.0, 0.595) circle[radius=0.065];
		\draw (0.0, 0.595) -- (-0.35, 1.05);
		\fill (-0.35, 1.05) circle[radius=0.065];
		\draw (-0.35, 1.05) -- (-1.05, 1.225);
		\draw (-0.35, 1.05) -- (-0.7, 1.75);
		\fill (-0.7, 1.75) circle[radius=0.065];
		\draw (-0.35, 1.05) -- (-0.175, 1.575);
		\fill (-0.175, 1.575) circle[radius=0.065];
		\draw (-0.175, 1.575) -- (-0.35, 2.275);
		\draw (-0.175, 1.575) -- (0.0, 2.275);
		\draw (0.0, 0.595) -- (0.56, 2.275);
		\draw (0.0, 0.595) -- (0.42, 0.875);
		\fill (0.42, 0.875) circle[radius=0.065];
		\draw[line width=0.01, rotate around={70:(-0.28,1.33)}] (-0.28,1.33)  ellipse (0.56 and 0.245);
		\draw[line width=0.01, rotate around={34:(0.21,0.77)}] (0.21,0.77)  ellipse (0.49 and 0.245);
		\draw[line width=0.01, rotate around={70:(-0.7,1.75)}] (-0.7,1.75)  ellipse (0.175 and 0.175);
		\draw[line width=0.01, rotate around={70:(0.28,1.435)}] (0.28,1.435)  ellipse (0.14 and 0.14);
		\draw[line width=0.01, rotate around={70:(0.42,1.855)}] (0.42,1.855)  ellipse (0.14 and 0.14);
	  \end{scope}
	\end{tikzpicture}
  \end{equation}
  Each blob contains a tree, and each node is contained in precisely
  one blob. It must be stressed that there may be blobs containing
  only a trivial tree. This occurs naturally because the tree $K$ may
  contain unary nodes \inlineonetree , and the decorating tree of such
  a unary node may be the trivial tree \inlineDotlessTree . For
  details, see \cite{Kock-Joyal-Batanin-Mascari:0706}. Let us stress
  that $\inlineDotlessTree$ and $\inlineblobtriv$ are examples of blobbed trees,
  corresponding, respectively, to the active maps
  $\inlineDotlessTree\!\actto \inlineDotlessTree$ and $\inlineonetree
  \!\actto \inlineDotlessTree$. The combinatorial description is
  attractive and useful for grasping constructions, but for the
  proofs below, it is the active maps $K \actto T \to \PPP$ we
  shall actually work with.
\end{blanko}

\begin{blanko}{Monad structure on $\PPP\bd$ and operad interpretation.}
  The monad structure is canonically given by the adjunction. 
  The monad multiplication $\mu:
  \PPP\bd\circ \PPP\bd \Rightarrow \PPP\bd$ simply takes the tree of
  $\PPP$-trees and returns the total $\PPP$-tree obtained by gluing
  together all the decorating $\PPP$-trees $R_i$ according to the recipe
  provided by $K$, to get a big $\PPP$-tree $T$. In terms of blobbed trees,
  the monad multiplication consists in erasing the blobs (not their
  content) and retaining the total $\PPP$-tree $T$. The unit for the monad
  $\PPP\bd$ is given by regarding a $\PPP$-corolla as a $\PPP$-tree.

  We consider $\PPP\bd$ as a coloured operad, in the sense of
  \ref{ouroperads}: its colours are the operations of $\PPP$. The
  operations of $\PPP\bd$ are the $\PPP$-trees, and the arity of such
  a $\PPP$-tree is its set of nodes.
  One can substitute a whole $\PPP$-tree (that is, a
  $\PPP\bd$-operation) into a node of another $\PPP$-tree if the
  residue of the tree matches the local structure of the node, as 
  pictured here:
\begin{equation*}\label{BD-subst}
  \begin{tikzpicture}[line width=0.25mm]
    \begin{scope}[shift={(0.0, 0.0)}, blue]
	  \draw (0.0, 0.0) -- (0.0, 0.7);
	  \fill (0.0, 0.7) circle[radius=0.065];
	  \draw (0.0, 0.7) -- (-0.525, 1.05);
	  \fill (-0.525, 1.05) circle[radius=0.065];
	  \draw (0.0, 0.7) -- (-0.595, 2.275);
	  \draw (0.0, 0.7) -- (-0.175, 2.45);
	  \draw (0.0, 0.7) -- (0.49, 1.225);
	  \fill (0.49, 1.225) circle[radius=0.065];
	  \draw (0.49, 1.225) -- (0.315, 1.75);
	  \fill (0.315, 1.75) circle[radius=0.065];
	  \draw (0.315, 1.75) -- (0.315, 2.45);
	  \draw (0.49, 1.225) -- (0.805, 1.645);
	  \fill (0.805, 1.645) circle[radius=0.065];
	  \draw[line width=0.01, densely dashed, rotate around={35:(0.175,1.225)}] 
	    (0.175,1.225)  ellipse (1.015 and 0.665);
	  \draw[-{Latex[width=3.5pt,length=4pt]}
	  , line width=0.02, densely dashed, black]
	  (0.7, 0.7) .. controls (2.1, -0.525) and (3.15, -0.525) .. (3.98, 0.75);
    \end{scope}
    \begin{scope}[shift={(4.55, 0.0)}]
	  \draw (0.0, 0.0) -- (0.0, 0.595);
	  \fill (0.0, 0.595) circle[radius=0.065];
	  \draw (0.0, 0.595) -- (-0.35, 1.05);
	  \fill (-0.35, 1.05) circle[radius=0.065];
	  \draw (-0.35, 1.05) -- (-1.05, 1.225);
	  \draw (-0.35, 1.05) -- (-0.7, 1.925);
	  \fill (-0.7, 1.925) circle[radius=0.065];
	  \draw (-0.35, 1.05) -- (-0.175, 1.75);
	  \fill (-0.175, 1.75) circle[radius=0.065];
	  \draw (-0.175, 1.75) -- (-0.35, 2.45);
	  \draw (-0.175, 1.75) -- (0.0, 2.45);
	  \draw (0.0, 0.595) -- (0.525, 2.1);
	  \draw (0.0, 0.595) -- (0.77, 1.225);
	  \fill (0.77, 1.225) circle[radius=0.065];
	  \draw[line width=0.01, densely dashed] (-0.35, 1.05) circle (0.35);
	\end{scope}
    
	\draw (8.0, 0.6) node {\normalsize $\leadsto$};
	
	\begin{scope}[shift={(11.3, -0.3)}]
	  \scriptsize
	  \draw (0.0, 0.0) -- (0.0, 0.595);
	  \fill (0.0, 0.595) circle[radius=0.065];
	  \draw (0.0, 0.595) -- (-0.525, 1.225);
	  \fill (-0.525, 1.225) circle[radius=0.065];
	  \draw (-0.525, 1.225) -- (-1.225, 2.45); 
	  \draw (-0.525, 1.225) -- (-1.05, 1.4);
	  \fill (-1.05, 1.4) circle[radius=0.065];
	  \draw (-0.525, 1.225) -- (-0.8, 2.8);
	  \fill (-0.8, 2.8) circle[radius=0.065];
	  \draw (-0.525, 1.225) -- (-0.175, 1.5);
	  \fill (-0.175, 1.5) circle[radius=0.065];
	  \draw (-0.175, 1.5) -- (-0.175, 2.1);
	  \fill (-0.175, 2.1) circle[radius=0.065];
	  \draw (-0.175, 2.1) -- (-0.175, 2.65);
	  \fill (-0.175, 2.65) circle[radius=0.065];
	  \draw (-0.175, 2.65) -- (-0.35, 3.1); 
	  \draw (-0.175, 2.65) -- (0.0, 3.1);
	  \draw (-0.175, 1.5) -- (0.175, 1.925);
	  \fill (0.175, 1.925) circle[radius=0.065];
	  \draw (0.0, 0.595) -- (0.875, 2.1);
	  \draw (0.0, 0.595) -- (0.77, 1.225);
	  \fill (0.77, 1.225) circle[radius=0.065];
	  
	  \draw[fill, white, rotate around={32:(-0.42,1.645)}]
	    (-0.42,1.645) ellipse (0.91 and 0.595);
	  \draw[blue, line width=0.01,densely dashed, rotate around={32:(-0.42,1.645)}]
	    (-0.42,1.645) ellipse (0.91 and 0.595);
	  \clip[rotate around={32:(-0.42,1.645)}]
	    (-0.42,1.645) ellipse (0.91 and 0.595);
	  \draw[blue] (0.0, 0.0) -- (0.0, 0.595);
	  \fill[blue] (0.0, 0.595) circle[radius=0.065];
	  \draw[blue] (0.0, 0.595) -- (-0.525, 1.225);
	  \fill[blue] (-0.525, 1.225) circle[radius=0.065];
	  \draw[blue] (-0.525, 1.225) -- (-1.225, 2.45); 
	  \draw[blue] (-0.525, 1.225) -- (-1.05, 1.4);
	  \fill[blue] (-1.05, 1.4) circle[radius=0.065];
	  \draw[blue] (-0.525, 1.225) -- (-0.8, 2.8);
	  \fill[blue] (-0.8, 2.8) circle[radius=0.065];
	  \draw[blue] (-0.525, 1.225) -- (-0.175, 1.5);
	  \fill[blue] (-0.175, 1.5) circle[radius=0.065];
	  \draw[blue] (-0.175, 1.5) -- (-0.175, 2.1);
	  \fill[blue] (-0.175, 2.1) circle[radius=0.065];
	  \draw[blue] (-0.175, 2.1) -- (-0.175, 2.65);
	  \fill[blue] (-0.175, 2.65) circle[radius=0.065];
	  \draw[blue] (-0.175, 2.65) -- (-0.35, 3.1); 
	  \draw[blue] (-0.175, 2.65) -- (0.0, 3.1);
	  \draw[blue] (-0.175, 1.5) -- (0.175, 1.925);
	  \fill[blue] (0.175, 1.925) circle[radius=0.065];
	  \draw[blue] (0.0, 0.595) -- (0.875, 2.1);
	  \draw[blue] (0.0, 0.595) -- (0.77, 1.225);
	  \fill[blue] (0.77, 1.225) circle[radius=0.065];
      
	\end{scope}

  \end{tikzpicture}
\end{equation*}
  
  The neutral operation
  (of colour $b \in B$) is the tree consisting of just that corolla $b$.

  Note that 
  for each original
  colour $i\in I$, the trivial tree of colour $i$ is a nullary operation of
  $\PPP\bd$.
\end{blanko}

\begin{blanko}{Example.}\label{ex:Id}
  Let $\PPP$ be the identity monad $\Id: \Grpd \to \Grpd$, which is
  represented by the diagram
  $$
  1 \leftarrow \underline 1 \to \underline 1 \to 1 .
  $$
  The ones are all singleton; the two middle ones have been underlined
  just to stress that the play a different role than the non-underlined.
  According to the general graphical interpretation (\ref{graphical}), we
  picture the unique operation (unique element in $B=\underline 1$) as a
  \inlineonetree . The free monad on $\Id$ is represented by
  $$
  1 \leftarrow \N \stackrel=\to \N \to 1 ,
  $$
  where $\N$ is regarded as the set of linear trees (including the 
  trivial tree, corresponding to $0\in \N$).  The Baez--Dolan 
  construction $\Id\bd$ is instead represented by
  $$
  \underline 1 \leftarrow \N^\bullet \to \N \to \underline 1 .
  $$
  Here $\N^\bullet$ denotes the set of (iso-classes of) linear trees 
  with a marked node.  The monad multiplication is given by 
  substituting linear trees into the nodes of linear trees.
  Since $\Id\bd$ has one operation in each degree, it is easily seen that
  $\Id\bd$ is the free-monoid monad from Example~\ref{ex:M}, which is
  thus exhibited as the Baez--Dolan construction on the identity 
  monad. We shall return to this example in \ref{sub:FdB}, where there is 
  also a picture.
\end{blanko}

\begin{blanko}{Example (continued from Example~\ref{ex:M}).}
  Let $\PPP=\MMM$ be the free-monoid monad on $\Grpd$, $X\mapsto X\upperstar$,
  represented by 
  $$
  1 \leftarrow \N'\to \N \to 1 .
  $$
  We regard $\N$ as the set of corollas (one for each arity
  $n\in \N$) and $\N'$ as the set of corollas with a marked leaf. The
  monad law now deals with grafting of corollas onto leaves.
  (This is a different graphical interpretation than the one just
  produced in the previous example with $\MMM = 
  \Id\bd$.)
  
  The free monad on $\MMM$ is represented by
  $$
  1 \leftarrow \tr'(\MMM) \to \tr(\MMM) \to 1  ,
  $$
  having as operations the set of all $\MMM$-trees, which means planar trees.
  The Baez--Dolan construction $\MMM\bd$ is represented by
  $$
  \N \leftarrow \tr^\bullet(\MMM) \to \tr(\MMM) \to \N ,
  $$
  where $\tr^\bullet(\MMM)$ is the set of planar trees with a marked
  node. The monad multiplication is given by substituting trees into
  nodes. This monad $\MMM\bd$ is the free-nonsymmetric-single-coloured-operad
  monad~\cite{Leinster:0305049}.
\end{blanko}

\begin{blanko}{Aside: opetopes.}\label{opetopes}
  The original motivation \cite{Baez-Dolan:9702} for the
  Baez--Dolan construction was to iterate it and use it to define the
  {\em opetopes}. These are shapes parametrising higher-dimensional
  many-in/one-out operations, in turn devised by Baez and Dolan to define
  weak $n$-categories for all $n$
  (see
  Leinster's book~\cite{Leinster:0305049} for a detailed development of the
  theory).
  
  The $n$-dimensional opetopes are by
  definition the colours of the $n$th iteration of the Baez--Dolan
  construction starting with $\Id$:
  $$
  \operatorname{Op}_0 = 1 \qquad
  \operatorname{Op}_1 = \underline 1 \qquad
  \operatorname{Op}_2 = \N \qquad 
  \operatorname{Op}_3 = \tr(\Id\bd) = \tr(\MMM) ,
  $$
  the set of planar trees. In dimension $4$ one finds the set of blobbed
  planar trees, and after that it becomes increasing complicated. An
  elementary combinatorial description was given in
  \cite{Kock-Joyal-Batanin-Mascari:0706} in terms of something called zoom
  complexes, certain trees of trees of trees.
\end{blanko}

\subsection{Two-sided bar construction (of one operad relative to another)}

\label{sub:bar}

The two-sided bar construction is a classical construction in algebraic
topology and homological algebra (see \cite{May:LNM271}), where it serves,
among many other things, to construct classifying spaces, approximations
and resolutions, and deloopings. In $2$-dimensional category theory it
serves to model the PROP envelope of an operad, and more generally internal
algebra classifiers via codescent objects~\cite{Weber:1503.07585}; see also
\cite{Batanin-Berger:1305.0086}.

The relative polynomial version \cite{Batanin:0207281}, 
\cite{Batanin-Berger:1305.0086}, \cite{Weber:1503.07585} used here 
concerns the situation where one
polynomial monad is cartesian over another. The most important case is that
of polynomial monads cartesian over $\SSS$, the
free-symmetric-monoidal-category monad \ref{S}, because these are
essentially symmetric operads. We describe the constructions in that
case, but $\SSS$ could be replaced by any other polynomial monad (as we
shall do in Subsection~\ref{sec:furtherbar}).

\begin{blanko}{Operad morphisms (monad opfunctors).}\label{opf-ax}
  The natural notion of morphism of coloured operads does not fix the
  colours, and therefore at the monad level must be a bit more
  involved than just natural transformations.
  To say that a monad $\RRR: \Grpd_{/I} \to \Grpd_{/I}$ is {\em cartesian} over 
  $\SSS$ means there is a diagram of polynomial monads
  $$    \begin{tikzcd}
    I \ar[d, "F"']& \ar[l]  E\ar[d] \drpullback \ar[r] & B\ar[d] \ar[r] & 
    I\ar[d, "F"] \\
    1  &\ar[l] \B'\ar[r] & \B \ar[r]  &1 ,
  \end{tikzcd}
  $$
  intertwining the  two monads $\RRR$ and $\SSS$ by means of the functor
  $F\lowershriek$ and a natural transformation
  $$
  \phi : F\lowershriek \RRR \Rightarrow \SSS F\lowershriek  ,
  $$
  forming together a monad opfunctor in the sense of 
  Street~\cite{Street:formal-monads}, a monad adjunction in the sense of 
  Weber~\cite{Weber:1503.07585},
  or a vertical morphism of horizontal monads in the double-category 
  setting~\cite{Fiore-Gambino-Kock:1006.0797}.
  This means we have the following two compatibilities with the monad structures
  (monad opfunctor structure on $F\lowershriek$): for any 
  object $A$ in $\Grpd_{/I}$ we have commutative diagrams
  \begin{equation}\label{eq:colax}
  \begin{tikzcd}[column sep = {2.5em,between origins}]
	 F\lowershriek A \ar[d, "{F\lowershriek \eta^{\RRR}_A}"'] 
	 \ar[rr, equal]&&
	 F\lowershriek A
	 \ar[d, "{\eta^{\SSS}_{F\lowershriek A}}"]  \\
	 F\lowershriek \RRR A \ar[rr, "{\phi_A}"'] && \SSS F\lowershriek A
  \end{tikzcd}
  \qquad\qquad
  \begin{tikzcd}[column sep = {3em,between origins}]
	 F\lowershriek \RRR\RRR A \ar[d, "{F\lowershriek \mu^{\RRR}_A}"'] 
	 \ar[rr, "{\phi_{\RRR A}}"] && \SSS F\lowershriek \RRR A \ar[rr, "{\SSS 
	 \phi_A}"] 
	 &&\SSS\SSS F\lowershriek A \ar[d, "{\mu^{\SSS}_{F\lowershriek A}}"] \\
	F\lowershriek \RRR A \ar[rrrr, "{\phi_A}"'] &&&& \SSS F\lowershriek A  .
  \end{tikzcd}
  \end{equation}

  It is a general fact that in the polynomial setting the
  natural transformation $\phi$ is cartesian.  This follows
  because its ingredients are the unit and counit of lowershriek-upperstar 
  adjunctions and an instance of the Beck--Chevalley isomorphism.  See 
  \cite[\S~3.3]{Weber:1503.07585} for details.
\end{blanko}

\begin{blanko}{One-sided bar construction.}\label{one-sided}
  Let $1 := \id_I {:} I {\to} I$ denote the terminal object in
  $\Grpd_{/I}$, and let $\alpha: \RRR 1 \to 1$ denote the unique map from
  $\RRR 1$ in $\Grpd_{/I}$. (Note that $\alpha$ has underlying map of
  groupoids $B \to I$.) The pair $(1,\alpha)$ is the terminal
  $\RRR$-algebra.

  In $\Grpd_{/I}$ there is induced a natural 
  simplicial-object-with-missing-top-face-maps
  \begin{equation}\label{eq:R1}
  \begin{tikzcd}[column sep = 5em]
  1
  \ar[r, pos=0.65, "s_0" on top] 
  &
  \ar[l, phantom, shift right=6pt, "\dots\dots" on top]
  \ar[l, shift left=6pt, pos=0.65, "d_0" on top]
  \RRR 1
  \ar[r, shift right=6pt, pos=0.65, "s_0" on top]
  \ar[r, shift left=6pt, pos=0.65, "s_1" on top]  
  &
  \ar[l, phantom, shift right=12pt, "\dots\dots" on top]
  \ar[l, pos=0.65, "d_1" on top]
  \ar[l, shift left=12pt, pos=0.65, "d_0" on top]
  \RRR\RRR 1
  \ar[r, shift left=12pt, pos=0.65, "s_2" on top]
  \ar[r, pos=0.65, "s_1" on top]
  \ar[r, shift right=12pt, pos=0.65, "s_0" on top]
  &
  \ar[l, phantom, shift right=18pt, "\dots\dots" on top]
  \ar[l, shift right=6pt, pos=0.65, "d_2" on top]
  \ar[l, shift left=6pt, pos=0.65, "d_1" on top]
  \ar[l, shift left=18pt, pos=0.65, "d_0" on top]
  \RRR\RRR\RRR 1
    \ar[r, phantom, "\dots" on top]
  & {}
  \end{tikzcd}
  \end{equation}
  The bottom face maps come from the action $\alpha: \RRR 1 \to 1$,
  and the remaining face and degeneracy maps come from the monad structure.
  The lowest-degree maps are 
  \[
  \begin{tikzcd}[column sep = {8em,between origins}]
  1
  \ar[r, pos=0.65, "\eta_1" on top] 
  &
  \ar[l, phantom, shift right=6pt, "\dots\dots" on top]
  \ar[l, shift left=6pt, pos=0.65, "\alpha" on top]
  \RRR 1
  \ar[r, shift right=6pt, pos=0.65, "\RRR\eta_1" on top]
  \ar[r, shift left=6pt, pos=0.65, "\eta_{\RRR 1}" on top]  
  &
  \ar[l, phantom, shift right=12pt, "\dots\dots" on top]
  \ar[l, pos=0.65, "\mu_1" on top]
  \ar[l, shift left=12pt, pos=0.65, "\RRR \alpha" on top]
  \RRR\RRR 1
    \ar[r, phantom, "\dots" on top]
  & {}
  \end{tikzcd}
  \]
  and in general, $s_k  : \RRR^n 1 \to  \RRR^{n+1}1$  is given by $\RRR^{n-k} 
  \eta_{\RRR^k 1}$ and
  $d_k  : \RRR^{n+1}1 \to \RRR^n 1$ is given by $\RRR^{n-k} \mu_{\RRR^{k-1} 1}$, 
  with the convention that
  $\mu_{\RRR^{-1}1} = \alpha$.
\end{blanko}
 
\begin{blanko}{Two-sided bar construction.}
  We now apply $\SSS F\lowershriek$ to the diagram~\eqref{eq:R1} above, to
  obtain inside $\Grpd$ a genuine simplicial object: the diagram now
  acquires the missing top face maps (written 
  \begin{tikzcd}[cramped] {} & \ar[l, wavy] {}\end{tikzcd}), 
  and constitutes altogether a simplicial object in $\Grpd$
  (\cite{Weber:1503.07585}, Lemma 4.3.2) which is the {\em two-sided 
  bar construction} on $\RRR$, denoted
  $\TSB{\SSS}{\RRR}$:
  \[\begin{tikzcd}[column sep = 5em]
  \ar[r, phantom, "\TSB{\SSS}{\RRR} :" description] &
  \SSS  F\lowershriek 1 
  \ar[r, pos=0.65, "s_0" on top] 
  &
  \ar[l, shift left=6pt, pos=0.65, "d_0" on top]
  \ar[l, wavy, shift right=6pt, pos=0.65, "d_1" on top]
  \SSS  F\lowershriek \RRR 1
  \ar[r, shift right=6pt, pos=0.65, "s_0" on top]
  \ar[r, shift left=6pt, pos=0.65, "s_1" on top]  
  &
  \ar[l, shift left=12pt, pos=0.65, "d_0" on top]
  \ar[l, wavy, shift right=12pt, pos=0.65, "d_2" on top]
  \ar[l, pos=0.65, "d_1" on top]
  \SSS  F\lowershriek \RRR\RRR 1
  \ar[r, shift left=12pt, pos=0.65, "s_2" on top]
  \ar[r, pos=0.65, "s_1" on top]
  \ar[r, shift right=12pt, pos=0.65, "s_0" on top]
  &
  \ar[l, wavy, shift right=18pt, pos=0.65, "d_3" on top]
  \ar[l, shift right=6pt, pos=0.65, "d_2" on top]
  \ar[l, shift left=6pt, pos=0.65, "d_1" on top]
  \ar[l, shift left=18pt, pos=0.65, "d_0" on top]
  \SSS  F\lowershriek  \RRR\RRR\RRR  1 
  \ar[r, phantom, "\dots" on top] & {}
  \end{tikzcd}\]
  The new top face maps are given by 
  $$
  d_\top := \mu^\SSS \circ \SSS(\phi) .
  $$
  For example, the lowest-degree top face map is given as
  \begin{equation}\label{eq:dtop}
	\begin{tikzcd}[sep = 2em]
      & \ar[ld, "{\mu^S_{F\lowershriek 1}}"'] \SSS\SSS F\lowershriek 1 & \\
    \SSS F\lowershriek 1 && \ar[ll, wavy, "{d_1}"] \ar[lu, "{\SSS(\phi_1)}"']
    \SSS F\lowershriek \RRR 1 .
  \end{tikzcd}
  \end{equation}
\end{blanko}
  
\begin{blanko}{Remarks.}  
  Note that $F\lowershriek$ only serves to pass from $\Grpd_{/I}$ to
  $\Grpd$, so that the simplicial object really lives in the category
  $\Grpd$. It should further be noted that the terminal object in
  $\Grpd_{/I}$ (as always denoted $1$) is the identity map $\id:I \to I$,
  so that we have
  $$
  F\lowershriek 1 = I \qquad \text{ and } \qquad 
  F\lowershriek \RRR 1 = B .
  $$
  Furthermore, $F\lowershriek \RRR\RRR 1$ is the groupoid of $2$-level
  $\RRR$-trees.
  
  The $\SSS$ in front of everything means we are talking about
  monomials of objects. Monomials of degree $1$ (that is, original
  objects) are called {\em connected}. In the cases of interest, the
  objects will be various kinds of trees, and it makes sense also to
  say {\em forest} for monomials (disjoint unions) of trees. All the
  face and degeneracy maps --- except the top face maps --- are $\SSS$
  of a map, meaning that they send connected objects to connected
  objects. In contrast, the top face maps generally send connected
  objects to non-connected ones, which is what the wavy arrows
  indicate.
  
  It should be noted that the standard notation in the literature for this 
  two-sided bar construction is
  $$
  B(\SSS F\lowershriek, \RRR, 1) .
  $$
\end{blanko}

\begin{blanko}{Interpretation of the groupoids.}
  The two-sided bar construction is completely formal. At the same time,
  all the constituents of $X = \TSB{\SSS}{\RRR}$ have clean combinatorial
  interpretations. For the groupoids:
  
  \begin{quote} \em $X_k = \SSS \tr_k(\RRR)$ is the groupoid of
  monomials of $k$-level $\RRR$-trees.
  \end{quote}
  In particular, 
  
  $\bullet$ \
  $X_0 = \SSS \tr_0(\RRR) = \SSS I$ is the groupoid of monomials of 
  colours;
  
  $\bullet$ \
  $X_1 = \SSS \tr_1(\RRR) = \SSS\operatorname{cor}(\RRR) = \SSS B$ is the groupoid of
  monomials of operations of $\RRR$ ;
  
  $\bullet$ \
  $X_2 = \SSS \tr_2(\RRR) = \SSS \RRR B$ is the groupoid of monomials 
  of $2$-level $\RRR$-trees.
  
  As to the maps, they are all about joining or deleting levels (or inserting
  trivial levels), using the monad structure. In the lowest degree
  \begin{tikzcd}[column sep={3.8em}] X_0 \ar[r, pos=0.65, "s_0" on top] & \ar[l, shift
  left=6pt, pos=0.65, "d_0" on top] \ar[l, wavy, shift right=6pt,
  pos=0.65, "d_1" on top] X_1 \end{tikzcd}, 
  
  $\bullet$ \ 
  the bottom face map $d_0$
  returns the root edges (of a monomial of corollas);
  
  $\bullet$ \
  the top face map
  $d_1$ returns the
  monomial of leaves. 
  
  
  $\bullet$ \
  The degeneracy map $s_0$ sends a colour $i$ to the identity operation on
  $i$.
  
  \noindent 
  In the next degree, \begin{tikzcd}[column sep={3.8em}]
    X_1
  \ar[r, shift right=6pt, pos=0.65, "s_0" on top]
  \ar[r, shift left=6pt, pos=0.65, "s_1" on top]  
  &
  \ar[l, shift left=12pt, pos=0.65, "d_0" on top]
  \ar[l, wavy, shift right=12pt, pos=0.65, "d_2" on top]
  \ar[l, pos=0.65, "d_1" on top]
  X_2
\end{tikzcd}
  
  $\bullet$ \
  $d_0$
  returns the bottom level of nodes; 
  
  $\bullet$ \ 
  $d_1$ composes the $2$-level forest to
  a $1$-level forest using monad multiplication;
  
  $\bullet$ \ 
  $d_2$
  returns the monomial of corollas resulting from deleting the bottom
  nodes;
  
  $\bullet$ \ 
  $s_0$ sends a corolla to the $2$-level tree obtained by 
  grafting identity corollas onto all leaves;
  
  $\bullet$ \
  $s_1$ sends a corolla to 
  the $2$-level tree obtained by grafting that corolla onto a single 
  identity corolla. 
  
  And so on.
  The beginning of the simplicial groupoid $\TSB{\SSS}{\RRR}$ can thus be 
  pictured like this:

\begin{equation}\label{twosidedpicture}
  \begin{tikzpicture}[line width=0.25mm]

  \begin{scope}[shift={(0.525, 0.42)}] 
	\draw (0.0, 0.525) -- (0.0, 1.225);
	\draw (-0.91, 0.875) node {$\SSS$};
	\draw (-0.56, 0.875) node {$\bigleftbrace{15}$};
	\draw (0.56, 0.875) node {$\bigrightbrace{15}$};
  \end{scope}

  \begin{scope}[shift={(0.98, 0.0)}]
    \draw[->, wavy] (2.1, 1.645) -- +(-1.4, 0.0);
    \draw[->] (2.1, 0.945) -- +(-1.4, 0.0);
	\draw (1.4, 1.86) node {\scriptsize leaves};
	\draw (1.4, 1.16) node {\scriptsize root};
  \end{scope}
	
  \begin{scope}[shift={(5.075, 0.105)}] 
	\draw (0.0, 0.525) -- (0.0, 1.05);
	\fill (0.0, 1.05) circle[radius=0.065];
	\draw (0.0, 1.05) -- (-0.63, 1.5); 
	\draw (0.0, 1.05) -- (-0.33, 1.7);
	\draw (0.0, 1.05) -- (0.0, 1.8);
	\draw (0.0, 1.05) -- (0.33, 1.7);
	\draw (0.0, 1.05) -- (0.63, 1.5);
	\draw (-1.33, 1.225) node {$\SSS$};
	\draw (-0.98, 1.225) node {$\bigleftbrace{20}$};
	\draw (0.98, 1.225) node {$\bigrightbrace{20}$};
  \end{scope}

  \begin{scope}[shift={(6.335, 0.0)}]
    \draw[->, wavy] (2.1, 1.995) -- +(-1.4, 0.0);
    \draw[->] (2.1, 1.295) -- +(-1.4, 0.0);
    \draw[->] (2.1, 0.595) -- +(-1.4, 0.0);
	\draw (1.4, 2.21) node {\scriptsize delete bottom corolla};
	\draw (1.4, 1.47) node {\scriptsize compose};
	\draw (1.4, 0.82) node {\scriptsize delete top forest};
  \end{scope}

  \begin{scope}[shift={(10.815, 0.28)}] 
	\draw (0.0, 0.28) -- (0.0, 0.7);
	\fill (0.0, 0.7) circle[radius=0.065];
	\draw (0.0, 0.7) -- (-0.7, 1.4);
	\fill (-0.7, 1.4) circle[radius=0.065];
	\draw (-0.7, 1.4) -- (-0.91, 1.82);
	\draw (-0.7, 1.4) -- (-0.49, 1.82);
	\draw (0.0, 0.7) -- (0.0, 1.4);
	\fill (0.0, 1.4) circle[radius=0.065];
	\draw (0.0, 0.7) -- (0.7, 1.4);
	\fill (0.7, 1.4) circle[radius=0.065];
	\draw (0.7, 1.4) -- (0.28, 1.82);
	\draw (0.7, 1.4) -- (0.7, 1.82);
	\draw (0.7, 1.4) -- (1.12, 1.82);	
	\draw (-0.91, 1.05) -- (0.91, 1.05);
	\draw (-1.61, 1.05) node {$\SSS$};
	\draw (-1.225, 1.05) node {$\bigleftbrace{26}$};
	\draw (1.4, 1.05) node {$\bigrightbrace{26}$};
  \end{scope}

    \begin{scope}[shift={(13, 0.28)}] 
	  	\draw (1.4, 1.05) node {$\dots$};

  \end{scope}

\end{tikzpicture}\end{equation}
  (The corollas in the picture are $\RRR$-operations.
The degeneracy maps are not rendered in the picture, to avoid clutter.)
\end{blanko}

\begin{prop}[Weber~\cite{Weber:1503.07585}, Prop.~4.4.1]\label{ho-pbk}
  For any operad $\RRR$, the simplicial groupoid $\TSB{\SSS}{\RRR}$ is a
  strict category object and a Segal space.
\end{prop}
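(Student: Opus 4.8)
The plan is to read off everything from the combinatorial description of the groupoids: $X_k = \SSS \tr_k(\RRR)$ is the groupoid of monomials of $k$-level $\RRR$-trees, with the inner face maps composing adjacent levels (via the monad multiplication $\mu^\RRR$), the outer face maps deleting the bottom or top level (via the action $\alpha$ and the opfunctor $\phi$), and the degeneracies inserting a trivial level (via $\eta^\RRR$). I would prove both assertions in one stroke by establishing that, for all $n\geq 2$, the strict Segal maps
$$
X_n \longrightarrow X_1 \times_{X_0} X_1 \times_{X_0} \cdots \times_{X_0} X_1
$$
(strict fibre products over $X_0$, taken along the source and target maps $d_1,d_0 : X_1\to X_0$) are \emph{isomorphisms} of groupoids. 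By the standard characterisation of nerves, this says precisely that $\TSB{\SSS}{\RRR}$ is the nerve of a category internal to $\Grpd$ --- a strict category object --- with objects $X_0 = \SSS I$, morphisms $X_1 = \SSS B$, and composition the inner face $d_1 : X_2\to X_1$; associativity and unitality are then automatic from the simplicial identities, equivalently from the strict monad axioms for $\RRR$.

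The combinatorial heart is the identification of $n$-level $\RRR$-trees with chains of composable $1$-level forests. Following the level description of trees in \ref{levelled}, an $n$-level $\RRR$-tree amounts to a sequence of $1$-level $\RRR$-trees in which the leaf-colours of each level agree, on the nose, with the root-colours of the next; applying $\SSS$ and recording this matching in $\SSS I$ then exhibits $X_n$ as the iterated strict pullback. The identification is a strict isomorphism, not merely an equivalence, because cutting a forest into its levels and re-grafting along the shared edges are mutually inverse operations that respect the chosen skeletal model of $\SSS$. Cartesianness of $\RRR$ over $\SSS$ enters here to guarantee that arities match exactly, so that the grafting is unobstructed and uniquely determined.

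It remains to upgrade the strict Segal condition to the homotopy Segal condition demanded of a Segal space. For this I would check that each face map $X_{n+1}\to X_n$ occurring as the right-hand leg of a Segal square is a fibration in the sense of the path-lifting lemma: an isomorphism in the target $X_n$ is a decorated permutation of a forest, and it lifts by transporting the corollas and their colours of the total forest in $X_{n+1}$ along it. Granting this, each strict Segal square has a leg that is a fibration, hence is also a homotopy pullback by the lemma that strict pullbacks along fibrations are homotopy pullbacks; pasting these squares (Prism Lemma~\ref{lem:prism}) then shows that each Segal map is an equivalence onto the homotopy fibre product, which is the Segal condition.

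The main obstacle is the strictness asserted in the second paragraph: verifying that the level decomposition is an honest isomorphism of groupoids requires pinning the bar-construction face maps down against the skeletal model of $\SSS$ and checking that $\mu^\RRR$ realises grafting on the nose. This is exactly where cartesianness over $\SSS$ is indispensable, and it is the content of Weber's analysis in~\cite{Weber:1503.07585}; by contrast, the fibration check of the third paragraph is routine once the groupoids and maps have been given their tree-theoretic description.
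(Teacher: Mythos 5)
Since the paper offers no proof of this proposition --- it is imported wholesale from Weber~\cite{Weber:1503.07585}, Prop.~4.4.1 --- your proposal can only be compared with Weber's argument and with the paper's general toolbox, and on both counts it holds up. Your two-step structure is the right one, and it is worth emphasising that the second step is not a formality: a strict category object in $\Grpd$ need \emph{not} be a Segal space, since a strict pullback is only guaranteed to be a homotopy pullback when one of the cospan legs is a fibration, and there are internal categories in $\Grpd$ whose source and target maps fail to be isofibrations and whose nerves fail the homotopy Segal condition; so the fibration check genuinely carries half the weight of the proof. That check is correct as you state it: the right-hand leg of each square in the paper's Segal condition is $d_0\colon X_{n+1}\to X_n$, which deletes the top level of a levelled forest, and an isomorphism downstairs (a decorated permutation) lifts by regrafting the deleted top corollas along the induced bijection of leaves --- the same style of argument the paper itself uses when it asserts in the proof of Lemma~\ref{lem:comult=comodmap} that $d_1^Y$ is a fibration. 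The strict isomorphism of your second paragraph is likewise the correct mechanism, with the correct cause: cartesianness of $\RRR$ over the skeletal model of $\SSS$ gives each fibre $E_b$ a canonical linear order, so that cutting a monomial of $n$-level trees into its level forests and grafting matched forests back together are strictly inverse operations, on decorated-permutation morphisms as well as on objects. The only debt is the one you acknowledge --- verifying that the bar-construction face maps realise this combinatorics on the nose against the skeletal model --- and since that is exactly the content of Weber's proposition, your argument is no less complete than the paper's treatment, while being more informative: it makes visible both where strictness comes from and why the homotopy Segal condition requires a separate fibration argument on top of it.
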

 
\begin{prop}[\cite{Kock-Weber:1609.03276}, Prop.~3.7]
  For any operad $\RRR$, the two-sided bar construction
  $X=\TSB{\SSS}{\RRR}$ is an $\SSS$-algebra in Segal spaces. Furthermore,
  the structure map $\SSS X \to X$ is culf, and hence al\-together $X$ is a
  symmetric monoidal decomposition space.
\end{prop}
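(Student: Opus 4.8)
The plan is to read off the monoidal structure levelwise from the free symmetric monoidal structure already present in $X_k = \SSS F\lowershriek \RRR^k 1$, and then to verify separately the two substantive clauses: that $X$ becomes an $\SSS$-algebra in Segal spaces, and that the structure map is culf. Write $A_k := F\lowershriek \RRR^k 1$, so that $X_k = \SSS A_k$ is the free $\SSS$-algebra on $A_k$, with structure map the monad multiplication $\mu^\SSS_{A_k} : \SSS\SSS A_k \to \SSS A_k$.

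First I would check that these levelwise structure maps assemble into a simplicial map $\SSS X \to X$. All face and degeneracy maps except the top face maps are $\SSS$ of maps coming from the one-sided construction, so for these simpliciality is immediate from naturality of $\mu^\SSS$. For a top face map $d_\top = \mu^\SSS \circ \SSS(\phi)$, one combines naturality of $\mu^\SSS$ along $\phi$ with the associativity law of $\mu^\SSS$; the monad-opfunctor axioms for $\phi$ --- which already underlie Weber's result that $X$ is simplicial at all --- supply exactly the compatibility needed. The $\SSS$-algebra axioms then hold because levelwise they are nothing but the monad axioms for $\SSS$.

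The technical core is the following input: over groupoids the free-symmetric-monoidal-category monad $\SSS$ is \emph{cartesian}, meaning it preserves (homotopy) pullbacks and its unit and multiplication are cartesian natural transformations. This is precisely where the passage from sets to groupoids is essential: the finite-multiset monad on $\Set$ is only weakly cartesian, and it is the homotopy quotients by the symmetric groups (checked through the Fibre Lemma~\ref{lem:fibre}) that promote weak cartesianness to genuine cartesianness. Granting this, pullback preservation shows that $\SSS X$ again satisfies the Segal condition of the preceding proposition, so that $X$ is an $\SSS$-algebra internal to Segal spaces. For culfness I would verify that $\SSS X \to X$ is cartesian on active maps; since cartesian naturality squares paste (Lemma~\ref{lem:prism}), it suffices to test the active generators, namely the inner face maps and the degeneracies. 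In the bar construction these are all of the form $\SSS(f)$, so the relevant square is a naturality square of $\mu^\SSS$ along $f$, hence a pullback by cartesianness of $\mu^\SSS$. The one face map not of this form, the top face $d_\top$, is inert rather than active and so need not be tested --- which is exactly why culfness can hold even though $d_\top$ genuinely fails to preserve connectedness.

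Finally I would assemble the conclusion: $X$ is a Segal space (preceding proposition), hence a decomposition space, and it carries an $\SSS$-algebra structure whose structure map is culf; this is by definition a symmetric monoidal decomposition space. I expect the cartesianness of $\SSS$ over groupoids to be the main obstacle --- establishing pullback preservation and cartesianness of $\mu^\SSS$ carefully while tracking the homotopy quotients by the symmetric groups. Everything else (simpliciality, the algebra axioms, and the reduction of culfness to the active generators) is formal once that cartesianness is available.
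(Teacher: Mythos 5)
Your proposal is correct and follows essentially the same route as the paper, whose own justification (deferring details to the cited Kock--Weber reference) is exactly your outline: the structure map is degreewise the monad multiplication $\mu^\SSS$, Segal-ness of $\SSS X$ follows because $\SSS$ preserves homotopy pullbacks, and culfness follows from cartesianness of $\SSS$. The extra details you supply --- simpliciality of the structure map at the top face via naturality plus associativity of $\mu^\SSS$, and the reduction of culfness to the active generators, which are all $\SSS$ of maps --- are precisely the fleshing-out the paper leaves to the reference, and they are carried out correctly.
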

\noindent
Note that $\SSS$ preserves (both strict and) homotopy pullbacks, so if 
$X$ is Segal then so is $\SSS X$.
  The $\SSS$-algebra structure is a consequence of the fact 
  that degree-wise, $X$ is $\SSS$ of something. The $\SSS$-algebra 
  structure $\SSS X \to X$ is given in degree $k$ by monad multiplication
  $
  \begin{tikzcd}
	\SSS F\lowershriek \RRR^k 1 \ar[r, "\mu^{\SSS}_{F\lowershriek 
	\RRR^k 1}"']
	&
  \SSS \SSS F\lowershriek \RRR^k 1 .
  \end{tikzcd}
  $
  Culfness is a consequence of the fact that $\SSS$ is cartesian.

\begin{blanko}{Incidence bialgebra.}
  Since the two-sided bar construction $\TSB{\SSS}{\RRR}$ is a Segal
  groupoid, and in particular a decomposition
  space~\cite{Galvez-Kock-Tonks:1512.07573}, one can now apply the
  incidence coalgebra construction (\ref{incidence}) to obtain a coalgebra
  structure on the slice $\Grpd_{/X_1}$, with comultiplication given by the
  span
  $$
  \begin{tikzcd}
	X_1 & \ar[l, "d_1"'] X_2 \ar[r, "{(d_2,d_0)}"] & X_1 \times X_1  .
  \end{tikzcd}
  $$
  Recall that $X_1$ is the groupoid of monomials of operations of $\RRR$,
  and $X_2$ is groupoid of monomials of $2$-level trees in $\RRR$. The
  symmetric monoidal structure is disjoint union. Thanks to its culfness
  property, the induced multiplication functor is compatible with the
  comultiplication so as to give altogether a
  bialgebra~\cite{Galvez-Kock-Tonks:1512.07573}.

  Shortly we shall impose the finiteness conditions  necessary to take 
  homotopy cardinality (\ref{fin-cond}), which will finally give an ordinary 
  bialgebra in $\Q$-vector spaces,
  where an $\RRR$-operation $r$ is comultiplied as
$$
  \Delta(r) = \sum_{r=b \circ (a_1,\ldots,a_n)}  a_1\cdots a_n \tensor b  .
$$  
  (Because of the bialgebra structure, it is enough to specify the
  comultiplication on connected elements, i.e.~the operations themselves,
  rather than monomials of operations.)
\end{blanko}

\subsection{Two-sided bar construction on $\PPP \upperstar$}
\label{sub:bar(free)}

\begin{blanko}{Set-up.}
  We fix an operad in the form of a polynomial monad $\PPP$,
  represented by $I \leftarrow E \to B \to I$. Then, as we have seen
  in \ref{thm:P*}, the free monad $\PPP\upperstar$ is represented by
  $I \leftarrow \tr'(\PPP) \to \tr(\PPP) \to I$. We denote by $F:I \to
  1$ the unique map to the terminal groupoid.

  Let $Y = \TSB{\SSS}{\PPP\upperstar}$ denote the two-sided bar construction
  of $\PPP \upperstar$. We have
  \[
  \begin{tikzcd}[column sep = 5em]
	Y=\TSB{\SSS}{\PPP\upperstar} : &
  \SSS  F\lowershriek 1 
  \ar[r, pos=0.65, "s_0" on top] 
  &
  \ar[l, shift left=6pt, pos=0.65, "d_0" on top]
  \ar[l, wavy, shift right=6pt, pos=0.65, "d_1" on top]
  \SSS  F\lowershriek \PPP\upperstar 1
  \ar[r, shift right=6pt, pos=0.65, "s_0" on top]
  \ar[r, shift left=6pt, pos=0.65, "s_1" on top]  
  &
  \ar[l, shift left=12pt, pos=0.65, "d_0" on top]
  \ar[l, wavy, shift right=12pt, pos=0.65, "d_2" on top]
  \ar[l, pos=0.65, "d_1" on top]
  \SSS  F\lowershriek \PPP\upperstar\PPP\upperstar 1
  \ar[r, phantom, "\cdots" on top] & {}
  \end{tikzcd}
  \]
  Thus $Y_0 = \SSS F\lowershriek 1 = \SSS I$ is the groupoid of monomials
  of colours, and $Y_1 = \SSS F\lowershriek \PPP\upperstar 1 = \SSS
  \tr(\PPP)$ is the groupoid of monomials of $\PPP$-trees (which we may
  interpret as $\PPP $-forests).
\end{blanko}

\begin{blanko}{Interpretation of the groupoids $Y_n$.}
  The general description of the two-sided bar construction in
  \ref{sub:bar} tells us that $Y_n$ is the groupoid of (monomials of)
  $n$-level trees $W$ decorated by $\PPP\upperstar$-trees. This is the same
  as (monomials of) active maps
  $$
  W \actto T ,
  $$
  where $W$ is an $n$-level tree and $T$ is any $\PPP $-tree.

  (Note that a $1$-level $\PPP$-tree is a corolla, and to
  give an active map from a corolla to a $\PPP$-tree is the same thing as
  giving the $\PPP$-tree (each $\PPP$-tree receives a unique active map
  from a $\PPP$-corolla (namely its residue~\ref{residue})).)
\end{blanko}

\begin{blanko}{Inner face maps as precomposition.}
  The
  inner face maps $Y_{n-1} \stackrel{d_i}\leftarrow Y_n$ ($0<i<n$) are
  described as follows. Let $W'$ be the $(n{-}1)$-level tree obtained
  from $W$ by contracting all the edges between levels $n-1$ and $n$, then
  there is a unique active map $W' \actto W$. Now $d_i(W{\actto}T)$ is the
  composite $W'\actto W \actto T$.
  
  The same kind of description works for the degeneracy maps.
\end{blanko}

\begin{blanko}{Outer face maps in terms of active-inert factorisation in $\OMEGA$.}
  \label{faceY}
  The outer face maps of $Y$ 
  involve active-inert factorisation in $\OMEGA$
  (cf.~\ref{active-inert-Omega}). We describe the top face maps. Given a
  (connected) $n$-simplex $W \actto T$, consider the $(n{-}1)$-level forest
  $W'$ obtained by deleting the root node of $W$, with the canonical inert
  map of forests $W' \into W$. The top face map $Y_{n-1}
  \stackrel{d_n}\leftarrow Y_n$ returns
  the $(n-1)$-simplex $W'\actto T'$ appearing in the active-inert
  factorisation
  \[
  \begin{tikzcd}[sep = {4em,between origins}]
  W' \ar[d, into] \ar[r, dotted, -act] & T' \ar[d, dotted, into] \dlactinert \\
  W \ar[r, ->|] & T .
  \end{tikzcd}
  \]
  
  The bottom face maps $Y_{n-1} \stackrel{d_0}\leftarrow Y_n$ are described 
  similarly, by deleting the leaf level instead of the root level.
\end{blanko}

\begin{blanko}{Layerings and cuts.}\label{layerings-cuts}
  We define an {\em $n$-layering} of a $\PPP$-tree $T$ to be an active map from 
  an $n$-level tree
  $$
  W \actto T.
  $$
  We also call it a $\PPP$-tree with $n-1$ {\em compatible cuts}. For the
  actual computations, we shall stick with active maps, so in a sense
  layerings and cuts are just redundant terminology. However, they serve to
  strengthen the combinatorial intuition with further pictures, and to
  relate to the notion of directed restriction
  species~\cite{Galvez-Kock-Tonks:1207.6404}\footnote{For 
  nontrivial trees, the layering notion
  agrees with the general notion of layering of directed restriction 
  species of \cite{Galvez-Kock-Tonks:1207.6404}. In the presence of 
  trivial trees, the situation is a more subtle.} and the
  Butcher--Connes--Kreimer bialgebra of Example~\ref{ex:CK},
  as we shall formalise in \ref{sec:CEFM}.
  
  In particular, an active map $W \actto T$ from a $2$-level tree 
  encodes equivalently the following data: a bottom tree (the image of the 
  root-level
  node of $W$) whose leaves are decorated with other trees (the images of
  the leaf-level nodes of $W$). This is conveniently interpreted as a tree
  with a cut, as illustrated here:
  \begin{center}
\begin{tikzpicture}
  
  \begin{scope}[shift={(0.0, 0.0)}]
    \coordinate (b) at (0.0, 0.6);
    \coordinate (t1) at (-0.5, 1.2);
    \coordinate (t2) at (0.0, 1.2);
    \coordinate (t3) at (0.5, 1.2);
    \draw (0.0, 0.0) -- (b) pic {onedot};
	\draw (b) -- (t1) pic {onedot};
	\draw (b) -- (t2) pic {onedot};
	\draw (b) -- (t3) pic {onedot};
	\draw (t1) -- +(-0.18, 0.5);
	\draw (t1) -- +(0.18, 0.5);
	\draw (t2) -- +(-0.18, 0.5);
	\draw (t2) -- +(0.18, 0.5);
	\draw (t3) -- +(-0.18, 0.5);
	\draw (t3) -- +(0.18, 0.5);
	
	\draw (-0.9, 0.9) -- +(1.8, 0);
  \end{scope}
  
  \begin{scope}[shift={(3.5, 0.0)}]
    \coordinate (b1) at (0.1, 0.4);
    \coordinate (b2) at (-0.1, 0.6);
	\draw[densely dotted] (0.0, 0.51) circle (0.33);
	
	\begin{scope}[shift={(-0.55, 0.9)}]
	  \coordinate (x1) at (0.0, 0.0);
	  \coordinate (x2) at (0.1, 0.3);
	  \draw[densely dotted] (0.0, 0.18) circle (0.34);	
	\end{scope}
	
	\begin{scope}[shift={(0.1, 1.3)}]
	  \coordinate (y1) at (0.0, 0.0);
	  \coordinate (y2) at (-0.18, 0.28);
	  \coordinate (y3) at (0.0, 0.4);
	  \coordinate (y4) at (0.18, 0.28);
	  \draw[densely dotted] (0.0, 0.19) circle (0.35);
	\end{scope}
	
	\begin{scope}[shift={(0.5, 0.8)}]
	  \coordinate (z1) at (0.0, 0.0);
	  \coordinate (z2) at (0.1, 0.3);
	  \coordinate (z3) at (0.3, 0.2);
	  \draw[densely dotted] (0.12, 0.15) circle (0.32);
	\end{scope}
	
    \draw (0.1, -0.1) -- (b1) pic {onedot} -- (b2) pic {onedot};	
	\draw (b2) -- (x1) pic {onedot} -- (x2) pic {onedot};
	\draw (b2) -- (y1) pic {onedot};
	\draw (b1) -- (z1) pic {onedot};

	\draw (x1) -- +(-0.3, 0.65);
	\draw (x2) -- +(-0.18, 0.5);
	
	\draw (y1) -- (y2) pic {onedot};
	\draw (y1) -- (y3) pic {onedot};
	\draw (y1) -- (y4) pic {onedot};
	\draw (y4) -- +(0.0, 0.55);
	\draw (y4) -- +(0.23, 0.5);
	
	\draw (z1) -- (z2) pic {onedot};
	\draw (z1) -- (z3) pic {onedot};
	\draw (z2) -- +(-0.0, 0.55);
	\draw (z2) -- +(0.26, 0.49);
  \end{scope}
  
  \draw [->|] (1.3, 0.7) -- +(0.7, 0); 

  \node at (6.9, 0.7) {$=$};
  
  \begin{scope}[shift={(10.0, 0.0)}]
    \coordinate (b1) at (0.1, 0.4);
    \coordinate (b2) at (-0.1, 0.6);
	
	\begin{scope}[shift={(-0.55, 0.9)}]
	  \coordinate (x1) at (0.0, 0.0);
	  \coordinate (x2) at (0.1, 0.3);
	\end{scope}
	
	\begin{scope}[shift={(0.1, 1.3)}]
	  \coordinate (y1) at (0.0, 0.0);
	  \coordinate (y2) at (-0.18, 0.28);
	  \coordinate (y3) at (0.0, 0.4);
	  \coordinate (y4) at (0.18, 0.28);
	\end{scope}
	
	\begin{scope}[shift={(0.5, 0.8)}]
	  \coordinate (z1) at (0.0, 0.0);
	  \coordinate (z2) at (0.1, 0.3);
	  \coordinate (z3) at (0.3, 0.2);
	\end{scope}
	
    \draw (0.1, -0.1) -- (b1) pic {onedot} -- (b2) pic {onedot};	
	\draw (b2) -- (x1) pic {onedot} -- (x2) pic {onedot};
	\draw (b2) -- (y1) pic {onedot};
	\draw (b1) -- (z1) pic {onedot};

	\draw (x1) -- +(-0.3, 0.65);
	\draw (x2) -- +(-0.18, 0.5);
	
	\draw (y1) -- (y2) pic {onedot};
	\draw (y1) -- (y3) pic {onedot};
	\draw (y1) -- (y4) pic {onedot};
	\draw (y4) -- +(0.0, 0.55);
	\draw (y4) -- +(0.23, 0.5);
	
	\draw (z1) -- (z2) pic {onedot};
	\draw (z1) -- (z3) pic {onedot};
	\draw (z2) -- +(-0.0, 0.55);
	\draw (z2) -- +(0.26, 0.49);
	
	\draw[ultra thin] (-0.8, 0.5) .. controls (-0.4, 0.5) and (-0.4, 0.85) .. (-0.1, 0.85);
	\draw[ultra thin] (-0.1, 0.85) .. controls (0.3, 0.85) and (0.3, 0.2) .. (0.8, 0.2);

  \end{scope}
  
\end{tikzpicture} 
\end{center}
\end{blanko}

\begin{blanko}{Layer interpretation of the face and degeneracy maps.}
  Trees (and forests) should be read from leaves to root, in accordance with
  the operad interpretation, where leaves are inputs and root is output.
  The face maps are best understood from this viewpoint: in the lowest 
  degree
$
\begin{tikzcd}[column sep={3.8em}]
  Y_0 
  \ar[r, pos=0.65, "s_0" on top] & 
  \ar[l, shift left=6pt, pos=0.65, "d_0" on top] 
  \ar[l, wavy, shift right=6pt, pos=0.65, "d_1" on top] 
  Y_1
\end{tikzcd}$

$\bullet$ \
$d_0$ deletes the leaf level, retaining only the root colour;

$\bullet$ \
$d_1$ deletes the root level, retaining only the monomial of leaf 
colours.

\noindent
  In the next degree, 
  $
  \begin{tikzcd}[column sep={3.8em}]
    Y_1
  \ar[r, shift right=6pt, pos=0.65, "s_0" on top]
  \ar[r, shift left=6pt, pos=0.65, "s_1" on top]  
  &
  \ar[l, shift left=12pt, pos=0.65, "d_0" on top]
  \ar[l, wavy, shift right=12pt, pos=0.65, "d_2" on top]
  \ar[l, pos=0.65, "d_1" on top]
  Y_2
\end{tikzcd}
$

$\bullet$ \
$d_0$ deletes the leaf layer, leaving only 
a tree containing the root;

$\bullet$ \
$d_1$ joins the two layers, 
keeping the underlying $\PPP$-tree fixed;

$\bullet$ \
$d_2$ deletes the root 
layer, leaving only the crown forest.

\end{blanko}
\begin{equation*}
  \begin{tikzpicture}[line width=0.25mm]

  \begin{scope}[shift={(0.525, 0.42)}] 
	\draw (0.0, 0.525) -- (0.0, 1.225);
	\draw (-1.05, 0.875) node {$\SSS$};
	\draw (-0.7, 0.875) node {$\bigleftbrace{15}$};
	\draw (0.7, 0.875) node {$\bigrightbrace{15}$};
  \end{scope}

  \begin{scope}[shift={(1.05, 0.0)}]
    \draw[->, wavy] (2.1, 1.645) -- +(-1.4, 0.0);
    \draw[->] (2.1, 0.945) -- +(-1.4, 0.0);
	\draw (1.4, 1.86) node {\scriptsize leaves};
	\draw (1.4, 1.16) node {\scriptsize root};
  \end{scope}
	
  \begin{scope}[shift={(5.25, 0.0)}] 
	\draw (0.0, 0.175) -- (0.0, 0.595);
	\fill (0.0, 0.595) circle[radius=0.065];
	\draw (0.0, 0.595) -- (-0.35, 1.05);
	\fill (-0.35, 1.05) circle[radius=0.065];
	\draw (-0.35, 1.05) -- (-0.7, 1.75);
	\fill (-0.7, 1.75) circle[radius=0.065];
	\draw (-0.35, 1.05) -- (-0.35, 2.275);
	\draw (0.0, 0.595) -- (0.0, 2.275);
	\draw (0.0, 0.595) -- (0.455, 1.365);
	\fill (0.455, 1.365) circle[radius=0.065];
	\draw (0.455, 1.365) -- (0.42, 2.205);
	\fill (0.455, 1.365) circle[radius=0.065];
	\draw (0.455, 1.365) -- (0.63, 2.1);
	\draw (-1.61, 1.33) node {$\SSS$};
	\draw (-1.26, 1.33) node {$\bigleftbrace{26}$};
	\draw (1.155, 1.33) node {$\bigrightbrace{26}$};
  \end{scope}

  \begin{scope}[shift={(6.51, 0.0)}]
    \draw[->, wavy] (2.1, 1.995) -- +(-1.4, 0.0);
    \draw[->] (2.1, 1.295) -- +(-1.4, 0.0);
    \draw[->] (2.1, 0.595) -- +(-1.4, 0.0);
	
	\draw (1.4, 2.21) node {\scriptsize top forest};
	\draw (1.4, 1.47) node {\scriptsize forget cut};
	\draw (1.4, 0.82) node {\scriptsize bottom tree};
  \end{scope}

  \begin{scope}[shift={(10.99, 0.0)}] 
	\draw (0.0, 0.175) -- (0.0, 0.595);
	\fill (0.0, 0.595) circle[radius=0.065];
	\draw (0.0, 0.595) -- (-0.35, 1.05);
	\fill (-0.35, 1.05) circle[radius=0.065];
	\draw (-0.35, 1.05) -- (-0.7, 1.75);
	\fill (-0.7, 1.75) circle[radius=0.065];
	\draw (-0.35, 1.05) -- (-0.35, 2.275);
	\draw (0.0, 0.595) -- (0.0, 2.275);
	\draw (0.0, 0.595) -- (0.455, 1.365);
	\fill (0.455, 1.365) circle[radius=0.065];
	\draw (0.455, 1.365) -- (0.42, 2.205);
	\fill (0.455, 1.365) circle[radius=0.065];
	\draw (0.455, 1.365) -- (0.63, 2.1);
	\draw[line width=0.01] (-0.91, 1.4) .. controls (-0.175, 1.75) and (0.175, 0.63) .. (0.7, 0.875);
	\draw (-1.61, 1.33) node {$\SSS$};
	\draw (-1.26, 1.33) node {$\bigleftbrace{26}$};
	\draw (1.155, 1.33) node {$\bigrightbrace{26}$};
  \end{scope}
  
  \draw (14, 1.33) node {$\dots$};

\end{tikzpicture}\end{equation*}

The degeneracy maps (not pictured) insert empty layers:

$\bullet$ \
$s_0 : Y_0 \to Y_0$ sends a colour (trivial tree)
to the trivial tree of that colour;

$\bullet$ \
$s_0 : Y_1 \to Y_2$ adds the cut of all the leaves;

$\bullet$ \
$s_1 : Y_1 \to Y_2$ adds the cut of the root edge.

The top face maps generally take connected objects to 
non-connected ones, as exemplified by $Y_1 \stackrel{d_2}\leftarrow Y_2$
which takes a tree with a cut to the top forest.
All the non-top face maps as well as the degeneracy maps send 
connected objects to connected objects.

\subsection{Two-sided bar construction on $\PPP \bd$}
\label{sub:bar(BD)}

Let $\PPP$ be any operad, represented by $I \leftarrow E \to B \to I$.
We have seen in \ref{thm:Pbd} that the Baez--Dolan construction $\PPP\bd$ is represented by
$B \leftarrow \tr^\bullet(\PPP) \to \tr(\PPP) \to B$. We denote by $F:B 
\to 1$ the unique map (apologising for the notation clash: previously $F$ 
denoted the map $I \to 1$, relevant for the free-monad construction).

Denote by $Z:= \TSB{\SSS}{\PPP\bd}$ the two-sided bar construction of 
$\PPP\bd$. The beginning of $Z$ looks like this (picturing $\PPP$-trees
as plain trees, and omitting degeneracy maps):
\begin{equation*}
  \begin{tikzpicture}[line width=0.25mm]

	\begin{scope}[shift={(0.77, 0.07)}] 
	  \draw (0.0, 0.525) -- (0.0, 1.05);
	  \fill (0.0, 1.05) circle[radius=0.065];
	  \draw (0.0, 1.05) -- (-0.63, 1.575); 
	  \draw (0.0, 1.05) -- (-0.21, 1.855);
	  \draw (0.0, 1.05) -- (0.21, 1.855);
	  \draw (0.0, 1.05) -- (0.63, 1.575);
	  \draw (-1.33, 1.225) node {$\SSS$};
	  \draw (-0.98, 1.225) node {$\bigleftbrace{20}$};
	  \draw (0.98, 1.225) node {$\bigrightbrace{20}$};
	\end{scope}

	\begin{scope}[shift={(1.785, 0.0)}]
	  \draw[->, wavy] (2.1, 1.645) -- +(-1.4, 0.0);
	  \draw[->] (2.1, 0.945) -- +(-1.4, 0.0);
	  \draw (1.4, 1.86) node {\scriptsize nodes};
	  \draw (1.4, 1.16) node {\scriptsize residue};
	\end{scope}

	\begin{scope}[shift={(6.23, 0.0)}] 
	  \draw (0.0, 0.175) -- (0.0, 0.595);
	  \fill (0.0, 0.595) circle[radius=0.065];
	  \draw (0.0, 0.595) -- (-0.35, 1.05);
	  \fill (-0.35, 1.05) circle[radius=0.065];
	  \draw (-0.35, 1.05) -- (-1.05, 1.225);
	  \draw (-0.35, 1.05) -- (-0.7, 1.75);
	  \fill (-0.7, 1.75) circle[radius=0.065];
	  \draw (-0.35, 1.05) -- (-0.35, 2.275);
	  \draw (-0.35, 1.05) -- (0.0, 2.275);
	  \draw (0.0, 0.595) -- (0.56, 2.275);
	  \fill (0.28, 1.435) circle[radius=0.065];
	  \fill (0.42, 1.855) circle[radius=0.065];
	  \draw (-1.75, 1.33) node {$\SSS$};
	  \draw (-1.4, 1.33) node {$\bigleftbrace{30}$};
	  \draw (1.155, 1.33) node {$\bigrightbrace{30}$};
	\end{scope}

	\begin{scope}[shift={(7.595, 0.0)}]
	  \draw[->, wavy] (2.1, 1.995) -- +(-1.4, 0.0);
	  \draw[->] (2.1, 1.295) -- +(-1.4, 0.0);
	  \draw[->] (2.1, 0.595) -- +(-1.4, 0.0);
	  \draw (1.4, 2.20) node {\scriptsize blobs};
	  \draw (1.4, 1.47) node {\scriptsize forget blobs};
	  \draw (1.4, 0.81) node {\scriptsize contract blobs};
	\end{scope}

	\begin{scope}[shift={(12.18, 0.0)}] 
	  \draw (0.0, 0.175) -- (0.0, 0.595);
	  \fill (0.0, 0.595) circle[radius=0.065];
	  \draw (0.0, 0.595) -- (-0.35, 1.05);
	  \fill (-0.35, 1.05) circle[radius=0.065];
	  \draw (-0.35, 1.05) -- (-1.05, 1.225);
	  \draw (-0.35, 1.05) -- (-0.7, 1.75);
	  \fill (-0.7, 1.75) circle[radius=0.065];
	  \draw (-0.35, 1.05) -- (-0.175, 1.575);
	  \fill (-0.175, 1.575) circle[radius=0.065];
	  \draw (-0.175, 1.575) -- (-0.35, 2.275);
	  \draw (-0.175, 1.575) -- (0.0, 2.275);
	  \draw (0.0, 0.595) -- (0.56, 2.275);
	  \draw (0.0, 0.595) -- (0.42, 0.875);
	  \fill (0.42, 0.875) circle[radius=0.065];
	  \draw[line width=0.01, rotate around={70:(-0.28,1.33)}] (-0.28,1.33)  ellipse (0.56 and 0.245);
	  \draw[line width=0.01, rotate around={34:(0.21,0.77)}] (0.21,0.77)  ellipse (0.49 and 0.245);
	  \draw[line width=0.01, rotate around={70:(-0.7,1.75)}] (-0.7,1.75)  ellipse (0.175 and 0.175);
	  \draw[line width=0.01, rotate around={70:(0.28,1.435)}] (0.28,1.435)  ellipse (0.14 and 0.14);
	  \draw[line width=0.01, rotate around={70:(0.42,1.855)}] (0.42,1.855)  ellipse (0.14 and 0.14);
	  \draw (-1.75, 1.33) node {$\SSS$};
	  \draw (-1.4, 1.33) node {$\bigleftbrace{30}$};
	  \draw (1.155, 1.33) node {$\bigrightbrace{30}$};
	\end{scope}
	  	\draw (15, 1.33) node {$\dots$};

  \end{tikzpicture}
\end{equation*}

\noindent
as we now formalise.

\begin{blanko}{Description of the face maps of $Z=\TSB{\SSS}{\PPP\bd}$.}
  \label{faceZ}
  All the non-top face maps, as well as the degeneracy maps, send connected
  configurations to connected configurations. The top face maps generally
  take a connected configuration to a non-connected one (as indicated with
  wavy arrows), exemplified by $Z_1 \stackrel{d_2}\leftarrow Z_2$ which
  takes a blobbed tree to the forest consisting of the individual
  blobs. More formally, given an element $K \actto T$ in $Z_2$, there is a
  canonical cover of $K$ by corollas (one for each node), more 
  precisely a bijective-on-nodes inert map of forests
  $$
  \textstyle{\sum_i C_i \into K .}
  $$
  For each node $C_i$, take active-inert factorisation as in the diagram
  \[
  \begin{tikzcd}[sep={4em,between origins}]
  C_i \ar[d, into] \ar[r, dotted, -act]  & 
  S_i \ar[d, dotted, into] \dlactinert  \\
  K \ar[r, -act] & T  .
  \end{tikzcd}
  \]
  Altogether
  $$
  d_2( K{\actto}T) = S_1\cdots S_k.
  $$

  The general case is described as follows.
  $Z_n$ is the groupoid of monomials of sequences of active maps of $\PPP$-trees
  $$
  K^{(n)} \actto K^{(n-1)} \actto \cdots \actto K^{(1)} \actto K^{(0)}=T \to \PPP   ,
  $$
  with the condition that $K^{(n)}$ is a corolla. The last map $T \to \PPP
  $ is just to say that $T$ is a $\PPP $-tree. By \ref{induce}, this
  induces $\PPP$-structure on all trees $K^{(i)}$ in the sequence. Note
  that $K^{(n)}$ is redundant information, since any tree admits a unique active
  map from a corolla (more precisely: admits such a map and any two such are 
  uniquely isomorphic).
  Including $K^{(n)}$ is convenient because it gives a nerve flavour to the
  simplicial structure, as we shall now see:

  The bottom face map $d_0$ consists in deleting $T$.

  The middle face maps $d_i$ ($0<i<n$)
  consist in composing two consecutive maps.  The degeneracy maps $s_j$ just 
  insert identity maps.

  The top face map $d_n$ cannot
  just delete $K^{(n)}$, because of the requirement that the sequence begins
  with a corolla.  What it does instead is to `look into the
  nodes of $K^{(n-1)}$ and for each node return the remaining sequence seen through that 
  node'.  More precisely, each node of $K^{(n-1)}$ defines an inert map from a corolla
  $C\into K^{(n-1)}$;
  let $C^{(n-1)}$ be the forest of all nodes in $K^{(n-1)}$, so that 
  $C^{(n-1)} \into 
  K^{(n-1)}$
  is an inert map with the further properties that it is bijective on 
  nodes and its domain is a forest of corollas.  Now the top face map 
  is defined by returning the sequence obtained by active--inert factorising
  the whole configuration:
  $$\begin{tikzcd}[column sep = {5.2em,between origins}, row sep = {4.7em,between origins}]
  C^{(n-1)} \ar[d, into] \ar[r, dotted, -act] & 
  C^{(n-2)} \ar[d, dotted, into] \ar[r, dotted, -act] \dlactinert & 
  \cdots \ar[r, dotted, -act] & 
  C^{(1)} \dlactinert \ar[d, dotted, into] \ar[r, dotted, -act] &
  C^{(0)} \dlactinert \ar[d, dotted, into] 
  \\
  K^{(n-1)} \ar[r, -act] & K^{(n-2)} \ar[r, -act] & 
  \cdots \ar[r, -act] &  K^{(1)} \ar[r, -act] & K^{(0)}   .
  \end{tikzcd}$$
\end{blanko}

\begin{blanko}{Follow-up remarks on strictness (cf.~\ref{weak?} and \ref{followup}).}
\label{followup2}
  Let us comment on the possibility of carrying out the constructions in a 
  stricter setting. The challenge is to find a level of strictness for 
  polynomial monads 
  strict enough to give a strict simplicial groupoid for its two-sided 
  bar construction but still Segal, and at the same time reproduce itself under the 
  free-monad and Baez--Dolan constructions. 
  It was checked in \cite[\S 6.1]{Kock-Weber:1609.03276} that the two-sided
  bar construction can be kept strict and Segal if just $B \to I$ is a fibration. 

  One possibility is to work with polynomial monads
    $$    
  \begin{tikzcd}
  I \ar[d]& \ar[l, "s"']  E\ar[d] \drpullback \ar[r, "p"] & B\ar[d, 
  "f"] 
  \ar[r, "t"] & 
  I\ar[d] \\
  1  &\ar[l] \B'\ar[r] & \B \ar[r]  &1 
  \end{tikzcd}
  $$
  for which all squares are required strict, and {\em $f$, $s$, and $t$ are
  required to be fibrations}.
  (Weber's setting meets these conditions, but
  requires furthermore that $I$ is
  discrete and that $f$ is a discrete fibration.) Intuitively, what the
  conditions say is that for any operation one can change the output and
  input colours to isomorphic colours and get a new operation.

  It is not difficult to check that the conditions are reproduced by the
  free-monad construction: roughly, the leaf map $I \leftarrow \tr'(\PPP)$
  is again a fibration because one can change colour at the marked leaf by
  changing colour at the adjacent node, and the root map $\tr(\PPP) \to I$
  is a fibration by the same argument applied to the root. For the
  Baez--Dolan construction it is more difficult. The
  return-the-marked-node map $B \leftarrow\tr^\bullet(\PPP)$ is a
  fibration because one can replace the marked node by an isomorphic node
  by gluing along the colour changes at the gluing locus. Similarly, $\mu:
  \tr(\PPP) \to B$ is {\em almost} a fibration: one can replace the residue
  of a tree --- but this does {\em not} work for trivial trees. Indeed
  given an isomorphism of colours $i\simeq j$, the trivial tree
  \inlineDotlessTree $\!_i$\; has residue \inlineonetree $\!^i_i$\; which
  is isomorphic to \inlineonetree $\!^i_j$\; (by the fibration condition),
  but there is no trivial tree with this residue. It is necessary here to
  perform a fibrant replacement of $\mu:\tr(\PPP) \to B$, by
  standard-homotopy-pullback along $\id: B \isopil B$. This replaces the
  groupoid of $\PPP$-trees with the equivalent groupoid of $\PPP$-trees
  equipped with an active map from some corolla. Note that we already
  performed this replacement silently in \ref{faceZ} when we described
  $Z_1$ as the groupoid of configurations
  $$
  K^{(1)} \actto K^{(0} = T \to \PPP,
  $$
  including the redundant corolla $K^{(1)}$.
\end{blanko}

\section{Incidence comodule bialgebras}

\label{sec:main-general}

So far we have seen that the simplicial groupoids given by two-sided
bar construction on $\PPP \upperstar$ and $\PPP \bd$, respectively, have the same 
groupoid in degree $1$, namely the groupoid $\SSS(\tr(\PPP))$ of monomials of 
$\PPP$-trees, and therefore define two different bialgebra
structures on $\Grpd_{/Y_1} \simeq \Grpd_{/Z_1}$.  In this section we 
establish the first version of the Main Theorem (\ref{thm:main}), showing 
that these two bialgebras form a comodule bialgebra. We first need to
set up the language required.

\subsection{Comodule bialgebras}
\label{sub:comodulebialgebras}

\begin{blanko}{Classical comodule bialgebras.}
  Recall first the definition (see for example Abe~\cite[\S~3.2]{Abe} and
  Manchon~\cite{Manchon:Abelsymposium}). Fix a bialgebra $B$ (over $\Q$).
  A {\em comodule
  bialgebra} over $B$ is a bialgebra in the (braided) monoidal category 
  $B\kat{-Comod}$
  of
  (left) $B$-comodules. Note that the notion of $B$-comodule uses only the
  coalgebra structure of $B$, not the algebra structure, but that it is the
  algebra structure of $B$ that endows the category of $B$-comodules with a
  {\em monoidal} structure, given as follows. If $M$ and $N$ are left
  $B$-comodules, then there is a left $B$-comodule structure on $M\tensor
  N$ is given by the composite map
  $$
  M \tensor N \to B \tensor M \tensor B \tensor N 
  \stackrel{\mu_{13}}\to B \tensor M \tensor N .
  $$
  Here $\mu_{13}$ is the map that first swaps the two middle tensor
  factors and then uses the multiplication of $B$ in the two now
  adjacent $B$-factors. It follows from the bialgebra axioms that this
  is a valid left $B$-comodule structure, giving the monoidal
  structure on the category of left $B$-comodules; the unit object is
  the $B$-comodule $\Q$ (with structure map the unit of $B$).
  Furthermore, one checks that the braiding on the underlying category
  of vector spaces lifts to the category of $B$-comodules --- this
  depends on the bialgebra $B$ being commutative.

  We now have a braided monoidal structure on $B\kat{-Comod}$, and it makes
  sense to consider bialgebras in here. A bialgebra in $B\kat{-Comod}$ is a
  $B$-comodule $M$ together with structure maps
  \begin{xalignat*}{2}
  \Delta_M : M &\to M \tensor M  & \varepsilon_M : M &\to \Q \\
  \mu_M: M \tensor M &\to M & \eta_M : \Q &\to M 
  \end{xalignat*}
  all required to be $B$-comodule maps and
   to satisfy the usual bialgebra axioms.
  We shall be concerned in particular with the requirement that
  $\Delta$ and $\varepsilon$ be $B$-comodule maps, which is to say 
  that
  they are compatible with the coaction $\gamma: M \to B \tensor M$:
  \begin{equation}\label{comodulebialg-axiom}
  \begin{tikzcd}
  M \ar[r, "\Delta_M"] \ar[dd, "\gamma"'] & M \tensor M \ar[d, "\gamma \tensor 
  \gamma"] \\
  & B \tensor M \tensor B \tensor M \ar[d, "\mu_{13}"] \\
  B \tensor M \ar[r, "B \tensor \Delta_M"'] & B \tensor M \tensor M
  \end{tikzcd}
  \qquad
  \begin{tikzcd}
  M \ar[d, "\gamma"'] \ar[r, "\varepsilon_M"] & \Q \ar[d, "\eta_B"] \\
  B \tensor M \ar[r, "B\tensor \varepsilon"'] & B .
  \end{tikzcd}
  \end{equation}
  In the main theorem, the two other axioms will automatically be 
  satisfied, because it will be the case that
  
  $\bullet$ \
  as a
  comodule, $M$ coincides with $B$ itself (with coaction $=$ comultiplication),
    
  $\bullet$ \
  the algebra structure of $M$ coincides with that of $B$ (both will 
  be free commutative).

  We now turn to the objective version of these structures.
\end{blanko}

\begin{blanko}{Comodule configurations.}\label{comoduleconf}
  Comodules arising in combinatorics are often the cardinality of
  certain slice-level comodules given by so-called comodule
  configurations of simplicial groupoids, first studied by
  Walde~\cite{Walde:1611.08241} and Young~\cite{Young:1611.09234}. We
  follow the terminology of Carlier~\cite{Carlier:1801.07504}, 
  \cite{Carlier:1903.07964}.
  For $X$ a decomposition space, a {\em left $X$-comodule configuration} 
  is a culf map 
  $$
  u:M \to X,
  $$
  where $M$ is a Segal space. It is useful to stare at the diagram:
  \[\begin{tikzcd}[column sep = 5em]
  X_0
  \ar[r, pos=0.65, "s_0" on top] 
  &
  \ar[l, shift left=6pt, pos=0.65, "d_0" on top]
  \ar[l, shift right=6pt, pos=0.65, "d_1" on top]
  X_1
  \ar[r, shift right=6pt, pos=0.65, "s_0" on top]
  \ar[r, shift left=6pt, pos=0.65, "s_1" on top]  
  &
  \ar[l, shift left=12pt, pos=0.65, "d_0" on top]
  \ar[l, shift right=12pt, pos=0.65, "d_2" on top]
  \ar[l, pos=0.65, "d_1" on top]
  X_2
  &
  \cdots
  \\ 
  \\
  M_0
  \ar[uu, "u"]
  \ar[r, pos=0.65, "s_0" on top] 
  &
  \ar[l, shift left=6pt, pos=0.65, "d_0" on top]
  \ar[l, shift right=6pt, pos=0.65, "d_1" on top]
  M_1
  \ar[uu, "u"]
  \ar[r, shift right=6pt, pos=0.65, "s_0" on top]  
  \ar[r, shift left=6pt, pos=0.65, "s_1" on top]  
  &
  \ar[l, shift left=12pt, pos=0.65, "d_0" on top]
  \ar[l, shift right=12pt, pos=0.65, "d_2" on top]
  \ar[l, pos=0.65, "d_1" on top]
  M_2
  \ar[uu, "u"]
  &
  \cdots
  \end{tikzcd}\] 
  The actual comodule is then $\Grpd_{/M_0}$, and the 
  coaction by $\Grpd_{/X_1}$ is the linear functor
  $$
  \gamma: \Grpd_{/M_0} \to \Grpd_{/X_1} \tensor \Grpd_{/M_0}
  $$
  given by the span
  $$
  M_0 \stackrel{d_1}{\longleftarrow} M_1 \stackrel{(u,d_0)}{\longrightarrow} X_1 
  \times M_0 .
  $$
  
  A comodule configuration $u:M \to X$ is called {\em locally finite} when
  $X$ is locally finite and the face map $M_0 
  \stackrel{d_1}{\leftarrow} M_1$ is finite. This is the map whose fibres 
  are summed over, so the condition is necessary and sufficient to be able
  to take homotopy cardinality to arrive at a comodule at the level of 
  vector spaces, called the {\em incidence comodule}.
\end{blanko}

\begin{blanko}{Example: decalage.}\label{dec-map}
  Recall that for any simplicial groupoid $X$, the decalage $\Dectop X$
  (also called the path-space construction~\cite{Dyckerhoff:1505.06940})
  is the simplicial groupoid obtained from $X$ by shifting all the
  groupoids down one degree, and omitting the top face and the top
  degeneracy maps. The original top face maps serve to give a simplicial
  map $u : \Dectop X \to X$ called the {\em dec map}.
  It is a general fact (\cite[Prop.~4.9]{Galvez-Kock-Tonks:1512.07573})
  that if $X$ is a decomposition space then $\Dectop X$ is a Segal space
  and the dec map $u : \Dectop X \to X$ is culf. This is to say that
  $\Dectop X$ is a left comodule configuration over $X$. The corresponding
  incidence comodule is simply the incidence coalgebra of $X$ considered as a
  left comodule over itself.
\end{blanko}

\begin{blanko}{Comodule bialgebras, objectively.}
  Given a symmetric monoidal decomposition space $Z$, there is induced
  a symmetric bialgebra structure on $\Grpd_{/Z_1}$ (meaning that the 
  multiplication is a symmetric 
  monoidal structure). To provide
  comodule-bialgebra structure on some slice $\Grpd_{/A}$ we need to make
  the groupoid $A$ appear simultaneously as $A=Y_1$ for a monoidal
  decomposition space $Y$, and as $A= M_0$ for a left $Z$-comodule
  configuration $u:M\to Z$. Then we need to check the axioms. In the 
  first case of interest, the underlying comodule, as well as its monoidal 
  structure, will be $Z$ again. As a comodule configuration, this is  
  more precisely the upper dec $u: \Dectop Z \to Z$ from \ref{dec-map}.
 \end{blanko}

\subsection{Main theorem, general form}
\label{sub:mainthm}

\begin{theorem}\label{thm:main}
  For any operad $\PPP$, the
  two-sided bar constructions $\TSB{\SSS}{\PPP\upperstar}$ and
  $\TSB{\SSS}{\PPP\bd}$ together endow the slice $ \Grpd_{/\SSS(\tr(\PPP))} $ with
  the structure of a comodule bialgebra. Precisely, the incidence bialgebra
  of $\TSB{\SSS}{\PPP\upperstar}$ is a left comodule bialgebra over the
  incidence bialgebra of $\TSB{\SSS}{\PPP\bd}$.
\end{theorem}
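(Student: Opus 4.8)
The plan is to reduce the theorem to the single comodule-bialgebra compatibility square~\eqref{comodulebialg-axiom} and then verify that square objectively by a homotopy-pullback computation. First I would record the data. Following the recipe at the end of Subsection~4.1, the underlying comodule is the upper dec $u:\Dectop Z \to Z$ of~\ref{dec-map}, whose base is $(\Dectop Z)_0 = Z_1 = \SSS(\tr(\PPP)) = Y_1$, so the $Z$-coaction $\gamma$ on $\Grpd_{/Y_1}$ is carried by the span $Z_1 \xleftarrow{d_1} Z_2 \xrightarrow{(d_2,d_0)} Z_1 \times Z_1$ — literally the comultiplication span of $Z$ (the incidence coalgebra of $Z$ regarded as a comodule over itself). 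Both bialgebras carry the \emph{same} free-commutative algebra structure on $\Grpd_{/\SSS(\tr(\PPP))}$, namely disjoint union of $\PPP$-forests, and as a comodule $M$ coincides with $Z$; as noted in Subsection~4.1 this makes the counit square of~\eqref{comodulebialg-axiom}, together with the requirements that $\mu_M$ and $\eta_M$ be comodule maps, formal consequences of the bialgebra axioms already established for $Z$. The entire content therefore sits in the left square of~\eqref{comodulebialg-axiom}, which asserts that the coproduct $\Delta_M = \Delta_Y$ is a $B$-comodule map.

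Next I would write the two composite linear functors appearing in that square as composites of spans and compute their apexes as iterated homotopy pullbacks, using Beck--Chevalley~\ref{lem:BC} and the Prism Lemma~\ref{lem:prism}. Along the left-and-bottom path $M \xrightarrow{\gamma} B\tensor M \xrightarrow{B\tensor\Delta_Y} B\tensor M\tensor M$, the apex is the pullback $Z_2 \times_{Z_1} Y_2$ of $d_0\colon Z_2\to Z_1$ against $d_1\colon Y_2\to Z_1$; by~\ref{faceZ} and~\ref{layerings-cuts} this is exactly the groupoid of composable active maps of $\PPP$-trees $W\actto K\actto T$ with $W$ a $2$-level tree — the groupoid flagged in the introduction. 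The output legs are the input $d_1(z)=T$ and the target triple $(\mathrm{blobs}(T),\,P_c,\,R_c)$, where $P_c,R_c$ are the top forest and bottom tree of the cut $W\actto K$. Along the top-and-right path $M \xrightarrow{\Delta_Y} M\tensor M \xrightarrow{\gamma\tensor\gamma} B\tensor M\tensor B\tensor M \xrightarrow{\mu_{13}} B\tensor M\tensor M$, the apex is the pullback $Y_2 \times_{Z_1\times Z_1} (Z_2\times Z_2)$ of $(d_2,d_0)$ against $(d_1,d_1)$; by the layer and blob descriptions of~\ref{faceY},~\ref{faceZ} this is the groupoid of data $(W'\actto T,\ K_P\actto P,\ K_R\actto R)$ consisting of a $2$-level cut of $T$ into top forest $P$ and bottom tree $R$ together with blobbings of $P$ and of $R$, with output triple $(\mathrm{blobs}(P)\sqcup\mathrm{blobs}(R),\,K_P,\,K_R)$ and input $T$.

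Finally I would exhibit an equivalence between these two apex groupoids compatible with all three output legs and the input leg. Given $W\actto K\actto T$, the cut $W\actto K$ of $K$ induces a cut $W'\actto T$ of $T$: the blobs lying below the cut graft to a connected rooted subtree $R_T$ of $T$, those above to a forest $P_T$, and the blobbing $K\actto T$ restricts to blobbings $R_c\actto R_T$ and $P_c\actto P_T$; conversely one reassembles $K$ (and the cut of $K$) by gluing. Under this correspondence the legs match on the nose: $K_P=P_c$ and $K_R=R_c$, the input is $T$ on both sides, and the $B$-factor agrees since the cut of $K$ partitions the nodes of $K$, hence the blobs, giving $\mathrm{blobs}(T)=\mathrm{blobs}(P_T)\sqcup\mathrm{blobs}(R_T)$. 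The main obstacle is to upgrade this correspondence from a bijection of iso-classes to a genuine \emph{equivalence of groupoids}: this is exactly where the active--inert factorisation system of $\OMEGA$ (\ref{active-inert-Omega}) and the decomposition of active maps over nodes (Lemma~\ref{lem:Act(K)}) do the work, the induced cut being produced by the canonical refactoring of an inert-then-active composite and the gluing controlled by that decomposition. I expect the delicate bookkeeping to concern blobs carrying only a trivial tree — the subtlety already signalled in~\ref{blobbing} and~\ref{layerings-cuts} — which is precisely what forces the cuts to be encoded by active maps from $2$-level trees rather than by bare node-partitions, and which must be tracked consistently so that the equivalence respects degeneracies and hence assembles into the required isomorphism of linear functors.
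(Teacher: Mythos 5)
Your handling of the comultiplication compatibility is essentially the paper's own proof of Lemma~\ref{lem:comult=comodmap}: the same middle groupoid $Q = Z_2 \times_A Y_2$ of composable active maps $W \actto K \actto T$ (with $W$ a $2$-level tree), the same identification of the other composite's apex as $Y_2 \times_{A\times A}(Z_2\times Z_2)$, and the same key inputs --- Beck--Chevalley, the Fibre Lemma~\ref{lem:fibre} applied to factorisation fibres, and Lemma~\ref{lem:Act(K)} to see that blobbing $T$ compatibly with a cut is the same as blobbing each layer separately. Presenting this as ``two apexes plus an equivalence over all the legs'' rather than ``one groupoid serving as pullback for both paths'' is only a difference of packaging.

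The genuine gap is in your reduction step: you fold the counit square of \eqref{comodulebialg-axiom} in with the $\mu$- and $\eta$-compatibilities as ``formal consequences of the bialgebra axioms already established for $Z$''. Only the latter two are formal (the algebra structure of the comodule bialgebra literally \emph{is} that of $Z$, which is compatible with $\gamma=\Delta_Z$ by $Z$'s own bialgebra axioms); the paper's subsection on comodule bialgebras says exactly this, and explicitly keeps both $\Delta$ \emph{and} $\varepsilon$ on the list of axioms to verify. The counit $\epsilon_Y$ is not part of $Z$'s structure in any way: it is carried by the span $A \xleftarrow{s_0^Y} Y_0 \to 1$ with $Y_0 = \SSS I$ the groupoid of trivial forests, whereas $Z$'s counit lives on $Z_0 = \SSS B$, forests of corollas; no axiom of $Z$ mentions $\epsilon_Y$. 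Indeed, from what you have proved one can only deduce formally that $f := (\id\tensor\epsilon_Y)\circ\gamma$ is idempotent for the convolution product induced by $\Delta_Y$ and the multiplication, not that it equals the convolution unit $\eta\circ\epsilon_Y$, which is what the counit square asserts. The compatibility is therefore a genuine statement with real combinatorial content --- it holds because a trivial tree admits only the identity active map out of it, i.e.\ only the empty blobbing --- and at the objective level it needs its own diagram of commutative and pullback squares. This is the paper's Lemma~\ref{lem:counit=comodmap}, where the middle object is forced to be $Y_0$ and the crucial check is that the fibre of $\operatorname{dom}\colon \kat{Act}(\PPP) \to \OMEGA_{\operatorname{iso}}(\PPP)$ over a trivial forest is contractible, again by Lemma~\ref{lem:Act(K)} (an empty product over no nodes). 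Adding this lemma --- which uses only tools you already invoke --- completes your proof.
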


\begin{blanko}{Set-up.}
  As usual, we assume $\PPP$ is represented by $I \leftarrow E \to B \to
  I$, and employ the following notation:
  
  $\bullet$ \
  $Y = \TSB{\SSS}{\PPP\upperstar}$ is the two-sided bar
  construction on $\PPP\upperstar$;

  $\bullet$ \
  $Z = \TSB{\SSS}{\PPP\bd}$ is the two-sided bar construction on $\PPP\bd$.

  \noindent
  These simplicial objects have the same groupoid in degree $1$, which we
  give a special name:
  $$
  A := Z_1 = Y_1 = \SSS(\tr(\PPP)),
  $$
  the `basis' for the comodule bialgebra $\Grpd_{/A}$.
  In
  all diagrams following, comultiplication in $Z$ (as well as the coaction) is
  written vertically, whereas comultiplication in $Y$ is written
  horizontally.
\end{blanko}

\begin{proof}[Proof of Theorem~\ref{thm:main}]
  We show below in \ref{lem:comult=comodmap} that {\em the comultiplication
  of $Y$ is a $Z$-comodule map}, and in \ref{lem:counit=comodmap} that {\em
  the counit of $Y$ is a $Z$-comodule map}. The two remaining axioms, that
  the algebra structure maps are $Z$-comodule maps, are automatically
  satisfied since the algebra structure is the same as that of $Z$, which
  is compatible with the comodule structure by the bialgebra axioms for $Z$.
\end{proof}
  
\begin{lemma}\label{lem:comult=comodmap}
  The comultiplication of $Y$ is a $Z$-comodule map.
\end{lemma}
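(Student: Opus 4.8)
The plan is to verify the comodule-bialgebra axiom~\eqref{comodulebialg-axiom} (for $\Delta_M = \Delta_Y$) directly at the objective level, by writing both composite linear functors as composites of spans and then exhibiting a single groupoid that serves as the pullback apex for both. Recall that the comodule here is the upper dec $u\colon \Dectop Z \to Z$ of~\ref{dec-map}, so that $M_0 = Z_1 = A$ and the coaction $\gamma$ is the span $A \xleftarrow{d_1} Z_2 \xrightarrow{(d_2,d_0)} Z_1\times A$; that is, $\gamma$ coincides with the comultiplication $\Delta_Z$, feeding the ``blobs'' map $d_2$ into the $B$-slot and the ``contract blobs'' map $d_0$ into the $M$-slot (notation of~\ref{faceZ}). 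Since the bialgebra multiplication on $B=\Grpd_{/Z_1}$ is induced by an honest functor (concatenation of monomials $Z_1\times Z_1\to Z_1$), the operator $\mu_{13}$ contributes no pullback, and the axiom reduces to an equivalence of the apex groupoids of the two span composites $(B\tensor \Delta_Y)\circ\gamma$ and $\mu_{13}\circ(\gamma\tensor\gamma)\circ\Delta_Y$, together with agreement of their two legs, into $A$ on the left and into $Z_1\times A\times A$ on the right.

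The common apex will be the groupoid $G$ of (monomials of) composable active maps of $\PPP$-trees $W \actto K \actto T$ with $W$ a $2$-level tree --- the ``blobbed trees with a compatible cut'' of the introduction. First I would treat the right-hand composite $(B\tensor\Delta_Y)\circ\gamma$. Composing the two spans by pullback and invoking Beck--Chevalley~(\ref{lem:BC}) to move the pullbacks past the postcompositions, the apex is the fibre product $Z_2\times_A Y_2$ formed over the ``outer tree'' map $d_0\colon Z_2\to A$, $(K\actto T)\mapsto K$, and the ``forget cut'' map $d_1\colon Y_2\to A$, $(W\actto K)\mapsto K$; this is precisely $G$. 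Its left leg to $A$ is $(W\actto K\actto T)\mapsto T$, and its right leg to $Z_1\times A\times A$ returns the blob-contents $d_2(K\actto T)$ together with the top forest and bottom tree $(d_2,d_0)(W\actto K)$ of the cut of the outer tree $K$.

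For the left-hand composite I would proceed identically: its apex is $Y_2\times_{A\times A}(Z_2\times Z_2)$, an object of which is a cut $W\actto T$ of $T$ (with top forest $P$ and bottom tree $R$) equipped with blobbings $K'\actto P$ and $K''\actto R$ of the two halves. The crux is then the refactoring equivalence
$$Z_2\times_A Y_2 \;\simeq\; Y_2\times_{A\times A}(Z_2\times Z_2).$$
Given $W\actto K\actto T$, one composes to obtain the cut $W\actto T\in Y_2$ of the \emph{total} tree, and recovers the two blobbings by applying the active--inert factorisation of~\ref{active-inert-Omega} to the inert inclusions into $K\actto T$ of, respectively, the root-level subtree and the leaf-level crown forest of the cut $W\actto K$; conversely, two compatible blobbings glue along the cut --- by the colimit-of-nodes description of active maps (Lemma~\ref{lem:Act(K)}) --- to reconstitute both the outer tree $K$ and the $2$-level tree $W$. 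Under this equivalence the legs match on the nose: the left legs both return $T$; on the right, $K'$ is the top forest and $K''$ the bottom tree of the $K$-cut, and the concatenated blob-contents of the two halves equal the full blob-contents of $K\actto T$.

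I expect the main obstacle to be exactly this refactoring step: promoting the pictorial slogan ``a cut of the outer tree never crosses a blob, so it transports to a compatible cut of the total tree'' into a genuine equivalence of groupoids, with an explicit inverse and the requisite coherence, phrased throughout in terms of composable active maps. The delicate ingredients are the existence and uniqueness furnished by the active--inert factorisation system~(\ref{active-inert-Omega}) and the bookkeeping of trivial blobs and trivial layers (the empty-tree subtleties emphasised in~\ref{blobbing} and~\ref{layerings-cuts}), which must be seen not to disturb the correspondence; once the apex equivalence and leg-matching are in place, the resulting natural isomorphism of linear functors --- and hence the lemma --- follows formally.
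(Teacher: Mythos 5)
Your proposal is correct and follows essentially the same route as the paper's proof: the paper also takes as common apex the groupoid $Q$ of composable active maps $W \actto K \actto T$ (with $W$ a $2$-level tree), identifies it as the pullback $Z_2 \times_A Y_2$ along $d_0^Z$ and $d_1^Y$, and establishes exactly your ``crux'' refactoring equivalence $Q \simeq Y_2 \times_{A\times A}(Z_2 \times Z_2)$, with the same combinatorial input (active--inert factorisation and Lemma~\ref{lem:Act(K)}). The only difference is presentational: the paper organises the argument as filling a large diagram with two commutative squares and two pullback squares followed by Beck--Chevalley, and verifies the key pullback fibrewise via the Fibre Lemma~\ref{lem:fibre} (comparing the groupoids $\operatorname{Fact}(W\actto T)$ rather than constructing an explicit inverse equivalence), which sidesteps the coherence bookkeeping you flag as the main obstacle.
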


\begin{proof}
  We must show that the following diagram commutes up to natural isomorphism:
  \begin{equation}\label{eq:Grpd-diagram}
  \begin{tikzcd}
	 \Grpd_{/A} \ar[rr, "\Delta_Y"] \ar[dd, "\gamma"'] && \Grpd_{/A} \tensor \Grpd_{/A}
	 \ar[d, "\gamma\tensor \gamma"] \\
	 && \Grpd_{/A} \tensor \Grpd_{/A} \tensor \Grpd_{/A} \tensor \Grpd_{/A} \ar[d, "\mu_{13}"] \\
	 \Grpd_{/A} \tensor \Grpd_{/A} \ar[rr, "\id \tensor \Delta_Y"'] && 
	 \Grpd_{/A} \tensor \Grpd_{/A} \tensor \Grpd_{/A}   .
  \end{tikzcd}
  \end{equation}
  Spelling out the spans that define these functors, we are faced with
  the solid diagram
  \begin{equation}\label{eq:Q}
  \begin{tikzcd}[sep=large]
	 A & \ar[l, "d_1^Y"'] Y_2 \ar[r, "{(d_2^Y, d_0^Y)}"] & A \times A
	 \\
	 Z_2 \ar[u, "d_1^Z"] \ar[dd, "{(d_2^Z,d_0^Z)}"']  & 
	 Q \dlpullback \urpullback 
	 \ar[l, dotted] \ar[u, dotted] \ar[r, dotted] \ar[dd, dotted] &
	 Z_2 \times Z_2 \ar[d, "{(d_2^Z,d_0^Z)\times(d_2^Z,d_0^Z)}"] \ar[u, "d_1^Z\times d_1^Z"'] 
	 \\
	  {}&& A \times A \times A \times A \ar[d, "\mu_{13}"] 
	 \\
	 A \times A& A \times Y_2 \ar[l, "\id \times d_1^Y"] \ar[r, "{\id 
	 \times (d_2^Y,d_0^Y)}"']& A \times A \times A  .
  \end{tikzcd}
  \end{equation}
  To establish that Diagram~\eqref{eq:Grpd-diagram} commutes, the standard
  technique (see in particular \cite{Carlier:1903.07964}) is to fill the
  diagram \eqref{eq:Q} with commutative squares, such that furthermore the
  lower left-hand and upper right-hand squares are pullbacks, as indicated.
  Then the Beck--Chevalley isomorphisms \ref{lem:BC} will deliver the
  required natural isomorphism in \eqref{eq:Grpd-diagram}.

  We need to exhibit the middle groupoid $Q$ and the dotted maps.
  As is typical for an argument of this kind, these have formal 
  categorical definitions and at the same time clean combinatorial 
  interpretations.
  The groupoid $Q$ is the pullback
  \[\begin{tikzcd}
  Z_2 \ar[d, "d_0^Z"'] & \ar[l] Q \dlpullback \ar[d]  \\
  A & \ar[l, "d_1^Y"] Y_2 .
  \end{tikzcd}\]
  In other words, the (connected) objects of $Q$ consist of an active map
  $K \actto T$ (element in $Z_2$), and an active map $W \actto K$ with $W$
  a $2$-level tree (element in $Y_2)$, with the same $K$ (that's the
  condition of being fibred over $A$). This can be taken as a strict
  pullback, because $d_1^Y$ (in the explicit description given) is easily
  seen to be a fibration. In conclusion, $Q$ is the groupoid of (monomials
  of) active maps
  $$
  W \actto K \actto T,
  $$
  where $T$ and $K$ are $\PPP$-trees, and $W$ is a $2$-level $\PPP$-tree.
  Such configurations in turn can be interpreted as (monomials of) blobbed
  $\PPP$-trees with a compatible cut:
  \begin{center}
  \begin{tikzpicture}
	
	\begin{scope}[shift={(0.0,0.0)}]
	  \draw[densely dotted] (0.0, 0.0) circle (0.33);
	  \coordinate (b1) at (0.0, -0.17);
	  \coordinate (b21) at (-0.15, 0.11);
	  \coordinate (b22) at (0.15, 0.11);
	  \draw (b1)--(b21) pic {onedot};
	  \draw (b1)--(b22) pic {onedot};
	\end{scope}

	\begin{scope}[shift={(-0.65,0.45)}]
	  \draw[densely dotted] (0.0, 0.0) circle (0.3);
	  \coordinate (c1) at (0.1, -0.1);
	  \coordinate (c2) at (-0.1, 0.1);
	\end{scope}

	\begin{scope}[shift={(0.45,0.4)}]
	  \draw[densely dotted] (0.0, 0.0) circle (0.18);
	  \coordinate (d) at (0.0, 0.0);
	\end{scope}
	
	\begin{scope}[shift={(-1.0,1.25)}]
	  \draw[densely dotted] (0.0, 0.0) circle (0.32);
	  \coordinate (e1) at (0.0, -0.17);
	  \coordinate (e21) at (-0.14, 0.11);
	  \coordinate (e22) at (0.14, 0.11);
	  \draw (e1)--(e21) pic {onedot} -- +(-0.2, 0.6);
	  \draw (e21) -- +(0.2, 1.00);
	  \draw[densely dotted] (e21)+(0.12, 0.60) circle (0.14);
	  \draw (e1)--(e22) pic {onedot};
	\end{scope}

	\begin{scope}[shift={(0.0,1.0)}]
	  \draw[densely dotted] (0.0, 0.0) circle (0.32);
	  \coordinate (f1) at (0.0, -0.17);
	  \coordinate (f21) at (-0.14, 0.11);
	  \coordinate (f22) at (0.14, 0.11);
	  \draw (f1)--(f21) pic {onedot};
	  \draw (f1)--(f22) pic {onedot};
	  \draw (f22) -- +(-0.2, 0.8);
	  \draw (f22) -- +(0.17, 0.9);
	\end{scope}

	\begin{scope}[shift={(-0.4,1.6)}]
	  \draw[densely dotted] (0.0, 0.0) circle (0.28);
	  \coordinate (u3) at (0.0, -0.13);
	  \coordinate (u31) at (0.0, 0.1);
	  \draw (u3) -- (u31) pic {onedot} -- +(-0.2, 0.8);
	  \draw (u31) -- +(0.2, 0.8);
	\end{scope}

   \begin{scope}[shift={(1.1,1.05)}]
	  \draw[densely dotted] (0.03, 0.0) circle (0.46);
	  \coordinate (r1) at (0.0, -0.3);
	  \coordinate (r2) at (-0.25, 0.0);
	  \coordinate (r3) at (0.05, 0.0);
	  \coordinate (r4) at (0.3, -0.1);
	  \coordinate (r5) at (-0.1, 0.3);
	  \coordinate (r6) at (0.16, 0.3);
	  \draw (r1) -- (r2) pic {onedot} -- +(-0.4, 1.4);
	  \draw (r1) -- (r3) pic {onedot};
	  \draw (r1) -- (r4) pic {onedot} -- +(0.2, 0.8);
	  \draw (r3) -- (r5) pic {onedot};
	  \draw (r3) -- (r6) pic {onedot} -- +(0.2, 0.8);
	\end{scope}
	   
	\begin{scope}[shift={(0.48,1.4)}]
	  \draw[densely dotted] (0.0, 0.0) circle (0.16);
	  \coordinate (v) at (0.0, 0.0);
	\end{scope}

	\begin{scope}[shift={(1.0,2.0)}]
	  \draw[densely dotted] (0.0, 0.0) circle (0.32);
	  \coordinate (w1) at (0.0, -0.17);
	  \coordinate (w21) at (-0.14, 0.11);
	  \coordinate (w22) at (0.14, 0.11);
	  \draw (w1)--(w21) pic {onedot} -- +(-0.2, 0.6);
	  \draw (w1)--(w22) pic {onedot} -- +(0.2, 0.6);
	  \draw (w21) -- +(0.2, 0.8);
	\end{scope}

	\draw (0.0,-0.6) -- (b1) pic {onedot};
	\draw (b21)--(c1) pic {onedot} --(c2) pic {onedot};
	\draw (b22)--(d) pic {onedot};
	\draw (c2)--(e1) pic {onedot};
	\draw (b21)--(f1) pic {onedot};
	\draw (c1)--(u3) pic {onedot};
	\draw (d)--(r1) pic {onedot};
	\draw (d)--(v) pic {onedot};
	\draw (r5)--(w1) pic {onedot};

	\draw[ultra thin] (-1.5, 0.6) to[out=0, in=180] (-0.6, 0.9);
	\draw[ultra thin] (-0.6, 0.9) to[out=0, in=180] (0.0, 0.5);
	\draw[ultra thin] (0.0, 0.5) to[out=0, in=180] (0.45, 0.7);
	\draw[ultra thin] (0.45, 0.7) to[out=0, in=180] (1.2, 0.2);
	
	  \draw (-3.0, 1.05) node {$Q \ = \ $};
	  \draw (-2.2, 1.05) node {$\SSS$};
	  \draw (-1.8, 1.05) node {$\bigleftbrace{35}$};
	  \draw (2.0, 1.05) node {$\bigrightbrace{35}$};

  \end{tikzpicture}
  \end{center}
  Here $T$ is the total tree, $K$ is the tree of blobs, and the $2$-level 
  tree $W$ is represented by the cut, as explained in \ref{layerings-cuts}.

  It remains to exhibit the maps, and check that the squares are
  commutative and pullbacks as indicated. These checks occupy 
  \ref{pf:ULHS}--\ref{pf:LRHS} below.
\end{proof}

\begin{blanko}{The upper left-hand square.}\label{pf:ULHS}
  The maps constituting the upper left-hand square are clear from the 
  descriptions:
  $$
  \begin{tikzcd}
  A & \ar[l, "d_1"'] Y_2 \\
  Z_2 \ar[u, "d_1"] & \ar[l] Q \ar[u]
  \end{tikzcd}
  \qquad :
  \qquad
  \begin{tikzcd}
	\{T\} & \{W {\actto\,} T\} \ar[l, "\text{forget $W$}"'] \\
	\{K {\actto\,} T \} \ar[u, "\text{forget $K$}"] 
	& \{ W{\actto\,} K {\actto\,} T \} ,
	\ar[l, "\text{forget $W$}"] \ar[u, "\text{forget $K$}"']
  \end{tikzcd}
  $$
  and it is obvious that the square commutes (but it is not a pullback).
\end{blanko}

\begin{blanko}{The lower left-hand square is a pullback.}
  The lower left-hand square is given by
  $$\begin{tikzcd}
  Z_2 \ar[d, "{(\id,d_0)}"'] & Q \ar[l] \ar[d] \\
  Z_2\times A \ar[d, "d_2\times \id"']& Z_2\times Y_2 \ar[l, "\id 
  \times d_1"'] \ar[d, "d_2\times \id"] \\
  A \times A & A \times Y_2 \ar[l, "\id\times d_1"]
  \end{tikzcd}
  $$
  which intuitively is
  $$
  \begin{tikzcd}
	\{K {\actto\,} T \} \ar[d, "\text{(forest of image-trees , 
	$K$)}"'] & \{ W{\actto\,} K {\actto\,} T \} 
	\ar[l, "\text{forget $W$}"'] \ar[d, "\text{(forest of image-trees , 
	$W{\actto\,}K$)}"] 
	\\
	\{ S_1\cdots S_k , K \} & \{ S_1\cdots S_k,  W {\actto\,} K \}
	\ar[l, "\text{forget $W$}"]
  \end{tikzcd}
  $$
  The left-hand component of the vertical maps takes $K \actto T$, 
  interprets it as a blobbed tree, and
  returns the forest of trees seen in the blobs. Formally this is given by
  active-inert factorising the maps $C_i \into K \actto T$, where 
  $C_1\cdots C_k$ are the nodes of $K$, as explained in \ref{faceZ}.
  It is clear the square commutes.  To see it is a pullback, compose vertically 
  with the projection onto the second factor:
  $$
  \begin{tikzcd}
	\{K {\actto\,} T \} \ar[d, "\text{(forest of image-trees , 
	$K$)}"'] & \{ W{\actto\,} K {\actto\,} T \} 
	\ar[l, "\text{forget $W$}"'] \ar[d, "\text{(forest of image-trees , 
	$W{\actto\,}K$)}"] 
	\\
	\{ S_1\cdots S_k , K \} \ar[d, "\text{pr}_2"'] & \{ S_1\cdots S_k,  W {\actto\,} K \}
	\ar[l, "\text{forget $W$}"']  \ar[d, "\text{pr}_2"]
	\\
	\{ K \} & \{ W {\actto\,} K \}
	\ar[l, "\text{forget $W$}"] 
  \end{tikzcd}
  $$
  Now the outer rectangle is a pullback (it is the pullback defining $Q$).
  The bottom square is also a pullback, 
  since projecting away an identity map is always a pullback.
  Therefore, by the 
  Prism Lemma~\ref{lem:prism}, also the top square is a pullback, which is 
  the square of interest. 
\end{blanko}

\begin{blanko}{The upper right-hand square is a pullback.}
  The  upper right-hand square is
  \begin{equation}\label{eq:QY2}
  \begin{tikzcd}[column sep = {6em,between origins}]
  Y_2 \ar[r, "{(d_2^Y,d_0^Y)}"] & A\times A  \\
  Q \ar[u] \ar[r] & Z_2 \times Z_2 
  \ar[u, "{d_1^Z \times d_1^Z}"']
  \end{tikzcd}
  \quad
  :
  \quad
  \begin{tikzcd}[column sep = large]
	\{W {\actto\,}T\} \ar[r, "\text{return layers}"] 
	& \{(T',T'')\} 
	\\
	\{ W{\actto\,} K {\actto\,} T \} 
	\ar[r, "\text{return layers}"'] \ar[u, "\text{forget $K$}"] 
	&
	\{ (K'{\actto\,}T', K''{\actto\,}T'')\}  .
	\ar[u, "\text{forget $K$}"'] 
  \end{tikzcd}
  \end{equation}
  
  The horizontal map $Y_2 \stackrel{(d_2^Y,d_0^Y)}\longrightarrow A \times A$
  is described as follows (cf.~\ref{faceY}).
  A (connected) element in $Y_2$ is an active map  $W\actto T$
  where $W$ is a $2$-level tree. By being a $2$-level 
  tree,
  it has a leaf-preserving inert forest inclusion $W' \into W$
  (where $W'$ is a forest of corollas),
  and a root-preserving inert tree inclusion $W'' \into W$ (where $W''$ is 
  just a corolla), as in the 
  solid part of the diagram
  \[\begin{tikzcd}[sep = {3.8em,between origins}]
  W' \ar[d, into] \ar[r, dotted, -act] & T' \ar[d, dotted, into] \dlactinert \\
  W \ar[r, -act] & T \\
  W'' \ar[u, into] \ar[r, dotted, -act] & T'' \ulactinert .
  \ar[u, dotted, into]
  \end{tikzcd}
  \]
  The map $(d_2,d_0) : Y_2 \to A \times A$
  returns the pair $(T',T'')$ consisting of the forest $T'$ and the 
  tree $T''$ appearing in the active-inert factorisation of the two
  maps to $T$.

  The other horizontal map
  $$
  Q \longrightarrow Z_2 \times Z_2
  $$
  is of the same nature.
  A (connected) element in $Q$ is of the form $W\actto K \actto T$
  where $W$ is a $2$-level tree. Again we have the
  solid part of the diagram
  \[\begin{tikzcd}[sep = {3.8em,between origins}]
  W' \ar[d, into] \ar[r, dotted, -act] & 
  K' \dlactinert \ar[d, dotted, into]\ar[r, dotted, -act] & 
  T' \dlactinert \ar[d, dotted, into]  \\
  W \ar[r, -act] & K \ar[r, -act] & T \\
  W'' \ar[u, into] \ar[r, dotted, -act] & 
  K'' \ulactinert \ar[u, dotted, into]\ar[r, dotted, -act] & 
  T'' \ulactinert \ar[u, dotted, into]  .
  \end{tikzcd}\]
  The map $Q \to Z_2 \times Z_2$
  returns the pair $(K'\actto T',K''\actto T'')$.

  The vertical maps in \eqref{eq:QY2} simply forget the trees $K$ (and $K'$
  and $K''$). More formally, let $\kat{Act}(\PPP)$ denote the groupoid
  whose objects are the active maps of $\PPP$-trees.
  The right-hand map in \eqref{eq:QY2} is a product of two copies of
  ($\SSS$ of) $\operatorname{codom}:\kat{Act}(\PPP) \to
  \OMEGA_{\operatorname{iso}}(\PPP)$, which is clearly a fibration: given an
  active map $K \actto T$ and $T \isopil S$, just compose to get an active
  map $K \actto S$.
  To see that the square \eqref{eq:QY2} is a pullback (which is the main
  part of the proof of the theorem), we use the Fibre 
  Lemma~\ref{lem:fibre},
  applied to the {\em strict} fibres of these maps.
  The strict fibre of the map $Q \to Y_2$ over an element $W \actto T$ 
  is the groupoid
  $$
  \operatorname{Fact}(W {\actto} T)
  $$
  of all ways of 
  factoring into $W \actto K \actto T$. On the other hand, the fibre 
  over the corresponding $(W'_1\cdots W'_{k'} , W'' \}) \in A \times A$
  is 
  $$
  \big(\prod_i \operatorname{Fact}(C'_i {\actto} T'_i)\big) \times
  \operatorname{Fact}(C'' {\actto} T'')  .
  $$
  Altogether, the product is over all the nodes of $W$, so now the 
  equivalence of these two groupoids follows from the basic equivalence
  $$
  \operatorname{Act}(W) \simeq \big(\prod_i \operatorname{Act}(C'_i)\big) \times 
  \operatorname{Act}(C'') 
  $$
  of Lemma~\ref{lem:Act(K)}.

  In intuitive terms, the equivalence of the fibres says that to blob a
  tree $T$ compatibly with a $2$-layering (the active map $W \actto
  T$) is the same as blobbing the root-layer tree and all the trees in
  the leaf-layer forest.
\end{blanko}

\begin{blanko}{The lower right-hand square.}\label{pf:LRHS}
  We finally look at the bottom right square:
$$\begin{tikzcd}[column sep = 10em]
  \{ W{\actto\,} K {\actto\,} T \} 
  \ar[r, "\text{return layers}"] \ar[d, "\text{forest of image-trees, 
  $W{\actto\,}K$}"'] 
  &
  \{ (K'{\actto\,}T', K''{\actto\,}T'')\}  
  \ar[d, "\text{ disjoint union of forests of image trees, $K'$, $K''$}"]
  \\
  \{ S_1\cdots S_k,  W {\actto\,} K \} 
  \ar[r, "\text{keep the $S$-forest, return layers}"'] & \{ S_1\cdots S_k,  
  K', K'' \}
\end{tikzcd}$$
where $S_1\cdots S_k = S'_1\cdots S'_{k'} \cdot S''_1 \cdots 
S''_{k''}$. It is clear that the trees appearing in these two expressions 
are the same, but one may worry that they do not come in the same order.
But in fact the monomials are not indexed by linear orders (that is only
for notational convenience) --- in reality they are indexed by the nodes 
of $W$, and as such the two monomials are literally the same.
\end{blanko}

Now for the counit compatibility.

\begin{lemma}\label{lem:counit=comodmap}
  The counit structure of $Y$ is a $Z$-comodule map.
\end{lemma}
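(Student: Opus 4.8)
The axiom to verify is the second square of \eqref{comodulebialg-axiom}: at the objective level it asks that the diagram of linear functors
\[
\begin{tikzcd}
\Grpd_{/A} \ar[r, "\varepsilon_Y"] \ar[d, "\gamma"'] & \Grpd \ar[d, "\eta"] \\
\Grpd_{/A} \tensor \Grpd_{/A} \ar[r, "\id \tensor \varepsilon_Y"'] & \Grpd_{/A}
\end{tikzcd}
\]
commute up to natural isomorphism. Here $\varepsilon_Y$ is the counit of the incidence coalgebra of $Y$, carried by the span $A \xleftarrow{s_0^Y} Y_0 \to 1$ that picks out the degenerate $1$-simplices (the monomials of trivial trees); $\eta$ is the algebra unit of the incidence bialgebra of $Z$, carried by the span $1 \leftarrow 1 \xrightarrow{\name{e}} A$ picking out the empty forest $e$ (the unit for disjoint union); and, since the comodule is the upper dec $u\colon \Dectop Z \to Z$ (cf.~\ref{dec-map}), the coaction $\gamma$ is carried by $A \xleftarrow{d_1^Z} Z_2 \xrightarrow{(d_2^Z, d_0^Z)} A \times A$.

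The plan is to follow the method of Lemma~\ref{lem:comult=comodmap}: spell out the two composites as pullback-composites of spans and exhibit an equivalence between their apices matching the legs. Composing $\varepsilon_Y$ with $\eta$ gives the span $A \xleftarrow{s_0^Y} Y_0 \xrightarrow{e} A$, whose right leg is constant at $e$. Composing $\gamma$ with $\id \tensor \varepsilon_Y$ requires the pullback $P$ of $(d_2^Z, d_0^Z)\colon Z_2 \to A \times A$ along $\id_A \times s_0^Y\colon A \times Y_0 \to A \times A$; since the first coordinate is pulled back along the identity, only the second coordinate is constrained, so $P$ is simply the pullback $Z_2 \times_A Y_0$ formed from $Z_2 \xrightarrow{d_0^Z} A \xleftarrow{s_0^Y} Y_0$, and the resulting composite is $A \xleftarrow{d_1^Z} P \xrightarrow{d_2^Z} A$. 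The whole argument thus reduces to producing an equivalence $P \simeq Y_0$ intertwining these two spans.

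The crux is the computation of $P$. A connected point of $Z_2$ is a blobbed tree, i.e.\ an active map $K \actto T$ with $K$ the tree of blobs, so that $d_0^Z = K$, $d_1^Z = T$, and $d_2^Z$ the forest of blob-contents (cf.~\ref{faceZ}); a point of $P$ is such a blobbed tree whose tree of blobs $K$ lies in the image of $s_0^Y$, that is, is a (monomial of) trivial tree(s). Here the degenerate case of Lemma~\ref{lem:Act(K)} does the work: a tree with no nodes admits a unique active map (the empty product of refinements), necessarily $\inlineDotlessTree \actto \inlineDotlessTree$, so a trivial tree of blobs forces the total tree $T$ to be trivial and leaves no blobs at all. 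Hence the connected points of $P$ are exactly the trivial trees of colour $i \in I$, giving $P \simeq \SSS(\operatorname{triv}(\PPP)) \simeq \SSS I = Y_0$ by Lemma~\ref{lem:cor(P)}. Under this equivalence the left leg $d_1^Z$ returns the trivial tree of colour $i$, which is precisely $s_0^Y$, while the right leg $d_2^Z$ returns the empty forest $e$ (there being no blobs); both legs therefore match those of $\eta \circ \varepsilon_Y$.

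Finally, invoking the Beck--Chevalley isomorphisms (\ref{lem:BC}) exactly as in Lemma~\ref{lem:comult=comodmap} converts this matching of spans into the required natural isomorphism of linear functors, establishing that $\varepsilon_Y$ is a $Z$-comodule map. I expect the only real subtlety to be the book-keeping around the trivial tree: one must check that $\inlineDotlessTree$ genuinely has no nodes, so that Lemma~\ref{lem:Act(K)} applies as an empty product, and that the monomials in $P$ are indexed by the (empty) node-sets so that the ambient $\SSS$'s line up on the nose. Beyond this, no combinatorial input other than Lemma~\ref{lem:Act(K)} is needed, and the argument is considerably shorter than that of Lemma~\ref{lem:comult=comodmap}.
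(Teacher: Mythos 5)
Your proof is correct and is essentially the paper's own argument: the paper takes $Y_0$ as the middle object of the span-composition diagram and proves the crucial lower left-hand square is a pullback by observing that the fibre of $\operatorname{dom}\colon \kat{Act}(\PPP)\to\OMEGA_{\operatorname{iso}}(\PPP)$ over a trivial forest is contractible by Lemma~\ref{lem:Act(K)} (as an empty product), which is precisely the computation behind your identification $P = Z_2\times_A Y_0\simeq Y_0$ forcing trivially blobbed trivial forests. The only difference is packaging: you compose the spans explicitly and exhibit an equivalence of apices, while the paper fills in the standard grid of commuting squares and invokes the Fibre and Prism Lemmas before applying Beck--Chevalley.
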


\begin{proof}
  We must show that the following diagram commutes up to natural isomorphism:
  $$
  \begin{tikzcd}
	 \Grpd_{/A} \ar[r, "\epsilon_Y"] \ar[d, "\gamma"'] & \Grpd_{/1}
	 \ar[d, "\eta"] \\
	 \Grpd_{/A} \tensor \Grpd_{/A} \ar[r, "\id \tensor \epsilon_Y"'] & 
	 \Grpd_{/A} \tensor \Grpd_{/1}.
  \end{tikzcd}
  $$
  Spelling out the spans that define these functors, we are faced with
  the solid diagram
  $$
  \begin{tikzcd}[sep=large]
	 A & \ar[l, "s_0^Y"'] Y_0 \ar[r] & 1
	 \\
	 Z_2 \ar[u, "d_1^Z"] \ar[d, "{(d_2^Z,d_0^Z)}"']  & 
	 ? \dlpullback \urpullback 
	 \ar[l, dotted] \ar[u, dotted] \ar[r, dotted] \ar[d, dotted] &
	 1 \ar[d, "\eta"] \ar[u, "="'] 
	 \\
	 A \times A& A \times Y_0 \ar[l, "\id \times s_0^Y"] \ar[r]& 
	 A \times 1 .
  \end{tikzcd}
  $$
  This time, as middle object we are forced to take simply $Y_0$, in order 
  to
  make the upper right-hand square a pullback.
    It remains to exhibit the other maps, and check that the squares are
  commutative and pullbacks as indicated. These checks occupy 
  \ref{pf:ULHScounit}--\ref{pf:LRHScounit} below.
\end{proof}

\begin{blanko}{Upper left-hand square (of the counit compatibility check).}
  \label{pf:ULHScounit}
  This square
  \[
  \begin{tikzcd}
	 A & \ar[l, "s_0^Y"'] Y_0 
	 \\
	 Z_2 \ar[u, "d_1^Z"]   & 
	 Y_0  
	 \ar[l] \ar[u, "="] 
   \end{tikzcd}
\]  
  commutes because the map $Z_2 \leftarrow Y_0$ sends a forest $U$ of trivial 
  trees to the identity map $U{\actto}U$. So both way around the 
  square, the result is just $U$ again.
\end{blanko}
  
\begin{blanko}{Lower left-hand square (of the counit compatibility check).}
  The square is
  \[
  \begin{tikzcd}
  Z_2 \ar[d, "{(d_2^Z,d_0^Z)}"'] & Y_0 \ar[l] \ar[d] \\
  A \times A  & A \times Y_0 \ar[l, "\id \times s_0^Y"]
  \end{tikzcd}
  \qquad \qquad
  \begin{tikzcd}
  (U{\actto}U) \ar[d, mapsto] & U \ar[l, mapsto] \ar[d, mapsto]  \\
  (\emptyset, U) & (\emptyset, U) . \ar[l, mapsto]
  \end{tikzcd}
  \]
  The vertical map on the right takes a forest $U$ of trivial trees,
  interpreted as the trivially blobbed trivial forest (hence having no
  blobs) to the pair $(\emptyset, U)$ consisting of the forest of all trees
  seen in the blobs (there are none), and the forest of all the trivial trees. It is clear
  this commutes. To see that it is also a pullback, paste below with the
  square projecting away the identity map:
  \[
  \begin{tikzcd}
  Z_2 \ar[d, "{(d_2^Z,d_0^Z)}"'] & Y_0 \ar[l] \ar[d] \\
  A \times A  \ar[d, "\operatorname{pr}_2"']& 
  A \times Y_0 
  \ar[l, "\id \times s_0^Y"]
  \ar[d, "\operatorname{pr}_2"] \\
  A & Y_0 . \ar[l, "s_0^Y"]
  \end{tikzcd}
  \]
  We first show that the composite square is a pullback. The right-hand 
  composite is the identity map. The left-hand composite,
  is ($\SSS$ applied to) the map $\operatorname{dom}: \kat{Act}(\PPP)
  \to \OMEGA_{\operatorname{iso}}(\PPP)$ sending an active map of
  $\PPP$-trees to its domain, clearly a fibration. To check that the
  composite square is a pullback, we compare the fibres of the vertical maps
  over an element $U \in Y_0$ ($U$ is a trivial forest). The fibre of the
  identity map is of course singleton. The fibre of $\operatorname{dom}:
  \kat{Act}(\PPP) \to \OMEGA_{\operatorname{iso}}(\PPP)$ is contractible by
  Lemma~\ref{lem:Act(K)}. So the composite square is a pullback by the
  Fibre Lemma~\ref{lem:fibre}. But the project-away-the-identity square is
  also a pullback. Therefore, by the Prism Lemma~\ref{lem:prism}, also the
  top is square is a pullback, as required.
\end{blanko}

\begin{blanko}{Lower right-hand square (of the counit compatibility check).}
  \label{pf:LRHScounit}
  Both ways around send a forest $U$ of trivial trees to the empty 
  forest $\emptyset$ (of all the trees seen in the zero blobs).
\end{blanko}

\bigskip

This finishes the proof of Theorem~\ref{thm:main}.

\section{Locally finite version of the incidence comodule-bialgebra construction}

\label{sec:finiteness}

\subsection{Finiteness conditions and the reduced Baez--Dolan construction 
$\PPP\bdr$}

At the objective level, {\em any} poset, category, or decomposition space
defines a coalgebra~\cite{Galvez-Kock-Tonks:1512.07573}. However, in order
to take cardinality to arrive at an ordinary coalgebra in
vector spaces, it is necessary to impose the finiteness condition that the
poset, category, or decomposition space be locally
finite~\cite{Galvez-Kock-Tonks:1512.07577}. This condition says that the
two maps
\begin{equation}
  X_0 \stackrel{s_0}\to X_1 , \qquad X_1 \stackrel{d_1}{\leftarrow} X_2
\end{equation}
have finite (homotopy) fibres.\footnote{In
\cite{Galvez-Kock-Tonks:1602.05082} it is furthermore required that $X_1$
is homotopy finite, but this has turned out to be a superfluous 
requirement.} Indeed, the formulae
for the counit and comultiplication amount to summing over these fibres.

\begin{blanko}{Locally finite operads.}
  \label{fin-cond}
  An operad $\PPP$ is called {\em locally finite}
  \cite{Kock-Weber:1609.03276} if its two-sided bar construction 
  $\TSB{\SSS}{\PPP}$ is locally
  finite. This is equivalent to demanding directly on the monad that the
  structure maps $\mu: \PPP \circ \PPP \Rightarrow \PPP$ and $\eta: \Id
  \Rightarrow \PPP$ be finite. (Here, by definition, a map in $\Grpd_{/I}$
  is {\em finite} if its image in $\Grpd$ is finite.)
\end{blanko}

\begin{lemma}\label{lem:freefinite}
  For any operad $\PPP$, the free operad $\PPP\upperstar$ is locally 
  finite. 
\end{lemma}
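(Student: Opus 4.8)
The plan is to verify local finiteness directly on the monad, using the criterion recalled in \ref{fin-cond}: the operad $\PPP\upperstar$ is locally finite exactly when its unit $\eta : \Id \Rightarrow \PPP\upperstar$ and its multiplication $\mu : \PPP\upperstar \circ \PPP\upperstar \Rightarrow \PPP\upperstar$ are finite maps. By Theorem~\ref{thm:P*} the operations of $\PPP\upperstar$ are the $\PPP$-trees $\tr(\PPP)$, the unit sends a colour to the corresponding trivial tree, and the multiplication grafts trees onto leaves. Since every $\PPP$-tree is by definition a diagram of finite sets (\ref{polytree-def}), each operation of $\PPP\upperstar$ has only finitely many nodes and edges; this finiteness of the underlying tree is the single fact that drives the whole argument.

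For the unit, I would observe that on operations $\eta$ is the inclusion $\operatorname{triv}(\PPP) \into \tr(\PPP)$ of trivial trees, which is a full subgroupoid inclusion (indeed $\operatorname{triv}(\PPP) \simeq I$ by Lemma~\ref{lem:cor(P)}). Being fully faithful, its homotopy fibre over a $\PPP$-tree $T$ is contractible when $T$ lies in the essential image, i.e.\ when $T$ is a trivial tree, and empty otherwise; in either case it is finite, so $\eta$ is finite.

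The substance is the multiplication. An operation of $\PPP\upperstar \circ \PPP\upperstar$ is a $\PPP$-tree $R$ together with a $\PPP$-tree $S_\ell$ grafted onto each leaf $\ell$ of $R$ (with matching colours), and $\mu$ returns the total tree $T$ obtained by grafting. I would identify the homotopy fibre of $\mu$ over a fixed $\PPP$-tree $T$ with the groupoid of such grafting decompositions of $T$. The key combinatorial observation is that a decomposition is pinned down by a subset $D$ of the nodes of $T$ closed under passage toward the root (an order ideal in the node poset): then $R$ is the subtree spanned by $D$, and the $S_\ell$ are the subtrees hanging above its leaves, with $S_\ell$ trivial exactly when $\ell$ is already a leaf of $T$. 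This gives a correspondence between decompositions of $T$ and order ideals of its node poset. Since $T$ has finitely many nodes, there are finitely many such ideals, so the fibre has finitely many iso-classes; and each automorphism group is a subgroup of the finite group $\Aut(T)$. Hence the fibre is finite and $\mu$ is finite.

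The main thing to get right --- and the only place where care is needed --- is the claim that grafting decompositions of $T$ correspond exactly to order ideals of its node poset. In particular one must check that allowing trivial trees to be grafted does not produce extra or infinitely many decompositions: a trivial $S_\ell$ merely records that the corresponding leaf of $R$ is already a leaf of $T$, and this is still governed by the datum $D$. Once this identification is in place, finiteness of both structure maps follows, and local finiteness of $\PPP\upperstar$ is then immediate from the criterion in \ref{fin-cond}.
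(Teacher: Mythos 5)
Your overall route is essentially the paper's argument in different clothing: the two maps the paper inspects, $s_0\colon Y_0\to Y_1$ and $d_1\colon Y_2\to Y_1$ for $Y=\TSB{\SSS}{\PPP\upperstar}$, are exactly $\SSS F\lowershriek$ applied to your $\eta_1$ and $\mu_1$, so checking finiteness on the monad via \ref{fin-cond} amounts to the same two computations. Your unit check is fine, and your identification of grafting decompositions of $T$ with down-closed sets of nodes is the same combinatorial fact the paper expresses by saying the $d_1$-fibre is the set of cuts of $T$ (active maps $W\actto T$ from $2$-level trees).

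There is, however, a genuine gap in how you conclude that the fibre of $\mu_1$ over $T$ is a \emph{finite groupoid}, namely the step ``each automorphism group is a subgroup of the finite group $\Aut(T)$''. Two things go wrong. First, $\Aut(T)$ need not be finite: in the paper's setting (\ref{ouroperads}) an operad is allowed arbitrary \emph{groupoids} of colours and operations (this generality is the whole point, since the Baez--Dolan construction produces such operads, and Subsection~5.1 feeds in arbitrary categories), and the automorphism group of a $\PPP$-tree contains decoration symmetries, not only symmetries of the finite underlying tree. Indeed, by Lemma~\ref{lem:cor(P)} the trivial $\PPP$-tree of colour $i$ has automorphism group $\Aut_I(i)$; taking for $\PPP$ the group $\mathbb{Z}$ regarded as a one-object groupoid (an operad with only unary operations) gives $\PPP$-trees whose automorphism groups contain $\mathbb{Z}$. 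So ``finitely many nodes'' does not bound $\Aut(T)$. Second, the automorphisms appearing in the homotopy fibre are not subgroups of $\Aut(T)$ in any case: an automorphism of an object $(x,\beta)$ of the fibre is an automorphism of the decomposition $x$ whose image under $\mu_1$ is the \emph{identity} of $T$, i.e.\ an element of the kernel of $\Aut(x)\to\Aut(T)$; and the set of fibre components lying over a given iso-class $[x]$ is a quotient of $\Aut(T)$ by the image of $\Aut(x)$, which your count of iso-classes does not control when $\Aut(T)$ is infinite. The statement you actually need --- and what the paper asserts --- is that the fibre is \emph{discrete}, equal to the finite set of cuts of the literal tree $T$: any automorphism of a decomposition $(R,(S_\ell))$ inducing the identity on $T$ is itself the identity, because the edges, nodes and decorating data of $R$ and of the $S_\ell$ embed into those of $T$. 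Replacing your automorphism step by this discreteness argument repairs the proof; as written, that step fails for any operad whose colour groupoid has infinite isotropy.
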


\begin{proof}
  Put $Y = \TSB{\SSS}{\PPP\upperstar}$. The fibre of $Y_1
  \stackrel{d_1}\leftarrow Y_2$ over a $\PPP$-tree $T$ is the discrete
  groupoid of all ways to cut the tree $T$, or more formally, the active
  maps $W \actto T$, where $W$ is a $2$-level tree. It is clear that $T$
  has only finitely many possible cuts. The map $Y_0 \stackrel{s_0}\to Y_1$
  is even a monomorphism, and therefore in particular is finite.
\end{proof}

\begin{lemma}\label{lem:BDnotfinite}
  The Baez--Dolan construction $\PPP\bd$ is never locally finite.
\end{lemma}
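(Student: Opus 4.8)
The plan is to exhibit an infinite (homotopy) fibre for one of the two structure maps whose finiteness governs local finiteness of $Z := \TSB{\SSS}{\PPP\bd}$. By the criterion recalled in \ref{fin-cond}, it suffices to show that one of the maps $Z_0 \stackrel{s_0}{\to} Z_1$ and $Z_1 \stackrel{d_1}{\leftarrow} Z_2$ fails to have finite fibres. The degeneracy map $s_0$ is ($\SSS$ applied to) the inclusion of $\PPP$-corollas into $\PPP$-trees, which is a monomorphism and hence finite, exactly as for $s_0$ in the proof of Lemma~\ref{lem:freefinite}. So I would locate the obstruction in the multiplication map $d_1$.

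Recall from \ref{faceZ} that $Z_1 = \SSS\tr(\PPP)$ is the groupoid of monomials of $\PPP$-trees, that $Z_2$ is the groupoid of monomials of blobbed $\PPP$-trees (that is, of active maps $K \actto T$), and that $d_1$ erases the blobs, returning the total tree $T$. Thus the fibre of $d_1$ over a $\PPP$-tree $T$ is the groupoid of all blobbings $K \actto T$ of $T$. I would compute this fibre over the most economical target: a trivial tree $\inlineDotlessTree$ of some colour $i \in I$ (such a colour exists whenever $\PPP$ is non-empty; the degenerate empty case is vacuous).

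The heart of the argument is the following. In an active map $K \actto \inlineDotlessTree$ each node of $K$ is sent to a blob, and the blobs partition the nodes of the target; as $\inlineDotlessTree$ has no nodes, every blob must be trivial, forcing every node of $K$ to be unary and to collapse onto the unique edge. Hence $K$ is necessarily a linear tree $K_n$ --- a chain of $n \geq 0$ unary nodes, each decorated by the identity operation $\eta(i)$ (the residue of a trivial tree, cf.~\ref{residue}) --- and $K_n \actto \inlineDotlessTree$ is built from $n$ copies of the elementary trivial blob $\inlineonetree \actto \inlineDotlessTree$ of \ref{blobbing}. Since the $K_n$ are pairwise non-isomorphic, the fibre is equivalent to the infinite discrete groupoid $\N$. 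Therefore $d_1$ is not finite and $\PPP\bd$ is not locally finite.

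The only point requiring care is checking that the chains $K_n$ both exhaust the fibre and are all genuine, pairwise distinct $\PPP$-trees; this uses that every colour carries an identity operation (so each $K_n$ really is a $\PPP$-tree) together with the node-by-node description of active maps in Lemma~\ref{lem:Act(K)}. Conceptually there is no difficulty: the infinitude is precisely the ability, flagged in \ref{blobbing}, to insert arbitrarily many trivial blobs along an edge --- the very phenomenon that the reduced construction $\PPP\bdr$ is later engineered to eliminate.
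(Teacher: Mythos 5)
Your proof is correct and takes essentially the same approach as the paper: both locate the failure of local finiteness in the fibres of $d_1 : Z_2 \to Z_1$, which are infinite precisely because arbitrarily many trivial blobs (the nullary operations of $\PPP\bd$) can be inserted. The paper states this for the fibre over an arbitrary $\PPP$-tree, while you instantiate it over the trivial tree, where the fibre is visibly the discrete groupoid $\N$ of chains of identity-decorated unary nodes --- the same phenomenon, worked out in more detail.
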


\begin{proof}
  Put $Z= \TSB{\SSS}{\PPP\bd}$. The fibre of $Z_1 \stackrel{d_1}\leftarrow
  Z_2$ over a $\PPP$-tree is the discrete groupoid of all ways of blobbing
  the tree, as in \ref{blobbing}. Since each edge admits an arbitrary
  number of trivial blobs, this is an infinite set.
\end{proof}

\begin{blanko}{The reduced Baez--Dolan construction.}
  For the sake of finiteness, and to be able to take cardinality, we
  need to work instead with the {\em reduced} Baez--Dolan construction
  $$
  \PPP\bdr:=\overline{\PPP\bd}
  $$
  (already considered by Baez and
  Dolan~\cite{Baez-Dolan:9702}). This is simply the operad $\PPP\bd$
  with all nullary operations removed. Recall that the operations of 
  $\PPP\bd$ are the $\PPP$-trees, and that the nullary operations are 
  the trivial $\PPP$-trees; we are thus excluding trivial $\PPP$-trees.  
  
  The two-sided bar construction of the reduced Baez--Dolan construction
  $\PPP\bdr$ will be denoted
  $$
  Z := \TSB{\SSS}{\PPP\bdr} .
  $$
  For the rest of the paper, this will replace $Z = 
  \TSB{\SSS}{\PPP\bd}$,
  which will no longer be considered.
  
  Excluding nullary operations means disallowing trivial trees and 
  trivial blobs. Intuitively, for the lowest degrees of $Z$ we have:

  $\bullet$ \
  $Z_0$ is the groupoid of monomials of $\PPP$-corollas;

  $\bullet$ \
  $Z_1$ is the groupoid of monomials of nontrivial $\PPP$-trees;

  $\bullet$ \
  $Z_2$ is the groupoid of monomials of nontrivial $\PPP$-trees with only nontrivial 
  blobs.
  
  Let us describe $Z$ more formally:
\end{blanko}

\begin{blanko}{Active injections, reduced covers, spanning forests.}
  The category of trees $\OMEGA$ (and the category of $\PPP$-trees
  $\OMEGA(\PPP)$) has another factorisation system than the active-inert
  system exploited so far, namely the surjective-injective
  factorisation system. (The notions injective and surjective refer to
  the effect on edges.) All inert maps are injective, but the active
  maps come in two flavours: {\em active injections} which refine
  nodes into nontrivial trees (these are generated by the active
  coface maps), and {\em active surjections} which refine unary nodes
  into trivial trees (these are generated by the codegeneracy maps).

  We can now describe $Z_k$  more formally as the groupoid of sequences
  of active injections
  $$
  K^{(n)} \actto K^{(n-1)} \actto \cdots \actto K^{(1)} \actto K^{(0)}=T \to \PPP   ,
  $$
  with the condition that $K^{(n)}$ is a forest of corollas. This is just
  like in \ref{faceZ}, except we now require active {\em injections}
  instead of arbitrary active maps.

  The opposite of the category of active injections into a fixed
  nontrivial tree $T$ is a preorder equivalent to the poset of reduced
  covers of $T$ (cf.~\cite[2.3.2]{Kock:0807}). Both are equivalent to
  the power set of the set of inner edges in $T$. Here a {\em reduced
  cover} of $T$ is an inert map of forests $\sum_i R_i \into T$ which
  is bijective on nodes, and where the $R_i$ are nontrivial trees. It
  could also be called a spanning forest. The correspondence goes like
  this: given an active injection $K \actto T$,
  let $R_i$ be the
  subtrees of $T$ arising from active-inert factorisation of composite
  maps $C_i \into K \actto T$ as in \ref{faceZ} (where as usual the 
  $C_i$ are the nodes
  of $K$).
  
  With this correspondence, $Z_2$ can be described also as the groupoid 
  of reduced covers of trees, and $Z_3$ can be described as the 
  groupoid of reduced covers of reduced covers.  The active-injections
  interpretation is good for describing 
  $\begin{tikzcd} 
  Z_1 & Z_2
  \ar[l, shift left, "d_0"]
  \ar[l, shift right, "d_1"']
  \end{tikzcd}$ (but not $d_2$): 
  we have $d_0( K{\actto}T) = K$ and 
  $d_1(K{\actto}T) = T$. The reduced-covers interpretation is 
  convenient for describing
  $\begin{tikzcd}
  Z_1 & Z_2
  \ar[l, shift left, "d_1"]
  \ar[l, shift right, wavy, "d_2"']
  \end{tikzcd}$ (but not $d_0$):
  we have $d_1( \sum_i R_i{\into}T) = T$ and 
  $d_2( \sum_i R_i{\into}T) = \sum_i R_i$.
  
  In the following it will be practical to favour the active-injections 
  interpretation, but we will have to convert to the reduced-covers 
  viewpoint each time we describe a top face map.
\end{blanko}

\begin{lemma}\label{lem:BDRfin}
  For any operad $\PPP$ the reduced Baez--Dolan construction $\PPP\bdr$ 
  is locally finite.
\end{lemma}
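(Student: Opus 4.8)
The plan is to verify directly the two finiteness conditions recorded in \ref{fin-cond}, applied to $Z = \TSB{\SSS}{\PPP\bdr}$: namely that the maps $Z_0 \stackrel{s_0}\to Z_1$ and $Z_1 \stackrel{d_1}\leftarrow Z_2$ have finite homotopy fibres. This mirrors exactly the structure of the proof of Lemma~\ref{lem:freefinite} for $\PPP\upperstar$. Since both $s_0$ and $d_1$ are non-top maps, each is $\SSS$ of a map between the underlying \emph{connected} groupoids (of corollas and nontrivial $\PPP$-trees), and $\SSS$ of a finite map is again finite (a monomial has only finitely many factors, each with finite automorphisms). Hence it suffices to compute fibres over connected objects, that is, over single nontrivial $\PPP$-trees.

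First I would treat $d_1$, where the actual content lies and where the contrast with the non-reduced case (Lemma~\ref{lem:BDnotfinite}) becomes decisive. Using the reduced-covers description of $Z_2$ recalled above, the face map $d_1$ simply reads off the total tree, $d_1(\sum_i R_i \into T) = T$, so the homotopy fibre over a nontrivial $\PPP$-tree $T$ is the discrete groupoid of reduced covers $\sum_i R_i \into T$ with prescribed total tree $T$. By the correspondence of \cite[2.3.2]{Kock:0807} recalled above, the set of such reduced covers is the power set of the set of inner edges of $T$; since $T$ is a finite tree it has finitely many inner edges, so the fibre is finite. Intuitively, a nontrivial blobbing of $T$ is determined by the subset of inner edges lying on blob boundaries, and disallowing trivial blobs is precisely what removes the source of infinitude in Lemma~\ref{lem:BDnotfinite}, where each edge could carry arbitrarily many trivial blobs.

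It then remains to check $s_0$. Here the degeneracy sends a $\PPP$-corolla (a colour of $\PPP\bdr$) to the one-node $\PPP$-tree it determines; since every corolla is nontrivial, this is the inclusion of corollas among the nontrivial $\PPP$-trees. This inclusion is injective on objects and fully faithful, hence a monomorphism, so its homotopy fibres are either empty or contractible and in particular finite---the same argument as for $\PPP\upperstar$ in Lemma~\ref{lem:freefinite}. With both conditions established, $\PPP\bdr$ is locally finite.

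I do not anticipate a serious obstacle here, as the key finiteness is an immediate consequence of the reduced-covers bookkeeping. The only points requiring care are, first, confirming that finiteness of the connected fibres genuinely propagates to monomials through $\SSS$, and second, using the correct combinatorial description of the relevant face map: I favour the reduced-covers interpretation precisely because it computes $d_1$ cleanly as $\sum_i R_i \into T \mapsto T$, in contrast to the active-injections description of \ref{faceZ} which is better suited to $d_0$.
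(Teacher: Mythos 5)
Your proof is correct and follows essentially the same route as the paper's: the paper likewise verifies the two conditions directly, computing the fibre of $d_1$ over a tree $T$ as the discrete groupoid of nontrivial blobbings of $T$ (phrased there as active injections $K \actto T$, which is equivalent to your reduced-covers/power-set-of-inner-edges description by the correspondence recalled just before the lemma), finite because $T$ is a finite tree, and observing that $s_0$ is a monomorphism, hence finite. Your explicit reduction to connected objects via $\SSS$ is a point the paper leaves implicit, and the only (cosmetic) inaccuracy is your closing remark that the active-injections picture is ill-suited to $d_1$ --- it computes $d_1$ just as cleanly, namely $K \actto T \mapsto T$, and that is what the paper's own proof uses.
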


\begin{proof}
  The fibre of $Z_1 \stackrel{d_1}\leftarrow Z_2$ over a
  $\PPP$-tree $T$ is the discrete groupoid of all active injections 
  $K \actto T$. This is a finite set since there are only finitely many 
  nodes in $T$.
  Put in other terms, the fibre is the discrete groupoid of all  
  ways of blobbing the
  tree in such a way that each blob contains at least one node. 
  The degeneracy map $Z_0 \stackrel{s_0}\to Z_1$ assigns
  to a corolla the same corolla with a single blob around it. This map
  is even a mono, so in particular finite.
\end{proof}

\begin{blanko}{`General trees as comodule over nontrivial trees'.}
  We now have a locally finite simplicial groupoid $Z = 
  \TSB{\SSS}{\PPP\bdr}$ (which is a 
  symmetric monoidal Segal space), so its incidence bialgebra
  admits a homotopy cardinality. But now it no longer has the same 
  underlying space as the incidence bialgebra of 
  $Y=\TSB{\SSS}{\PPP\upperstar}$, and some further adjustments are 
  required. It is not possible to adjust $Y$ in the same way, because
  $Y_0$ consists entirely of trivial trees, so these cannot just be thrown away.
  Instead we need a separate
  comodule structure on $Y_1$, which
  in the objective setting should be a comodule configuration $u: M 
  \to Z$ with $M_0 = Y_1$.
  In rough terms we need to exhibit `general $\PPP$-trees as a comodule
  over nontrivial $\PPP$-trees'.
  (Note that it would not work simply to use $\TSB{\SSS}{\PPP\bd}$ as 
  $M$, because it cannot possibly be culf over $\TSB{\SSS}{\PPP\bdr}$:
  there are obviously more ways of drawing blobs on a tree than 
  drawing nontrivial blobs.)
  
  The simplicial groupoid $M$ is finally going to have

  $\bullet$ \
  $M_0$ the groupoid of (monomials of) arbitrary trees (possibly 
  trivial);

  $\bullet$ \
  $M_1$ the groupoid of (monomials of) arbitrary trees (possibly trivial)
  with nontrivial blobs.

  One can fiddle with these conditions, figure out what the higher $M_k$
  should be, assemble them into a simplicial groupoid, and prove that it is
  culf over $Z$, so as to form indeed a comodule configuration. Rather
  than doing this by hand, we shall embark on a small detour to be able to
  deduce these properties from a general construction: we shall
  define a relative two-sided bar construction
  $C := F\lowershriek\TSB{\PPP\bd}{\PPP\bdr}$ and then put $M := \SSS C$.
  General principles will then imply that $M$ culf over $Z$, and that $M$
  is locally finite as a comodule, 
  as required.
\end{blanko}

\subsection{Further bar constructions and a general comodule construction}

\label{sec:furtherbar}
  
In this subsection, we exploit further two-sided bar constructions to give
an abstract construction of comodules from a pair of operads, one cartesian
over the other.

\begin{blanko}{Set-up.}\label{set-up}
  We place ourselves in the situation of an operad map 
  $\RRR\Rightarrow\PPP$, in the form of polynomial monads
  related by monad opfunctors
$$
\RRR\Rightarrow \PPP \Rightarrow \SSS ,
$$
altogether represented by polynomial diagrams
  $$\begin{tikzcd}
    \RRR : & J \ar[d, "G"']& \ar[l]  U\ar[d] \drpullback \ar[r] & V\ar[d] \ar[r] & 
    J\ar[d, "G"] \\
    \PPP : & I \ar[d, "F"']& \ar[l]  E\ar[d] \drpullback \ar[r] & B\ar[d] \ar[r] & 
    I\ar[d, "F"] \\
    \SSS : & 1  &\ar[l] \B'\ar[r] & \B \ar[r]  &1 .
  \end{tikzcd}
  $$
  The monad opfunctor $\RRR \Rightarrow\PPP$ is given by
  the functor $G\lowershriek$ and a natural transformation
  $$
  \theta : G\lowershriek \RRR \Rightarrow \PPP G\lowershriek ;
  $$
  the monad opfunctor $\PPP\Rightarrow\SSS$ is given by the functor
  $F\lowershriek$ and a natural transformation
  $$
  \psi : F\lowershriek \PPP \Rightarrow \SSS F\lowershriek   ,
  $$
  both satisfying the axioms of \ref{opf-ax}. The composite exhibits also 
  $\RRR$ as an operad, with monad opfunctor $\RRR\Rightarrow\SSS$ given by the functor
  $F\lowershriek G\lowershriek$ and the natural transformation
  $$
  \phi : F\lowershriek G \lowershriek \RRR 
  \Rightarrow \SSS F\lowershriek G\lowershriek   ,
  $$
  which is simply $\psi G\lowershriek \circ F\lowershriek \theta$.
\end{blanko}

\begin{prop}\label{prop:RP}
  From operads $\RRR \Rightarrow \PPP \Rightarrow \SSS$ as in
  \ref{set-up}, there is induced a simplicial map $F\lowershriek
  \TSB{\PPP}{\RRR} \to \TSB{\SSS}{\RRR}$, and it is culf. In other
  words, $F\lowershriek \TSB{\PPP}{\RRR}$ is a comodule configuration
  over $\TSB{\SSS}{\RRR}$.
\end{prop}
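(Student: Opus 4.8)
The plan is to define the comparison map levelwise from the opfunctor $\psi$ and then to read off both simpliciality and culfness from its formal properties. Write $M := F\lowershriek\TSB{\PPP}{\RRR}$, so that $M_n = F\lowershriek \PPP G\lowershriek \RRR^n 1$, while $X := \TSB{\SSS}{\RRR}$ has $X_n = \SSS F\lowershriek G\lowershriek \RRR^n 1$. The evident candidate for the $n$th component of the map $u\colon M \to X$ is
\[
u_n := \psi_{G\lowershriek \RRR^n 1} \colon F\lowershriek \PPP(G\lowershriek \RRR^n 1) \longrightarrow \SSS F\lowershriek(G\lowershriek \RRR^n 1),
\]
using that the codomain is precisely $X_n$. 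The first task is to check that these components assemble into a simplicial map.

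Simpliciality splits according to the two kinds of structure map. All face maps except the top one, together with all degeneracy maps, are obtained by applying the functor $\PPP G\lowershriek$ (respectively $\SSS F\lowershriek G\lowershriek$) to the inner face, degeneracy, and bottom-action maps of the one-sided bar construction of \ref{one-sided} (those built from $\mu^{\RRR}$, $\eta^{\RRR}$, and $\alpha$, all living in $\Grpd_{/J}$); compatibility with $u$ is then just naturality of $\psi$ evaluated at the $G\lowershriek$-image of these maps. The only maps requiring genuine work are the top face maps, given in $\TSB{\PPP}{\RRR}$ by $\mu^{\PPP}\circ \PPP(\theta)$ and in $X$ by $\mu^{\SSS}\circ \SSS(\phi)$ (cf.~\ref{sub:bar}); here one feeds in the multiplicative opfunctor axiom for $\psi$ from \eqref{eq:colax} together with the factorisation $\phi = \psi G\lowershriek \circ F\lowershriek\theta$ of \ref{set-up} to see that $u$ intertwines them. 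That $M$ is a Segal space follows from Proposition~\ref{ho-pbk} applied with $\PPP$ in place of $\SSS$ (so that $\TSB{\PPP}{\RRR}$ is Segal in $\Grpd_{/I}$) and the fact that $F\lowershriek\colon \Grpd_{/I}\to\Grpd$, being the forgetful functor from a homotopy slice, preserves pullbacks.

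For culfness I would use that, by definition, $u$ is culf precisely when its naturality squares over active maps of $\simplexcategory$ are pullbacks, and that it suffices to verify this for the generating active maps, namely the inner cofaces and the codegeneracies (\ref{active-inert-Delta}), since pullback squares paste (Lemma~\ref{lem:prism}). But the corresponding simplicial operators --- the inner face maps $d_i$ ($0<i<n$) and the degeneracies $s_i$ --- are exactly the maps obtained by applying $\PPP G\lowershriek$ (respectively $\SSS F\lowershriek G\lowershriek$) to maps in the image of $G\lowershriek$. Hence each such naturality square is literally a naturality square of $\psi$ at a map of $\Grpd_{/I}$,
\[
\begin{tikzcd}[sep=large]
F\lowershriek\PPP(G\lowershriek\RRR^n 1) \ar[r] \ar[d, "\psi"'] & F\lowershriek\PPP(G\lowershriek\RRR^{n-1}1) \ar[d, "\psi"] \\
\SSS F\lowershriek(G\lowershriek\RRR^n 1) \ar[r] & \SSS F\lowershriek(G\lowershriek\RRR^{n-1}1) ,
\end{tikzcd}
\]
and the analogue for degeneracies. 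Since $\psi$ is cartesian (the general fact recalled in \ref{opf-ax}), every such square is a pullback, and culfness follows.

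The conceptual point --- and the reason the argument is clean rather than a computation --- is precisely this dichotomy: the active (inner) part of the simplicial structure is built solely from $\PPP$ (resp.\ $\SSS$) applied to maps, so it meets $u$ through the cartesian naturality squares of $\psi$, whereas the genuine module structure enters only through the top (inert) face maps, for which culfness demands nothing. I therefore expect the main obstacle to be not culfness at all but the bookkeeping in the top-face compatibility needed for simpliciality; this is routine given the opfunctor axioms, but it is the one place where the two multiplications $\mu^{\PPP}, \mu^{\SSS}$ and the two transformations $\theta, \psi$ must be juggled together.
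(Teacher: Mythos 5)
Your proposal is correct and takes essentially the same route as the paper's proof: the comparison map is defined levelwise by the components of $\psi$, simpliciality away from the top face maps is naturality of $\psi$, the top-face compatibility is exactly the monad-opfunctor axiom \eqref{eq:colax} for $\psi$ combined with the factorisation $\phi = \psi G\lowershriek \circ F\lowershriek\theta$ (plus one further instance of naturality), and culfness follows because $\psi$ is cartesian, so the naturality squares over the inner face and degeneracy operators are pullbacks. The only differences are cosmetic: you additionally record the Segal condition on $F\lowershriek\TSB{\PPP}{\RRR}$ and spell out the reduction of culfness to the generating active maps, both of which the paper leaves implicit.
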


\begin{proof}
  The two-sided bar constructions
  $\TSB{\SSS}{\RRR}$ and $F\lowershriek\TSB{\PPP}{\RRR}$ are the
  top and bottom rows of the diagram
  \[\begin{tikzcd}[column sep = 5em]
  \ar[r, phantom, "\TSB{\SSS}{\RRR} :" description] &
  \SSS  F\lowershriek G\lowershriek \; 1 
  \ar[r, pos=0.65, "s_0" on top] 
  &
  \ar[l, shift left=6pt, pos=0.65, "d_0" on top]
  \ar[l, wavy, shift right=6pt, pos=0.65, "d_1" on top]
  \SSS  F\lowershriek G\lowershriek \; \RRR  1 
  \ar[r, shift right=6pt, pos=0.65, "s_0" on top]
  \ar[r, shift left=6pt, pos=0.65, "s_1" on top]  
  &
  \ar[l, shift left=12pt, pos=0.65, "d_0" on top]
  \ar[l, wavy, shift right=12pt, pos=0.65, "d_2" on top]
  \ar[l, pos=0.65, "d_1" on top]
  \SSS  F\lowershriek G\lowershriek \; \RRR\RRR 1
    \ar[r, phantom, "\dots" on top]
  & {}
  \\ 
  \\
  \ar[r, phantom, "F\lowershriek\TSB{\PPP}{\RRR} :" description] &
  F\lowershriek \PPP   G\lowershriek \; 1
  \ar[uu, wavy, "{\psi_{G\lowershriek 1}}"]
  \ar[r, pos=0.65, "s_0" on top] 
  &
  \ar[l, shift left=6pt, pos=0.65, "d_0" on top]
  \ar[l, shift right=6pt, pos=0.65, "d_1" on top]
  F\lowershriek \PPP    G\lowershriek  \; \RRR 1
  \ar[uu, wavy, "{\psi_{G\lowershriek \RRR 1}}"]
  \ar[r, shift right=6pt, pos=0.65, "s_0" on top]  
  \ar[r, shift left=6pt, pos=0.65, "s_1" on top]  
  &
  \ar[l, shift left=12pt, pos=0.65, "d_0" on top]
  \ar[l, shift right=12pt, pos=0.65, "d_2" on top]
  \ar[l, pos=0.65, "d_1" on top]
  F\lowershriek \PPP    G\lowershriek  \; \RRR\RRR 1 
  \ar[uu, wavy, "{\psi_{G\lowershriek \RRR\RRR 1}}"]
  \ar[r, phantom, "\dots" on top]
  & {}
  \end{tikzcd}\]
  The vertical comparison maps are components of the natural transformation
  $\psi$. We first check that this is a simplicial map. By naturality of
  $\psi$, it is clear that the vertical maps commute with all degeneracy
  and face maps except perhaps the top face maps. The top face maps are
  special since they involve the monad multiplication of $\SSS$, and
  require a separate check: for $k\geq 0$, put $A:= \RRR^k 1$. The
  compatibility with the top face between degree $k+1$ and $k$ is
  commutativity of the outline of the diagram
	$$\begin{tikzcd}[row sep=tiny]
	& \SSS\SSS F\lowershriek G\lowershriek A 
	\ar[ld, "\mu^\SSS"']
	&
	\\
	\SSS F\lowershriek G\lowershriek A & & 
	\SSS F\lowershriek G\lowershriek \RRR A 
	\ar[lu, "\SSS \phi_A"']
	\ar[ld, "\SSS F\lowershriek \theta_A"]
	\\
	& \SSS F \lowershriek \PPP G \lowershriek A 
	\ar[uu, "\SSS \psi_{G\lowershriek A}"]
	&
	\\
	F\lowershriek \PPP G \lowershriek A 
	\ar[uu, "\psi_{G\lowershriek A}"]
	&& F\lowershriek \PPP G \lowershriek \RRR A
	\ar[uu, "\psi_{G\lowershriek \RRR A}"']
	\ar[ld, "F\lowershriek \PPP \theta_A"]
	\\
	& F\lowershriek \PPP\PPP G\lowershriek A
	\ar[uu, "\psi_{\PPP G\lowershriek A}"]
	\ar[lu, "F\lowershriek \mu^\PPP"]
	&
	\end{tikzcd}$$
  Commutativity of the pentagon on the left is a monad-opfunctor 
  axiom~\eqref{eq:colax} for
  $\psi$. The triangle is ($\SSS$ applied to) the definition of $\phi$.
  Commutativity of the square is naturality of $\psi$. Furthermore, the
  simplicial map is cartesian on the non-wavy part because $\psi$ is a
  cartesian natural transformation. In particular the simplicial map is
  culf, and since two-sided bar constructions are Segal 
  this makes the bottom row $F\lowershriek \TSB{\PPP}{\RRR}$
  a comodule over the top row $\TSB\SSS\RRR$.
\end{proof}

\begin{blanko}{Finite operad maps.}\label{finiteopmaps}
  An operad map, in the form of a monad opfunctor $ (G,\theta) : \RRR\Rightarrow\PPP$ represented by 
  $$\begin{tikzcd}
    \RRR: & J \ar[d, "G"']& \ar[l]  U\ar[d, "K"'] \drpullback \ar[r] & V\ar[d, 
	"H"] \ar[r] & 
    J\ar[d, "G"] \\
    \PPP: & I & \ar[l]  E \ar[r] & B \ar[r] & 
    I  ,
  \end{tikzcd}
  $$
  is called {\em finite} when the maps $G$ and $H$ are finite (then 
  the map $K$ is finite too, by pullback).
\end{blanko}

\begin{lemma}\label{lem:theta=finite}
  If $(G, \theta): \RRR\Rightarrow \PPP$ is finite, then all components of $\theta$ are 
  finite.
\end{lemma}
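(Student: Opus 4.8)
The plan is to exploit that $\theta$ is a \emph{cartesian} natural transformation, reduce the question to a single component $\theta_1$, and then compute that component explicitly.

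First I would record that $\theta$ is cartesian. Since $(G,\theta)$ is a morphism of polynomial monads, by \ref{morphisms} the transformation $\theta : G\lowershriek\RRR \Rightarrow \PPP G\lowershriek$ is cartesian; concretely the middle square of the representing diagram in \ref{finiteopmaps} is a pullback. Cartesianness says that for every object $A\to J$ of $\Grpd_{/J}$, the naturality square for the unique map $A\to 1$ to the terminal object is a pullback
\[
\begin{tikzcd}
G\lowershriek\RRR A \ar[r] \ar[d, "\theta_A"'] & G\lowershriek\RRR 1 \ar[d, "\theta_1"] \\
\PPP G\lowershriek A \ar[r] & \PPP G\lowershriek 1 .
\end{tikzcd}
\]
Forgetting down to $\Grpd$, this is a pullback of groupoids, so the homotopy fibres of $\theta_A$ agree with those of $\theta_1$. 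As finiteness is stable under base change, it is enough to prove that the single component $\theta_1$ is finite.

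Next I would identify $\theta_1$. We have $G\lowershriek\RRR 1 = (V\to I)$, the groupoid of operations of $\RRR$, while $G\lowershriek 1 = (J\xrightarrow{G} I)$, so that $\PPP G\lowershriek 1 = \PPP(G)$ is the groupoid of $\PPP$-operations whose input slots are decorated by colours of $J$, compatibly over $I$. Reading off the representing diagram (equivalently, from the operad interpretation), $\theta_1$ sends an $\RRR$-operation $v$ to the $\PPP$-operation $H(v)$ together with the induced $J$-decoration of its inputs. Let $\pi:\PPP(G)\to \PPP(1_I)=B$ be the map forgetting the decoration, i.e.\ $\PPP$ applied to the terminal map $G\lowershriek 1\to 1_I$ in $\Grpd_{/I}$; then $\pi\circ\theta_1 = H$. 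The fibre of $\pi$ over a node $b\in B$ is the product, over the (finitely many, since $\PPP$ is finitary) input slots $e$ of $b$, of the homotopy fibres of $G$ over the corresponding input colours; as $G$ is finite, each factor is finite, whence $\pi$ is finite.

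Finally I would run a finiteness cancellation along $V\xrightarrow{\theta_1}\PPP(G)\xrightarrow{\pi}B$. Since $\pi\circ\theta_1=H$ is finite by hypothesis and $\pi$ is finite, both $H^{-1}(b)$ and $\pi^{-1}(b)$ are finite groupoids for every $b$. By the Fibre Lemma~\ref{lem:fibre}, for a point $x\in\PPP(G)$ lying over $b=\pi(x)$ the homotopy fibre of $\theta_1$ over $x$ coincides with the homotopy fibre over $x$ of the induced map $H^{-1}(b)\to\pi^{-1}(b)$ between these two finite groupoids, and a homotopy fibre of a map of finite groupoids is finite. Thus $\theta_1$ is finite, and by the reduction every $\theta_A$ is finite. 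The main obstacle is the explicit identification of $\theta_1$ and the decoration-forgetting factorisation $\pi\circ\theta_1=H$; once this is in place the finiteness of $\pi$ is a direct finitary computation and the cancellation is elementary. Care is only needed to verify that the naturality square is genuinely a pullback of \emph{underlying} groupoids and that it is \emph{homotopy} fibres being compared throughout.
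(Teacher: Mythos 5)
Your proof is correct and follows essentially the same route as the paper's: reduce to the single component $\theta_1$ by cartesianness of $\theta$, factor $H = \PPP(G)\circ\theta_1$ through $\PPP(J)\to\PPP(I)$, observe that this second map is finite, and cancel. The only difference is that you prove inline the two facts the paper invokes by citation or assertion — that the endofunctor $\PPP$ preserves finite maps (your product-of-fibres computation) and the finiteness cancellation along a composite (the paper's Lemma~\ref{lem:fibseqfinite}, which you re-derive via the Prism/Fibre Lemmas together with the fact that maps between finite groupoids have finite fibres).
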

\begin{proof}
  It is enough to check that $\theta_1$ is finite, because $\theta$ is a
  cartesian natural transformation, and therefore all the other components
  of $\theta$ are pullbacks of $\theta_1$. The map $\theta_1$ is related to
  $G$ and $H$ by the diagram
  $$
  \begin{tikzcd}[column sep={1.6em,between origins}]
	V \ar[dd, "H"'] & = \phantom{l}& \RRR(J) \ar[d, "\theta_1"] \\
	&& \PPP(J) \ar[d, "\PPP(G)"] \\
	B &= \phantom{l}& \PPP(I) .
  \end{tikzcd}$$
  But $G$ and $H$ are finite by assumption, and $\PPP(G)$ is
  too (because endofunctors underlying operads preserve finite maps).
  Now it follows from the next lemma that 
  also $\theta_1$ is finite.
\end{proof}

\begin{lemma}\label{lem:fibseqfinite}
  Given maps of groupoids $A \stackrel{f}\to B \stackrel{g}\to C$,
  if two out of $f$, $g$, and $g\circ f$ are finite then so is the third.
\end{lemma}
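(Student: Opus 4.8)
The plan is to reduce the whole statement to a single fibration sequence of groupoids and then read off each implication from the associated long exact sequence. Recall that a map of groupoids is finite exactly when all its homotopy fibres are finite, and that a groupoid $X$ is finite when $\pi_0 X$ is a finite set and every $\Aut(x)$ is a finite group; so everything here is a statement about finiteness of homotopy fibres. First I would relate the fibres of $f$, $g$ and $g\circ f$. Fix $c\in C$ and let $B_c$ and $(gf)_c$ denote the homotopy fibres of $g$ and of $gf$ over $c$. Since (homotopy) pullbacks compose, $(gf)_c \simeq A\times_B B_c$, and the projection $(gf)_c \to B_c$ has homotopy fibre over $b\in B_c$ equal to the homotopy fibre $A_b$ of $f$ over $b$ --- this is the Fibre Lemma~\ref{lem:fibre} applied twice, or one application of the Prism Lemma~\ref{lem:prism}. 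Thus for each compatible pair we obtain a fibration sequence of groupoids
\[
A_b \longrightarrow (gf)_c \longrightarrow B_c ,
\]
and $f$ (resp.\ $g$, resp.\ $gf$) is finite if and only if all the $A_b$ (resp.\ all the $B_c$, resp.\ all the $(gf)_c$) are finite.

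The engine is the long exact sequence attached to this fibration sequence: for a basepoint $a\in A_b$ mapping to $b\in B_c$ there is an exact sequence of pointed sets and groups
\[
1 \to \Aut_{A_b}(a) \to \Aut_{(gf)_c}(a) \to \Aut_{B_c}(b) \to \pi_0 A_b \to \pi_0 (gf)_c \to \pi_0 B_c ,
\]
valid because groupoids are homotopy $1$-types. I would then run the implications off this sequence, treating $\pi_0$ and the automorphism groups separately. \emph{If $f$ and $g$ are finite}, then $(gf)_c$ is finite: the map $\pi_0(gf)_c \to \pi_0 B_c$ has, over the finite set $\pi_0 B_c$, fibres which are quotients of the finite sets $\pi_0 A_b$, so $\pi_0(gf)_c$ is finite; and each $\Aut_{(gf)_c}(a)$ is an extension of a subgroup of the finite group $\Aut_{B_c}(b)$ by the finite group $\Aut_{A_b}(a)$, hence finite. \emph{If $g$ and $gf$ are finite}, then each $A_b$ is finite: $\pi_0 A_b$ maps to the finite set $\pi_0(gf)_c$ with fibres that are quotients of the finite group $\Aut_{B_c}(b)$, and $\Aut_{A_b}(a)$ is a subgroup of the finite group $\Aut_{(gf)_c}(a)$. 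This second case is exactly the one invoked in \ref{lem:theta=finite}.

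The remaining implication --- deducing that $g$ is finite from finiteness of $f$ and $gf$ --- is the delicate one, and is where I expect the main obstacle to sit. Over a point $b\in B_c$ lying in the essential image of $f$ one may choose $a\in A_b$, and the exact sequence then forces $\Aut_{B_c}(b)$ finite (it maps, modulo the finite image of $\Aut_{(gf)_c}(a)$, injectively into the finite set $\pi_0 A_b$) and places $[b]$ in the image of the map $\pi_0(gf)_c \to \pi_0 B_c$ out of a finite set. The obstruction is precisely the components of $B_c$ \emph{not} met by $f$: there $A_b$ is empty, the long exact sequence yields no control, and $\pi_0 B_c$ may be infinite while $(gf)_c$ remains finite. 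Hence the argument closes on the essential image of $f$, and the honest way to finish this third case is to supply the missing input that $f$ is (essentially) surjective onto the relevant part of $B_c$; tracking that surjectivity is the step I would single out as the crux. In the present paper this subtlety is moot, since the lemma is only used through the already-established fibre implication.
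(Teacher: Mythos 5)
Your reduction is the same as the paper's: the paper forms the double pullback over $b$ and $c$ and reads off the fibre sequence $A_b \to A_c \to B_c$ (fibres of $f$, of $g\circ f$, and of $g$), then disposes of all three implications at once by citing the two-out-of-three property of homotopy finiteness from \cite{Galvez-Kock-Tonks:1602.05082}; you instead extract the implications by hand from the long exact sequence of this fibration. Your two completed cases ($f,g$ finite $\Rightarrow$ $g\circ f$ finite; $g$, $g\circ f$ finite $\Rightarrow$ $f$ finite) are correct as written.

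Your diagnosis of the third case is also correct, and deserves to be stated more bluntly: Lemma~\ref{lem:fibseqfinite} is \emph{false} as stated. Take $A=\emptyset$, let $B$ be an infinite discrete groupoid, and $C=1$: every fibre of $f$ and of $g\circ f$ is empty, hence finite (the empty groupoid is finite under the paper's definition), while the fibre of $g$ over the point of $C$ is all of $B$. (Non-surjectivity, not emptiness, is the culprit: $A=1\hookrightarrow B$ gives a counterexample with $A$ nonempty.) A fibrewise two-out-of-three in the direction ``fibre and total space finite $\Rightarrow$ base finite'' can only be valid for pointed fibre sequences over a connected base --- your obstruction rules out anything more general --- and that is precisely what fails on the components of $B_c$ not met by $A_c$. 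So the paper's one-line citation inherits exactly the hole you found; with your added hypothesis that $f$ be essentially surjective (equivalently, $\pi_0$-surjective on each fibre $B_c$), your long-exact-sequence argument does close the third case, and that is the correct form of the lemma.

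Where you slip is your final sentence. The paper does \emph{not} use only the safe implications: Lemma~\ref{lem:theta=finite} uses the case you verified, but Lemma~\ref{lem:finfinfin} invokes precisely the problematic one, deducing that the dotted arrow $g\colon \PPP G\lowershriek \RRR 1 \to \PPP G\lowershriek 1$ is finite from finiteness of $f=\theta_{\RRR 1}$ and of $g\circ f = \theta_1 \circ G\lowershriek\mu^{\RRR}_1$. So the subtlety is not moot, and that proof needs a patch. Fortunately the patch is small in the intended application ($\RRR=\PPP\bdr$, $\PPP=\PPP\bd$): the points of $\PPP G\lowershriek\RRR 1$ outside the essential image of $\theta_{\RRR 1}$ are the active injections $K\actto T$ with $K$, hence $T$, a trivial tree; these sit over trivial trees in $\PPP G\lowershriek 1$, where the fibre of the dotted arrow is equivalent to a point and so trivially finite, while over nontrivial trees $\theta_{\RRR 1}$ hits every component of the fibre and your argument applies. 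Flagging this needed repair --- rather than assuming the paper avoids the bad case --- is how your analysis should have concluded.
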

\begin{proof}
  With $b\in B$ and $c = g(b)$, consider the pullback diagram 
  $$\begin{tikzcd}
  A_b \drpullback \ar[d] \ar[r] & A_c \drpullback \ar[d] \ar[r] & A \ar[d, "f"] \\
  1 \ar[r, "\name{b}"'] & B_c \drpullback \ar[d] \ar[r] & B \ar[d, "g"] \\
  & 1 \ar[r, "\name{c}"'] & C  .
  \end{tikzcd}
  $$
  Here $A_b$, $B_c$, and $A_c$ are fibres of the maps $f$, $g$, and $g\circ
  f$, respectively. Now the result follows from the 2-out-of-3 property for
  finiteness \cite{Galvez-Kock-Tonks:1602.05082} in the fibre sequence $A_b
  \to A_c \to B_c$.
\end{proof}

\begin{lemma}\label{lem:finfinfin}
  If $\RRR\Rightarrow \PPP$ is finite and $\RRR$ is locally finite, then
  the simplicial groupoid $C = F\lowershriek\TSB{\PPP}{\RRR}$ is locally 
  finite (as a comodule configuration). This means that
  the face map $C_0 \stackrel{d_1}\leftarrow C_1$ is finite.
\end{lemma}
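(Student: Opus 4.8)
The plan is to unwind the definitions of $C_0$ and $C_1$ and identify the face map $d_1$ concretely, then show it is finite by a two-step factorisation. Recall that $C = F\lowershriek \TSB{\PPP}{\RRR}$, so by the general description of the two-sided bar construction (\ref{sub:bar}), we have $C_0 = F\lowershriek \PPP G\lowershriek 1$ and $C_1 = F\lowershriek \PPP G\lowershriek \RRR 1$. The face map $d_1 : C_1 \to C_0$ is $F\lowershriek \PPP G\lowershriek$ applied to the action map $\alpha : \RRR 1 \to 1$ (the bottom face map from \ref{one-sided}). Thus $d_1 = F\lowershriek \PPP G\lowershriek(\alpha)$, and since $F\lowershriek$ merely transports from $\Grpd_{/I}$ to $\Grpd$ and preserves finiteness by definition, it suffices to show that $\PPP G\lowershriek(\alpha)$ is a finite map in $\Grpd_{/I}$.

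First I would observe that $d_1$ is the bottom face map of $\TSB{\PPP}{\RRR}$ (pushed forward along $F\lowershriek$), which by the same reasoning as in the description of the two-sided bar construction corresponds to the action of $\RRR$ on the terminal $\RRR$-algebra. The key is to compare $C = F\lowershriek\TSB{\PPP}{\RRR}$ with $\TSB{\SSS}{\RRR}$ via the culf comparison map $\psi$ of Proposition~\ref{prop:RP}. Since $\RRR$ is locally finite, its two-sided bar construction $\TSB{\SSS}{\RRR}$ is locally finite, meaning in particular that the face map $d_1 : (\TSB{\SSS}{\RRR})_2 \to (\TSB{\SSS}{\RRR})_1$ is finite; but more to the point, the condition we need is at the level of $d_1 : C_1 \to C_0$, which compares against the analogous structure map of $\RRR$.

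The cleanest route is to factor $d_1 : C_1 \to C_0$ through the comparison with $\TSB{\SSS}{\RRR}$. The bottom face map of $\TSB{\SSS}{\RRR}$ in the relevant degrees comes from the action $\alpha : \RRR 1 \to 1$, whose finiteness is exactly part of local finiteness of $\RRR$ (via $\eta$ and $\mu$ being finite, cf.~\ref{fin-cond}). The square relating $d_1^C$ to $d_1$ of $\TSB{\SSS}{\RRR}$ is built from components of $\psi$, which are finite by hypothesis: indeed, since $\RRR\Rightarrow\PPP$ is finite, Lemma~\ref{lem:theta=finite} gives that all components of $\theta$ are finite, and I would combine this with the finiteness of the components of $\psi$ (which follows from local finiteness of $\PPP$, or can be extracted analogously). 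Then finiteness of $d_1^C$ follows by the $2$-out-of-$3$ property (Lemma~\ref{lem:fibseqfinite}) applied to the factorisation of $d_1^C$ through the comparison maps and the corresponding finite face map of $\TSB{\SSS}{\RRR}$.

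The main obstacle I anticipate is correctly tracking which finiteness hypotheses are genuinely needed and assembling them into a single fibre sequence. The hypothesis gives two inputs: $\RRR\Rightarrow\PPP$ is finite (so $\theta$ has finite components by Lemma~\ref{lem:theta=finite}), and $\RRR$ is locally finite (so the structure maps $\mu^\RRR$ and $\eta^\RRR$ are finite, hence the action $\alpha$ is finite). The delicate point is that $d_1^C = F\lowershriek\PPP G\lowershriek(\alpha)$ is \emph{not} literally a component of $\theta$ or $\psi$, but rather $\PPP$ applied to a finite map; here I would use that the endofunctor underlying the operad $\PPP$ preserves finite maps (as already invoked in the proof of Lemma~\ref{lem:theta=finite}), so that $\PPP G\lowershriek(\alpha)$ is finite precisely because $G\lowershriek(\alpha)$ is. Verifying that $G\lowershriek(\alpha)$ is finite reduces to finiteness of $G$ together with finiteness of $\alpha$, again via Lemma~\ref{lem:fibseqfinite}. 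Once this bookkeeping is in place, the result follows formally.
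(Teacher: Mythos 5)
Your proposal correctly computes $C_0 = F\lowershriek \PPP G\lowershriek 1$ and $C_1 = F\lowershriek \PPP G\lowershriek \RRR 1$, but it identifies the wrong face map. The map $d_1\colon C_1 \to C_0$ in the lemma is the \emph{top} face map of $C = F\lowershriek\TSB{\PPP}{\RRR}$, namely $d_1 = F\lowershriek\bigl(\mu^\PPP_{G\lowershriek 1}\circ \PPP(\theta_1)\bigr)$ --- the face map that exists only because of the two-sided structure, cf.~\eqref{eq:dtop} --- and it is this map whose fibres are summed over in the coaction span $M_0 \stackrel{d_1}\leftarrow M_1 \to X_1 \times M_0$ of \ref{comoduleconf}. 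The map you analyse, $F\lowershriek \PPP G\lowershriek(\alpha)$, is the \emph{bottom} face map $d_0$; it sits on the pushforward leg of that span and is not required to be finite. The distinction is fatal, not cosmetic: in the intended application $\RRR = \PPP\bdr \Rightarrow \PPP = \PPP\bd$, the top face map $d_1$ is `forget the blobs' (its fibre over a tree $T$ is the finite set of nontrivial blobbings of $T$), whereas $d_0$ is `contract the blobs', whose fibre over a tree $K$ consists of all ways of refining each node of $K$ into a nontrivial tree with the prescribed residue --- an infinite groupoid (chains of unary nodes already give infinitely many refinements of a single unary node). So the map you set out to prove finite is genuinely not finite, and no bookkeeping can repair an argument aimed at it.

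The auxiliary finiteness claims also fail. Local finiteness of $\RRR$ says that $\mu^\RRR$ and $\eta^\RRR$ are finite (\ref{fin-cond}); it does \emph{not} make $\alpha\colon \RRR 1 \to 1$ finite: for $\RRR = \PPP\bdr$ the underlying map of $\alpha$ sends a nontrivial $\PPP$-tree to its residue and has infinite fibres, even though $\PPP\bdr$ \emph{is} locally finite (Lemma~\ref{lem:BDRfin}). Likewise, finiteness of the components of $\psi$ is not available: the lemma assumes nothing about $\PPP$, and in the application $\PPP = \PPP\bd$ is never locally finite (Lemma~\ref{lem:BDnotfinite}); nor is finiteness of $d_1\colon (\TSB{\SSS}{\RRR})_1 \to (\TSB{\SSS}{\RRR})_0$ part of local finiteness of $\TSB{\SSS}{\RRR}$. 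The correct route uses exactly the ingredients you cite, but applied to the top face map: instantiate the monad-opfunctor axiom~\eqref{eq:colax} for $(G,\theta)$ at $A = 1$, which gives $\bigl(\mu^\PPP_{G\lowershriek 1}\circ\PPP(\theta_1)\bigr)\circ \theta_{\RRR 1} = \theta_1 \circ G\lowershriek\mu^\RRR_1$. The right-hand side is finite ($\theta_1$ is finite by Lemma~\ref{lem:theta=finite} since $\RRR\Rightarrow\PPP$ is finite, and $G\lowershriek \mu^\RRR_1$ is finite since $\RRR$ is locally finite), and $\theta_{\RRR 1}$ is finite, so the two-out-of-three Lemma~\ref{lem:fibseqfinite} makes $\mu^\PPP_{G\lowershriek 1}\circ\PPP(\theta_1)$ finite; applying $F\lowershriek$, which preserves finiteness, concludes that $d_1$ is finite.
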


\begin{proof}
  The solid diagram
  $$
  \begin{tikzcd}[column sep={4em,between origins}]
	\PPP \PPP G\lowershriek 1 \ar[rd, "\mu^\PPP_{G\lowershriek 1}"'] && 
	\PPP G\lowershriek \RRR 1 \ar[ll, "\PPP(\theta_{1})"'] \ar[ld, dotted] && G\lowershriek 
	\RRR\RRR 1 \ar[ll, "\theta_{\RRR 1}"'] \ar[ld, "G\lowershriek \mu^\RRR_1"] 
	\\
	& \PPP G\lowershriek 1&& G\lowershriek \RRR 1    \ar[ll, "\theta_1"] & 
  \end{tikzcd}
  $$
  expresses one of the axioms~\eqref{eq:colax} for the monad opfunctor $\RRR\Rightarrow
  \PPP$. The maps $\theta_1$ and $\theta_{\RRR 1}$ are finite by 
  Lemma~\ref{lem:theta=finite} since
  the operad map is finite.  Furthermore, $G\lowershriek 
  \mu^\RRR_1$ is finite since $\RRR$ is assumed locally finite.
  It now follows from  Lemma~\ref{lem:fibseqfinite} that the dotted
  arrow is finite.  But the map we are concerned with, $C_0 
  \stackrel{d_1}\leftarrow C_1$, is $F\lowershriek$ applied to this dotted 
  map (and lowershrieks preserve finiteness).
\end{proof}

\begin{blanko}{Free $\SSS$-algebra on a comodule.}
  For the desired application of this construction, we will need to pass to
  a comodule of {\em monomials} of $\RRR$-operations. This is achieved in a
  canonical way since $\TSB{\SSS}{\RRR}$ is a symmetric monoidal
  decomposition space. The $\TSB{\SSS}{\RRR}$-comodule structure on $\SSS
  F\lowershriek\TSB{\PPP}{\RRR}$ is given by
  $$
  \SSS F\lowershriek\TSB{\PPP}{\RRR} \to \SSS \TSB{\SSS}{\RRR} \to 
  \TSB{\SSS}{\RRR} ,
  $$
  where the last map is the symmetric monoidal structure. This composite is
  again culf, because $\SSS$ preserves culfness, and the structure map
  itself is culf (\ref{S}). With shorthand notation $Z:=
  \TSB{\SSS}{\RRR}$ and $C:=F\lowershriek\TSB{\PPP}{\RRR}$, the relevant
  span from this comodule configuration is
  $$
  \SSS C_0 \stackrel{\SSS(d_1)}\longleftarrow 
  \SSS C_1 \stackrel{\SSS(u,d_0)}\longrightarrow 
  \SSS(Z_1\times C_0) \isopil 
  \SSS Z_1\times_{\SSS 1} \SSS C_0 \longrightarrow 
  \SSS Z_1\times \SSS C_0 \longrightarrow 
  Z_1 \times \SSS C_0 
  $$
  (the two middle maps expressing together that $\SSS$ is colax  monoidal with 
  respect to the cartesian product).
\end{blanko}

\subsection{Main theorem, locally finite version}

\begin{blanko}{Set-up.}
  Let $\PPP$ be any operad, represented by $I \leftarrow E \to B \to 
  I$, and let $F$ denote either of the maps $I \to 1$ and $B \to 1$. (In 
  any case we use $F\lowershriek$ only to move from slices to $\Grpd$.)
  
  We now instantiate the constructions of \ref{sec:furtherbar} to the
  operad map
  $$
  \PPP\bdr \Rightarrow \PPP\bd .
  $$
  This map is finite: in the notation of \ref{finiteopmaps},
  $G$ is the identity, and $V \to B$ is
  the monomorphism given by inclusion of the nontrivial part, and in 
  particular is finite too.
  As before, we put
  $$
  Z := \TSB{\SSS}{\PPP\bdr}, \qquad
  C := F\lowershriek\TSB{\PPP\bd}{\PPP\bdr}, \qquad
  M := \SSS C.
  $$
\end{blanko}

\begin{lemma}\label{lem:comod-config-locfin}
  The simplicial map 
  $$
  u: F\lowershriek\TSB{\PPP\bd}{\PPP\bdr} \to \TSB{\SSS}{\PPP\bdr} ,
  $$
  is culf, and hence constitutes a comodule configuration. This comodule 
  configuration is furthermore locally finite.
\end{lemma}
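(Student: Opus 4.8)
The plan is to recognise this lemma as a direct instance of the abstract comodule construction of Subsection~\ref{sec:furtherbar}, applied to the chain of operad maps
\[
\PPP\bdr \Rightarrow \PPP\bd \Rightarrow \SSS ,
\]
where in the notation of \ref{set-up} the role of $\RRR$ is played by $\PPP\bdr$ and the role of the intermediate monad (there called $\PPP$) is played by $\PPP\bd$. The one preliminary point to settle is that $\PPP\bdr \Rightarrow \PPP\bd$ is genuinely an operad map, i.e.\ a monad opfunctor in the sense of \ref{opf-ax}: this is nothing but operad reduction, the removal of all nullary operations. Since $\PPP\bdr$ and $\PPP\bd$ share the colour groupoid $B$, the comparison functor on colours is the identity, and on operations the map is the inclusion of the nontrivial $\PPP$-trees into all $\PPP$-trees. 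What has to be observed is that nontrivial trees are closed under the substitution law of $\PPP\bd$: substituting nontrivial $\PPP$-trees into the nodes of a nontrivial $\PPP$-tree again yields a tree with at least one node, so the reduced operations really do form a sub-monad. By the general fact recalled in \ref{opf-ax}, the accompanying natural transformation $\theta$ is automatically cartesian in the polynomial setting.

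With this in place, culfness is immediate. Proposition~\ref{prop:RP}, instantiated as above, states precisely that the induced simplicial map
\[
u: F\lowershriek \TSB{\PPP\bd}{\PPP\bdr} \longrightarrow \TSB{\SSS}{\PPP\bdr}
\]
is culf, so that $C := F\lowershriek \TSB{\PPP\bd}{\PPP\bdr}$ is a comodule configuration over $Z = \TSB{\SSS}{\PPP\bdr}$. No further checking is needed for this part, as the pentagon/triangle/square verification was already carried out in the proof of \ref{prop:RP}.

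For local finiteness I would invoke Lemma~\ref{lem:finfinfin}, whose two hypotheses are both already secured. First, the operad map $\PPP\bdr \Rightarrow \PPP\bd$ is finite: as noted in the set-up preceding this lemma, the colour comparison $G$ is the identity and the map $V \to B$ on operations is the monomorphism including the nontrivial part, hence finite. Second, $\PPP\bdr$ is locally finite by Lemma~\ref{lem:BDRfin}. Lemma~\ref{lem:finfinfin} then yields exactly that the face map $C_0 \stackrel{d_1}\leftarrow C_1$ is finite, which is the defining condition for the comodule configuration to be locally finite. I expect no genuine obstacle: all the substantive content lives in the general results of Subsection~\ref{sec:furtherbar}, and the present statement is only their specialisation. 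The single point deserving an explicit word is the closure of nontrivial trees under $\PPP\bd$-substitution mentioned above, since this is what guarantees that $\PPP\bdr \Rightarrow \PPP\bd$ is a well-defined finite operad map and hence that the framework of \ref{set-up} applies.
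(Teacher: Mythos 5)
Your proposal is correct and follows essentially the same route as the paper: culfness and the comodule-configuration property come from instantiating Proposition~\ref{prop:RP} at the operad map $\PPP\bdr \Rightarrow \PPP\bd$, and local finiteness comes from Lemma~\ref{lem:finfinfin} together with the finiteness of this map (identity on colours, monomorphic inclusion of nontrivial trees on operations) and Lemma~\ref{lem:BDRfin}. Your extra remark that nontrivial trees are closed under substitution, so that $\PPP\bdr \Rightarrow \PPP\bd$ is indeed a well-defined operad map, is a detail the paper leaves implicit in its set-up but is a worthwhile check, not a departure from its argument.
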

\begin{proof}
  That $u$ is a comodule configuration is an immediate consequence of
  Proposition~\ref{prop:RP}. Local finiteness follows from 
  \ref{lem:finfinfin}, since clearly $\PPP\bdr\Rightarrow\PPP\bd$ is finite.
\end{proof}

The comodule configuration expands to
  \[\begin{tikzcd}[column sep = 5em]
  Z \quad :&
  \SSS  F\lowershriek \; 1 
  \ar[r, pos=0.65, "s_0" on top] 
  &
  \ar[l, shift left=6pt, pos=0.65, "d_0" on top]
  \ar[l, wavy, shift right=6pt, pos=0.65, "d_1" on top]
  \SSS  F\lowershriek  \; \PPP\bdr 1 
  \ar[r, shift right=6pt, pos=0.65, "s_0" on top]
  \ar[r, shift left=6pt, pos=0.65, "s_1" on top]  
  &
  \ar[l, shift left=12pt, pos=0.65, "d_0" on top]
  \ar[l, wavy, shift right=12pt, pos=0.65, "d_2" on top]
  \ar[l, pos=0.65, "d_1" on top]
  \SSS  F\lowershriek \; \PPP\bdr\PPP\bdr 1
  \ar[r, phantom, "\dots" on top]
  & {}
  \\ 
  \\
  C \quad : &
  F\lowershriek \PPP\bd  \; 1
  \ar[uu, wavy, "{\psi_{1}}"]
  \ar[r, pos=0.65, "s_0" on top] 
  &
  \ar[l, shift left=6pt, pos=0.65, "d_0" on top]
  \ar[l, shift right=6pt, pos=0.65, "d_1" on top]
  F\lowershriek \PPP\bd    \; \PPP\bdr 1
  \ar[uu, wavy, "{\psi_{\PPP\bdr 1}}"]
  \ar[r, shift right=6pt, pos=0.65, "s_0" on top]  
  \ar[r, shift left=6pt, pos=0.65, "s_1" on top]  
  &
  \ar[l, shift left=12pt, pos=0.65, "d_0" on top]
  \ar[l, shift right=12pt, pos=0.65, "d_2" on top]
  \ar[l, pos=0.65, "d_1" on top]
  F\lowershriek \PPP\bd  \; \PPP\bdr\PPP\bdr 1 
  \ar[uu, wavy, "{\psi_{\PPP\bdr\PPP\bdr 1}}"]
  \ar[r, phantom, "\dots" on top]
  & {}
  \end{tikzcd}\]
In the first row, we have arbitrary corollas, then nontrivial trees, then
nontrivial trees with only nontrivial blobs. In the second row, we have
arbitrary trees, then arbitrary trees with nontrivial blobs, then arbitrary
trees with nested nontrivial blobbings, and so on. In pictures:
  
  \begin{equation}\label{eq:bdr}
  \begin{tikzpicture}[line width=0.25mm]

	\begin{scope}[shift={(0.77, 0.07)}] 
	  \draw (0.0, 0.525) -- (0.0, 1.05);
	  \fill (0.0, 1.05) circle[radius=0.065];
	  \draw (0.0, 1.05) -- (-0.63, 1.575); 
	  \draw (0.0, 1.05) -- (-0.21, 1.855);
	  \draw (0.0, 1.05) -- (0.21, 1.855);
	  \draw (0.0, 1.05) -- (0.63, 1.575);
	  \draw (-1.33, 1.225) node {$\SSS$};
	  \draw (-0.98, 1.225) node {$\bigleftbrace{20}$};
	  \draw (0.98, 1.225) node {$\bigrightbrace{20}$};
	\end{scope}

	\begin{scope}[shift={(1.785, 0.0)}]
	  \draw[->, wavy] (2.1, 1.645) -- +(-1.4, 0.0);
	  \draw[->] (2.1, 0.945) -- +(-1.4, 0.0);
	  \draw (1.4, 1.86) node {\scriptsize nodes};
	  \draw (1.4, 1.16) node {\scriptsize residue};
	\end{scope}

	\begin{scope}[shift={(6.23, 0.0)}] 
	  \draw (0.0, 0.175) -- (0.0, 0.595);
	  \fill (0.0, 0.595) circle[radius=0.065];
	  \draw (0.0, 0.595) -- (-0.35, 1.05);
	  \fill (-0.35, 1.05) circle[radius=0.065];
	  \draw (-0.35, 1.05) -- (-1.05, 1.225);
	  \draw (-0.35, 1.05) -- (-0.7, 1.75);
	  \fill (-0.7, 1.75) circle[radius=0.065];
	  \draw (-0.35, 1.05) -- (-0.35, 2.275);
	  \draw (-0.35, 1.05) -- (0.0, 2.275);
	  \draw (0.0, 0.595) -- (0.56, 2.275);
	  \fill (0.28, 1.435) circle[radius=0.065];
	  \fill (0.42, 1.855) circle[radius=0.065];
	  \draw (-0.7,0.5) node {\tiny nontrivial};

	  \draw (-1.75, 1.33) node {$\SSS$};
	  \draw (-1.4, 1.33) node {$\bigleftbrace{30}$};
	  \draw (1.155, 1.33) node {$\bigrightbrace{30}$};
	\end{scope}

	\begin{scope}[shift={(7.595, 0.0)}]
	  \draw[->, wavy] (2.1, 1.995) -- +(-1.4, 0.0);
	  \draw[->] (2.1, 1.295) -- +(-1.4, 0.0);
	  \draw[->] (2.1, 0.595) -- +(-1.4, 0.0);
	  \draw (1.4, 2.20) node {\scriptsize blobs};
	  \draw (1.4, 1.47) node {\scriptsize forget blobs};
	  \draw (1.4, 0.81) node {\scriptsize contract blobs};
	\end{scope}

	\begin{scope}[shift={(12.18, 0.0)}] 
	  \draw (0.0, 0.175) -- (0.0, 0.595);
	  \fill (0.0, 0.595) circle[radius=0.065];
	  \draw (0.0, 0.595) -- (-0.35, 1.05);
	  \fill (-0.35, 1.05) circle[radius=0.065];
	  \draw (-0.35, 1.05) -- (-1.05, 1.225);
	  \draw (-0.35, 1.05) -- (-0.7, 1.75);
	  \fill (-0.7, 1.75) circle[radius=0.065];
	  \draw (-0.35, 1.05) -- (-0.175, 1.575);
	  \fill (-0.175, 1.575) circle[radius=0.065];
	  \draw (-0.175, 1.575) -- (-0.35, 2.275);
	  \draw (-0.175, 1.575) -- (0.0, 2.275);
	  \draw (0.0, 0.595) -- (0.56, 2.275);
	  \draw (0.0, 0.595) -- (0.42, 0.875);
	  \fill (0.42, 0.875) circle[radius=0.065];

	  	  \draw (-0.75,0.5) node {\tiny nontrivial};

	  \fill (0.28,1.435) circle[radius=0.065];
	  \fill (0.42,1.855) circle[radius=0.065];
	  \draw[line width=0.01, rotate around={70:(-0.28,1.33)}] (-0.28,1.33)  ellipse (0.56 and 0.245);
	  \draw[line width=0.01, rotate around={34:(0.21,0.77)}] (0.21,0.77)  ellipse (0.49 and 0.245);
	  \draw[line width=0.01, rotate around={70:(-0.7,1.75)}] (-0.7,1.75)  ellipse (0.17 and 0.17);
	  \draw[line width=0.01, rotate around={70:(0.28,1.435)}] (0.28,1.435)  ellipse (0.17 and 0.17);
	  \draw[line width=0.01, rotate around={70:(0.42,1.855)}] (0.42,1.855)  ellipse (0.17 and 0.17);
	  \draw (-1.75, 1.33) node {$\SSS$};
	  \draw (-1.4, 1.33) node {$\bigleftbrace{30}$};
	  \draw (1.155, 1.33) node {$\bigrightbrace{30}$};
	\end{scope}


	\begin{scope}[shift={(0.77, -4.07)}] 
	  \draw (0.0, 0.175) -- (0.0, 0.595);
	  \fill (0.0, 0.595) circle[radius=0.065];
	  \draw (0.0, 0.595) -- (-0.35, 1.05);
	  \fill (-0.35, 1.05) circle[radius=0.065];
	  \draw (-0.35, 1.05) -- (-1.05, 1.225);
	  \draw (-0.35, 1.05) -- (-0.7, 1.75);
	  \fill (-0.7, 1.75) circle[radius=0.065];
	  \draw (-0.35, 1.05) -- (-0.35, 2.275);
	  \draw (-0.35, 1.05) -- (0.0, 2.275);
	  \draw (0.0, 0.595) -- (0.56, 2.275);
	  \fill (0.28, 1.435) circle[radius=0.065];
	  \fill (0.42, 1.855) circle[radius=0.065];
	  \draw (-0.7,0.55) node {\tiny general};
	  \draw (-0.7,0.3) node {\tiny trees};
	  \draw (-1.4, 1.33) node {$\bigleftbrace{30}$};
	  \draw (1.155, 1.33) node {$\bigrightbrace{30}$};
	\end{scope}

	\begin{scope}[shift={(1.785, -4.0)}]
	  \draw[->] (2.1, 1.645) -- +(-1.4, 0.0);
	  \draw[->] (2.1, 0.945) -- +(-1.4, 0.0);
	  \draw (1.4, 1.86) node {\scriptsize forget blobs};
	  \draw (1.4, 1.16) node {\scriptsize contract blobs};
	\end{scope}

	\begin{scope}[shift={(6.23, -4.0)}] 
	  \draw (0.0, 0.175) -- (0.0, 0.595);
	  \fill (0.0, 0.595) circle[radius=0.065];
	  \draw (0.0, 0.595) -- (-0.35, 1.05);
	  \fill (-0.35, 1.05) circle[radius=0.065];
	  \draw (-0.35, 1.05) -- (-1.05, 1.225);
	  \draw (-0.35, 1.05) -- (-0.7, 1.75);
	  \fill (-0.7, 1.75) circle[radius=0.065];
	  \draw (-0.35, 1.05) -- (-0.175, 1.575);
	  \fill (-0.175, 1.575) circle[radius=0.065];
	  \draw (-0.175, 1.575) -- (-0.35, 2.275);
	  \draw (-0.175, 1.575) -- (0.0, 2.275);
	  \draw (0.0, 0.595) -- (0.56, 2.275);
	  \draw (0.0, 0.595) -- (0.42, 0.875);
	  \fill (0.42, 0.875) circle[radius=0.065];
	  \draw (-0.75,0.55) node {\tiny general};
	  \draw (-0.75,0.3) node {\tiny trees,};
	  \draw (-0.75,0.05) node {\tiny nontrivial};
	  \draw (-0.75,-0.2) node {\tiny blobs};

	  \fill (0.28,1.435) circle[radius=0.065];
	  \fill (0.42,1.855) circle[radius=0.065];
	  \draw[line width=0.01, rotate around={70:(-0.28,1.33)}] (-0.28,1.33)  ellipse (0.56 and 0.245);
	  \draw[line width=0.01, rotate around={34:(0.21,0.77)}] (0.21,0.77)  ellipse (0.49 and 0.245);
	  \draw[line width=0.01, rotate around={70:(-0.7,1.75)}] (-0.7,1.75)  ellipse (0.17 and 0.17);
	  \draw[line width=0.01, rotate around={70:(0.28,1.435)}] (0.28,1.435)  ellipse (0.17 and 0.17);
	  \draw[line width=0.01, rotate around={70:(0.42,1.855)}] (0.42,1.855)  ellipse (0.17 and 0.17);
	  \draw (-1.4, 1.33) node {$\bigleftbrace{30}$};
	  \draw (1.155, 1.33) node {$\bigrightbrace{30}$};
	\end{scope}
	
	\begin{scope}[shift={(7.595, -4.0)}]
	  \draw[->] (2.1, 1.995) -- +(-1.4, 0.0);
	  \draw[->] (2.1, 1.295) -- +(-1.4, 0.0);
	  \draw[->] (2.1, 0.595) -- +(-1.4, 0.0);
	  \draw (1.4, 2.20) node {\scriptsize forget outer blobs};
	  \draw (1.4, 1.47) node {\scriptsize forget inner blobs};
	  \draw (1.4, 0.81) node {\scriptsize contract inner blobs};
	\end{scope}

	\begin{scope}[shift={(12.18, -4.0)}] 
	  \draw (0.0, 0.175) -- (0.0, 0.595);
	  \fill (0.0, 0.595) circle[radius=0.065];
	  \draw (0.0, 0.595) -- (-0.35, 1.05);
	  \fill (-0.35, 1.05) circle[radius=0.065];
	  \draw (-0.35, 1.05) -- (-1.2, 1.27);
	  \draw (-0.35, 1.05) -- (-0.7, 1.75);
	  \fill (-0.7, 1.75) circle[radius=0.065];
	  \draw (-0.35, 1.05) -- (-0.175, 1.575);
	  \fill (-0.175, 1.575) circle[radius=0.065];
	  \draw (-0.175, 1.575) -- (-0.35, 2.3);
	  \draw (-0.175, 1.575) -- (0.0, 2.275);
	  \draw (0.0, 0.595) -- (0.56, 2.275);
	  \draw (0.0, 0.595) -- (0.42, 0.875);
	  \fill (0.42, 0.875) circle[radius=0.065];

	  \draw (-0.75,0.55) node {\tiny general};
	  \draw (-0.75,0.3) node {\tiny trees,};
	  \draw (-0.75,0.05) node {\tiny nontrivial};
	  \draw (-0.75,-0.2) node {\tiny blobs};

	  \fill (0.28,1.435) circle[radius=0.065];
	  \fill (0.42,1.855) circle[radius=0.065];
	  \draw[line width=0.01, rotate around={70:(-0.28,1.33)}] (-0.28,1.33)  ellipse (0.54 and 0.18);
	  \draw[line width=0.01, rotate around={34:(0.21,0.77)}] (0.21,0.77) ellipse (0.42 and 0.24);
	  \draw[line width=0.01, rotate around={70:(-0.7,1.75)}] (-0.7,1.75) ellipse (0.16 and 0.16);
	  \draw[line width=0.01, rotate around={70:(0.28,1.435)}] (0.28,1.435)  ellipse (0.16 and 0.16);
	  \draw[line width=0.01, rotate around={70:(0.42,1.855)}] (0.42,1.855)  ellipse (0.16 and 0.16);
	  \draw (-1.4, 1.33) node {$\bigleftbrace{30}$};
	  \draw (1.155, 1.33) node {$\bigrightbrace{30}$};
	  
	  \draw[line width=0.01, rotate around={77:(-0.5,1.44)}] 
	  (-0.5,1.44) ellipse (0.75 and 0.53);
	  
	  \draw[line width=0.01, rotate around={70:(0.35,1.645)}] 
	  (0.35,1.645) ellipse (0.5 and 0.25);
	  
	  \draw[line width=0.01, rotate around={34:(0.21,0.77)}] 
	  (0.21,0.77)  ellipse (0.5 and 0.3);
	\end{scope}		  
		  
  \begin{scope}[shift={(0.6, -1.2)}]
	\draw[->, wavy] (0.0, 0.0) -- +(0.0, 0.945);
	\draw (0.49, 0.49) node {\scriptsize nodes};
  \end{scope}
  \begin{scope}[shift={(6.0, -1.2)}]
	\draw[->, wavy] (0.0, 0.0) -- +(0.0, 0.945);
	\draw (0.45, 0.49) node {\scriptsize blobs};
  \end{scope}
  \begin{scope}[shift={(12.0, -1.2)}]
	\draw[->, wavy] (0.0, 0.0) -- +(0.0, 0.945);
	\draw (0.87, 0.49) node {\scriptsize outer blobs};
  \end{scope}

	\end{tikzpicture}
\end{equation}

\begin{prop}\label{prop:=Actop}
  For any operad $\PPP$, we have
  $$
  C := F\lowershriek \TSB{\PPP\bd}{\PPP\bdr} \simeq \fatnerve\, 
  \OMEGA_{\operatorname{act.inj}}(\PPP)\op  .
  $$
\end{prop}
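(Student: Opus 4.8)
The plan is to compute the simplicial groupoid $C$ degreewise, recognise each $C_k$ as the groupoid of strings of $k$ composable active injections of $\PPP$-trees, and then package this into a simplicial equivalence by exhibiting $C$ as the fat nerve of $\OMEGA_{\operatorname{act.inj}}(\PPP)\op$. The most economical route is not to verify the simplicial identities by hand, but to show that $C$ is a Segal space that is a strict category object (as bar constructions are, by Weber's Proposition~\ref{ho-pbk}), and then identify its underlying category: such an $X$ is the fat nerve of the category recorded by its objects $X_0$, morphisms $X_1$, source/target maps, and composition $d_1\colon X_2\to X_1$, so once these are identified the statement follows.

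First I would unwind the degreewise formula. Since $\PPP\bdr$ and $\PPP\bd$ share the groupoid of colours $B$, the relative bar construction reads $C_k = F\lowershriek\,\PPP\bd\,(\PPP\bdr)^k\,1$. By the combinatorial description of the (reduced) Baez--Dolan construction (Theorem~\ref{thm:Pbd}), an element of $(\PPP\bdr)^k 1$ is a $k$-fold nested nontrivial blobbing, i.e.\ a string $K^{(k)}\actto\cdots\actto K^{(1)}$ of active maps, all of which are active injections because every decorating tree is nontrivial; applying the outer, \emph{non-reduced} $\PPP\bd$ adjoins one further tree $K^{(0)}$, allowed to be arbitrary (possibly trivial), together with an active injection $K^{(1)}\actto K^{(0)}$. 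Thus $C_k$ is the groupoid of strings $K^{(k)}\actto K^{(k-1)}\actto\cdots\actto K^{(0)}$ of $k$ composable active injections, with no constraint on the endpoints; in particular $C_0=\tr(\PPP)$ is the groupoid of all $\PPP$-trees and $C_1$ is the groupoid of active injections. The gluing of node-wise refinements underlying these identifications is governed by Lemma~\ref{lem:Act(K)}, and the fact that reducedness excludes exactly the active surjections (the trivial-tree refinements of unary nodes) is what cuts arbitrary active maps down to active injections, as in the surjective--injective factorisation recalled before Lemma~\ref{lem:BDRfin}.

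Next I would match the simplicial operators. The inner face maps $d_i$ ($0<i<k$) and the degeneracies of the bar construction are the monad multiplication and unit of $\PPP\bdr$, which on strings of active injections are precisely composition and insertion of identities, so they agree with the fat-nerve operators. The two outer faces come from the module actions: the face using the $\PPP\bd$-structure (the opfunctor $\theta\colon\PPP\bdr\Rightarrow\PPP\bd$ followed by $\mu^{\PPP\bd}$) deletes the outer tree $K^{(k)}$ and ``forgets blobs'', while the face using the terminal action $\PPP\bdr 1\to 1$ deletes the total tree $K^{(0)}$ and ``contracts blobs''; both send connected configurations to connected ones, consistent with $C$ being a genuine (non-wavy) Segal space. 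Reading off source and target on $C_1$ shows that the $1$-simplex $K^{(1)}\actto K^{(0)}$ has source $K^{(0)}$ and target $K^{(1)}$, so composition runs \emph{against} the direction of the active injections; this is exactly why the category obtained is $\OMEGA_{\operatorname{act.inj}}(\PPP)\op$ rather than $\OMEGA_{\operatorname{act.inj}}(\PPP)$. Finally, since here $F\lowershriek$ is the forgetful functor $\Grpd_{/B}\to\Grpd$, which creates pullbacks and hence preserves the Segal condition, $C$ is a Segal space; together with the identification of $C_0$, $C_1$ and composition this yields $C\simeq\fatnerve\,\OMEGA_{\operatorname{act.inj}}(\PPP)\op$.

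The hard part will be the identification of the underlying category, and specifically three intertwined points: getting the orientation right (that the bar-construction composition reverses the active injections, giving the opposite category), verifying that $d_1\colon C_2\to C_1$ really is composition in $\OMEGA_{\operatorname{act.inj}}(\PPP)$ via the monad multiplication of $\PPP\bdr$ and Lemma~\ref{lem:Act(K)}, and --- most conceptually --- pinning down that reducedness of the inner $\PPP\bdr$ forces morphisms to be active \emph{injections} while non-reducedness of the outer $\PPP\bd$ allows objects to be \emph{all} $\PPP$-trees. The outer face maps, resting on the module actions rather than on monad multiplication, are where this bookkeeping is most delicate and will require the active--inert refactoring of \ref{active-inert-Omega} to be carried out carefully, exactly as in the description of the top face maps in \ref{faceZ}.
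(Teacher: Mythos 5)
Your route is the same as the paper's: compute $C$ degreewise as strings of composable active injections, read off the face maps, and observe that source and target come out reversed relative to $\fatnerve\,\OMEGA_{\operatorname{act.inj}}(\PPP)$, so that the opposite category appears. Your reading of the two outer faces (the $\theta$/$\mu^{\PPP\bd}$-face forgets blobs, the terminal-action face contracts blobs) and the resulting orientation ($d_1$ returns $K^{(0)}$, $d_0$ returns $K^{(1)}$, so arrows run against the active injections) agree with the paper's proof, which consists of exactly this check in degrees $0$ and $1$ together with the remark that the higher degrees are the same.

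There is, however, a concrete error in your degreewise computation: you attach the tree contributed by the outer, non-reduced $\PPP\bd$ at the wrong end of the string. In a composite of these polynomial monads, the \emph{outermost} functor supplies the \emph{coarsest} tree --- the common domain of all the active maps --- while the innermost iterate supplies the total tree: an element of $\PPP\bd\PPP\bdr 1$ is an arbitrary tree $K$ (possibly trivial) whose nodes are decorated by nontrivial trees, gluing to an active injection $K \actto T$; it is $K$, not $T$, that the outer $\PPP\bd$ contributes. Hence in $C_k$ the tree allowed to be trivial is the domain-most $K^{(k)}$, whereas you place the arbitrary tree at the codomain end $K^{(0)}$ and require $K^{(k)},\dots,K^{(1)}$ to be nontrivial. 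Taken literally, your description omits the identity strings on trivial trees, so it is not all of $C_k$ and would not reproduce $\fatnerve\,\OMEGA_{\operatorname{act.inj}}(\PPP)\op$, whose objects do include the trivial trees (these are needed, for instance, for the degeneracy $s_0$ applied to a trivial tree, and for the comodule to live over all of $Y_1=\SSS(\tr(\PPP))$). The slip also makes your argument internally inconsistent: the face built from $\theta$ and $\mu^{\PPP\bd}$ acts on the outermost level, so it deletes whichever tree the outer $\PPP\bd$ contributes; your (correct) statement that it deletes $K^{(k)}$ is incompatible with your earlier statement that the outer $\PPP\bd$ adjoins $K^{(0)}$. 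Once the outer tree is placed correctly at the domain end, your description of $C_k$, your face-map analysis, and your conclusion all come out right. A last small point: for $C$, unlike for $Z$ in \ref{faceZ}, the top face needs no active--inert refactoring at all --- with no outer $\SSS$ present, it simply composes away the coarsest level of the string.
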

\noindent Recall that $\fatnerve$ denotes the fat nerve.

\begin{proof}
  The objects in $C_0$ are arbitrary $\PPP$-trees (including the
  trivial trees); these are also the objects of
  $\OMEGA_{\operatorname{act.inj}}(\PPP)$. The objects in $C_1$ are
  active injections $K \actto T$. The face maps go as follows: $C_0
  \stackrel{d_0}\leftarrow C_1$ returns $K$ (`contract the blobs'),
  while $C_0 \stackrel{d_1}\leftarrow C_1$ returns $T$ (`underlying
  tree'). So this is precisely opposite to how the face maps work in
  $\fatnerve\OMEGA_{\operatorname{act.inj}}(\PPP)$. In higher degrees
  the checks are the same.
\end{proof}

The equivalence in Proposition~\ref{prop:=Actop} is just the 
restriction of the following (and the proof the same):

\begin{prop}
  For any operad $\PPP$, we have
  $$
  F\lowershriek \TSB{\PPP\bd}{\PPP\bd} \simeq \fatnerve 
  \OMEGA_{\operatorname{active}}(\PPP)\op .
  $$
\end{prop}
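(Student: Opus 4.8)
The plan is to follow verbatim the proof of Proposition~\ref{prop:=Actop}, replacing the inner operad $\PPP\bdr$ by $\PPP\bd$ throughout, so that everywhere ``active injection'' is relaxed to ``active map''. First I would compute the constituent groupoids degreewise. By the degreewise description of the relative two-sided bar construction (\ref{set-up}, Proposition~\ref{prop:RP}), now with the outer monad $\PPP\bd$ in place of $\SSS$ and colour map the identity of $B$, one has
$$
(F\lowershriek \TSB{\PPP\bd}{\PPP\bd})_n \;=\; F\lowershriek (\PPP\bd)^{n+1} 1 .
$$
Using the interpretation of iterated Baez--Dolan composites from \ref{blobbing} --- an operation of $\PPP\bd\circ\PPP\bd$ is an active map of $\PPP$-trees, and more generally an operation of the $(n{+}1)$-fold composite $(\PPP\bd)^{n+1}$ is a chain of $n$ composable active maps --- this groupoid is the groupoid of length-$n$ chains of composable active maps of $\PPP$-trees
$$
K^{(n)} \actto K^{(n-1)} \actto \cdots \actto K^{(0)} .
$$
In particular, degree $0$ gives the $\PPP$-trees, i.e.~the objects of $\OMEGA_{\operatorname{active}}(\PPP)$, and degree $1$ gives the active maps, i.e.~its morphisms.

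Next I would match the simplicial operators. As in the proof of Proposition~\ref{prop:=Actop}, the face map $d_0$ returns the domain $K$ (``contract the blobs'') and $d_1$ the codomain $T$ (``underlying tree''), which is exactly opposite to the source/target convention of the fat nerve $\fatnerve\OMEGA_{\operatorname{active}}(\PPP)$; this is what produces the $(-)\op$. The inner face maps $d_i$ ($0<i<n$) compose two consecutive active maps --- precisely the inner faces of the nerve of the opposite category --- and the degeneracies insert identity active maps. Since the fat nerve records strings of composable morphisms only up to isomorphism (it is built from $\kat{Fun}([n],-)^{\operatorname{iso}}$), and the bar construction records the same chains up to isomorphism of $\PPP$-trees, the two simplicial groupoids agree. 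The only change from the reduced case is that the inner $\PPP\bd$ no longer forces the node-refinements to be nontrivial, so arbitrary active maps (not merely active injections) occur; this is why the target is the full $\OMEGA_{\operatorname{active}}(\PPP)$ rather than $\OMEGA_{\operatorname{act.inj}}(\PPP)$.

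The hard part will be the top face map $d_n$, which is the only operator not simply given by the outer monad applied to a map: it uses the monad multiplication of the outer $\PPP\bd$ together with the active--inert refactoring of \ref{active-inert-Omega}, exactly as described for $\TSB{\SSS}{\PPP\bd}$ in \ref{faceZ}. One must check that, after transporting along $F\lowershriek$, this operator computes the outer face of the opposite nerve, i.e.~deletes the bottom tree of the chain while recording the remaining chain ``seen through the nodes''. This verification is identical to the one underlying Proposition~\ref{prop:=Actop}; since passing from $\PPP\bdr$ to $\PPP\bd$ affects neither the refactoring nor the monad multiplication but only the nontriviality constraint on the decorating trees, the same computation applies, and Proposition~\ref{prop:=Actop} is then recovered by restricting to the sub-simplicial-groupoid of active injections.
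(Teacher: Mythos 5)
Your overall strategy is the paper's: the paper proves this proposition by exactly the same degreewise identification as Proposition~\ref{prop:=Actop} (and then notes that the latter is just the restriction of the former). Your computation of the groupoids --- $(F\lowershriek \TSB{\PPP\bd}{\PPP\bd})_n \simeq F\lowershriek(\PPP\bd)^{n+1}1$, the groupoid of chains $K^{(n)}\actto\cdots\actto K^{(0)}$ of composable active maps --- is correct, as is the observation that $d_0$ returns the domain (``contract the blobs'') and $d_1$ the codomain (``underlying tree''), the swap that produces $(-)\op$, and the closing remark that the reduced case is recovered by restriction to active injections.

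However, your description of the top face map --- the step you yourself single out as the hard part --- is wrong. The active--inert refactoring in \ref{faceZ} does not come from the monad multiplication; it comes from the monad-opfunctor component $\phi\colon F\lowershriek\PPP\bd \Rightarrow \SSS F\lowershriek$, which breaks a tree into the monomial of its nodes, and is therefore an artifact of the outer monad being $\SSS$. In $\TSB{\PPP\bd}{\PPP\bd}$ the outer monad is $\PPP\bd$ acting on itself, the relevant opfunctor is the identity, and so $d_\top = \mu^{\PPP\bd}\circ\PPP\bd(\id) = \mu^{\PPP\bd}$ is plain monad multiplication: in the chains picture it simply deletes the domain-most tree $K^{(n)}$ (erases the outermost level of blobs), with no refactoring and no ``chain seen through the nodes''. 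This is precisely what makes the proposition true, since the outer face map of the fat nerve of a category also just deletes the outermost object and arrow. The operator you describe instead would return one chain for each node of $K^{(n)}$, i.e.\ a monomial of chains, which is not even an element of $C_{n-1} = F\lowershriek(\PPP\bd)^{n}1$ (it lives in $\SSS$ of it); so the verification you propose, taken literally, could not succeed. Once $d_\top$ is correctly identified as plain multiplication the ``hard part'' is immediate, and the rest of your argument stands. (The same remark applies to Proposition~\ref{prop:=Actop} itself, where the opfunctor is the inclusion $\PPP\bdr\Rightarrow\PPP\bd$: there too the top face map merely includes and then multiplies, with no breaking into nodes.)
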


This result is interesting also in view of its analogy with the following:

\begin{prop}
  For any operad $\PPP$, we have
  $$
  F\lowershriek \TSB{\PPP\upperstar}{\PPP\upperstar} \simeq \fatnerve 
  \OMEGA_{\operatorname{inert, root-pres}}(\PPP)\op .
  $$
\end{prop}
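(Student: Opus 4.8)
The plan is to mimic the proof of Proposition~\ref{prop:=Actop} almost verbatim, identifying the simplicial groupoid $C := F\lowershriek\TSB{\PPP\upperstar}{\PPP\upperstar}$ degreewise with the opposite fat nerve. The only feature distinguishing the present case is that the relative monad and the ambient monad coincide, and the connecting operad map is the identity $\PPP\upperstar \Rightarrow \PPP\upperstar$; hence the opfunctor entering the top face map is the identity and $d_\top = \mu^{\PPP\upperstar}$ is simply grafting, just like the inner face maps. In particular there are no genuinely wavy face maps, $C$ is a Segal space, and so it is a candidate to be the fat nerve of an honest category.

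First I would read off the low-degree groupoids. In degree $n$ one has $C_n = F\lowershriek(\PPP\upperstar)^{n+1}1$. By Theorem~\ref{thm:P*}, $\PPP\upperstar 1 = \tr(\PPP)$, so $C_0$ is the groupoid of $\PPP$-trees, which is exactly the groupoid of objects of $\OMEGA_{\operatorname{inert, root-pres}}(\PPP)$. For degree $1$, an element of $\PPP\upperstar\PPP\upperstar 1 = \PPP\upperstar(\tr(\PPP))$ is a $\PPP\upperstar$-operation --- that is, a $\PPP$-tree $K$ --- whose leaves are decorated, colour-compatibly, by $\PPP$-trees (possibly trivial). Grafting these decorations onto the leaves of $K$ yields a $\PPP$-tree $T$, and the inclusion of the root region $K \into T$ is precisely a root-preserving inert map; conversely, from any root-preserving inert map $K \into T$ the decorating trees are recovered as the subtrees of $T$ hanging above the leaves of $K$ (a leaf of $K$ that is already a leaf of $T$ carrying the trivial decoration). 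This sets up an equivalence between $C_1$ and the groupoid of root-preserving inert maps, i.e. the morphisms of $\OMEGA_{\operatorname{inert, root-pres}}(\PPP)$.

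Next I would match the simplicial operators. The bottom face $d_0 = \PPP\upperstar(\alpha)$ collapses the decorations and returns $K$, whereas the top face $d_1 = \mu^{\PPP\upperstar}$ grafts and returns $T$; exactly as in Proposition~\ref{prop:=Actop}, this is opposite to the nerve convention (where the domain $K$ is $d_1$ and the codomain $T$ is $d_0$), which is the source of the $\op$. Iterating the grafting description, $C_n$ becomes the groupoid of chains of composable root-preserving inert maps $K_n \into \cdots \into K_1 \into K_0 = T$, with inner faces composing adjacent inclusions, outer faces dropping an end, and degeneracies inserting identities (grafting on trivial trees). Since root-preserving inert maps are closed under composition, these data assemble exactly into $\fatnerve\OMEGA_{\operatorname{inert, root-pres}}(\PPP)\op$.

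The crux --- and the only place requiring genuine work rather than bookkeeping --- is the degree-$1$ identification together with its compatibility with the simplicial structure: one must check that grafting data correspond to root-preserving inert maps with nothing missing and nothing extra, that trivial decorating trees account precisely for the leaves of $K$ that survive as leaves of $T$ (so that identities and full subtrees are covered), and that the assignment is an equivalence of groupoids natural in the face and degeneracy maps. This is the inert, root-preserving counterpart of the active computation behind Lemma~\ref{lem:Act(K)}; once it is in place, the higher degrees follow formally, just as the proof of Proposition~\ref{prop:=Actop} is asserted to carry over to its active refinement.
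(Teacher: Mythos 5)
Your proposal is correct and takes essentially the same approach as the paper, which establishes this proposition by the same degreewise identification used to prove Proposition~\ref{prop:=Actop}: $C_0 = F\lowershriek\PPP\upperstar 1$ is the groupoid of $\PPP$-trees, $C_1 = F\lowershriek\PPP\upperstar\PPP\upperstar 1$ is identified via grafting with the groupoid of root-preserving inert maps $K \into T$, and the face maps ($d_0$ returning $K$, $d_1$ returning $T$) run opposite to the fat-nerve convention, whence the $\op$. Your chain description of the higher degrees, with inner faces composing inclusions and degeneracies grafting on trivial trees, is exactly the paper's implicit "in higher degrees the checks are the same".
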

 
Here $\OMEGA_{\operatorname{inert, root-pres}}(\PPP)$ is the category 
  of $\PPP$-trees and only root-preserving inert maps.
The two propositions add a new layer to the analogy (observed in 
\cite{Kock:0807})
between $\OMEGA_{\operatorname{inert}}$ with the (root-preserving, 
leaves-preserving) factorisation system and $\OMEGA$ with the (active, 
inert) factorisation system.

\bigskip

We are now ready for the locally finite version of the Main 
Theorem~\ref{thm:main}:

\begin{theorem}\label{thm:main-finite}
  For any operad $\PPP$, the two-sided bar constructions
  $Y=\TSB{\SSS}{\PPP\upperstar}$ and $Z=\TSB{\SSS}{\PPP\bdr}$ together
  endow the slice $\Grpd_{/\SSS(\tr(\PPP))}$ with the structure of a {\em
  locally finite} comodule bialgebra. Precisely, the incidence bialgebra of
  $Y$ is a locally finite left comodule bialgebra over the incidence bialgebra of $Z$.
\end{theorem}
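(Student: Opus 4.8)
The plan is to re-run the objective argument of Theorem~\ref{thm:main} essentially verbatim, but with the self-comodule $\Dectop{Z}$ replaced by the locally finite comodule configuration $u\colon M \to Z$ produced in Lemma~\ref{lem:comod-config-locfin}, and then to take homotopy cardinality. Here $Z = \TSB{\SSS}{\PPP\bdr}$ and $M = \SSS C$ with $C = F\lowershriek\TSB{\PPP\bd}{\PPP\bdr} \simeq \fatnerve\,\OMEGA_{\operatorname{act.inj}}(\PPP)\op$ (Proposition~\ref{prop:=Actop}). The crucial point is that $M_0 = Y_1 = \SSS(\tr(\PPP)) =: A$, whereas now $Z_1$, the groupoid of monomials of \emph{nontrivial} $\PPP$-trees, is a \emph{different} groupoid. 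Thus the slice $\Grpd_{/A}$ carries simultaneously the $Y$-bialgebra structure and, via the span $M_0 \stackrel{d_1}\leftarrow M_1 \stackrel{(u,d_0)}\to Z_1 \times M_0$, a left $\Grpd_{/Z_1}$-coaction $\gamma$. As in the proof of Theorem~\ref{thm:main}, it then suffices to establish the two objective comodule-bialgebra axioms, namely that the comultiplication and counit of $Y$ are $Z$-comodule maps (the analogues of Lemmas~\ref{lem:comult=comodmap} and~\ref{lem:counit=comodmap}). The remaining two axioms, that the $Y$-multiplication and unit are $Z$-comodule maps, follow as in the general case from the compatibility of $\gamma$ with disjoint union, itself a consequence of the symmetric monoidal culf structure on the comodule configuration $M = \SSS C$ built in Subsection~\ref{sec:furtherbar}.

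For the comultiplication axiom I would set up the same hexagonal diagram as in~\eqref{eq:Grpd-diagram}, now with $\mu_{13}$ using the multiplication of $\Grpd_{/Z_1}$ (disjoint union of forests of nontrivial trees). The middle groupoid is again obtained as a pullback, and now admits the clean interpretation as monomials of composable active maps $W \actto K \actto T$ in which $K \actto T$ is an active injection (a \emph{nontrivial} blobbing) and $W \actto K$ is a $2$-level tree (a compatible cut of the tree of blobs $K$) --- that is, nontrivially blobbed $\PPP$-trees with a compatible cut. I would then fill the diagram with commutative squares, exhibiting the lower-left square as a pullback by projecting away an identity and invoking the Prism Lemma~\ref{lem:prism}, and the upper-right square as a pullback by the Fibre Lemma~\ref{lem:fibre} together with the node-local decomposition $\operatorname{Act}(W) \simeq (\prod_i \operatorname{Act}(C'_i)) \times \operatorname{Act}(C'')$ of Lemma~\ref{lem:Act(K)}. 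Beck--Chevalley (Lemma~\ref{lem:BC}) then supplies the required natural isomorphism. The counit axiom is handled exactly as in Lemma~\ref{lem:counit=comodmap}, the middle object being forced to be $M_0$ and the needed pullback following again from the contractibility of the active-map fibres in Lemma~\ref{lem:Act(K)}.

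The main obstacle I anticipate is bookkeeping the interaction of the two factorisation systems in the upper-right square: the blobbings $K \actto T$ are constrained to be active \emph{injections} (nontrivial blobs), while the cuts $W \actto K$ range over arbitrary active maps, and one must check that the node-local decomposition of Lemma~\ref{lem:Act(K)} respects this constraint. This goes through because nontriviality of a blobbing is a node-by-node condition: an active map is an injection exactly when each of its node-components refines its node into a nontrivial tree, so nontrivially blobbing $T$ compatibly with a cut is equivalent to nontrivially blobbing each layer separately. Some care is also needed because the layers of a cut, and the comodule base $M_0$, may legitimately contain trivial trees, whereas $Z_1$ consists only of nontrivial trees; tracking which trees are allowed to be trivial is precisely what distinguishes this argument from the general one.

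Once all the objective isomorphisms are in place, the passage to $\Q$-vector spaces is immediate. Local finiteness of the bialgebra $Z$ holds by Lemma~\ref{lem:BDRfin}, local finiteness of the comodule bialgebra $Y$ by Lemma~\ref{lem:freefinite}, and local finiteness of the comodule configuration $u\colon M \to Z$ by Lemma~\ref{lem:comod-config-locfin}. Hence every fibre that is summed over is finite, homotopy cardinality applies throughout, and it delivers the asserted locally finite comodule bialgebra.
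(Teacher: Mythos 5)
Your proposal is correct and follows essentially the same route as the paper's proof: the comodule structure is taken from Lemma~\ref{lem:comod-config-locfin}, the finiteness statements from Lemmas~\ref{lem:freefinite}, \ref{lem:BDRfin} and \ref{lem:comod-config-locfin}, and the comodule-bialgebra axioms are verified by rerunning the diagrams of Theorem~\ref{thm:main} with the middle object $Q = M_1 \times_A Y_2$ (composable maps $W \actto K \actto T$ with $K \actto T$ an active injection). Your explicit check that the active-injection constraint is node-local, so that Lemma~\ref{lem:Act(K)} restricts correctly to the nontrivial-blobbing setting, is precisely the content the paper compresses into its remark that the groupoids involved are full subgroupoids of those in Theorem~\ref{thm:main}, whence the fibre calculations go through unchanged.
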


\begin{proof}
  The comodule structure on $\Grpd_{/\SSS(\tr(\PPP))}$ is given by
  the comodule configuration
  $$
  u: \SSS F\lowershriek\TSB{\PPP\bd}{\PPP\bdr} \to \TSB{\SSS}{\PPP\bdr}
  $$
  of Lemma~\ref{lem:comod-config-locfin}, where also its finiteness as
  comodule is established. (Finiteness of the involved bialgebras was
  established in Lemmas~\ref{lem:freefinite} and \ref{lem:BDRfin}.)
  Establishing that the bialgebra structure maps of $Y$ are
  $Z$-comodule maps is a repetition of all the arguments in the proof
  of Theorem~\ref{thm:main}. The difference is that the comodule
  configuration $M$ is no longer just $Z$ itself, so the face
  maps of $Z=\TSB{\SSS}{\PPP\bdr}$ must be replaced by the ones from
  $M=\SSS F\lowershriek\TSB{\PPP\bd}{\PPP\bdr}$. Writing $A := Y_1 =
  M_0$, the main diagram now takes the shape
    \begin{equation}
  \begin{tikzcd}[sep=large]
	 A & \ar[l, "d_1^Y"'] Y_2 \ar[r, "{(d_2^Y, d_0^Y)}"] & A \times A
	 \\
	 M_1 \ar[u, "d_1^M"] \ar[dd, "{(u,d_0^M)}"']  & 
	 Q \dlpullback \urpullback 
	 \ar[l, dotted] \ar[u, dotted] \ar[r, dotted] \ar[dd, dotted] &
	 M_1 \times M_1 
	 \ar[d, "{(u,d_0^M)\times(u,d_0^M)}"] \ar[u, "d_1^M\times d_1^M"'] 
	 \\
	  {}&& Z_1 \times A \times Z_1 \times A \ar[d, "\mu_{13}"] 
	 \\
	 Z_1 \times A& Z_1 \times Y_2 \ar[l, "\id \times d_1^Y"] \ar[r, "{\id 
	 \times (d_2^Y,d_0^Y)}"']& Z_1 \times A \times A  ,
  \end{tikzcd}
  \end{equation}
  and the middle object has to be $Q:= M_1 \times_A Y_2$.  
  The groupoids involved are full subgroupoids of 
  those in \ref{thm:main}, so commutativity of the diagrams and the fibre 
  calculations to establish the pullback conditions are the same again.
\end{proof}

\section{Examples}

\subsection{Baez--Dolan construction on categories}
\label{sub:unary}

Before coming to more explicit examples, we deal with the case where $\PPP$
is a small category, or more precisely a Segal groupoid (such as for
example the fat nerve of an ordinary category).
We view $\PPP$ as an
operad with only unary operations. As a polynomial monad 
this means that it is cartesian over the identity monad
$$
\PPP \Rightarrow \Id \Rightarrow \SSS ,
$$
which leads to some special features.
The first of these is straightforward:
\begin{lemma}
  For any small category $\RRR$, considered as an operad with only unary
  operations, $\RRR\Rightarrow\Id$, we have
  $$
  \TSB{\SSS}{\RRR} \simeq \SSS ( \TSB{\Id}{\RRR} ) \simeq \SSS( 
  \fatnerve\RRR ).
  $$
\end{lemma}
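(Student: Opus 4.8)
The plan is to show that all three simplicial groupoids agree degreewise, and then to match their structure maps, the only delicate point being the top (outer) face maps. Write $G\lowershriek : \Grpd_{/J} \to \Grpd$ for the functor induced by the colour map $G : J \to 1$; since the identity monad has colour groupoid $1$, the composite $\RRR \Rightarrow \Id \Rightarrow \SSS$ uses $F\lowershriek = G\lowershriek$. By the definition of the two-sided bar construction in \ref{sub:bar} we have $\TSB{\SSS}{\RRR}_k = \SSS G\lowershriek \RRR^k 1$, whereas the $\Id$-relative bar construction lives in $\Grpd$ with $\TSB{\Id}{\RRR}_k = G\lowershriek \RRR^k 1$. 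Hence $\SSS(\TSB{\Id}{\RRR})_k = \SSS(G\lowershriek \RRR^k 1) = \TSB{\SSS}{\RRR}_k$, so the first two simplicial groupoids coincide degreewise and the first equivalence reduces to comparing simplicial operators.

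For the first equivalence, all face maps except the top ones, together with all degeneracies, are (per the general description in \ref{sub:bar}) of the form $\SSS$ applied to a map built from the monad structure of $\RRR$ and the action $\RRR 1 \to 1$; these are literally $\SSS$ of the corresponding operators of $\TSB{\Id}{\RRR}$, so they match automatically. The top face of $\TSB{\SSS}{\RRR}$ is $d_\top = \mu^\SSS \circ \SSS(\phi)$. Because $\RRR$ is cartesian over $\Id$, which is in turn cartesian over $\SSS$, the opfunctor structure $\phi$ factors as $\phi = \eta^\SSS \circ \theta$, where $\theta : G\lowershriek\RRR \Rightarrow G\lowershriek$ is the opfunctor structure of $\RRR \Rightarrow \Id$ and $\eta^\SSS$ is the unit of $\SSS$ (cf.\ \ref{set-up}, with $\PPP = \Id$). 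Therefore, suppressing the evident subscripts,
$$
d_\top = \mu^\SSS \circ \SSS(\eta^\SSS) \circ \SSS(\theta) = \SSS(\theta),
$$
using the monad unit law $\mu^\SSS \circ \SSS\eta^\SSS = \id$. But $\theta$ is precisely the top face map of $\TSB{\Id}{\RRR}$, whose $\mu^{\Id}$ is trivial. Thus every structure operator of $\TSB{\SSS}{\RRR}$ is $\SSS$ of the corresponding operator of $\TSB{\Id}{\RRR}$, giving $\TSB{\SSS}{\RRR} \simeq \SSS(\TSB{\Id}{\RRR})$.

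For the second equivalence I would recognise $\TSB{\Id}{\RRR}$ as the fat nerve. Since $\RRR$ has only unary operations, $\RRR$-trees are linear, so $\RRR^k 1$ is the groupoid of strings of $k$ composable arrows of $\RRR$ and $G\lowershriek$ merely forgets the residual colour; hence $\TSB{\Id}{\RRR}_k \simeq \fatnerve\RRR_k$. Under this identification the inner face maps (monad multiplication) become composition of consecutive arrows, the bottom and top face maps (the action, respectively the opfunctor $\theta$ to $\Id$) become dropping the first, respectively last, object, and the degeneracies insert identity arrows — exactly the simplicial operators of $\fatnerve\RRR$. This is the standard fact that the two-sided bar construction of a category-as-monad relative to the identity monad returns its nerve.

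The main obstacle is the routine-but-delicate bookkeeping around the outer face maps. Away from the top faces everything is formal, but pinning down the factorisation $\phi = \eta^\SSS \circ \theta$ with correct indices, verifying that the unit law collapses the $\SSS$-multiplication, and confirming that the opfunctor $\theta$ of a category really is ``forget the arrow'' so as to match the nerve's outer face, are where the genuine content lies.
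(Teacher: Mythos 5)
Your proof is correct and is essentially the intended argument: the paper states this lemma without proof, introducing it as straightforward, and your verification supplies exactly the details left to the reader. In particular the one step with genuine content — that the top face map $\mu^\SSS\circ\SSS(\phi)$ of $\TSB{\SSS}{\RRR}$ collapses to $\SSS(\theta)$ because $\phi=\eta^\SSS\circ\theta$ when the intermediate monad is $\Id$, combined with the unit law $\mu^\SSS\circ\SSS\eta^\SSS=\id$ — is carried out correctly, and the identification of $\TSB{\Id}{\RRR}$ with $\fatnerve\RRR$ (including the outer face maps) is the standard fact about Segal groupoids that the paper presupposes.
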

\noindent
(Taking fat nerve here is only to stress that it is regarded as a 
simplicial groupoid, not as a polynomial monad.)

If $\PPP$ is a small category, then so is $\PPP\upperstar$.
We thus have
$$
Y := \TSB{\SSS}{\PPP\upperstar} = \SSS ( \TSB{\Id}{\PPP\upperstar} ) .
$$
It follows that the comultiplication of $Y$ is actually $\SSS$ of
a comultiplication, namely the standard
incidence comultiplication of the Segal groupoid $\PPP\upperstar$. 
Recall that in Theorem~\ref{thm:main-finite},
the comodule structure too comes from a simplicial 
groupoid which is $\SSS$ of something: $M = \SSS C$.
In fact, for $\PPP$ a category, the whole comodule {\em bi}algebra structure
is just $\SSS$ of a comodule {\em co}algebra structure.
The following result formalises this, providing a cheaper unary 
version of the main theorems. The proof is very similar.

\begin{theorem}
  For any small category $\PPP$,
  the incidence coalgebra of
  $\TSB{\Id}{\PPP\upperstar} \simeq \fatnerve \PPP\upperstar$
  is a left comodule coalgebra over the incidence bialgebra of
  $\TSB{\PPP\bd}{\PPP\bdr}$.
\end{theorem}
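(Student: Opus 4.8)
The plan is to read this statement as the connected-level shadow of Theorem~\ref{thm:main-finite} and to prove it by rerunning that proof with the outer $\SSS$ stripped off. The guiding principle is that the free-symmetric-monoidal monad $\SSS$ turns a coalgebra into a free commutative bialgebra and, more generally, a connected comodule coalgebra into a comodule bialgebra; the present statement should be exactly the connected core whose $\SSS$-image is the structure of Theorem~\ref{thm:main-finite}. Concretely, since $\PPP$ is a category all $\PPP$-trees are linear, $\PPP\upperstar$ is again a category with $\tr(\PPP)$ as its groupoid of morphisms, and $Y=\TSB{\SSS}{\PPP\upperstar}\simeq\SSS(\fatnerve\PPP\upperstar)$; the coalgebra under study is then the incidence coalgebra of $\fatnerve\PPP\upperstar=\TSB{\Id}{\PPP\upperstar}$ on $\Grpd_{/\tr(\PPP)}$, the connected core of the bialgebra of $Y$.

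First I would pin down the base and the comodule at the connected level. Using $M=\SSS C$ with $C=F\lowershriek\TSB{\PPP\bd}{\PPP\bdr}\simeq\fatnerve\,\OMEGA_{\operatorname{act.inj}}(\PPP)\op$ (Proposition~\ref{prop:=Actop}), and that the comodule configuration $u\colon M\to Z$ of Lemma~\ref{lem:comod-config-locfin} is $\SSS$ of a connected culf map, I would extract the connected comodule configuration exhibiting $\Grpd_{/\tr(\PPP)}$ as a comodule over the incidence bialgebra of $C=\TSB{\PPP\bd}{\PPP\bdr}$. The essential use of the unary hypothesis is to see that $C$ really carries a \emph{bialgebra} and not merely a coalgebra structure: for $\PPP$ a category the linear $\PPP$-trees concatenate by grafting, so $\PPP\bd$ behaves like a monoidal monad (exactly as $\Id\bd=\MMM$ is the free-monoid monad, cf.~Example~\ref{ex:Id} and Subsection~\ref{sub:FdB}), making $\TSB{\PPP\bd}{\PPP\bdr}$ a monoidal Segal space whose incidence structure is a bialgebra. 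For general $\PPP$ this monoidal structure only becomes available after applying $\SSS$, which is precisely why only the unary case admits this cheaper connected formulation.

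With the data in place, the verification that the comultiplication and counit of $\fatnerve\PPP\upperstar$ are comodule maps is a verbatim replay of the diagram chases of Lemmas~\ref{lem:comult=comodmap} and \ref{lem:counit=comodmap}: the same middle pullback $Q$ (now the connected pullback of the connected bar constructions), the same commuting squares, and the same appeal to Lemma~\ref{lem:Act(K)}, which here simplifies because the $2$-level tree $W$ is linear. Since $\SSS$ preserves homotopy pullbacks (as noted after Proposition on symmetric monoidal structure), these connected squares are exactly the cores of the squares already checked in Theorem~\ref{thm:main-finite}, so no new pullback computations are required. Moreover the two remaining bialgebra axioms evaporate at this level: the multiplication of the comodule bialgebra is disjoint union, i.e.~precisely the $\SSS$-monoidal structure, which carries connected objects out of the connected world; there is thus no multiplication to check, and we land on a comodule \emph{co}algebra rather than a comodule bialgebra.

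I expect the main obstacle, and the only genuinely non-formal step, to be the identification of the monoidal structure on the base $\TSB{\PPP\bd}{\PPP\bdr}$ together with the bookkeeping that makes the connected comodule configuration sit over it, and then the careful matching of this connected picture with Theorem~\ref{thm:main-finite} under $\SSS$ — in particular confirming the precise sense in which the bialgebra of $Y$ is the free commutative bialgebra on the present coalgebra and the comodule $M=\SSS C$ is the $\SSS$-envelope of $C$. Everything else is organisationally identical to the proof of Theorem~\ref{thm:main-finite}, merely restricted to connected components.
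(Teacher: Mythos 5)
Your global strategy --- reading the theorem as a connected version of Theorem~\ref{thm:main-finite} and replaying its diagram chases with the outer $\SSS$ stripped off --- is indeed what the paper intends: it offers no separate argument beyond the remark that the proof is ``very similar'' to the main theorems, and your observation about the comodule side is correct and is one of the places where unarity is genuinely essential (for unary $\PPP$, cutting a path yields connected pieces, so $Y\simeq\SSS(\fatnerve\PPP\upperstar)$ and the incidence coalgebra already lives on $\Grpd_{/\tr(\PPP)}$, which fails for general operads since the crown of a cut tree is a forest).

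There is, however, a genuine gap in your treatment of the base. The claim that ``for $\PPP$ a category the linear $\PPP$-trees concatenate by grafting'', so that $\TSB{\PPP\bd}{\PPP\bdr}$ becomes a monoidal Segal space, is false as soon as $\PPP$ has more than one object: two paths graft only when the target of one matches the source of the other (already for $\PPP$ the arrow category $0\to 1$, the path $f$ cannot be grafted onto itself). Unrestricted concatenation exists exactly when $\PPP$ is a monoid --- the Fa\`a di Bruno and mould examples --- so your proposed route to the bialgebra structure fails for a general small category. What the statement requires on $\Grpd_{/C_1}$, where $C=\TSB{\PPP\bd}{\PPP\bdr}$, is the convolution-type product induced by the culf $\PPP\bd$-algebra structure of the relative bar construction: concatenation of \emph{composable} subdivided paths, zero otherwise, with unit the family of trivial paths $I\to C_1$; this is the many-object version of the noncommutative Fa\`a di Bruno bialgebra that the paper identifies in \ref{sub:FdB} as the incidence bialgebra of $\TSB{\Id\bd}{\Id\bdr}$, and it suffices because in the comodule-coalgebra axiom the product is only ever applied to the two halves of a split path, which are automatically composable. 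A related slip: the coaction is not obtained by stripping $\SSS$ off $u\colon M\to Z$ of Lemma~\ref{lem:comod-config-locfin}; that yields a coaction valued in $\Grpd_{/Z_1}\tensor\Grpd_{/\tr(\PPP)}$, whose base (the free commutative bialgebra) remembers only the monomial of blobs. The theorem's coaction is the finer $T\mapsto\sum_{K\actto T}(K\actto T)\tensor K$, i.e.\ the Segal space $C$ regarded as a comodule configuration over itself via the identity culf map. Consequently the unary theorem is not literally the connected shadow of Theorem~\ref{thm:main-finite}: the comodule restricts, but the base is \emph{refined} from $Z_1$ to $C_1$, so the squares of Lemmas~\ref{lem:comult=comodmap} and \ref{lem:counit=comodmap} must be set up afresh over this finer base --- after which they do reduce, as you say, to the linear case of Lemma~\ref{lem:Act(K)}: subdividing a path and then splitting the subdivided path is the same as splitting the path and subdividing the two parts.
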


\begin{blanko}{Remark.}
  The comodule bialgebras of Carlier~\cite{Carlier:1903.07964}
  defined from hereditary species are all of the form `$\SSS$ of a 
  comodule coalgebra', although they do not come from neither 
  categories nor operads. 
  
  Adding an algebra structure freely on top of a coalgebra structure is
  important to get access to the machinery of antipodes \cite{Schmitt:hacs}
  rather than just M\"obius inversion (see~\cite{Carlier-Kock:1807.11858}
  for this perspective at the objective level). One important special case
  is that of the {\em cartesian envelope} of a poset, which has been
  exploited to great effect by Aguiar and Ferrer~\cite{Aguiar-Ferrer}.
\end{blanko}

\subsection{Fa\`a di Bruno}

\label{sub:FdB}

\begin{blanko}{Baez--Dolan construction on the identity monad.}
  Let $\PPP$ be the terminal category. Considered as a polynomial
  monad it is the identity monad $\PPP:=\Id$, represented by the
  polynomial $1\leftarrow 1 \to 1 \to 1$ (as in Example~\ref{ex:Id}).
  Now $\PPP$-trees are linear trees, and the category of $\Id$-trees
  is $\OMEGA(\Id) \simeq \simplexcategory$.
  
  The free monad $\PPP\upperstar = \Id\upperstar$ is the one-object
  category whose arrows are the linear trees (with leaf as domain and 
  root as codomain), 
  composed by grafting. In
  other words, the category is just the monoid $(\N, +,0)$.
  
  The Baez--Dolan construction is the operad $\PPP\bd= \Id\bd$ whose
  only colour is \inlineonetree, and whose operations are the linear
  trees. The input slots are the nodes, and substitution works like 
  this:

\begin{center}
  \begin{tikzpicture}[line width=1.0pt,scale=0.80]
	
	\begin{scope}[shift={(0.0, 2.5)}, blue]
	   \draw[blue] (0,0.1) -- (0,1.5);
	   \draw[blue] (0, 0.4) pic {onedot};
	   \draw[blue] (0, 0.8) pic {onedot};
	   \draw[blue] (0, 1.2) pic {onedot};
	  \coordinate (start) at (0.6, 0.8);
	\end{scope}

	\begin{scope}[shift={(3.0, 0.5)}]	
	  \coordinate (insertionpoint) at (-0.25, 0.8);
	  \draw (0,0.1) -- (0,1.9);
	  \draw (0, 0.4) pic {onedot};
	  \draw (0, 0.8) pic {onedot};
	  \draw (0, 1.2) pic {onedot};
	  \draw (0, 1.6) pic {onedot};
	\end{scope}

	\draw[-{Latex[width=3.5pt,length=4pt]}, line width=0.02, densely dashed] 
	  (start) to[out=0, in=180] (insertionpoint);

	\node at (5.5, 1.5) {$\leadsto$};

	\begin{scope}[shift={(8.0, 0.3)}]	
	  \draw (0,0.1) -- (0,0.6);
	  \draw (0, 0.4) pic {onedot};
	  \begin{scope}[shift={(0.0,0.4)}]
		\draw[blue] (0,0.2) -- (0,1.4);
		\draw[blue] (0, 0.4) pic {onedot};
		\draw[blue] (0, 0.8) pic {onedot};
		\draw[blue] (0, 1.2) pic {onedot};
	  \end{scope}
	  \begin{scope}[shift={(0.0,1.6)}]
		\draw (0,0.2) -- (0,1.5);
		\draw (0, 0.4) pic {onedot};
		\draw (0, 0.8) pic {onedot};
		\draw (0, 1.2) pic {onedot};
	  \end{scope}
	\end{scope}

  \end{tikzpicture}
\end{center}
  
  This operad is the free-monoid operad: $\Id\bd\simeq\MMM$. From the
  $\Id\bd$ viewpoint, the operations are linear trees, and the
  operations of $\Id\bd\circ\Id\bd$ are active maps of linear trees.
  From the viewpoint of the equivalent operad $\MMM$, the operations
  are planar corollas, and the operations of $\MMM\circ\MMM$ are
  $2$-level planar trees.
  For the reduced Baez--Dolan construction $\Id\bdr\simeq 
  \overline\MMM$ we just exclude the trivial
  tree (or in the corolla interpretation, the nullary corolla).
  
  The two-sided bar construction giving the comodule configuration is
  described by Proposition~\ref{prop:=Actop}:
  $$
  \TSB{\Id\bd}{\Id\bdr} 
  \simeq
  \fatnerve \simplexcategory_{\operatorname{act.inj}}\op  ,
  $$
  and combined with the well-known equivalence 
  (see~\cite{Galvez-Kock-Tonks:1512.07573})
  $$
  \simplexcategory_{\operatorname{act.inj}}\op
  \simeq \simplexcategory^+_{\operatorname{surj}}
  $$
  we get the interpretation from the $\MMM$-viewpoint:
  $\TSB{\MMM}{\overline\MMM}$ is the fat nerve of the category
  $\simplexcategory^+_{\operatorname{surj}}$ of finite ordinals 
  (including the empty ordinal) and monotone surjections.
  
  (For the bar construction $Z=\TSB{\SSS}{\Id\bdr}$ giving the
  bialgebra, there is also a nerve interpretation: it is the category
  whose objects are finite sets and whose maps are surjections
  equipped with a linear order on each fibre.)
\end{blanko}

\begin{blanko}{Incidence comodule bialgebra.}
  Denote by $a_n$ the linear tree with $n$ nodes.  The 
  comultiplication coming from $\TSB{\SSS}{\Id\upperstar} = 
  \TSB{\SSS}{\N}$ is given (on connected elements) by
  $$
  \Delta(a_n) = \sum_{i+j = n} a_i \tensor a_j \qquad n\geq 0 .
  $$
  Here the sum is over nonnegative numbers summing to $n$.
  
  The comultiplication coming from $ \TSB{\SSS}{\Id\bdr}$
  is given (on connected elements) by
  $$
  \Delta(a_n) = \sum_{n_1+\dots+n_k =n} a_{n_1} \cdots a_{n_k} 
  \tensor a_k   \qquad n\geq 1.
  $$
  Here the sum is over all compositions of the natural number $n$.
  This composition comes about as an active map $a_k \actto a_n$.
  Inclusion of the $k$ one-node trees into $a_k$ defines the numbers 
  $n_i$ by
  active-inert factorisation:
  \[
  \begin{tikzcd}[sep = {4em,between origins}]
  {[1]} \ar[d, into] \ar[r, dotted, -act] & {[n_i]} \ar[d, dotted, into] \dlactinert \\
  {[k]} \ar[r, ->|] & {[n]} .
  \end{tikzcd}
  \]
\end{blanko}

\begin{prop}\label{prop:FdB}
  The incidence comodule bialgebra of the Baez--Dolan construction on 
  $\Id$ is the Fa\`a di Bruno comodule bialgebra.
\end{prop}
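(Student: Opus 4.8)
The plan is to prove Proposition~\ref{prop:FdB} as a recognition statement: pass to homotopy cardinality, read off the resulting pair of comultiplications together with the coaction, and match them term by term with the structure maps dual to multiplication and substitution of formal power series, which is by definition the Fa\`a di Bruno comodule bialgebra. First I would pass to cardinality. Both $\TSB{\SSS}{\Id\upperstar}$ and $\TSB{\SSS}{\Id\bdr}$ are locally finite (Lemmas~\ref{lem:freefinite} and~\ref{lem:BDRfin}), and the comodule configuration of Theorem~\ref{thm:main-finite} is locally finite as well, so cardinality yields an honest comodule bialgebra over $\Q$. Since $\Id$-trees are linear trees and each $a_n$ has trivial automorphism group, no denominators appear, and the underlying space is the free commutative algebra $\Q[a_0,a_1,a_2,\ldots]$ on iso-classes of linear trees (the bialgebra $Z$ being supported on the nontrivial ones $a_n$, $n\geq 1$). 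This is exactly the polynomial algebra underlying the Fa\`a di Bruno comodule bialgebra, with $a_n$ playing the role of the coefficient functional $f\mapsto f_n$ on a series $f=\sum_n f_n x^n$.

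Next I would identify the three structure maps against this pairing. The multiplicative comultiplication $\Delta(a_n)=\sum_{i+j=n}a_i\tensor a_j$ satisfies $\Delta(a_n)(f\tensor g)=\sum_{i+j=n}f_i g_j=(fg)_n$, hence is dual to the product of power series. The substitution comultiplication $\Delta(a_n)=\sum_{n_1+\dots+n_k=n}a_{n_1}\cdots a_{n_k}\tensor a_k$ is, by the classical Fa\`a di Bruno / Bell-polynomial expansion $(f\circ h)_n=\sum_k f_k\sum_{n_1+\dots+n_k=n}h_{n_1}\cdots h_{n_k}$, precisely dual to substitution of a series $h$ without constant term into $f$. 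Finally, using the description of the comodule configuration as $\fatnerve\,\OMEGA_{\operatorname{act.inj}}(\Id)\op$ (Proposition~\ref{prop:=Actop}), the span $M_0\leftarrow M_1\to Z_1\times M_0$ computes the coaction as $\gamma(a_n)=\sum_{n_1+\dots+n_k=n}a_{n_1}\cdots a_{n_k}\tensor a_k$ for $n\geq 1$ (summing over nontrivial blobbings of the chain $a_n$, i.e.\ compositions of $n$) and $\gamma(a_0)=1\tensor a_0$; this is exactly dual to the map $f\mapsto f\circ h$ recording substitution of the fixed inner series $h$ (the $B=\Q[a_1,a_2,\ldots]$ factor) into the arbitrary, possibly constant-term-bearing, outer series $f$ (the $M$ factor).

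With the algebra, both comultiplications, and the coaction all matched, the comodule-bialgebra compatibility requires no further computation: Theorem~\ref{thm:main-finite} already guarantees a comodule bialgebra, and on the power-series side the matching axioms (that $\Delta$ and $\varepsilon$ are coaction maps, diagram~\eqref{comodulebialg-axiom}) are the duals of the distributivity law $(fg)\circ h=(f\circ h)(g\circ h)$ and its counit analogue. Hence the two comodule bialgebras coincide. The only genuine work, and the point I expect to be the main obstacle, is the bookkeeping of conventions: which tensor factor carries the inner versus the outer series, the treatment of the constant term $a_0$ (present on the comodule side but absent from the substitution bialgebra), and the verification that the coaction direction produced by the span is substitution-into-a-general-series rather than its transpose.
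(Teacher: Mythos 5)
Your proposal is correct and follows essentially the same route as the paper's proof: both identify the underlying polynomial algebra, match the two comultiplications against the duals of multiplication and substitution of power series, and reduce the comodule-bialgebra compatibility to left-distributivity of substitution over multiplication (already packaged objectively in Theorem~\ref{thm:main-finite}). Your explicit matching of the coaction via Proposition~\ref{prop:=Actop} and the careful treatment of $a_0$ make precise a point the paper leaves implicit, but this is a refinement of the same recognition argument, not a different one.
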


\begin{proof}
  This is clear once we describe the Fa\`a di Bruno comodule
  bialgebra. Consider the ring of formal power series $\Q[[z]]$
  under multiplication and under substitution (assuming zero constant 
  term).
  
  For $k\geq 0$, consider the linear functional
  \begin{eqnarray*}
   a_k : \Q[[z]] & \longrightarrow & \Q  \\
    \textstyle{\sum_n} f_n z^n & \longmapsto & f_k  .
  \end{eqnarray*}
  The {\em Fa\`a di Bruno bialgebra} is the polynomial ring $\FF =
  \Q[a_1,a_2,\ldots]$, with comultiplication $\Delta: 
  \FF\to\FF\tensor \FF$ given as dual to
  substitution of power series (we 
  disallow constant terms):
  $$
  \Delta(a_n)(f\tensor g) := a_n(g\circ f).
  $$
  It follows that
  $$
  \Delta(a_n) = \sum_{n_1+\dots+n_k =n} a_{n_1} \cdots a_{n_k} 
  \tensor a_k ,   \qquad n\geq 1  ,
  $$
  just as for the $Z$-comultiplication of linear trees.
  
  On the other hand, $\MM =
  \mathbb{C}[a_0,a_1,a_2,\ldots]$ is a bialgebra too, with
  comultiplication $\Delta:\MM\to\MM\tensor\MM$ dual to multiplication of power series:
  $$
  \Delta(a_n)(f\tensor g) := a_n(f \cdot g).
  $$
  This expands to
  $$
  \Delta(a_n) = \sum_{i+j = n} a_i \tensor a_j , \qquad n\geq 0  ,
  $$
  precisely as for the $Y$-comultiplication of linear trees.
  
  Together,
  $\FF$ and $\MM$ form the {\em Fa\`a di Bruno comodule bialgebra}: $\MM$ is
  a comodule bialgebra over $\FF$. The comodule-bialgebra axioms 
  follow
  from the fact that substitution distributes over multiplication on 
  the left.
\end{proof}

\begin{blanko}{Remarks.}
  The present form of the Fa\`a di Bruno bialgebra is how it appears in
  algebraic topology, where it is called the (dual) Landweber--Novikov
  bialgebra (see for example~\cite{Morava:Oaxtepec}).
  The usual form of the Fa\`a di Bruno bialgebra (see for 
  example~\cite{Figueroa-GraciaBondia:0408145})
  uses rather the basis
  \begin{eqnarray*}
    A_k : \Q[[z]] & \longrightarrow & \Q  \\
    \textstyle{\sum_n} f_n \frac{z^n}{n!} & \longmapsto & f_k  ,
  \end{eqnarray*}
  in which case the natural simplicial realisation is as the fat nerve
  of the category of finite sets and surjections
  (cf.~\cite{Joyal:1981}, \cite{Galvez-Kock-Tonks:1207.6404}). This
  comes about as the two-sided bar construction
  $\TSB{\SSS}{\overline\SSS}$ (see \cite{Kock-Weber:1609.03276}), but
  cannot arise as a Baez--Dolan construction.
  
  The factorial-free form of the Fa\`a di Bruno bialgebra given here
  admits a noncommutative
  variant~\cite{Brouder-Frabetti-Krattenthaler:0406117} which is also
  called the Dynkin--Fa\`a di Bruno bialgebra in the theory of
  numerical integration on manifolds~\cite{MuntheKaas:BIT95}; see also
  \cite{EbrahimiFard-Lundervold-Manchon:1402.4761}. Objectively, the
  noncommutative bialgebra is the incidence bialgebra of
  $\TSB{\Id\bd}{\Id\bdr}$, but this is not so interesting in the
  present context, since noncommutative bialgebras do not in general
  admit comodule bialgebras.
\end{blanko}

\subsection{Mould calculus}

\begin{blanko}{Mould calculus.}
  The mould calculus was introduced by \'Ecalle~\cite{Ecalle:I} as a
  combinatorial toolbox for his theory of resurgence in the theory of
  local dynamical systems (see Cresson~\cite{Cresson}).
  For $\Omega$ a monoid, let
  $(\Omega\upperstar, \,\cdot\,)$ denote the free monoid on $\Omega$.
  A {\em mould} is a function $M^\bullet: \Omega\upperstar \to k$ (for
  $k$ a ring), taking a word $w\in \Omega\upperstar$ to $M^w$. There
  are two basic operations: the {\em product} is defined as
  $$
  (M\times N)^w = \sum_{\omega=w'\cdot w''} M^{w'} N^{w''} ,   \qquad 
  \omega\in\Omega\upperstar
  $$
  (where $\cdot$ is concatenation of words),
  and the {\em composition} is defined as
  $$
  (M \circ N)^w = 
  \sum_{k>0} \sum_{w=w_1\cdots w_k} 
  N^{w_1}\cdots N^{w_k} \ M^{||w_1|| \cdots ||w_k||},  \qquad 
  \omega\in\Omega\upperstar.
  $$
  Here $||w|| \in \Omega$ denotes the multiplication of the word $w\in
  \Omega\upperstar$ in the monoid $\Omega$. Composition distributes
  over the product, but only from the left~\cite{Ecalle:I}.
\end{blanko}

\begin{blanko}{Moulds via Baez--Dolan construction.}
  Consider the monoid $\Omega$ as an operad with only one colour and
  only unary operations, and let $\Omega\bdr$ be the Baez--Dolan
  construction. Its colours are the elements of
  $\Omega$. There is a $k$-ary operation of profile $(a_1,\ldots,a_k ;
  b)$ if and only if $a_1\cdots a_k = b$, for $k>0$. There is also the operad
  $\Omega\upperstar$, the free operad on $\Omega$. The incidence
  algebra of the operad $\Omega\upperstar$ is the algebra of moulds
  under $\times$, and the incidence algebra (that is, the convolution 
  algebra of the incidence coalgebra) of $\Omega\bdr$ is the
  algebra of moulds under $\circ$. More precisely, the incidence
  bialgebra of the whole structure is a comodule bialgebra, dual to the 
  near-semiring of $\Omega$-moulds.
\end{blanko}

\subsection{B-series, and the Calaque--Ebrahimi-Fard--Manchon comodule bialgebra}

\label{sec:CEFM}

Throughout we have considered operadic trees --- trees with open-ended
edges. In this subsection we are concerned with trees {\em without}
open-ended edges: they are defined as connected and simply-connected graphs
with a distinguished vertex called the root. In the (huge) literature
employing them, they are simply called {\em rooted trees}. Below we
adhere to this convention as long as no operadic trees are involved,
but call them {\em combinatorial trees} when contrast with operadic
trees is required.

\begin{blanko}{B-series.}
  {\em B-series} were introduced by Butcher~\cite{Butcher:1972} in his
  study of order conditions for Runge--Kutta methods, and named after him
  by E.~Hairer (see~\cite{Hairer-Lubich-Wanner}). They are formal series
  indexed by (combinatorial) rooted trees, of the form

  \begin{align*}
B(a,hf,y) &=
\sum_{\tau\in \mathcal{T}}   \frac{h^{\left|\tau\right|}}{\tau!}   a(\tau) f^\tau(y) 
\\
&=
a(\emptyset)y+ h\,a(\!\raisebox{1pt}{\ctreeone}\!) f(y) + h^2 \,
a(\!\raisebox{-2pt}{\ctreetwo}\!) (f'f)(y) 
\\ & \phantom{xxxxxxxxxxxxxxxxx} 
+ 
h^3 \left( a(\!\raisebox{-5pt}{\ctreethreeL}\!) (f'f'f)(y)
+ \textstyle{\frac12} a(\!\raisebox{-2pt}{\ctreethreeV}\!) (f''(f,f))(y) \right) 
+  \dots
\end{align*}
  where $\mathcal{T}$ is the set of rooted trees, $\left|\tau\right|$
  denotes the number of nodes in $\tau$, and $f^\tau$ is the elementary
  differential associated to $\tau$ (for $f$ a vector field), and $h$ 
  is a
  step-size parameter. $a(\tau)$ are the coefficients, encoded as a
  complex-valued function on $\mathcal{T}$. For an initial-value problem
  $\dot y = f(y)$, $y(0)= y_0$, the exact solution can be expanded as a
  B-series, but more importantly, many numerical methods, including all 
  Runge--Kutta methods,\footnote{An intrinsic characterisation of B-series 
  methods was given only recently in terms of affine 
  equivariance~\cite{McLachlan-Modin-MuntheKaas-Verdier}.}
  can be regarded as a
  B-series, the coefficients $a(\tau)$ being the weights assigned to the
  elementary differentials $f^\tau$ of $f$.
\end{blanko}

\begin{blanko}{Composition and substitution of B-series.}
  There are two fundamental operations one can perform on B-series:
  composition and substitution. {\em Composition} (due to
  Butcher~\cite{Butcher:1972}): for $b(\emptyset)=1$,
  $$
  B(a, hf, B(b,hf, y)) = B(b\cdot a, hf, y)  .
  $$
%
  This characterises the product $\cdot$ defining the {\em Butcher
  group} of B-series, which was later rediscovered as the group of
  characters of the Connes--Kreimer Hopf algebra of rooted trees in
  perturbative renormalisation~\cite{Dur:1986},
  \cite{Connes-Kreimer:9808042} (see Example~\ref{ex:CK}).

  {\em Substitution} (introduced by Chartier, E.~Hairer, and
  Vilmart~\cite{Chartier-Hairer-Vilmart:INRIA},~\cite{Chartier-Hairer-Vilmart:FCM2010}):
%
  if $b(\emptyset)=0$ then
  $B(b,hf, -)$ is a vector field, so it makes sense to substitute it into
  another $B$-series in the $hf$ slot:
  $$
  B(a, B(b,hf, - ),y) = B( b \star a, hf,y) .
  $$
  This characterises a new product $\star$, which can be 
  described combinatorially in terms of contracting 
  subtrees.  Chartier--Hairer--Vilmart showed that $\star$ acts on $\cdot$
  by group homomorphisms.  (The substitution product is
  important in backward error analysis and in the more
  general theory of modified (preprocessed)
  integrators~\cite{Chartier-Hairer-Vilmart:MathComp}.)
\end{blanko}

\begin{blanko}{The Calaque--Ebrahimi-Fard--Manchon comodule bialgebra.}
  Calaque, Ebrahimi-Fard, and Manchon
  \cite{Calaque-EbrahimiFard-Manchon:0806.2238} gave a Hopf-algebra
  theoretic interpretation of composition and substitution inspired by
  quantum field theory, relating the substitution product with a tree
  version of the Connes--Kreimer Hopf algebra of {\em Feynman
  graphs}~\cite{Kreimer:9707029}, \cite{Connes-Kreimer:9912092}. The
  bialgebras are both the free commutative algebra on the set of rooted
  trees. The comultiplications are such that their respective characters
  form the group structures on B-series.

  The comultiplication corresponding to composition of B-series is simply
  the Butcher--Connes--Kreimer Hopf algebra of rooted trees of
  Example~\ref{ex:CK}. The comultiplication corresponding to substitution
  of B-series is defined by summing over all ways of partitioning the set
  of nodes into subtrees. The left-hand tensor factor is then constituted
  by the forest of all these subtrees, whereas the right-hand tensor factor
  is obtained by contracting each subtree to a single node.

  Since the trees involved are only combinatorial trees (as opposed to
  operadic trees), it is not possible to realise this bialgebra as the
  incidence bialgebra of an operad --- there is not enough typing
  information available to make sense of substituting a tree into the node
  of another tree. But for {\em operadic trees} this works, and we shall
  see that the operadic analogue of the Calaque--Ebrahimi-Fard--Manchon
  comodule bialgebra is the incidence comodule bialgebra of a Baez--Dolan
  construction (of the terminal operad). The precise relationship is given
  by taking core of an operadic tree, as we proceed to explain.
\end{blanko}

\begin{blanko}{The core of a $\PPP$-tree \cite{Kock:1109.5785}.}\label{core}
  The {\em core} of a $\PPP$-tree is the combinatorial tree constituted by
  its inner edges, obtained by forgetting all decorations and shaving off
  leaf edges and root edge. Taking core constitutes a bialgebra
  homomorphism from the bialgebra of $\PPP$-trees (the incidence bialgebra
  of $\TSB{\SSS}{\PPP\upperstar}$) to the Butcher--Connes--Kreimer Hopf
  algebra. This bialgebra homomorphism is induced by a symmetric monoidal
  culf map from $\TSB{\SSS}{\PPP\upperstar}$ to the decomposition space
  of Example~\ref{ex:CK}.
  
  The core map compares operadic approaches with the standard
  combinatorial viewpoint in renormalisation. One advantage of the operadic
  viewpoint is that the operadic trees have a leaf grading, which cannot be
  seen in the core. This has been exploited in connection with BPHZ
  renormalisation~\cite{Kock:1411.3098} and in connection with
  combinatorial Dyson--Schwinger equations~\cite{Kock:1512.03027}.
  Existence of the leaf grading is closely related to the fact that
  operadic trees form a Segal object, whereas taking core destroys the
  Segal-ness. Combinatorial trees form instead only a decomposition
  space~\cite{Galvez-Kock-Tonks:1612.09225}, as mentioned in 
  Example~\ref{ex:CK}.
\end{blanko}

We now specialise to the case where $\PPP$ is the terminal operad (Comm),
so that $\PPP$-trees are just naked (operadic) trees.

\begin{blanko}{Core of $\TSB{\SSS}{\PPP\bdr}$ and $F\lowershriek\TSB{\PPP\bd}{\PPP\bdr}$.}
  Each of the simplicial groupoids $\TSB{\SSS}{\PPP\bdr}$ and
  $F\lowershriek\TSB{\PPP\bd}{\PPP\bdr}$ admits a core version (which
  however cannot possibly arise as the two-sided bar construction of an
  operad). We describe the first one by hand, by mimicking the explicit
  description of $\TSB{\SSS}{\PPP\bdr}$.
  
  Let $\dstrees\bdr$ denote the simplicial groupoid with $\dstrees\bdr_1$
  the groupoid of combinatorial forests, and $\dstrees\bdr_2$
  the groupoid of blobbed combinatorial forests, such that each blob 
  contains at least one node (and as usual, each node is contained in 
  precisely one blob). $\dstrees\bdr_0$ is the groupoid of forests 
  consisting only of one-node trees. The face and degeneracy maps have the 
  same descriptions as in Figure~\eqref{eq:bdr}.
  
  For the comodule $F\lowershriek\TSB{\PPP\bd}{\PPP\bdr}$,
  the core is in fact the upper 
  decalage of $H\bdr$. Just as $\TSB{\PPP\bd}{\PPP\bdr}$ is the fat nerve 
  of the opposite of the category of nontrivial $\PPP$-trees and active 
  injections (cf.~Proposition~\ref{prop:=Actop}), also the core is a fat nerve, namely
  the fat nerve of the category of combinatorial trees and edge contractions.
\end{blanko}

\begin{lemma}
  The simplicial groupoid $H\bdr$ just described is a symmetric monoidal
  decomposition space. Taking core defines a culf map
  $\TSB{\SSS}{\PPP\bdr} \to H\bdr$ (and hence also a culf map from
  $F\lowershriek\TSB{\PPP\bd}{\PPP\bdr}$).
\end{lemma}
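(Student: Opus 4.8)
The plan is to build the core map level by level and then verify, in order, that it is simplicial and symmetric monoidal, that its target $H\bdr$ is a decomposition space, and that the map is culf; the parenthetical statement about the comodule will then drop out by composing culf maps. Since $\PPP$ is the terminal operad, the operations carry no genuine decoration, so taking core of an operadic forest amounts only to shaving off the leaf edges and the root edge while retaining the tree of inner edges together with whatever blob/cut data is present. This gives for each $n$ a functor $\operatorname{core}_n\colon \TSB{\SSS}{\PPP\bdr}_n \to H\bdr_n$. Because the face and degeneracy maps of $\TSB{\SSS}{\PPP\bdr}$ and of $H\bdr$ are described by the \emph{same} operations on blobs (forget blobs, contract blobs, return the forest of blobs, insert identity blobbings), as laid out in \ref{faceZ} and Figure~\eqref{eq:bdr}, and since shaving off leaves and root commutes with all of these manipulations of the node structure, the $\operatorname{core}_n$ assemble into a simplicial map $c\colon \TSB{\SSS}{\PPP\bdr} \to H\bdr$. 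The symmetric monoidal structure on $H\bdr$ is disjoint union of forests: its groupoids are groupoids of monomials, so $H\bdr$ is a free $\SSS$-algebra, and its structure map is culf for exactly the reason recorded after Proposition~\ref{ho-pbk} — $\SSS$ is cartesian (\ref{S}), hence its multiplication is culf and $\SSS$ preserves (homotopy) pullbacks.

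Next I would establish that $H\bdr$ is a decomposition space. Here there is no Segal shortcut: taking core destroys Segalness (\ref{core}), so the $2$-Segal/decomposition condition must be checked directly, exactly as for the Butcher--Connes--Kreimer decomposition space of Example~\ref{ex:CK}. Concretely, one verifies that the active-inert pushout squares of $\simplexcategory$ are sent to homotopy pullbacks, using the Fibre Lemma~\ref{lem:fibre} to compare fibres; the fibres in question are sets of compatible (respectively nested) blobbings of a combinatorial forest, and the relevant reshuffling of blobbings matches up on the nose. This is the combinatorial substance underlying the fact that the Calaque--Ebrahimi-Fard--Manchon substitution product is comultiplicative, now phrased at the objective level, and the check is entirely parallel to the forest-level verification in Example~\ref{ex:CK}.

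The remaining, and principal, task is culfness of $c$, i.e.\ cartesianness on active maps. Since pullback squares paste, it suffices to treat the active generators, namely the inner face maps $d_i$ ($0<i<n$) and the degeneracies; for each such active structure map, I would show the naturality square
\[
\begin{tikzcd}[sep=large]
\TSB{\SSS}{\PPP\bdr}_n \ar[r, "c"] \ar[d] & H\bdr_n \ar[d] \\
\TSB{\SSS}{\PPP\bdr}_m \ar[r, "c"'] & H\bdr_m
\end{tikzcd}
\]
is a pullback by the Fibre Lemma~\ref{lem:fibre}. The point is that an active map only merges, refines, or contracts blobs, operating solely on the node structure, which the core preserves isomorphically: inner edges are exactly the core edges, and the adjacency of nodes is untouched. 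The extra data distinguishing an operadic forest from its core — the leaf multiplicities at each node, together with the root edge — is therefore carried along bijectively, so the fibres of $c$ over matching simplices are equivalent and the square is cartesian. I expect this to be the main obstacle, since it is precisely where the interaction between shaving and the blob combinatorics must be pinned down carefully (and where one must keep track of the reduced, nontrivial-blob condition). Finally, the culf map out of the comodule follows formally: the comodule-configuration map $u\colon F\lowershriek\TSB{\PPP\bd}{\PPP\bdr} \to \TSB{\SSS}{\PPP\bdr}$ is culf by Lemma~\ref{lem:comod-config-locfin}, and culf maps compose, so $c\circ u$ is the asserted culf map; under the identification of the core of $F\lowershriek\TSB{\PPP\bd}{\PPP\bdr}$ with $\Dectop{H\bdr}$ this is equivalently the dec map of \ref{dec-map} followed by $c$.
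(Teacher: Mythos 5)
Your proposal is correct and follows essentially the same route as the paper's (sketched) proof: disjoint union for the symmetric monoidal structure, a fibre computation of blobbings for the decomposition-space axiom, and the observation that blobbings live entirely on the node structure (hence depend only on the core) for culfness, with the statement about $F\lowershriek\TSB{\PPP\bd}{\PPP\bdr}$ following formally by composing culf maps. The only cosmetic difference is that for culfness you apply the Fibre Lemma to the fibres of the core map itself (checking that leaves, root, and the reduced-blob condition are carried along bijectively under active maps), whereas the paper compares fibres of the face maps (blobbings of a $\PPP$-tree versus blobbings of its core) --- transposed applications of the same Lemma~\ref{lem:fibre}.
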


\begin{proof}
  The proofs are standard and will not be given in detail. The symmetric
  monoidal structure is just disjoint union. To check the
  decomposition-space axiom, the first observation is that the inner face
  maps are discrete fibrations (the fibres being discrete sets of possible
  blobbings). The appropriate pullback squares are now verified by
  computing fibres of the inner face maps involved. Culfness of the
  taking-core map is clear: the possible blobbings of a $\PPP$-tree depends
  only on its core, not on leaves, root, or $\PPP$-structure.
\end{proof}

\begin{prop}
  The core of the incidence comodule bialgebra of the Baez--Dolan
  construction of the terminal operad is the
  Calaque--Ebrahimi-Fard--Manchon comodule bialgebra.
\end{prop}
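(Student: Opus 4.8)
The plan is to obtain the statement by pushing the objective comodule bialgebra of Theorem~\ref{thm:main-finite} forward along the three core maps, and then taking homotopy cardinality. Since the core maps are culf and symmetric monoidal, they induce comodule-bialgebra homomorphisms, so the core of a comodule bialgebra is again a comodule bialgebra; no axiom needs to be re-checked, and the task reduces to identifying the three resulting structure maps with their Calaque--Ebrahimi-Fard--Manchon counterparts. Throughout, $\PPP$ is the terminal operad, so $\PPP$-trees are naked trees and decorations play no role under core.

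First I would assemble the three core maps into a single morphism of comodule configurations. On the bialgebra side, \ref{core} already supplies a symmetric monoidal culf map from $Y=\TSB{\SSS}{\PPP\upperstar}$ to the decomposition space $\dstrees$ of Example~\ref{ex:CK}; on the coacting side, the preceding lemma supplies a symmetric monoidal culf map from $Z=\TSB{\SSS}{\PPP\bdr}$ to $\dstrees\bdr$, together with its restriction to the comodule $M=\SSS F\lowershriek\TSB{\PPP\bd}{\PPP\bdr}$, landing in $\SSS$ of the upper decalage of $\dstrees\bdr$. What must be verified is that these maps commute with the coaction span $M_0\xleftarrow{d_1} M_1\xrightarrow{(u,d_0)} Z_1\times M_0$; this is routine, because taking core commutes with the level- and blob-operations defining all the face maps (shaving leaves and root and forgetting decorations is unaffected by joining or contracting blobs).

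Next I would identify the three pushed-forward structures. The bialgebra coming from $Y$ becomes, via \ref{core} and Example~\ref{ex:CK}, the Butcher--Connes--Kreimer Hopf algebra, whose admissible-cut comultiplication is exactly the one dual to composition of B-series. For the coacting bialgebra I would trace the incidence span of $\dstrees\bdr$: an object of $\dstrees\bdr_1$ is a combinatorial tree $T$, the fibre of $d_1$ is the set of blobbings of $T$ (equivalently partitions of its nodes into subtrees, each blob nonempty), and $(d_2,d_0)$ returns the forest of blob contents together with the tree obtained by contracting each blob to a single node. By \ref{faceZ} (reduced version) this is precisely $\Delta(T)=\sum (S_1\cdots S_k)\tensor K$, matching the substitution comultiplication verbatim. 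The coaction is the same span read off from the comodule $M$, hence is the coaction of the composition bialgebra over the substitution bialgebra. Finally, all the algebras in sight are free commutative on combinatorial trees, as in the Calaque--Ebrahimi-Fard--Manchon setup, and $\dstrees\bdr$ is locally finite (there are only finitely many blobbings of a finite tree), so homotopy cardinality applies.

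With the three pieces identified and the core maps shown to form a morphism of comodule-bialgebra configurations, the comodule-bialgebra axioms are inherited from Theorem~\ref{thm:main-finite}, and homotopy cardinality yields the Calaque--Ebrahimi-Fard--Manchon comodule bialgebra over $\Q$. The main obstacle is the compatibility check of the second paragraph: one must confirm that the three separately-constructed core maps are mutually coherent over the coaction, and that the combinatorial identification of $(d_2,d_0)$ with \emph{forest of subtrees} and \emph{contracted tree} survives the shaving of leaves and root and the presence of trivial operadic trees (which core to the empty combinatorial tree) in $M_0$.
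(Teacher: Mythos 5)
Your proposal is correct and follows essentially the same route as the paper: both rest on the culf, symmetric monoidal core maps furnished by \ref{core} and the lemma preceding the proposition, identify the $Y$-side structure with the Butcher--Connes--Kreimer comultiplication (already established in \cite{Kock:1109.5785} and Example~\ref{ex:CK}), and observe that the $Z$-side comultiplication and coaction are the sum over blobbings of a given tree, which is verbatim the Calaque--Ebrahimi-Fard--Manchon substitution coproduct. The paper compresses all of this into ``this is just a verification''; the coherence of the three core maps and the trivial-tree/empty-core point that you flag as the main obstacle are precisely what that verification silently absorbs.
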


\begin{proof}
  This is just a verification. For the Butcher--Connes--Kreimer
  comultiplication the result was already in \cite{Kock:1109.5785}.
  For the Chartier--Hairer--Vilmart--Calaque--Ebrahimi-Fard--Manchon comultiplication, the point is
  simply that the summation is over blobbings of a given tree. (See
  also \cite[Ex.~7.5]{Kock-Weber:1609.03276} for this result.)
\end{proof}

\subsection{Other examples}

\label{sub:freeP}

\begin{blanko}{Linear orders and comodule bialgebras of monotone words.}
  \label{ex:linearorder}
  Let $\PPP$ be a (countable) linear order, considered as a linearly ordered alphabet.
  For definiteness we shall take the linear order $\PPP=\N$. Since $\PPP$
  is a poset and hence a category, it is a coloured operad with only unary
  operations. The $\PPP$-trees are the non-empty monotone words.
  
  The operad $\PPP\upperstar$ is the category whose objects are the
  letters, and whose morphisms from $i$ to $j$ are the monotone words that
  start in $i$ and end in $j$ (allowing the one-letter word for $i=j$). Two
  words are composed by $1$-letter-overlap concatenation, as exemplified by
  \begin{center}
	\begin{tikzpicture}[line width=1.0pt,scale=0.60]
	  \begin{scope}[shift={(0.0, 2.0)}, blue]
		\node at (1.5,0.0) {$266\underline{7}$};
		\coordinate (start) at (1.55, -0.7);
	  \end{scope}

	  \begin{scope}[shift={(3.0, 0.0)}]
		\node at (1.5,0.1) {$\underline{7}8899$};
		\coordinate (insertionpoint) at (0.45, 0.15);
	  \end{scope}

	  \draw[-{Latex[width=3.5pt,length=4pt]}, line width=0.02, densely dashed] 
		(start) to[out=-75, in=180] (insertionpoint);
	  \node at (8.0, 1.0) {$\leadsto$};

	  \begin{scope}[shift={(10.5, 0.8)}]	
		\node at (1.3,0.2) {$\textcolor{blue}{2667}8899$};
	  \end{scope}
	\end{tikzpicture}
  \end{center}
  
  The operad $\PPP\bdr$ has colours monotone two-letter words. The
  operations are monotone words of length at least $2$. The output colour
  of a word is the pair consisting of the first and the last letter. The
  input slots of such a word are the gaps between letters, and the colour
  of an input slot is the pair of adjacent letters. A whole word can thus
  be substituted into a gap provided its first and last letters agree with
  the adjacent letters of the gap, and in the result of the substitution,
  these two letters are replaced by the whole word, as pictured here:
  
  \begin{center}
	\begin{tikzpicture}[line width=1.0pt,scale=0.60]
	  \begin{scope}[shift={(0.0, 3.0)}, blue]
		\node at (1.5,0.0) {$\underline{2}245\underline{6}$};
		\coordinate (start) at (1.85, -0.5);
	  \end{scope}

	  \begin{scope}[shift={(3.0, 0.0)}]
		\node at (1.5,0.1) {$11\overline{2}\,\overline{6}6899$};
		\coordinate (insertionpoint) at (1.16, 0.45);
	  \end{scope}

	  \draw[-{Latex[width=3.5pt,length=4pt]}, line width=0.02, densely dashed] 
		(start) to[out=-60, in=90] (insertionpoint);
	  \node at (8.2, 1.5) {$\leadsto$};

	  \begin{scope}[shift={(10.5, 1.36)}]	
		\node at (1.3,0.2) {$11\raisebox{1.8pt}{\textcolor{blue}{22456}}6899$};
	  \end{scope}
	\end{tikzpicture}
  \end{center}
	
  (Had we taken $\PPP\bd$ instead of $\PPP\bdr$, we would have also the 
  one-letter words, and an allowed substitution would be, for instance, to replace a 
  subword $44$ by $4$.)
  
  The comodule bialgebra is the polynomial algebra on these monotone words.
  For the comultiplication corresponding to $\PPP\upperstar$, the $1$-letter
  words are group-like, and the degree of a word is its length minus $1$.

  Example:
  $$
  \Delta(2335) = 2 \tensor 2335 + 23 \tensor 335 + 233 \tensor 35 + 2335 
  \tensor 5 .
  $$
  The comultiplication corresponding to $\PPP\bdr$
  is given by summing over subwords that include the first and last letter,
  and then putting this subword in the right tensor factor, and putting
  on the right the monomial consisting of the words read within the 
  original word from one letter in the subword to the next. 

Example:
\begin{align*}
\Delta(35688) =& \
35 \cdot 56 \cdot 68 \cdot 88 \tensor 35688
\\
&+ 35 \cdot 56 \cdot 688 \tensor 3568
\ + \ 35 \cdot 568 \cdot 88 \tensor 3588
\ + \ 356 \cdot 68 \cdot 88 \tensor 3688
\\
&+ 35 \cdot 5688  \tensor 358
\ + \ 356 \cdot 688 \tensor 368
\ + \ 3568 \cdot 88 \tensor 388
\\
&+ 35688 \tensor 38  .
\end{align*}

\end{blanko}

\begin{blanko}{Variation: contractible groupoids.}
  As a variation of the previous example, instead of a linear order 
  take $\PPP$ to be a contractible groupoid (codiscrete groupoid on an 
  alphabet). Now $\PPP$-trees are non-empty words in the 
  alphabet (without any monotonicity constraints). Concatenation and 
  substitution work exactly as before.
\end{blanko}
  
\begin{blanko}{Quivers and comodule bialgebras of paths.}
  Linear orders are free categories on linear quivers. 
  Example~\ref{ex:linearorder}
  immediately generalises to general quivers (directed graphs). Let $Q$ be 
  a quiver, and let $\PPP$ be the free category on $Q$. Then a $\PPP$-tree
  is a
  {\em marked path} in the quiver, meaning a path with `stations' 
  marked along the path, including the start and the finish. Formally a 
  marked path is a configuration
  $$
  \Delta^k \actto \Delta^t \to Q .
  $$
  (Warning: the two maps live in different categories and cannot be
  composed: since $Q$ is only a quiver, not a category, the two maps do not
  make $\Delta^k$ into a path in $Q$!)
  
  Now $\PPP\upperstar$ is the category whose objects are the vertices of $Q$
  and whose morphisms are the marked paths. Composition is just 
  concatenation of marked paths at their endpoints.
  
  
  In the operad $\PPP\bdr$, the colours are the nontrivial paths. The operations are
  the nontrivial marked paths without trivial stages. The output colour 
  of a marked path is the path itself;
  the input colours are the stages of the marked path, meaning the paths
  from one station to the next. 
  
  Substitution replaces a stage with a 
  further marking.
  The formalisation of this (unpacking the general 
  constructions) involves active-inert pushouts in $\simplexcategory$ 
  (\ref{active-inert-Delta}).
  In detail, a marked path with a chosen input slot is the 
  configuration
  $$
  \begin{tikzcd}
  \Delta^1 \ar[d, into] \ar[r, dotted, -act]& 
  \Delta^s \dlactinert \ar[d, dotted, into]
  \ar[rd, 
  dashed, bend left]&  \\
  \Delta^k \ar[r, -act] & \Delta^t \ar[r] & Q
  \end{tikzcd}
  $$
  with input colour $\Delta^s \to Q$ obtained by active-inert 
  factorisation in $\simplexcategory$, as indicated. Note that the curved map
  does make sense as a composite, because 
  inert maps in $\simplexcategory$ are quiver maps, so $\Delta^s \to Q$ is
  a path in $Q$.
  To give another operation with 
  matching output is to give $\Delta^h \actto \Delta^s \to Q$, and since 
  $\Delta^1$ is initial in the category of active maps, this must factor
  the map $\Delta^1 \actto \Delta^s$:
  $$
  \begin{tikzcd}
  \Delta^1 \ar[d, into] \ar[r, dotted, -act] \ar[rr, bend left, -act]& 
  \Delta^h \ar[r, -act] & \Delta^s 
  \ar[d, into] &  \\
  \Delta^k \ar[rr, -act] && \Delta^t \ar[r] & Q .
  \end{tikzcd}
  $$
  The result of the substitution is obtained by taking the active-inert 
  pushout as indicated:
  $$
  \begin{tikzcd}
  \Delta^1 \ar[d, into] \ar[r, -act]& \Delta^h \ar[r, -act] \ar[d, 
  dotted, into] & \Delta^s 
  \ar[d, into] &  \\
  \Delta^k \ar[rr, bend right, -act] \ar[r, dotted, -act] & \Delta^r 
  \ulpullback
  \ar[r, dotted, -act]& \Delta^t \ar[r] & Q ,
  \end{tikzcd}
  $$
  which finally induces $\Delta^r \actto \Delta^t$ 
  by the universal property of the pushout.
  The result of the substitution is thus
  $$
  \Delta^r \actto \Delta^t \to Q .
  $$
\end{blanko}

\begin{blanko}{Polynomial endofunctors and subdivided trees.}
  The preceding examples have multi versions. The multi analogue of a
  linear order is a tree, and the multi analogue of a quiver is a
  polynomial endofunctor. Recall that a tree is a special case of a
  polynomial endofunctor (\ref{polytree-def}). For brevity we treat the
  latter case.
  
  Let $Q$ be a polynomial endofunctor, and consider $\PPP = Q\upperstar$,
  the free monad on $Q$. Then $\PPP$-trees are subdivided $Q$-trees,
  or more formally: $Q$-trees $T \to Q$ equipped with an active map $K 
  \actto T$. We write
  $$
  K \actto T \to Q 
  $$
  (with the warning again that the two maps live in 
  different categories and cannot be composed, and $K$ is not a $Q$-tree).
  
  The colours of $\PPP\upperstar$ are the original colours of $Q$.
  The operad structure on $\PPP\upperstar$ is given by grafting,
  which is gluing of subdivided trees. 
  
  The Baez--Dolan construction $\PPP\bdr$ has colours the
  nontrivial $Q$-trees. For the operations, we now have nontrivial
  subdivided trees, meaning
  $$
  K \actto T \to Q
  $$
  where $K \actto T$ is an active injection.
  The output colour is the whole tree $T \to Q$.  The input slots
  are the nodes of $K$; the colour of an input slot is the $Q$-tree $S \to 
  Q$
  obtained as active-inert factorisation (\ref{active})
  $$
  \begin{tikzcd}
  C \ar[d, into] \ar[r, dotted, -act]& S \dlactinert \ar[d, dotted, into] 
  \ar[rd, dashed, bend left]&  \\
  K \ar[r, -act] & T \ar[r] & Q .
  \end{tikzcd}
  $$
  Note that the curved map does make sense as a composite, because 
  inert maps are maps of polynomial endofunctors (\ref{inert}), so $S$ becomes indeed a 
  $Q$-tree.
  
  Substitution amounts to further refinement of subtrees of $T$. Formally
  it involves again active-inert pushouts, now in $\OMEGA$. In the diagram
  $$
  \begin{tikzcd}
  C \ar[d, into] \ar[r, dotted, -act] \ar[rr, bend left, -act]& 
  H \ar[r, -act] \ar[d, dotted, into] & S 
  \ar[d, into] &  \\
  K \ar[rr, bend right, -act] \ar[r, dotted, -act] & R
  \ulpullback
  \ar[r, dotted, -act]& T \ar[r] & Q
  \end{tikzcd}
  $$  
  the solid part represents the data required for a substitution:
  an operation $H{\actto}S{\to} Q$ with output colour $S{\to}Q$, to be 
  substituted into the input slot of $K{\actto}T{\to}Q$ corresponding to 
  $C{\into} K$. The result is the operation $R{\actto}T{\to} Q$ obtained 
  by first factoring $C{\actto}S$ through $H$, then forming the active-inert
  pushout, and finally using its universal property.
\end{blanko}

\subsection{Non-examples and outlook}

\label{sub:outlook}

The examples in the previous subsection concerned the case where $\PPP$
is itself free. In the general case, $\PPP$ is not required to be free, but
freeness comes in since of course $\PPP\upperstar$ is free.
The following discussion looks beyond the free case, towards more general
Baez--Dolan constructions.

\begin{blanko}{Moment-cumulant relations in free probability.}
  {\em Free probability}, introduced by
  Voiculescu~\cite{Voiculescu:lectures} in the 1980s, is a noncommutative
  analogue of classical probability, originally motivated by operator
  algebras. 
  Speicher~\cite{Speicher:multiplicative} discovered that the combinatorics
  underlying free probability is that of noncrossing partitions,
  contrasting the ordinary partitions in classical probability, and
  established a beautiful cumulant-moment formula for free cumulants in
  terms of M\"obius inversion in the incidence algebra of the noncrossing
  partitions lattice~\cite{Nica-Speicher:Lectures}.
  Ebrahimi-Fard and Patras \cite{EbrahimiFard-Patras:1409.5664},
  \cite{EbrahimiFard-Patras:1502.02748}
  gave a very different approach to the moment-cumulant
  relations, in terms of a time-ordered exponential coming from a
  half-shuffle in the tensor algebra.
\end{blanko}

\begin{blanko}{The comodule bialgebra of noncrossing partitions.}
  The link between the two constructions was found recently by
  Ebrahimi-Fard, Foissy, Kock, and
  Patras~\cite{EbrahimiFard-Foissy-Kock-Patras:1907.01190}, in terms of a
  comodule bialgebra structure on noncrossing partitions. This in turn is
  induced by two different operad structures on noncrossing partitions: the
  {\em gap-insertion operad} structure on noncrossing partitions works like
  this:
\begin{center}
  \def\h{0.5}
  \begin{tikzpicture}[line width=1.0pt,scale=0.60]
	\begin{scope}[shift={(0.0, 3.0)}, blue]
	  \draw  (0.0,0.0) -- +(1.5,0);
		\draw  (0.0,0.0) -- +(0,\h);
		\draw  (1.5,0.0) -- +(0,\h);
		\draw  (2.0,0.0) -- +(0,\h);
	  \draw  (0.5,0.35) -- +(0.5,0);
		\draw  (0.5,0.35) -- +(0,0.8*\h);
		\draw  (1.0,0.35) -- +(0,0.8*\h);
	  \coordinate (start) at (1.5, -0.5);
	\end{scope}

	\begin{scope}[shift={(3.0, 0.0)}]	
	  \coordinate (insertionpoint) at (1.7, 0.6);
	  \draw  (0.0,0.0) -- +(1.0,0);
		\draw  (0.0,0.0) -- +(0,\h);
		\draw  (1.0,0.0) -- +(0,\h);
		\draw  (0.5,0.35) -- +(0,0.8*\h);

	  \begin{scope}[shift={(1.5, 0.0)}]
		\draw  (0.0,0.0) -- +(2.0,0);
		  \draw  (0.0,0.0) -- +(0,\h);
		  \draw  (2.0,0.0) -- +(0,\h);
		\draw  (0.5,0.35) -- +(1.0,0);
		  \draw  (0.5,0.35) -- +(0,0.8*\h);
		  \draw  (1.0,0.35) -- +(0,0.8*\h);
		  \draw  (1.5,0.35) -- +(0,0.8*\h);
	  \end{scope}
	\end{scope}

	\draw[-{Latex[width=3.5pt,length=4pt]}, line width=0.02, densely dashed] 
	  (start) to[out=-60, in=110] (insertionpoint);
	\node at (8.3, 1.5) {$\leadsto$};

	\begin{scope}[shift={(10.5, 1.2)}]	
	  \draw  (0.0,0.0) -- +(1.0,0);
		\draw  (0.0,0.0) -- +(0,\h);
		\draw  (1.0,0.0) -- +(0,\h);
		\draw  (0.5,0.35) -- +(0,0.8*\h);

	  \begin{scope}[shift={(1.5, 0.0)}]
		\draw  (0.0,0.0) -- +(4.5,0);
		  \draw  (0.0,0.0) -- +(0,\h);
		  \draw  (4.5,0.0) -- +(0,\h);
		\draw  (3.0,0.35) -- +(1.0,0);
		  \draw  (3.0,0.35) -- +(0,0.8*\h);
		  \draw  (3.5,0.35) -- +(0,0.8*\h);
		  \draw  (4.0,0.35) -- +(0,0.8*\h);
	  \end{scope}
	  
	  \begin{scope}[shift={(2.0, 0.35)}, blue]
		\draw  (0.0,0.0) -- +(1.5,0);
		  \draw  (0.0,0.0) -- +(0,\h);
		  \draw  (1.5,0.0) -- +(0,\h);
		\draw  (2.0,0.0) -- +(0,\h);
		\draw  (0.5,0.35) -- +(0.5,0);
		  \draw  (0.5,0.35) -- +(0,0.8*\h);
		  \draw  (1.0,0.35) -- +(0,0.8*\h);
	  \end{scope}
	\end{scope}

  \end{tikzpicture}
\end{center}
The {\em block-substitution operad} structure on noncrossing partitions
works like this:
\begin{center}
  \def\h{0.5}
  \begin{tikzpicture}[line width=1.0pt,scale=0.60]
	\begin{scope}[shift={(0.0, 2.0)}, blue]
	  \draw (0.0,0.0) -- +(0.5,0);
	  \draw (0.0,0.0) -- +(0,\h);
	  \draw (0.5,0.0) -- +(0,\h);
	  \draw (1.0,0.0) -- +(0,\h);
	  \draw (1.5,0.0) -- +(0,\h);
	  \coordinate (start) at (0.75, -0.5);
	\end{scope}

	\begin{scope}[shift={(3.0, 0.5)}]	
	  \coordinate (insertionpoint) at (1.25, -0.1);
	  \draw  (0.0,0.0) -- +(2.5,0);
		\draw  (0.0,0.0) -- +(0,\h);
		\draw  (0.5,0.0) -- +(0,\h);
		\draw  (2.0,0.0) -- +(0,\h);
		\draw  (2.5,0.0) -- +(0,\h);
	  \draw  (3.0,0.0) -- +(1.0,0);
		\draw  (3.0,0.0) -- +(0,\h);
		\draw  (3.5,0.0) -- +(0,\h);
		\draw  (4.0,0.0) -- +(0,\h);
	  \draw  (1.0,0.35) -- +(0.5,0);
		\draw  (1.0,0.35) -- +(0,0.8*\h);
		\draw  (1.5,0.35) -- +(0,0.8*\h);
	\end{scope}

	\draw[-{Latex[width=3.5pt,length=4pt]}, line width=0.02, densely dashed] 
	  (start) .. controls (0.8, -0.5) and (4.0, -0.7) .. (insertionpoint);
	\node at (9.0, 1.5) {$\leadsto$};

	\begin{scope}[shift={(11.0, 1.2)}]	
	  \draw[blue]  (0.0,0.0) -- +(0.5,0);
		\draw[blue]  (0.0,0.0) -- +(0,\h);
		\draw[blue]  (0.5,0.0) -- +(0,\h);
		\draw[blue]  (2.0,0.0) -- +(0,\h);
		\draw[blue]  (2.5,0.0) -- +(0,\h);
	  \draw  (3.0,0.0) -- +(1.0,0);
		\draw  (3.0,0.0) -- +(0,\h);
		\draw  (3.5,0.0) -- +(0,\h);
		\draw  (4.0,0.0) -- +(0,\h);
	  \draw  (1.0,0.35) -- +(0.5,0);
		\draw  (1.0,0.35) -- +(0,0.8*\h);
		\draw  (1.5,0.35) -- +(0,0.8*\h);
	\end{scope}

  \end{tikzpicture}
\end{center}
The incidence bialgebras of the two operads together form a comodule 
bialgebra~\cite{EbrahimiFard-Foissy-Kock-Patras:1907.01190}.
\end{blanko}

\begin{blanko}{Balanced Baez--Dolan construction (tentative).}
  The comodule bialgebra of noncrossing partitions cannot result directly
  from a Baez--Dolan construction, since the gap-insertion operad is not
  free. However, it is `not too far' from being
  free~\cite[Prop.~3.1.4]{EbrahimiFard-Foissy-Kock-Patras:1907.01190}; precisely it 
  satisfies the equation of the following definition.

  Define a {\em balanced operad} to be a nonsymmetric operad
  satisfying the following equation: substituting operation $a$ into
  the first slot of operation $b$ equals substituting $b$ into the
  last slot of $a$:
  
  \begin{center}
\begin{tikzpicture}
  \footnotesize
  
  \begin{scope}[shift={(0.0,0.0)}]
    \coordinate (L) at (-0.33, 0.6);
	\node at (-0.5, 0.5) {$a$};
    \coordinate (R) at (0.0, 0.0);
	\node at (0.25, 0.0) {$b$};
	\draw (0.0, -0.4) -- (R) pic {onedot} -- (L) pic {onedot};
	\draw (L) -- + (-0.6, 0.6);
	\draw (L) -- + (-0.2, 0.6);
	\draw (L) -- + (0.6, 0.6);
	\draw (L) -- + (0.2, 0.6);
	\draw (R) -- + (0.0, 0.6);
	\draw (R) -- + (0.33, 0.6);
  \end{scope}

  \node at (1.5, 0.3) {$=$};
  
  \begin{scope}[shift={(3.0,0.0)}]
    \coordinate (L) at (0.0, 0.0);
	\node at (-0.2, -0.1) {$a$};
    \coordinate (R) at (0.6, 0.6);
	\node at (0.85, 0.6) {$b$};
	\draw (0.0, -0.4) -- (L) pic {onedot} -- (R) pic {onedot};
	\draw (L) -- + (-0.6, 0.6);
	\draw (L) -- + (-0.2, 0.6);
	\draw (L) -- + (0.2, 0.6);
	\draw (R) -- + (-0.33, 0.6);
	\draw (R) -- + (0.0, 0.6);
	\draw (R) -- + (0.33, 0.6);
  \end{scope}

  \end{tikzpicture}
  \end{center}
  (For ternary operations, this is the equation for the algebraic theory of
  generalised pseudo-heaps (non-Mal'cev heaps), studied by Wagner in the
  1950s in differential geometry.)

  We now claim that the forgetful functor from balanced operads to endofunctors
  cartesian over the free-monoid monad $\MMM$ admits a left adjoint, the
  free-balanced-operad functor.

  {\em Example:} the free balanced operad on the terminal (reduced) nonsymmetric
  operad $\overline\MMM$ should be the gap-insertion operad of
  noncrossing partitions.

  Now one should essentially just modify the Baez--Dolan construction to
  refer to this putative free-balanced-operad monad.

  {\em Example:} the balanced Baez--Dolan construction on the terminal (reduced)
  nonsymmetric operad $\overline\MMM$ should be the operad of noncrossing
  partitions with block substitution.
    
  Assuming these claims and constructions work out, it will follow that the
  comodule bialgebra of noncrossing partitions of 
  Ebrahimi-Fard--Foissy--Kock--Patras~\cite{EbrahimiFard-Foissy-Kock-Patras:1907.01190}
  is the incidence comodule bialgebra of the balanced Baez--Dolan 
  construction on $\overline\MMM$. 
  
  Verifying all the details has been postponed for future work, as indeed 
  it would seem worthwhile developing a theory for more general 
  Baez--Dolan constructions.
\end{blanko}

\begin{blanko}{Regularity structures.}
  As mentioned briefly in the introduction, Bruned, Hairer and
  Zambotti~\cite{Bruned-Hairer-Zambotti:1610.08468} have given an algebraic
  approach to renormalisation of regularity structures, where a comodule
  bialgebra plays a key role. The overall shape of the comultiplications
  involved resembles the Calaque--Ebrahimi-Fard--Manchon situation, but the
  tree structures are considerably more complicated (the
  paper~\cite{Bruned-Hairer-Zambotti:1610.08468} contains 40 pages of tree
  combinatorics!), because of intricate decorations required to encode the
  associated analytic objects. Briefly, vertices represent integration
  variables, edges represent integration kernels; there are additional
  numerical decorations, of nodes to represent Taylor remainders, and of
  edges to represent derivatives of the kernels. All these decorations are
  not just dead weight with respect to the combinatorics of the
  comultiplications, but transform in a non-trivial way with the
  contractions and extractions, as required in order to express the
  behaviour of the analytic objects.

One quickly sees that this comodule bialgebra escapes the range of examples
covered by the Baez--Dolan construction in its pure form. For examples, it
is easy to check that the simplicial groupoids defining the 
comultiplications are not Segal spaces, and therefore cannot arise as 
two-sided bar constructions. Another issue is that the sums involved are
not finite, as would be the case in the situation of a free operad.

In spite of these discouragements, it is not unlikely that there are still
relationships to be uncovered, involving passage to the core, and perhaps a
Baez--Dolan construction relative to something fancier than the free-monad
monad. The challenge is to obtain an operadic interpretation of the
decorations.
  
Even if the Baez--Dolan construction turns out not to be useful in this
context, there may still be opportunities for the techniques of the present
paper, and in particular for an objective approach. Firstly, the infinite
sums appearing in the comultiplication formulae, which in the paper are
handled through clever gradings and conditions ensuring that certain
infinite matrices are upper-triangular, suggest that slice categories
could be a natural framework.
Secondly, the formulae
for the comultiplications involve factorial denominators which
transform according to some generalised Chu--Vandermonde identity,
suggesting that these slices should be groupoid slices.

At the moment these considerations are speculative, and at the moment we
list the Bruned--Hairer--Zambotti comodule bialgebra as a non-example, 
calling for further investigation.
\end{blanko}

\bigskip

\footnotesize

\noindent {\bf Acknowledgments.} This work was presented
at the {\em Workshop on comodule bialgebras (GDR Renormalisation)} in
Clermont-Ferrand, November 2018. I wish to thank Dominique Manchon for a
wonderful conference, and for the perfect opportunity for me to expose
this material. I have benefitted much from related
collaborations with Imma G\'alvez, Andy Tonks, Mark Weber, Louis
Carlier, Kurusch Ebrahimi-Fard, Lo\"ic Foissy, and Fr\'ed\'eric Patras, all
of whom I thank for their influence on various parts of this work.
Thanks are due also to Marcelo Fiore, Ander Murua, Pierre-Louis Curien, Paul-Andr\'e Melli\`es,
Andr\'e Joyal,
Gabriella B\"ohm, and Birgit Richter, for input and feedback, and to the 
anonymous referees for catching a couple of small mistakes and for other 
pertinent remarks.
Support from grants MTM2016-80439-P (AEI/FEDER, UE) of Spain and
2017-SGR-1725 of Catalonia is gratefully acknowledged.


\hyphenation{mathe-matisk}

\end{document}